\numberwithin{equation}{section}
\theoremstyle{plain}
\newtheorem{theorem}[subsubsection]{Theorem}
 \newtheorem{lemma}[subsubsection]{Lemma}
 \newtheorem{proposition}[subsubsection]{Proposition}
 \newtheorem{corollary}[subsubsection]{Corollary}
 \newtheorem{conjecture}[subsubsection]{Conjecture}
 \theoremstyle{definition}
\newtheorem{remark}[subsubsection]{Remark}
\newcommand{\ol}{\overline}
\newcommand{\rmaa}{\mathrm{A}}
\newcommand{\rmoo}{\mathrm{O}}
\newcommand{\rmdd}{\mathrm{D}}
\newcommand{\CC}{\mathbb{C}}
\newcommand{\cc}{\mathbb{C}}
\newcommand{\PP}{\mathbb{P}}
\newcommand{\ZZ}{\mathbb{Z}}
\newcommand{\bb}{\mnfn}
\newcommand{\bbS}{\mathbb{S}}
\newcommand{\bbll}{\mathbb{L}}
\newcommand{\calE}{\mathcal{E}}
\newcommand{\calF}{\mathcal{F}}
\newcommand{\calG}{\mathcal{G}}
\newcommand{\calO}{\mathcal{O}}
\newcommand{\calS}{\mathcal{S}}
\newcommand{\calB}{\mathcal{B}}
\newcommand{\chh}{\mathcal{H}}
\newcommand{\calA}{\mathcal{A}}
\newcommand{\frg}{\mathfrak{g}}
\newcommand{\frp}{\mathfrak{p}}
\newcommand{\frb}{\mathfrak{b}}
\newcommand{\frn}{\mathfrak{n}}
\newcommand{\tfrg}{\tilde{\mathfrak{g}}}
\newcommand{\wtil}{\widetilde}
\newcommand{\ind}{\textup{ind}}
\newcommand{\inc}{\textup{inc}}
\newcommand\Mod{\textup{Mod}}
\newcommand\per{\textup{per}}
\newcommand\xprop{\textup{prop}}
\newcommand\res{\textup{res}}
\newcommand\st{\textup{st}}
\newcommand\Sym{\textup{Sym}}
\newcommand{\Tr}{\textup{Tr}}
\newcommand\Hom{\textup{Hom}}
\newcommand{\RHom}{\mathrm{RHom}}
\newcommand{\Ext}{\textup{Ext}}
\newcommand{\HHH}{\mathrm{HHH}}
\newcommand{\rmHH}{\mathrm{HH}}
\newcommand{\rmH}{\mathrm{H}}
\newcommand\GL{\textup{GL}}
\newcommand\gl{\mathfrak{gl}}
\newcommand{\ad}{\textup{ad}}
\newcommand{\Ad}{\textup{Ad}}
\newcommand{\RR}{\mathbb{R}}
\newcommand{\Id}{\mathrm{Id}}
\newcommand{\quash}[1]{}
\newcommand{\sst}{\mathrm{sst}}
\newcommand{\Br}{\mathfrak{Br}}
\newcommand{\xGr}{\mathfrak{Br}^\flat}
\newcommand{\sbim}{\mathrm{SBim}}
\newcommand{\sbimn}{\sbim_n}
\newcommand{\ctr}{\mathcal{T}r}
\newcommand{\Wr}{\overline{W}}
\newcommand{\Hilb}{\textup{Hilb}}
\newcommand{\MF}{\mathrm{MF}}
\newcommand{\calXr}{\overline{\mathcal{X}}}
\newcommand{\calX}{\mathcal{X}}
\newcommand{\MFs}{\mathrm{MF}}
\newcommand{\calZ}{\mathcal{Z}}
\newcommand{\calC}{\mathcal{C}}
\newcommand{\FHilb}{\mathrm{FHilb}}
\newcommand{\frh}{\mathfrak{h}}
\newcommand{\CE}{\mathrm{CE}}
\newcommand{\ChE}{Chevalley-Eilenberg }
\newcommand{\odel}{\stackon{$\otimes$}{$\scriptstyle\Delta$}}
\newcommand{\Tqt}{\mathbb{T}_{q,t}}
\newcommand{\thc}{\dddot{\mathrm{Cat}}}
\newcommand{\CH}{\mathsf{CH}}
\newcommand{\HHom}{\mathbb{H}\mathrm{om}}
\newcommand{\bim}{\mathbb{B}\mathrm{im}}
\newcommand{\ho}{\mathrm{Ho}}
\newcommand{\Brn}{\Br_n}
\newcommand{\xGrn}{\Brn^\flat}
\def\Hilb{ \mathrm{Hilb}}
\newcommand{\Tor}{\mathrm{Tor}}
\newcommand{\ti}{\times}
\newcommand{\ot}{\otimes}
\newcommand{\GG}{G}
\newcommand{\fg}{\mathfrak{g}}
\def\mfgl{ \mathfrak{gl}}
\def\mfgln{ \mfgl_n }
\def\tmfgl{\widetilde{\mfgl}}
\def\tmfgln{ \tmfgl_n }
\def\stab{\mathrm{st}}
\def\MFst{\MF^{\stab}}
\def\MFstn{ \MFst_n}
\def\xId{ \mathbbm{1} }
\def\bx{ \mathbf{x}}
\def\by{ \mathbf{y}}
\def\rmmod{ \mathrm{mod}}
\def\rmalg{ \mathrm{alg}}
\def\rmgeo{ \mathrm{geo}}
\def\xCatalg{\Htp(\sbim_n)}
\def\mnfn{ \mathsf{B}}
\def\idosb{ B_{\xId}}
\def\rmalg{\mathrm{alg}}
\def\rmgeo{\mathrm{geo}}
\def\rmD{ \mathrm{D}}
\def\Vn{ V_n }
\def\PhiRbr{ \Phi_R}
\def\PhiRgr{ \Phi_{S}}
\def\Phigr{ \Phi^\flat}
\def\Phibr{ \Phi}
\def\xHalg{\mathrm{HHH}_{\rmalg}}
\def\xHgeo{\mathrm{HHH}_{\rmgeo}}
\def\Dbxy{\rmD(\CC[\bx,\by]-\rmmod)}
\def\Hsbimn{ \Htp(\sbim_n) }
\def\Lbul{ \Lambda^\bullet}
\def\xdgeo{ d_{\rmgeo}}
\def\xdalg{ d_{\rmalg}}
\def\Htp{ \mathrm{Ho}}
\def\xHilbn{ \Hilb_n(\CC^2) }
\def\GLn{ \GL_n }
\def\mfb{ \mathfrak{b}}
\def\Adv#1{ \mathrm{Ad}_{#1}}
\def\lmb{\lambda}
\def\tlmb{ \widetilde{\lmb}}
\def\xiota{\mathrm{res}}
\def\xpi{ \pi}
\title{Soergel bimodules and matrix factorizations}
\author{A. Oblomkov}
\address{
A.~Oblomkov\\
Department of Mathematics and Statistics\\
University of Massachusetts at Amherst\\
Lederle Graduate Research Tower\\
710 N. Pleasant Street\\
Amherst, MA 01003 USA
}
\email{oblomkov@math.umass.edu}
\author{L. Rozansky}
\address{
L.~Rozansky\\
Department of Mathematics\\
University of North Carolina at Chapel Hill\\
CB \# 3250, Phillips Hall\\
Chapel Hill, NC 27599 USA
}
\email{rozansky@math.unc.edu}
\begin{document}


\begin{abstract}
  We establish an isomorphism between the Khovanov-Rozansky triply
  graded link homology and the geometric triply graded homology due to
  the authors.
  Hence we provide an interpretation of
  the Khovanov-Rozansky homology of the closure of a braid
  \(\beta\) as the space of derived sections of a \(\CC^*\times \CC^*\)-
  equivariant sheaf \(\mathcal{T}r(\beta)\) on the Hilbert scheme
  \(\Hilb_n(\CC^2)\), thus proving a version of
  Gorsky-Negut-Rasmussen conjecture \cite{GorskyNegutRasmussen16}. As a consequence we prove that
  Khovanov-Rozansky homology of knots satisfies the \(q\to t/q\)
  symmetry conjectured by Dunfield-Gukov-Rasmussen \cite{DunfieldGukovRasmussen06}.
  We also apply our main result to  compute the Khovanov-Rozansky homology of torus links.
\end{abstract}

\maketitle

\section{Introduction}
\label{sec:introduction}

In this paper we explain why the knot homology theory developed in our previous work \cite{OblomkovRozansky16},\cite{OblomkovRozansky17},\cite{OblomkovRozansky18},\cite{OblomkovRozansky17a},
\cite{OblomkovRozansky18b}
produces the same link homology $\HHH_{\rmalg}$ as the homology $\HHH_{\rmgeo}$ coming from the Soergel bimodule construction \cite{KhovanovRozansky08b}, \cite{Khovanov07}.

\begin{theorem}\label{thm:main}
For any braid $\beta\in\Br_n$ the homologies $\HHH_{\rmalg}(\beta)$ and $\HHH_{\rmgeo}(\beta)$ are canonically isomorphic:
\begin{equation}
\label{eq:iso}
\HHH_{\rmalg}(\beta) \cong \HHH_{\rmgeo}(\beta).
\end{equation}
\end{theorem}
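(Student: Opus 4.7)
The plan is to construct a monoidal functor $\Phibr: \Hsbimn \to \MFstn$ from the homotopy category of Soergel bimodules to an appropriate two-periodic category of equivariant matrix factorizations, and then to deduce the theorem by showing this functor intertwines both the braid group actions and the trace operations used to define the two homology theories.

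The first step is to define $\Phibr$ on generators. To each generating Soergel bimodule $B_{s_i}$ attached to a simple reflection $s_i$ I assign the matrix factorization that the geometric construction of our previous papers attaches to the corresponding elementary braid crossing. Extending monoidally to Bott--Samelson objects and descending to the homotopy category $\Hsbimn$ yields a candidate functor, and by construction it sends the Rouquier complex $R(\beta)$ of a braid $\beta$ to the geometric braid invariant $\ctr(\beta)$.

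The second step is to verify that $\Phibr$ is well defined on $\Hsbimn$: the braid relations and the direct summand decompositions governing $\sbimn$ must be matched by explicit isomorphisms on the matrix factorization side. These verifications are local in the sense that they reduce to rank-$2$ parabolic calculations, and are accessible through the explicit models of matrix factorizations developed in the earlier papers of the series. The monoidality of $\Phibr$ together with these identifications then guarantees $\Phibr(R(\beta))\cong \ctr(\beta)$ for every $\beta\in\Brn$.

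The main obstacle --- and the step where the real content lies --- is matching the two traces. Algebraically, $\HHHa(\beta)$ is the Hochschild homology of $R(\beta)$, while geometrically, $\HHHg(\beta)$ is the derived global sections of $\ctr(\beta)$ on $\xHilbn$. The crux is a trace formula identifying the Hochschild bar complex of a Soergel bimodule with the appropriate pushforward of its image under $\Phibr$, thereby producing a chain of identifications
\[
\HHHa(\beta) \;=\; \rmHH(R(\beta)) \;\cong\; \rmH^*(\xHilbn,\, \ctr(\beta)) \;=\; \HHHg(\beta).
\]
I expect this trace comparison to be the hardest ingredient, since the algebraic bar resolution and the geometric pushforward to $\xHilbn$ are a priori of very different natures; carrying it out likely requires an intermediate equivariant model on which both traces can be computed by the same spectral sequence. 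Once this identity is established, the canonicity of the isomorphism \eqref{eq:iso} follows from the naturality of the construction of $\Phibr$.
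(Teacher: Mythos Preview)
Your overall architecture is close in spirit but differs from the paper in one essential structural choice, and it glosses over the two ingredients that carry the actual weight of the argument.

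First, the functor in the paper runs in the \emph{opposite} direction: it is $\mnfn\colon \MFstn \to \sbimn$, defined geometrically as a pull--push $\pi_{\frh*}\circ\res^*$. This direction is not a matter of taste. A functor out of matrix factorizations is defined on morphisms for free by functoriality of pull-back and push-forward; a functor out of $\sbimn$ (or $\Hsbimn$) into $\MFstn$ would require you to specify images of all Soergel morphisms, i.e.\ to reproduce the diagrammatic calculus inside matrix factorizations, and then to check it descends to the Karoubi envelope. You do not indicate how you would do this. Relatedly, your assignment ``$B_{s_i}\mapsto$ (matrix factorization for the elementary crossing)'' conflates two different objects: $B_{s_i}$ corresponds to the \emph{blob} matrix factorization $\calC_\bullet^{(i)}$, while the crossing $\sigma_i$ corresponds on the geometric side to a genuinely different object (a cone built from $\calC_\bullet^{(i)}$ and the unit).

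Second, and more seriously, your ``trace formula'' step is where the paper does almost all of its work, and your proposal contains none of the mechanism. The paper proves the MOY relations (the Hecke-type relations $\calC_\bullet^{(i)}\star\calC_\bullet^{(i)}\cong\calC_\bullet^{(i)}\oplus\mathbf{q}^2\calC_\bullet^{(i)}$ and the cubic relation) and the Markov-type relations directly in $\MFstn$, via explicit computations with Steinberg-type varieties and Ext-group calculations. These relations, together with Hao Wu's theorem that MOY relations determine the graph homology uniquely, are what force the Hom/Ext spaces on the two sides to match. Finally, the passage from graphs to braids --- your ``same spectral sequence'' hope --- is handled by a purity argument: the graph pieces $\Tr^i(\calF_D)$ are concentrated in a single $\mathbf{t}$-degree, so the spectral sequence comparing the total differential on the MF side with the Rouquier differential on the Soergel side degenerates at $E_2$. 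None of this is visible in your sketch; without the MOY relations on the matrix factorization side and the purity statement, there is no mechanism to make the trace comparison go through.
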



The  algebraic triply-graded homology \(\HHH_{\rmalg}(\beta)\)
of the closure \(L(\beta)\subset \RR^3\) defined in \cite{KhovanovRozansky08b}, \cite{Khovanov07}
is based  on
 Rouquier's
construction of the homomorphism \(\Phi_{R}\) from the braid group \(\Br_n\) to the homotopy category
\(\mathrm{Ho}(\sbim_n)\) of
complexes of Soergel bimodules. We remind the details of the construction in the second half of the introduction.



%

The geometric version of the homology is more
recent and is constructed in the series of papers by the authors:
\cite{OblomkovRozansky16},\cite{OblomkovRozansky18a},\cite{OblomkovRozansky19a}.
We constructed a trace on the braid group \(\Br_n\) with values in the
category of two-periodic complexes of \(T_{qt}=(\CC^*\!\ti\CC^*)\)-equivariant coherent sheaves \(D^{\per}_{T_{qt}}(\Hilb_n(\CC^2))\) on the
Hilbert scheme of points on the plane:
\begin{equation}
\label{eq:geomod}
\ctr: \Br_n\to \rmD^{\per}_{T_{qt}}(\Hilb_n(\CC^2)),\quad  \HHH_{\rmgeo}(\beta)=\RHom(\calO\ot\Lambda^\bullet\calB,\ctr(\beta)),
\end{equation}
where \(\calB\) is the vector bundle on  \(\Hilb_n(\CC^2)\)  which is  dual to the tautological bundle.
We proved that $\HHH_{geo}(\beta)$ is an isotopy invariant of the closure \(L(\beta)\).

In \cite{OblomkovRozansky19a} we have shown that if the closure $L(\beta)$ of a braid \(\beta\) is a knot then the complex of sheaves
\(\ctr(\beta)\) does not change if we switch the factors \(\CC\) in the product \(\CC^2=\CC\ti \CC\) underlying the Hilbert scheme.

\begin{corollary}\label{cor:duality}
  If the closure of \(\beta\) is a knot then the homology \(\HHH_{\rmalg}(\beta)\) is symmetric
  with respect to switching of homological and polynomial gradings:
  \[\HHH_{\rmalg}(\beta)=\HHH_{\rmalg}(\beta)|_{q=t/q}.\]
\end{corollary}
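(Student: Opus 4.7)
The plan is to apply Theorem~\ref{thm:main} to replace \(\HHH_{\rmalg}(\beta)\) by \(\HHH_{\rmgeo}(\beta)\), and then to transport the symmetry of \(\ctr(\beta)\) from \cite{OblomkovRozansky19a} through the \(\RHom\) pairing in~\eqref{eq:geomod}. It suffices to prove the analogous \(q\mapsto t/q\) symmetry for the geometric homology \(\HHH_{\rmgeo}(\beta)=\RHom(\calO\ot\Lambda^\bullet\calB,\ctr(\beta))\) computed on \(\Hilb_n(\CC^2)\).

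Let \(\tau\colon\CC^2\to\CC^2\) denote the swap of the two factors of \(\CC\ti\CC\), and write \(\tau_*\) for the induced involution of \(\Hilb_n(\CC^2)\). The dual tautological bundle \(\calB\), which depends only on the quotient \(\CC[x,y]/I\), is invariant under \(\tau_*\) as a plain coherent sheaf, hence so is the input sheaf \(\calO\ot\Lambda^\bullet\calB\). At the same time, \(\tau_*\) exchanges the two \(\CC^*\)-factors of the torus \(T_{qt}\), so the functor \(\tau_*^*\) swaps the \(q\)- and \(t\)-characters of any \(T_{qt}\)-equivariant sheaf and acts on equivariant Poincaré series by the substitution of \((q,t)\)-variables induced by this swap.

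Combining these observations with the main result of \cite{OblomkovRozansky19a}---namely that \(\tau_*^*\ctr(\beta)\cong\ctr(\beta)\) whenever \(L(\beta)\) is a knot---yields an isomorphism between \(\HHH_{\rmgeo}(\beta)\) and its image under \(\tau_*^*\). It then remains to identify the involution that \(\tau_*^*\) induces on the Khovanov-Rozansky bigrading. Comparing the \(T_{qt}\)-characters of \(\calB\) and of \(\Lambda^\bullet\calB\) at a \(T_{qt}\)-fixed point with the normalizations used in the algebraic definition of \(\HHH_{\rmalg}\), I expect the factor swap to translate into the substitution \(q\mapsto t/q\) with \(t\) kept fixed, which is exactly the claim.

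The main obstacle is precisely this last piece of bookkeeping: one must pin down the dictionary between the two equivariant parameters of \(T_{qt}\) and the Khovanov-Rozansky variables \((q,t)\), keeping careful track of any determinantal twist coming from \(\Lambda^{\mathrm{top}}\calB\) and of the normalization shifts hidden in \(\calO\ot\Lambda^\bullet\calB\), so that the involution induced by \(\tau_*^*\) on characters is really \(q\mapsto t/q\) rather than, say, \(q\leftrightarrow t\) or some other involution of the \((q,t)\)-plane. Once the grading dictionary is pinned down, the corollary follows immediately from the \(\tau\)-invariance of \(\ctr(\beta)\) and of \(\calO\ot\Lambda^\bullet\calB\).
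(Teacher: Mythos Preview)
Your proposal is correct and takes essentially the same approach as the paper: the introduction derives the corollary from precisely the two inputs you name (Theorem~\ref{thm:main} and the $\tau$-invariance of $\ctr(\beta)$ for knots established in \cite{OblomkovRozansky19a}), and Section~\ref{sec:appl-furth-direct} spells this out via the auxiliary invariant $\calE(L)\in\rmD^{\per}_{T_{qt}}(\CC[x,y])$ together with its freeness over $\CC[x,y]$ when $L$ is a knot. The grading bookkeeping you flag as the main obstacle is settled by the weight conventions recorded there, namely $\deg(x_i)=\mathbf{q}^2$ and $\deg(y_i)=\mathbf{t}^2/\mathbf{q}^2$, so that swapping the two $\CC$-factors induces exactly $q\mapsto t/q$ with $t$ fixed.
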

Thus our main result implies the conjecture of \cite{DunfieldGukovRasmussen06} which resisted an algebraic
proof for almost fifteen years.

The existence of the geometric model~\eqref{eq:geomod} for link the Soergel bimodule-based link homology was
\cite{GorskyNegutRasmussen16} and some special classes of braids in \cite{OblomkovRasmussenShende12}, \cite{GorskyOblomkovRasmussenShende14}, \cite{GorskyNegut15}. In particular, our result immediately imply
the formula homology of torus links \(T_{n,k}\) conjectured in the above mentioned papers (for purely algebraic proofs of these conjectures
see \cite{Hogancamp17},\cite{Mellit17}). The  simplest version of the conjecture is in terms of punctual Hilbert scheme \(\Hilb_n(\CC^2,0)\subset \Hilb_n(\CC^2)\):

\begin{corollary}\label{thm:torus-knots}
  For any \(n,k\) we have
  \[(1-\mathbf{q}^2)\cdot\HHH_{alg}(T_{n,nk+1})=H^0(\Hilb_n(\CC^2,0),\det(\calB)^k\ot \Lambda(\calB)).\]
\end{corollary}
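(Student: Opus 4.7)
The plan is to apply Theorem~\ref{thm:main} to transport the statement to the geometric side, and then exploit known properties of the trace $\ctr$ on torus braids. First, realize $T_{n,nk+1}$ as the closure of $\beta = (\sigma_1\sigma_2\cdots\sigma_{n-1})^{nk+1}\in\Br_n$, and factor
\[
\beta \;=\; \mathrm{FT}^{\,k}\cdot c,\qquad \mathrm{FT}=(\sigma_1\cdots\sigma_{n-1})^{n},\qquad c=\sigma_1\cdots\sigma_{n-1},
\]
where $\mathrm{FT}$ is the (positive) full twist, which is central in $\Br_n$, and $c$ is a Coxeter element. By Theorem~\ref{thm:main} and the definition~\eqref{eq:geomod},
\[
\HHH_{\rmalg}(T_{n,nk+1})\;\cong\;\HHH_{\rmgeo}(\beta)\;=\;\RHom\bigl(\calO\otimes\Lambda^\bullet\calB,\ \ctr(\beta)\bigr).
\]

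Next, use the interaction of $\ctr$ with the full twist established in the authors' prior work \cite{OblomkovRozansky18a}: tensoring the input braid by $\mathrm{FT}$ corresponds on the Hilbert-scheme side to tensoring the output sheaf by the determinant of the tautological bundle (up to the appropriate $T_{qt}$-equivariant twist). Applying this $k$ times to $\gamma=c$ gives
\[
\ctr(\beta) \;\cong\; \det(\calB)^{\otimes k}\otimes\ctr(c).
\]
It then remains to compute $\ctr(c)$ and to unwind the $\RHom$. From the authors' earlier analysis of the Coxeter braid (and compatible with the conjectures of \cite{GorskyNegutRasmussen16}, \cite{OblomkovRasmussenShende12}, \cite{GorskyOblomkovRasmussenShende14}), the sheaf $\ctr(c)$ is supported on the punctual Hilbert scheme $\Hilb_n(\CC^2,0)\subset\Hilb_n(\CC^2)$ and admits an explicit description that, after pairing with $\calO\otimes\Lambda^\bullet\calB$, collapses the derived $\RHom$ onto a single $H^0$ on $\Hilb_n(\CC^2,0)$ with coefficients in $\det(\calB)^{k}\otimes\Lambda(\calB)$.

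Finally, the scalar factor $(1-\mathbf{q}^2)$ appears as follows. The restriction from $\Hilb_n(\CC^2)$ to the punctual locus $\Hilb_n(\CC^2,0)$ is governed by a Koszul-type resolution in the $\mathbf{q}$-variable (reflecting translation along the first $\CC$-factor), and the Euler characteristic of this Koszul complex on the ambient Hilbert scheme produces the $(1-\mathbf{q}^2)$ prefactor when one passes to $H^0$ on the punctual subscheme. The main obstacle in executing this plan is the precise description of $\ctr(c)$ and the careful bookkeeping of the $T_{qt}$-equivariant shifts: one must match the gradings on both sides so that the RHS indeed comes out as $(1-\mathbf{q}^2)^{-1}\cdot H^0(\Hilb_n(\CC^2,0),\det(\calB)^k\otimes\Lambda(\calB))$ with no residual higher cohomology, and then Theorem~\ref{thm:main} closes the loop.
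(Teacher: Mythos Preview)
Your approach matches the paper's: reduce via Theorem~\ref{thm:main} to $\HHH_{\rmgeo}$, factor $\beta=\cox_n\cdot FT^k$, apply \eqref{eq:FT}, and then identify $\ctr(\cox_n)$. The two places where you are vague are exactly where the paper supplies precise citations. First, the computation of the Coxeter trace is not merely ``supported on the punctual locus'': the result from \cite{OblomkovRozansky17a} is
\[
\ctr(\cox_n)\;=\;\calO_{\Hilb_n(\CC^2,0)\times\CC_{\mathbf{q}^2}},
\]
where the extra $\CC_{\mathbf{q}^2}$-factor is the preimage under the Hilbert--Chow map of the diagonal line in $\Sym^n(\CC^2)$. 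Taking sections over this extra affine line contributes the polynomial ring $\CC[x]$ with $\deg x=\mathbf{q}^2$, i.e.\ the factor $(1-\mathbf{q}^2)^{-1}$; this is the clean source of the prefactor, rather than a Koszul argument for restriction. Second, the collapse of $\RHom$ to $H^0$ is not automatic: it is Haiman's vanishing theorem \cite{Haiman02} for the higher cohomology of $\det(\calB)^k\otimes\Lambda^\bullet\calB$ on $\Hilb_n(\CC^2,0)$ that kills $H^{>0}$. With those two inputs in hand, your outline becomes a complete proof identical to the paper's.
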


For  a general braid \(\beta\) the geometric answer is  more complicated.
However, we have
the following  property of the geometric trace functor \cite{OblomkovRozansky18a}:
\begin{equation}\label{eq:FT}
\mathcal{T}r(\beta\cdot FT)=\mathcal{T}r(\beta)\otimes \det(\calB),
\end{equation}
where $FT$ is the full twist braid.
In general,
we hope that our result would allow to transfer geometric
results of moduli spaces to the algebraic side. For example, it
is natural to expect that the categorical version of
the localization by Halpern-Leistner \cite{HalpernLeistner14} transfers to the categorical
diagonalization of Elias-Hogancamp \cite{EliasHogancamp17}.




In the next subsection we describe the monoidal category of stable matrix factorizations \(\MF_n^{\st}\) that was used in
\cite{OblomkovRozansky16} to provide the geometric realization of the braid group \(\Br_n\).
The key step of our proof is the construction of an additive subcategory and a functor
\[\MF_n^\flat\subset \MF_n^{\st},\quad \mnfn: \MF_n^\flat\to \sbim_n. \]

\begin{theorem}
  For any \(n\) the functor \(\mnfn\) is monoidal and fully-faithful.
\end{theorem}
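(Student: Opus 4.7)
My plan is to reduce both claims, monoidality and full-faithfulness, to a finite generating set and then verify them by direct computation. The category \(\MF_n^\flat\) should be generated, as a monoidal additive category, by elementary matrix factorizations attached to the simple reflections \(s_1,\ldots,s_{n-1}\), with \(\mnfn\) sending each such generator to a Bott-Samelson bimodule \(B_{s_i}\). This decomposes the problem into three points: identify the generators and their images, construct the monoidal constraint, and compute \(\Hom\) spaces between arbitrary Bott-Samelson-type objects on both sides.

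For monoidality, the goal is to exhibit a natural isomorphism
\[\mnfn(X\star Y)\;\cong\;\mnfn(X)\ot_R\mnfn(Y),\]
where \(\star\) denotes convolution of matrix factorizations and \(R=\CC[\mathbf{x}]\) is the polynomial ring. Convolution is defined via a correspondence over a product of flag-type varieties with an explicit superpotential; passage to Soergel bimodules corresponds geometrically to integrating out the middle factor. This is a Thom-Sebastiani / Kn\"orrer-periodicity argument: the superpotential on the middle factor is quadratic non-degenerate after accounting for the geometry, so the convolution collapses to the honest bimodule tensor product. Once the isomorphism is established on generators, associativity and the hexagon axiom follow from the corresponding properties of fiber products of the underlying correspondences.

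For full-faithfulness, I would compare morphism spaces between Bott-Samelson-type objects. On the Soergel side, \(\Hom_{\sbim_n}(B_{\underline{w}},B_{\underline{v}})\) is a free graded \(R\)-module whose rank is governed by Soergel's \(\Hom\) formula, with explicit diagrammatic bases due to Libedinsky and Elias-Khovanov. On the matrix factorization side, \(\Hom_{\MF_n^\flat}\) can be computed using an explicit Koszul resolution of the diagonal, after which it becomes a graded polynomial ring computation. The comparison map induced by \(\mnfn\) can be shown surjective by lifting the diagrammatic generators (dots, cups, caps, trivalent vertices) to morphisms of matrix factorizations built out of explicit Koszul-type maps; injectivity follows by matching Poincar\'e series in the \(T_{qt}\)-equivariant grading.

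The main obstacle I anticipate is the lifting step in the full-faithfulness argument: one must construct explicit morphisms in \(\MF_n^\flat\) corresponding to each diagrammatic generator of \(\sbim_n\) and verify that all Soergel-calculus relations hold in \(\MF_n^\flat\), either strictly or up to canonical homotopy. Equivalently, one needs a section of \(\mnfn\) on morphism spaces rather than merely an abstract equality of Hilbert series. Once this is achieved, the two-sided comparison of morphism spaces yields the isomorphism, and the monoidal constraint automatically pins down the \(\Br_n\)-equivariant structure needed for the main theorem.
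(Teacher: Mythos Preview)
Your approach diverges substantially from the paper's, especially on full-faithfulness, and the route you propose is both harder and incomplete as stated.

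For monoidality, the paper does not check on generators and then extend; it proves the natural isomorphism $\mnfn(\calC_1\star\calC_2)\cong\mnfn(\calC_1)\star\mnfn(\calC_2)$ directly for \emph{all} objects by a base-change argument on the convolution space $\calX_{con}$. The key step is that the stability conditions on the first and third factors force stability on the middle factor (via the critical locus relation $\Ad_{g_1}Y_1=\Ad_{g_2}Y_2$), so the $B^{(2)}$-action becomes free and the Chevalley--Eilenberg quotient can be replaced by restriction to an explicit slice $\varkappa$. Your ``Kn\"orrer on the middle factor'' heuristic is in the right spirit, but checking only on generators does not by itself give a natural transformation on the whole monoidal category.

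For full-faithfulness, your plan---lift the Elias--Khovanov/Libedinsky diagrammatic generators to explicit morphisms of matrix factorizations and verify all Soergel-calculus relations---is in principle a valid strategy, but you correctly identify it as the main obstacle, and the paper avoids it entirely. The paper's route is: (i) prove that the objects $\calC_\bullet^{(i)}$ satisfy the \emph{MOY relations} in $\MF_n^\flat$ (the quadratic relation $\calC_\bullet^{(i)}\star\calC_\bullet^{(i)}\cong\calC_\bullet^{(i)}\oplus\mathbf{q}^2\calC_\bullet^{(i)}$, the braid-type relation, and Markov-type relations under strand removal); (ii) invoke Hao Wu's induction: any homology theory on braid graphs satisfying these MOY relations is determined uniquely by reduction to the unknot; (iii) conclude that the traces $\Tr^i(\calF_D)$ agree with $\Tor^i(\bb(\calF_D),R_n)$ for all graphs $D$; (iv) deduce the $\Hom$-space match from $\Hom(\calF_D,\calF_{D'})=\Tr^0(\calF_D\star\calF_{D'})$ (using self-duality of the generators). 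This replaces your open-ended verification of all diagrammatic relations by a short finite list of MOY moves, each of which is a concrete convolution computation on Steinberg-type varieties.

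What your approach would buy, if it could be carried out, is an explicit section of $\mnfn$ on morphisms---genuinely more information than the paper extracts. But as a proof of full-faithfulness it is strictly harder, and your sketch gives no indication of how to verify the Soergel-calculus relations in $\MF_n^\flat$; matching Poincar\'e series alone does not produce the required morphisms.
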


Establishing the isomorphism between the algebraic and geometric homologies requires a refinement of this theorem. Namely, we have to relate
derived homomorphisms
on the Soergel side with the \(\Lambda^\bullet \calB\) refinement of the homomorphism
space on the matrix factorization side. In the next section
we describe the refinement and  outline
our main argument.

\subsection{Link homology from Soergel bimodules}
Denote $\bx=x_1,\ldots,x_n$ and the same for $\by$. The category of Soergel bimodules \(\sbim_n\) is a monoidal additive subcategory
of \(\CC[\bx,\by]\)-modules that is a Karoubi envelope of tensor category generated by the standard
Soergel bimodules
\(B_i\) \cite{Soergel00}.  The standard bimodule \(B_i\) is the quotient of \(\CC[\bx,\by]\) modulo relations:
\[x_k=y_k, \quad k\ne i,i+1,\quad x^m_i+x^m_{i+1}=y^m_i+y^m_{i+1}, \quad m=1,2.\]


The category $\sbim_n$ has the (derived) duality endo-functor $\vee$. The convolution of bimodules endows $\Dbxy$ with the monoidal structure, the unit object being $\idosb = \CC[\bx,\by]/(\by-\bx)$. The Hochschild homology of a bimodule $B$  is defined as
\(
\rmHH_\bullet(B) = \Ext^\bullet(\idosb^\vee, B).
\)


Soergel \cite{Soergel00} constructed a homomorphism
\[
\PhiRgr\colon\xGr_n\longrightarrow \sbim_n 
\]
from the semi-group $\xGr_n$ of flat braid-graphs to Soergel bimodules.
In \cite{Rouquier04} Rouquier extended this homomorphism from graphs to braids:
\[
\PhiRbr\colon\Br_n\longrightarrow \Hsbimn, 
\]
where $\Br_n$ is the braid group and $\Htp(\sbim_n)$ is the homotopy category over $\sbim_n$.
%
%
The spaces of morphisms in
$\xCatalg$
have three gradings: $q$-grading is the polynomial degree, $t$-grading is the homological degree of the outer (homotopy) category and $a$-grading is the homological degree of the inner (derived) category. The category $\Hsbimn$ is equipped with the (derived) duality endo-functor $\vee$.

Modifying the construction of \cite{KhovanovRozansky08b}, Khovanov defined the triply graded homology of a braid $\beta\in\Br_n$ as a vector space
\begin{equation}
\label{eq:hmalg}
\xHalg(\beta) := \Hom_{\Hsbimn}
\bigl(\PhiRbr(\xId)^\vee,\PhiRbr(\beta)\bigr).
\end{equation}
 In other words, the space $\xHalg(\beta)$ results from, first, replacing the Soergel bimodules in the Rouquier complex $\Phi_R(\beta)$ with their Hochschild homology and, second, taking the homology of the resulting complex:
\[
\xHalg(\beta) := \rmH\bigl(\rmHH(\PhiRbr(\beta) \bigr).
\]
Denote $L(\beta)$ the link constructed by closing a braid $\beta$.
Khovanov proved that $\xHalg(\beta)$ is invariant under Markov moves, hence the triply graded homology is the isotopy invariant of $L(\beta)$:
\(
\xHalg\bigl(L(\beta)\bigr) := \xHalg(\beta).
\)

\subsection{Link homology from Hilbert schemes}
\subsubsection{Matrix factorizations}
In our recent work \cite{OblomkovRozansky16} we construct another triply graded link homology by using a different homomorphism.
Denote  $\mu\colon\tmfgln\rightarrow\mfgln$ the Grothendieck resolution. Explicitly, $\tmfgln = (\GLn\times \mfb)/B$, where $\mfb\subset\mfgln$ is the Borel subalgebra of upper-triangular matrices, $B\subset \GLn$ is the Borel subgroup of upper-triangular matrices, the action of $B$ on $\GLn\times\mfb$ is $h\cdot(g,Y) = (gh^{-1},\Adv{h}Y)$ and $\mu(g,Y) = \Adv{g}Y$.

Denote $\Vn=\CC^n$ the defining representation of $\GLn$.
A triple $(X,Y,v)\in\mfgln\times\mfgln\times\Vn$ is called \emph{stable} if $\CC\langle X,Y\rangle \,v = \Vn$.
Define
\begin{multline}
\label{eq:multst}
(\mfgln\times\tmfgln\times\tmfgln\times\Vn)^\stab
\\
=\{ (X,z_1,z_2,v)\in \mfgln\times\tmfgln\times\tmfgln\times\Vn\;|\; \text{$(X,\mu(z_1),v)$, $(X,\mu(z_2),v)$ are stable} \}.
\end{multline}
Define the polynomial $W\in\CC[\mfgln\times\tmfgln]$ by the formula $W_{Fl}(X,z) = \Tr \bigl(X \mu(z)\bigr)$. In \cite{OblomkovRozansky16} we
consider the category
\[
\MFst_n :=  \MF_{\GL_n}\Bigl( (\mfgln\times\tmfgln\times\tmfgln\times\Vn)^\stab; W_{Fl}(X,z_2) - W_{Fl}(X,z_1)\Bigr),
\]
which is equipped with the duality endo-functor $\vee$.
We define two  homomorphisms
\begin{equation}
\label{eq:homs}
\Phi^\flat\colon\xGr_n\longrightarrow\MFst_n,\qquad\Phi\colon\Br_n\longrightarrow\MFst_n
\end{equation}
and a triply graded homology $\xHgeo$ for graphs \(\gamma\in \xGr_n\) and for braids $\beta\in\Br_n$ by similar formulas:
\begin{align}
\label{eq:homgeo}
\xHgeo(\gamma) & = \Hom_{\MFst_n}\bigl(\Phi^\flat(\xId)^\vee,\Phi^\flat(\beta)\otimes \Lambda^\bullet  V_n\bigr),\\
\label{eq:hmgeo}
\xHgeo(\beta) & = \Hom_{\MFst_n}\bigl(\Phi(\xId)^\vee,\Phi(\beta)\otimes \Lambda^\bullet  V_n\bigr),
\end{align}
(note that by our definition, $\Phi^\flat(\xId) = \Phi (\xId)$).
We proved that $\xHgeo$ is invariant under the Markov moves, thus defining a triply graded link homology
\(
\xHgeo\bigl(L(\beta)\bigr) := \xHgeo(\beta),
\)
for links presented as braid closures.

\subsubsection{Sheaves on a Hilbert scheme}
In order to see the indirect
relation between the category $\MFstn$ and the Hilbert scheme $\xHilbn$, define $\xHilbn$ as the quotient
\begin{equation}
\label{eq:dhs}
\xHilbn = \{(X,Y,v)\in(\mfgln\times\mfgln\times \Vn)^{\stab}\;|\;[X,Y]=0 \}/\GLn.
\end{equation}
Then the matrix $X$ and the vector $v$ from~\eqref{eq:multst} resemble $X$ and $v$ of~\eqref{eq:dhs}, whereas $\mu(z_1)$ and $\mu(z_2)$ resemble $Y$ of~\eqref{eq:multst}.

In~\cite{OblomkovRozansky18a} we presented an alternative construction of the link homology $\xHgeo(\beta)$ in which $\xHilbn$ appears explicitly. We defined the Chern character functor
\[
\CH\colon \MFstn\longrightarrow \rmD^{\per}\bigl( \xHilbn\bigr)
\]
and proved the relation
\[\xHgeo(\beta) = \RHom_{ \rmD^{\per}\bigl( \xHilbn\bigr)} \Bigl(
\calO\ot\Lambda^\bullet\calB,\CH\bigl(\Phibr(\beta)\bigr)\Bigr),\]
where \(\calB\) is the dual of the tautological vector bundle on \(\Hilb_n(\CC^2)\). In other words, the functor $\ctr$ of \eqref{eq:geomod} appears as the composition of the second homomorphism of \eqref{eq:homs} and the Chern character functor: \[\ctr = \CH\circ \Phi.\]


\subsection{Equivalences}
The main result of this paper is the (partly canonical) isomorphism \eqref{eq:iso} between both link homologies.
This isomorphism originates from a special   functor
\begin{equation}
\label{eq:fnB}
\mnfn\colon \MFstn\longrightarrow\sbimn
\end{equation}
defined as a composition of a pull-back and a push-forward. Namely, consider a projection $\lambda$ from the Borel subalgebra $\mfb$ to the Cartan subalgebra $\frh$:
\[\lambda\colon\mfb\rightarrow \frh= \CC^n,\quad Y\mapsto (Y_{11},\dots,Y_{nn}).\]  There is an associate map $\tlmb\colon\tmfgln\rightarrow\CC^n$, $\tlmb(g,Y) = \lambda(Y)$. Consider two maps
\[
\begin{tikzcd}
& (\tmfgln\times\tmfgln\times\Vn)^{\stab}
\ar[dl, hook'
,"\xiota"'] \ar[dr,"\pi_\frh"]
\\
(\mfgln\times\tmfgln\times\tmfgln\times\Vn)^{\stab}
&&
\CC^n\times\CC^n
\end{tikzcd}
\]
where
\[
\xiota(z_1,z_2,v) = (0,z_1,z_2,v),\qquad
\xpi_\frh(z_1,z_2,v) = \bigl(\tlmb(z_1),\tlmb(z_2)\bigr).
\]
Thus we define the functor $\mnfn$ of ~\eqref{eq:fnB} as a composition of a pull-back and a push-forward:
\[\mnfn = \xpi_{\frh*}\circ \xiota^*.\]


In section~\ref{sec:construction-functor} we  construct a monoidal additive subcategory \(\MF_n^\flat\) and  monoidal  functor as image of the homomorphism:
\[\Phi^\flat:\mathfrak{Br}_n^\flat\to \MF_n^\flat.\]


\begin{theorem}

\end{theorem} \label{thm:main-cat} For any \(n\) we have
\begin{enumerate}
\item
The functor $\mnfn$ is monoidal.
\item The functor $\mnfn$ intertwines the homomorphisms $\PhiRgr$ and $\Phi^\flat$:
\[
\begin{tikzcd}
& \xGrn
\ar[ld,"\Phi^\flat"'] \ar[rd,"\PhiRgr"]
\\
\MF_n^{\st}\supset\MF_n^\flat
\ar[rr,"\mnfn"]
&&
\sbim_n
\end{tikzcd}
\]
\item
The functor $\mnfn$ converts the exterior power of $V_n$ into the homological degree of the derived category $\rmD(\CC[\bx,\by]-\rmmod)$: for any two graphs $\gamma_1,\gamma_2\in\xGrn$
\[
\Hom_{\MFstn}\bigl(\Phi^\flat(\gamma_1),\Phi^\flat(\gamma_2)\otimes\Lambda^\bullet V_n \bigr) \cong
\Ext^\bullet_{\Dbxy}\bigl(\PhiRgr(\gamma_1),\PhiRgr(\gamma_2) \bigr)
\]
\item
The functor $\mnfn$ intertwines duality endo-functors up to a `twisting':
\[
\mnfn\bigl( \Phi^\flat(\gamma)^\vee \otimes \det V_n \bigr) = \PhiRgr(\gamma)^\vee[n]_a.
\]
\end{enumerate}

Since $\det \Vn \otimes \Lbul \Vn^\vee = \Lambda^{n-\bullet} \Vn$, this theorem implies the isomorphism between homologies of graph closures:
\begin{corollary}
For any graph $\gamma\in\xGrn$ the functor $\mnfn$ induces an isomorphism
\begin{equation}
\label{eq:griso}
\xHalg(\gamma) \cong \xHgeo(\gamma).
\end{equation}
\end{corollary}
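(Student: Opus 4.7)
The plan is to combine parts (3) and (4) of Theorem~\ref{thm:main-cat} directly. First, I would observe that part (4) encodes the principle that the functor $\mnfn$ converts tensoring with the line bundle $\det V_n$ on $\MFstn$ into the homological shift $[n]_a$ on $\sbimn$. Indeed, part (4) specialized to $\gamma = \xId$ reads $\mnfn(\Phi^\flat(\xId)^\vee \otimes \det V_n) = \idosb^\vee[n]_a$, and under this correspondence it may be rewritten as $\mnfn(\Phi^\flat(\xId)^\vee) = \idosb^\vee$. The identity $\det V_n \otimes \Lambda^\bullet V_n^\vee = \Lambda^{n-\bullet} V_n$ stated just above the corollary is precisely what makes this reindexing between exterior powers and the determinant twist consistent with the gradings on both sides.

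Next, I would invoke a natural enhancement of part (3): for any object $A$ in the monoidal closure of the image of $\Phi^\flat$ under the duality $\vee$ and the tensor with $\det V_n$, the functor $\mnfn$ establishes
\[
\Hom_{\MFstn}\bigl(A,\ \Phi^\flat(\gamma)\otimes\Lambda^\bullet V_n\bigr) \;\cong\; \Ext^\bullet_{\Dbxy}\bigl(\mnfn(A),\ \PhiRgr(\gamma)\bigr).
\]
Such an enhancement is morally formal from the push-pull description $\mnfn = \pi_{\frh *}\circ\xiota^*$ combined with base change and the projection formula applied to part (3). Specializing to $A = \Phi^\flat(\xId)^\vee$ and using the previous step, we immediately compute
\[
\xHgeo(\gamma) \;=\; \Hom_{\MFstn}\bigl(\Phi^\flat(\xId)^\vee,\ \Phi^\flat(\gamma)\otimes\Lambda^\bullet V_n\bigr) \;\cong\; \Ext^\bullet_{\Dbxy}\bigl(\idosb^\vee,\ \PhiRgr(\gamma)\bigr) \;=\; \rmHH_\bullet\bigl(\PhiRgr(\gamma)\bigr) \;=\; \xHalg(\gamma),
\]
which is the desired isomorphism.

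The main technical obstacle is establishing the enhancement of part (3) used in the second step: as stated in Theorem~\ref{thm:main-cat}(3), the Hom-to-Ext identification is only asserted for objects in the image of $\Phi^\flat$, whereas the proof requires it for duals and $\det V_n$-twists of such objects. Extending the identification should follow formally from the push-pull structure of $\mnfn$ together with base change in the $\GLn$-equivariant matrix-factorization category with the two-term superpotential $W_{Fl}(X,z_2)-W_{Fl}(X,z_1)$, but verifying the relevant base-change and projection formulae in this setting is the principal technical point. A secondary bookkeeping issue is making rigorous the identification $\otimes\det V_n \leftrightarrow [n]_a$ hinted at in part (4), in particular reconciling it with the $q$-grading and with the distinction between Hochschild homology and cohomology on the Soergel side so that the resulting isomorphism is truly canonical and unshifted.
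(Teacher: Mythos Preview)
Your overall strategy coincides with the paper's: the corollary is deduced immediately from Theorem~\ref{thm:main-cat} together with the identity $\det V_n\otimes\Lambda^\bullet V_n^\vee=\Lambda^{n-\bullet}V_n$, and your plan to combine parts~(3) and~(4) through that identity is exactly what is intended.

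Where you diverge is in how you close the gap you correctly flag. You propose to enhance part~(3) to objects such as $\Phi^\flat(\xId)^\vee$ via base-change and projection formulae for the push--pull functor $\mnfn=\pi_{\frh*}\circ\xiota^*$; you also need, in your first step, the principle that $\otimes\det V_n$ on the matrix-factorization side corresponds to the shift $[n]_a$ on the Soergel side, which is stronger than what part~(4) literally asserts. The paper takes a shorter route that dissolves both issues at once: Proposition~\ref{prop:dual} in Section~\ref{sec:duality} shows that every object of $\MF_n^\flat$ is self-dual, $\calA^\vee=\calA$, so in particular $\Phi^\flat(\xId)^\vee=\Phi^\flat(\xId)$. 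Part~(3) with $\gamma_1=\xId$ then applies directly, yielding $\xHgeo(\gamma)\cong\Ext^\bullet\bigl(\idosb,\PhiRgr(\gamma)\bigr)$, and the remaining passage to $\Ext^\bullet(\idosb^\vee,-)=\rmHH_\bullet(-)$ is the standard Hochschild duality for the polynomial ring $R_n$; the exterior-algebra identity and part~(4) serve only to confirm that the resulting $a$-grading reindexing $\bullet\leftrightarrow n-\bullet$ is the expected one. So your plan is right, but the obstacle you flag is resolved by self-duality rather than by a new functorial Hom-to-Ext comparison.
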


It remains to extend the isomorphism~\eqref{eq:griso} from graphs to braids. Recall the Murakami-Ohtsuki-Yamada construction \cite{MurakamiOhtsukiYamada98} presenting a braid $\beta$ (considered as an element of the Hecke algebra) as a (weighted) alternating sum of associated braid graphs:
\begin{equation}
\label{eq:mcalg}
\beta = \sum_{\gamma\in\xGr(\beta)} (-1)^{s(\beta,\gamma)} \gamma,
\end{equation}
where $s(\beta,\gamma)$ is a $\ZZ$-valued function and  we ignore the powers of $q$ for simplicity. Categorifying this formula, Rouquier~\cite{Rouquier04}  represented a braid $\beta$ by a complex $\Phi_R(\beta)$ of Soergel bimodules $\Phi_S(\gamma)$:
\begin{equation}
\label{eq:mcgeo}
\Phibr(\beta) =
\Bigl(\bigoplus_{\gamma\in\xGr(\beta)} \PhiRgr(\gamma),\xdalg \Bigr).
\end{equation}
We show that the image $\Phi(\beta)$ of the braid $\beta$ admits a similar presentation within the geometric construction as a complex of sheaves associated with graphs:
\[
\Phibr(\beta) =
\Bigl( \bigoplus_{\gamma\in\xGr(\beta)} \Phigr(\gamma),\xdgeo \Bigr)
\]
%
%
%
%
while the functor $\mnfn$ intertwines the differentials:
\[
\begin{tikzcd}
\bigoplus_{\gamma\in\xGr(\beta)} \Phigr(\gamma)
\ar[r,"\xdgeo"]
\ar[d,"\mnfn"]
&
\bigoplus_{\gamma\in\xGr(\beta)} \Phigr(\gamma)
\ar[d,"\mnfn"]
\\
 \bigoplus_{\gamma\in\xGr(\beta)} \PhiRgr(\gamma)
 \ar[r,"\xdalg"]
&
 \bigoplus_{\gamma\in\xGr(\beta)} \PhiRgr(\gamma)
\end{tikzcd}
\]
In order to relate homologies $\xHalg(\beta)$ and $\xHgeo(\beta)$ we substitute the presentations~\eqref{eq:mcalg} and \eqref{eq:mcgeo} into the formulas~\eqref{eq:hmalg} and $\eqref{eq:hmgeo}$.
The difference between the computation of resulting homologies is that the cone of $\PhiRbr(\beta)$ is in the outer homotopy category, hence the homology of $\xdalg$ is computed after the Hochschild homology is applied to the bimodules $\PhiRgr(\gamma)$, whereas the cone of $\Phibr(\beta)$ is in the same matrix factorization category $\MFstn$, so the differential $\xdgeo$ is added to the matrix factorization differentials $D_\gamma$ of the matrix factorizations $\Phigr(\gamma)$. As a result, the homology in~\eqref{eq:hmgeo} is computed with respect to the total differential $\xdgeo + \sum D_\gamma$. However, this homology can be computed by spectral sequence, first taking homology with respect to $\sum D_\gamma$ (which matches the Hochschild homology), then taking the homology with respect to $\xdgeo$ (which matches $\xHalg(\beta)$) and, finally, taking homology with respect to secondary differentials. The counting of $t$-degree indicates that the spectral sequence converges in the second term, and this implies the isomorphism $\xHgeo(\beta)\cong \xHalg(\beta)$.

\subsection{Previous work}
\label{sec:previous-work}

The  relation between the knot homology \(\HHH_{alg}(\beta)\), \(\beta\in \Br_n\) and the sheaves on the Hilbert scheme of points
on the plane \(\Hilb_n(\CC^2)\) was proposed in various forms by several groups. Here we  briefly outline their contributions to the problem.

The  physics prospective on the question originates in the work of Aganagic and Shakirov
\cite{AganagicShakirov12}, the authors proposed formulas for the homology of torus knots in terms of Macdonald polynomials.
Their motivation comes from the fact that the compliment to the torus knot has a \(U(1)\)-symmetry thus one can compute the index
of the usual Chern-Simons theory with respect to this action and upgrade the HOMFLYPT polynomial to the super polynomial.
Using the TQFT perspective on Chern-Simons theory they compute the index in terms of \(\mathrm{SL}_2(\ZZ)\) action
from \cite{Kirillov96}. The  last work explains the connection with the Macdonald polynomials.

The  work of Haiman \cite{Haiman02a} provides a bridge between the theory of Macdonald polynomials and \(K\)-theory of the
Hilbert schemes on the plane.  Gorsky and Negut \cite{GorskyNegut15} exploited  this connection together some more recent developments
in the theory of elliptic Hall algebras in order to provide a geometric set of conjectures that imply the formulas of
\cite{AganagicShakirov12}. More precisely, they
constructed the classes \(S_{n,k}\) in \(\CC^*\ti\CC^*\)-equivariant
\(K\)-theory \(K_{\CC^*\ti\CC^*}(\Hilb_n(\CC^2))\) such that the \(\CC^*\ti \CC^*\) Euler characteristic of
\(S_{n,k}\ot \Lambda^*\calB\) reproduces the stable limit of the Aganagic-Shakirov formulas.
The more recent papers by Elias, Mellit and Hogancamp \cite{EliasHogancamp16}, \cite{Mellit17}, \cite{Hogancamp17} provide an algebraic proof of the conjectures.

From another perspective, a  torus knot \(T_{n,k}\) is a link of the singularity of the toric curve \(x^n=y^k\). The relation between a
Hilbert scheme of points on a singular curve and knot invariants was studied first in  \cite{CampiloDelgadoGuseinZade99} and developed further in \cite{OblomkovShende12}, \cite{OblomkovRasmussenShende12}. In the latter work the isomorphism between the homology of the Hilbert scheme
of points on the curve \(x^n=y^{1+kn}\) and the space of sections of \(\det(\calB)^k\) on \(\Hilb_n(\CC^2,0)\) was observed.

On the other hand, the earlier conjecture of Gorsky \cite{Gorsky12a}
on the Catalan numbers and super-polynomials of \(T_{n,n+1}\) combined  the geometric constructions
of the modules  over the rational Cherednik algebra \cite{GordonStafford06}, \cite{OblomkovYun16} led to
work \cite{GorskyOblomkovRasmussenShende14} in which the statement of theorem \ref{thm:torus-knots} was presented and motivated.

The  first precise conjecture relating \(\HHH_{alg}(\beta)\) to  \(K\)-theory of coherent sheaves on \(\Hilb_n(\CC^2)\)
appears in the work of Gorsky, Negut and Rasmussen  \cite{GorskyNegutRasmussen16}. The authors proposed a conjecture
that relates the knot homology to the \(K\)-theory of the flag Hilbert scheme \(\FHilb_n(\CC^2)\). The conjecture
from \cite{GorskyNegutRasmussen16} identifies the Picard group of the flag Hilbert scheme with the Jucys-Murphy commutative
subgroup of \(\Br_n\).

In  \cite{OblomkovRozansky16} and \cite{OblomkovRozansky18a} we constructed a link homology \(\HHH_{geo}(\beta)\) which is directly related to the Hilbert scheme \(\Hilb_n(\CC^2)\) as well as to the flag Hilbert scheme.
The  invariant \(\HHH_{geo}(\beta)\) has most of the properties predicted in \cite{GorskyNegutRasmussen16} but initial it was not clear why in would be related to \(\HHH_{alg}\). This paper provides a proof the equality between the two link homologies.

Finally, for a braid $\beta$, Gorsky and Hogancamp \cite{GorskyHogancamp17}  construct a complex
of sheaves on the
isospectral Hilbert scheme with the property that there is a spectral sequence connecting its homology
and \(\HHH_{alg}(\beta)\).

\subsection{The structure of the paper}
In section~\ref{sec:prel-matr-fact}
we recall the definition of the category of equivariant matrix factorizations
from \cite{OblomkovRozansky16}. We also explain our construction of the
equivariant push-forward for matrix factorizations and explain basic
properties of the push-forward. In section~\ref{sec:construction-functor}
we provide the details of the construction of our main functor
\(\bb\). We prove that \(\bb\) is monoidal and intertwines the
induction functors. Also in this section the value of \(\bb\) of
the elementary braid graph is computed.

In section~\ref{sec:relations-mfflat} we prove the MOY relations in the
category of matrix factorizations that correspond to the braid relations.
In section~\ref{sec:markov-moves} we prove the MOY relations that correspond
to the strand removing Markov moves. The MOY relations allow us
to use Hao Wu induction strategy \cite{Wu08} in section~\ref{sec:properties-functor} where we prove our main categorical theorem
\ref{thm:main-cat}. In the same section we show how the categorical
theorem implies the comparison theorem~\ref{thm:main}.
Finally, in section~\ref{sec:appl-furth-direct} we prove corollaries that we discuss
in the introduction. We also state conjectures suggested by our results.

{\bf Acknowledgments}
We would like to thank Dmitry Arinkin, Tudor Dimofte, Eugene Gorsky, Sergey Gukov, Tina Kanstrup, Ivan Losev, Roman Bezrukavnikov and Andrei Negu{\c t} for useful discussions. The authors also would like to thank UC Davis for hospitality
during the July 2019 FRG conference, most of the outline of this paper
was written during this visit.
The work of A.O. was supported in part by  the NSF CAREER grant DMS-1352398, NSF FRG grant DMS-1760373 and Simons Fellowship.
The work of L.R. was supported in part by  the NSF grant DMS-1108727.

\section{Preliminaries on matrix factorizations}
\label{sec:prel-matr-fact}

The goal of this section is to collect the definitions and results on matrix factorizations that are used in the
main body of the paper.

Let \(\calZ\) be an affine manifold and \(W\in \CC[\calZ]=R\) then a matrix factorization \cite{Eisenbud80} is a pair
\[ M=M_1\oplus M_2, \quad D \in \Hom(M_1,M_0)\oplus \Hom(M_0,M_1),\quad D^2=W,\]
where we assume that \(M_i\) are free \(R\)-modules.

Given \(\calF=(M,D)\) and \(\calG=(M',D')\) the space morphisms \(\Hom(\calF,\calG)\) consists of \(\ZZ_2\)
graded \(R\)-linear maps \(\phi\in \Hom_R(M,M')\) that respects the differentials:
\[D'\circ \phi=\phi\circ D.\]

A homotopy \(h\) between the morphisms \(\phi,\psi\in \Hom_R(M,M')\) shifts the \(\ZZ_2\) grading by \(1\) and
\(\phi-\psi=D'\circ h-h \circ D\).  Thus the matrix factorizations are elements of the homotopy category
\[\MF(\calZ,W)\]
and it was shown by Orlov \cite{Orlov04} that this category is triangulated.

There is a natural generalization of the category of matrix factorizations to the case when \(X\) is a quasi-projective
manifold or even Artin stack \cite{PolishchukVaintrob11}. However, in our theory the spaces that host matrix factorizations
are usually quotients of affine manifolds by group actions. Hence we prefer to work with {\it equivariant matrix factorizations}
which were introduced in  \cite{OblomkovRozansky16}.

Before we define the First let us remind the construction of the Chevalley-Eilenberg complex.

\subsection{Chevalley-Eilenberg complex}
\label{sec:chev-eilenb-compl}

Suppose that $\frh$ is a Lie algebra. Chevalley-Eilenberg complex
 $\CE_\frh$ is the complex $(V_\bullet(\frh),d)$ with $V_p(\frh)=U(\frh)\otimes_\CC\Lambda^p \frh$ and differential $d_{ce}=d_1+d_2$ where:
 \def\dtheta{d}
 $$ d_1(u\otimes x_1\wedge\dots \wedge x_p)=\sum_{i=1}^p (-1)^{i+1} ux_i\otimes x_1\wedge\dots \wedge \hat{x}_i\wedge\dots\wedge x_p,$$
 $$ d_2(u\otimes x_1\wedge\dots \wedge x_p)=\sum_{i<j} (-1)^{i+j} u\otimes [x_i,x_j]\wedge x_1\wedge\dots \wedge \hat{x}_i\wedge\dots\wedge \hat{x}_j\wedge\dots \wedge x_p,$$

 Let us denote by $\Delta$ the standard map $\frh\to \frh\otimes \frh$ defined by $x\mapsto x\otimes 1+1\otimes x$.
 Suppose $V$ and $W$ are modules over the Lie algebra $\frh$ then we use notation
 $V\odel W$ for  the $\frh$-module which is isomorphic to $V\otimes W$ as a vector space, the $\frh$-module structure being defined by  $\Delta$. Respectively, for a given $\frh$-equivariant matrix factorization $\calF=(M,D)$ we denote by $\CE_{\frh}\odel \calF$
 the $\frh$-equivariant matrix factorization $(CE_\frh\odel\calF, D+d_{ce})$. The $\frh$-equivariant structure on $\CE_{\frh}\odel \calF$ originates from the
 left action of $U(\frh)$ that commutes with right action on $U(\frh)$ used in the construction of $\CE_\frh$.

 A slight modification of the standard fact that $\CE_\frh$ is the resolution of the trivial module implies that \(\CE_\frh\stackon{$\otimes$}{$\scriptstyle\Delta$} M\) is a free resolution of the
$\frh$-module $M$.

\subsection{Equivariant matrix factorizations}
\label{sec:equiv-matr-fact}

Let us assume that there is an action of the Lie algebra \(\frh\) on \(\calZ\) and \(F\) is a \(\frh\)-invariant function.
Then we can construct the following triangulated category \(\MFs_{\frh}(\calZ,W)\).

The objects of the category are  triples:
\[\mathcal{F}=(M,D,\partial),\quad (M,D)\in\MFs(\calZ,W) \]
where $M=M^0\oplus M^1$ and $M^i=\CC[\calZ]\otimes V^i$, $V^i \in \Mod_{\frh}$,
$\partial\in \oplus_{i>j} \Hom_{\CC[\calZ]}(\Lambda^i\frh\otimes M, \Lambda^j\frh\otimes M)$ and $D$ is an odd endomorphism
$D\in \Hom_{\CC[\calZ]}(M,M)$ such that
$$D^2=F,\quad  D_{tot}^2=F,\quad D_{tot}=D+d_{ce}+\partial,$$
where the total differential $D_{tot}$ is an endomorphism of $\CE_\frh\odel M$, that commutes with the $U(\frh)$-action.


Note that we do not impose the equivariance condition on the differential $D$ in our definition of matrix factorizations. On the other hand, if $\calF=(M,D)\in \MFs(\calZ,F)$ is a matrix factorization with
$D$ that commutes with $\frh$-action on $M$ then $(M,D,0)\in \MFs_\frh(\calZ,F)$.


Given two $\frh$-equivariant matrix factorizations $\calF=(M,D,\partial)$ and $\tilde{\calF}=(\tilde{M},\tilde{D},\tilde{\partial})$ the space of morphisms $\Hom(\calF,\tilde{\calF})$ consists of
homotopy equivalence classes of elements $\Psi\in \Hom_{\CC[\calZ]}(\CE_\frh\odel M, \CE_\frh\odel \tilde{M})$ such that $\Psi\circ D_{tot}=\tilde{D}_{tot}\circ \Psi$ and $\Psi$ commutes with
$U(\frh)$-action on $\CE_\frh\odel M$. Two maps $\Psi,\Psi'\in \Hom(\calF,\tilde{\calF})$ are homotopy equivalent if
there is \[ h\in  \Hom_{\CC[\calZ]}(\CE_\frh\odel M,\CE_\frh\odel\tilde{M})\] such that $\Psi-\Psi'=\tilde{D}_{tot}\circ h- h\circ D_{tot}$ and $h$ commutes with $U(h)$-action on  $\CE_\frh\odel M$.

 Given two $\frh$-equivariant matrix factorizations $\calF=(M,D,\partial)\in \MFs_\frh(\calZ,F)$ and $\tilde{\calF}=(\tilde{M},\tilde{D},\tilde{\partial})\in \MFs_\frh(\calZ,\tilde{F})$
 we define $\calF\otimes\tilde{\calF}\in \MFs_\frh(\calZ,F+\tilde{F})$ as the equivariant matrix factorization $(M\otimes \tilde{M},D+\tilde{D},\partial+\tilde{\partial})$.

 \subsection{Push forwards, quotient by the group action}
\label{sec:push-forwards}

The technical part of \cite{OblomkovRozansky16} is the construction of push-forwards of equivariant matrix factorizations. Here we state the main
results, the details may be found in section 3 of \cite{OblomkovRozansky16}. We need push forwards along projections and embeddings. We also use  the
functor of taking quotient by group action for our definition of the convolution algebra.

The projection case is more elementary.
Suppose we have a smooth projection \(\pi:\calZ\to \calX\)
both \(\calZ \) and \(\mathcal{X}\) have \(\frh\)-action and
the map \(\pi:\mathcal{Z}\rightarrow\mathcal{X}\) is \(\frh\)-equivariant. Then
for any $\frh$ invariant element $w\in\CC[\calX]^\frh$ there is a functor
\[\pi_{*}:\quad  \MFs_{\frh}(\calZ, \pi^*(w))\rightarrow \MFs_{\frh}(\mathcal{X},w)
\]
which is induced by the push-forwards of the modules
underlying the matrix factorizations.


We define an embedding-related push-forward in the case when the subvariety $\calZ_0\xhookrightarrow{j}\calZ$
is the common zero of an ideal $I=(f_1,\dots,f_n)$ such that the functions $f_i\in\CC[\calZ]$ form a regular sequence. We assume that the Lie algebra $\frh$ acts on $\calZ$ and $I$ is $\frh$-invariant. Then there exists an $\frh$-equivariant Koszul complex $K(I)=(\Lambda^\bullet \CC^n\otimes \CC[\calZ],d_K)$ over $\CC[\calZ]$ which has non-trivial homology only in degree zero. Then in section~3 of \cite{OblomkovRozansky16} we define the push-forward functor
\[
j_*\colon\quad \MFs_{\frh}(\calZ_0,W|_{\calZ_0})\longrightarrow
\MFs_{\frh}(\calZ,W),
\]
for any $\frh$-invariant element $W\in\CC[\calZ]^\frh$.

Finally, let us discuss the quotient map. The complex \(\CE_\frh\) is a resolution of the trivial \(\frh\)-module by free modules. Thus the correct derived
version of taking \(\frh\)-invariant part of the matrix factorization \(\mathcal{F}=(M,D,\partial)\in\MFs_\frh(\calZ,W)\), \(W\in\CC[\calZ]^\frh\) is
\[\CE_\frh(\mathcal{F}):=(\CE_\frh(M),D+d_{ce}+\partial)\in\MFs(\calZ/H,W),\]
where \(\calZ/H:=\mathrm{Spec}(\CC[\calZ]^\frh )\) and use the general definition of \(\frh\)-module \(V\):
\[\CE_\frh(V):=\Hom_\frh(\CE_\frh,\CE_\frh\odel V).\]

\subsection{Base change}
\label{sec:base-change}

Unlike push-forward functor, the pull-back of matrix factorizations is defined for any
regular map \(f: \calX\to \calZ\). If we assume that both \(\calX,\calZ\) have \(\frh\)
action and \(f\) is \(\frh\)-equivariant then pull-back of the \(\CC[\calZ]\) modules
induces the functor
\[f^*: \MF_\frh(\calZ,W)\to \MF_\frh(\calX,W).\]

Just as in the case of coherent sheaves we have the smooth base change isomorphism.
In more details, suppose we have affine manifolds \(\calX,\calX',\calS,\calS\) and
the corresponding potentials \(W_{\calX}, W_{\calX'},W_{\calS},W_{\calS'}\) that
fit into the commuting diagrams:
\[\begin{tikzcd}
    \calX'\ar[r,"g'"]\ar[d,"f'"]&\calX\ar[d,"f"]\\
    \calS'\ar[r,"g"] &\calS
  \end{tikzcd},\quad \quad
\begin{tikzcd}
    \MF(\calX',W_{\calX'})\ar[d,"f'_*"]&\MF(\calX,W_{\calX})\ar[l,"g^{\prime *}"]\ar[d,"f_*"]\\
   \MF( \calS',W_{\calS'}) &\ar[l,"g^*"]\MF(\calS,W_{\calS})
  \end{tikzcd}.
\]

If \(g\) is a flat map then we have an natural transformation
that identifies the functors:
\[g^*\circ f_* = f'_*\circ g^{\prime*}.\]

The identity also holds in the equivariant setting.
\section{Construction of the functor}
\label{sec:construction-functor}

In this section we recall a construction of the functor:
\[\bb:\MF_n\to \rmD^{\per}(\cc^n\ti\cc^n).\]
and prove that the functor is monoidal and compute the functor on the some important
collection of matrix factorizations.  The  matrix factorization that we study are the
key ingredients for our braid group realization from \cite{OblomkovRozansky16}.

From now on we fix the conventions for the most used groups and Lie algebras:
\[G_n=\GL_n,\quad \mathfrak{g}_n=\gl_n,\quad \frh_n\subset\frn_n\subset \frb_n\subset \gl_n.\]
When the rank of the group is clear from the context we suppress the lower
index.

\subsection{Motivation and the main definition}
\label{sec:motiv-main-defin}

In our previous work \cite{OblomkovRozansky18d} we introduced  the three-category \(\thc_{\gl}\), the objects in the category are labeled by the positive integers.
The two-category of morphisms has objects:
\[\mathrm{Obj}(\HHom(\mathbf{n},\mathbf{m}))=\{Z,W\in \cc[\fg_n\ti\fg_m\ti Z]^{\GG_n\ti\GG_m}\},\]
here we assume that the space \(Z\) is acted on by  \(\GG_n\ti \GG_m\).

The composition of the morphisms is described in the cited paper. The one-category of morphisms between \((Z',W'),(Z'',W'')\in \HHom(\mathbf{n},\mathbf{m})\) has
objects:
\[\mathrm{Obj}(\Hom((Z',W'),(Z'',W'')))=\MF_{\GG_n\ti\GG_m}(\fg_n\ti Z'\ti Z'' \ti \fg_m,W'-W'').\]

There are two particular objects that play central role in our constructions:
\[\rmaa_n=(\wtil{\fg_n},W_{Fl}),\quad \rmoo_n=((\fg_n\ti \CC^n)^{st},W_{pt})\in \HHom(\mathbf{n},\mathbf{0}),\]
\[W_{Fl}(X,g,Y)=\Tr(X\Ad_gY),\quad X\in\fg_n,\quad Y\in \frb_n,\quad g\in GG_n,\]
\[W_{pt}(X,Y)=\Tr(XY), \quad X,Y\in \fg_n,\]
here we use the identification \(\wtil{\fg}_n=\GG_n\ti\frb/B\) and \((\fg_n\times\CC^n)^{st}\) stands for the stable sublocus that consists of the pairs
\((Y,v)\) such that \(\CC[Y]v=\CC^n\).

The one-category of the endomorphisms of the object \(A_n\)  is a dg category of the equivariant matrix factorizations:
\[\Hom(A_n,A_n)=\MF_{\GG_n\ti B^2}(\fg_n\ti (\GG_n\ti \frb)^2,W),\]
\[ W(X,g_1,Y_1,g_2,Y_2)=\Tr(X(\Ad_{g_1}Y_1-\Ad_{g_2}Y_2)),\]
we abbreviate  the notation for this category by \(\MF_n\).

The composition of elements of \(\Hom(A_n,A_n)\) endows the category \(\MF_n\) with the monoidal structure which we denote by \(\star\) to keep
our notations consistent with \cite{OblomkovRozansky16}. In the last mentioned paper we constructed a group homomorphism from the braid group.
\[\Phi: \Br_n\to (\MF_n,\star).\]

Thus we have an action of the braid group on category \(\Hom(\rmaa_n,\rmoo_n)\), that is for every element \(\beta\in \Br_n\) we obtain a functor:
\[\Phi_\beta:\Hom(\rmaa_n,\rmoo_n)\to \Hom(\rmaa_n,\rmoo_n).\]

The category \(\Hom(\rmaa_n,\rmoo_n)=\MF_{\GG_n\ti B_n}(\frg_n\ti \GG_n\ti \frb_n\ti \frb_n,W_A-W_O)\) has a simpler model:

\begin{proposition}
  For any \(n\) we have an isomorphism of the linear categories:
  \[\Hom(\rmaa_n,\rmoo_n)=\rmdd^{\per}(\CC^n).\]
\end{proposition}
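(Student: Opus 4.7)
The plan is to reduce the matrix factorization category in two steps: a Kn\"orrer elimination of the external variable $X\in\frg_n$, followed by a group reduction that identifies the resulting quotient with $\CC^n$. The potential
\[
W_A - W_O = \Tr\bigl(X(\Ad_g Y_1 - Y_2)\bigr)
\]
is linear in $X\in\frg_n$, and $X$ is paired with $\Ad_g Y_1 - Y_2\in\frg_n$ via the non-degenerate trace form. Applying Kn\"orrer periodicity (eliminating all $n^2$ coordinates of $X$ at once, using the trace pairing to match "linear" and "derivative" directions) yields an equivariant equivalence
\[
\Hom(\rmaa_n,\rmoo_n)\;\simeq\;\rmdd^{\per}_{\GG_n\times B_n}(\calZ),
\]
where $\calZ\subset \GG_n\times\frb_n\times (\frg_n\times\CC^n)^{\st}$ is the smooth closed subvariety defined by the graph relation $Y_2 = \Ad_g Y_1$.

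Next I would gauge out the $\GG_n$-action, which is free on the $g$-coordinate. Setting $g = e$ enforces $Y_2 = Y_1 =: Y\in \frb_n$, so the $\GG_n$-quotient is
\[
U \;=\;\{(Y,v)\in\frb_n\times\CC^n:(Y,v)\text{ is cyclic}\},
\]
with the residual $B_n$-action $b\cdot (Y,v) = (\Ad_b Y,\,b v)$. Hence
\[
\rmdd^{\per}_{\GG_n\times B_n}(\calZ)\;=\;\rmdd^{\per}_{B_n}(U).
\]

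To finish, I would show that $B_n$ acts freely on $U$ with quotient $\CC^n$ by exhibiting an explicit section. Let $Y_\lambda\in\frb_n$ be the matrix with diagonal $\lambda=(\lambda_1,\dots,\lambda_n)$, super-diagonal $(1,\dots,1)$, and zeros on all higher diagonals, and set $\sigma(\lambda) = (Y_\lambda,e_n)$. The map
\[
\Psi\colon B_n\times\CC^n\longrightarrow U,\qquad (b,\lambda)\mapsto (\Ad_b Y_\lambda,\,b e_n)
\]
is a $B_n$-equivariant isomorphism. Surjectivity: since $Y$ is upper-triangular and $(Y,v)$ is cyclic, the last component $v_n$ of $v$ is nonzero, so one can transport $v$ to $e_n$ by $B_n$; the remaining $B_{n-1}$-symmetry fixing $e_n$ then brings $Y$ into the form $Y_\lambda$ by induction on $n$. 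Injectivity has two ingredients: the diagonal of $Y$ is $B_n$-invariant, forcing agreement of the parameters $\lambda$; and any $b'\in B_{n-1}$ that commutes with $Y'_\lambda\in\frb_{n-1}$ and fixes $e_{n-1}$ must be trivial, because the centralizer of a cyclic matrix is the polynomial algebra $\CC[Y'_\lambda]$, and the evaluation map $\CC[Y'_\lambda]\to\CC^{n-1}$, $p\mapsto p(Y'_\lambda)e_{n-1}$, is a linear isomorphism (both sides have dimension $n-1$). Consequently $U\cong B_n\times\CC^n$ as a $B_n$-space, and
\[
\rmdd^{\per}_{B_n}(U)\;=\;\rmdd^{\per}(\CC^n).
\]

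The main obstacle I anticipate is making the equivariant Kn\"orrer step fully rigorous in the present combined $\GG_n\times B_n$ setting; once the Kn\"orrer equivalence is in place, the remaining reductions are essentially standard orbit-geometry, with the explicit section $\sigma$ and the classical cyclic-centralizer lemma doing all the real work.
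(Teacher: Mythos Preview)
Your proposal is correct and follows essentially the same route as the paper: apply Kn\"orrer periodicity to the quadratic potential $\Tr(X(\cdot))$, gauge out the free $\GG_n$-action, and then exhibit the slice $\lambda\mapsto (Y_\lambda,e_n)$ to the residual $B_n$-action on $(\frb_n\times\CC^n)^{\st}$. Your section $\sigma$ coincides with the paper's map $\varkappa$, and your surjectivity/injectivity argument supplies the detail behind the paper's one-line claim that the slice has trivial stabilizers.
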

\begin{proof}
We use Kn\"orrer periodicity, the potential in our matrix factorizations is quadratic:
\[W_{Fl}-W_{pt}=\Tr(X(Y_1-\Ad_gY_2)).\]

Thus we conclude that
\[\Hom(\rmaa_n,\rmoo_n)=\rmdd^{\per}_{\GG_n\ti B_n }((\GG_n\ti \frb\ti \cc^n)^{st})=\rmdd^{\per}_{B_n}((\frb\ti \cc^n)^{st}).\]

To complete our argument we observe that we have a natural slice to the \(B_n\)-action:
\begin{equation}\label{eq:kappa}
  \varkappa:\CC^n\to (\frb\ti \cc^n)^{st},\quad \varkappa(y)=(Y(y),e_n),\end{equation}
\[Y(y)_{ij}=\delta_{i-j}y_i+\delta_{j-i-1}.\]
Since all points of the slice have a trivial stabilizer the statement follows.
\end{proof}

Let us emphasize that we choose double grading of the variable as
\[\deg (Y_1)_{ij}=\deg (Y_2)_{ij}=\mathbf{q}^2,\quad \deg(X_{ij})=\mathbf{t}^2\mathbf{q}^{-2}. \]

Thus a general theorem implies that for a braid \(\beta\in \Br_n\) we have a two-periodic complex of bimodules \(\calC_\beta\) that is a Fourier-Mukai
kernel for \(\Phi_\beta\). Let us denote by \(\bim\) the abelian category of the \(R_n\)-bimodules, \(R_n=\cc[y_1,\dots,y_n]\). Respectively, we have constructed
the functor of the dg categories:
\[\bb: \MF_n\to \rmdd^{\per}(\CC^n\ti \CC^n).\]

The argument of the previous proposition implies an elementary description of the above functor. The Knorrer periodicity used in the proof amounts to the pull-back along
the embedding:
\[\res:\quad\GG_n\ti \frb\ti \GG_n\ti\frb\to\frg\ti \GG_n\ti \frb\ti \GG_n\ti\frb, \]
that is induced by the embedding of the zero into the Lie algebra \(\fg_n\).

The  stability condition in the definition of \(\rmoo_n\) translates in the stability condition in the definition below:
\[\bigg(\GG_n\ti \frb\ti \GG_n\ti \frb\ti \CC^n\bigg)^{\st}=\{(g_1,Y_1,g_2,Y_2,v)|\CC[\Ad_{g_i}Y]v=\CC^n\}.\]

Respectively, the inclusion of \(0\) inside \(\frg\) combined with the projection along \(\CC^n\) gives us
map:
\[\res: \bigg(\GG_n\ti \frb\ti \GG_n\ti \frb\ti \CC^n\bigg)^{st}\to \frg\ti \GG_n\ti \frb\ti \GG_n\ti\frb. \]

There is a natural projection from \(\frb_n\) to \(\frh_n\)
that extracts the diagonal part of the matrix. In the introduction we fixed notation
\(\lambda\) for this map.

With these notations we define the \(\GG_n\ti B_n\ti B_n\)
equivariant map:
\[\pi_\frh: \bigg(\GG_n\ti \frb\ti \GG_n\ti \frb\ti \CC^n\bigg)^{st}\to \frh_n\ti\frh_n,
\quad (g_1,Y_1,g_2,Y_2,v)\mapsto (\frh(Y_1),\frh(Y_2)).\]

As we mentioned in  the introduction, the functor from above can be constructed in terms of \(\res\) and \(\pi_\frh\) as follows.
For any \(\calC\in \MF_n\) we have
  \[\bb(\calC)=\CE_{\frn^2}(\pi_{\frh*}\circ\res^*(\calC))^{T^2\ti\GG_n}.\]

  From now on we use the last formula as definition of the functor and in the rest of the paper we do not use
  any TQFT results from \cite{OblomkovRozansky18b}.

\subsection{Monoidal properties}
\label{sec:monoidal-properties}

Our motivational construction of functor \(\bb\) is strongly suggests that the functor  is monoidal. The monoidal structure on \(\rmD^{\per}(\CC^n\ti\CC^n)\)
is an extension of the monoidal structure on the \(R_n=\CC[x_1,\dots,x_n]\)-bimodules:
\[B_1\star B_2=B_1\ot_{R_n}B_2.\]
We show that this monoidal structure is compatible with our functor \(\bb\).

\begin{proposition}\label{prop:monoid}
  For any \(\calC_1,\calC_2\in \MF_n\)
  we have:
  \[\bb(\calC_1\star\calC_2)=\bb(\calC_1)\star\bb(\calC_2).\]
\end{proposition}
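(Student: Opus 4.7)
The plan is to reduce the identity $\bb(\calC_1 \star \calC_2) \cong \bb(\calC_1) \star \bb(\calC_2)$ to a base change statement for equivariant matrix factorizations. I would begin by unwinding the convolution: viewing $\calC_1, \calC_2$ as $2$-endomorphisms of $\rmaa_n$ in the three-category $\thc_{\gl}$, the product $\calC_1 \star \calC_2$ is obtained by pulling them back to a joint space parametrizing tuples $(X, g_1, Y_1, g_2, Y_2, g_3, Y_3)$, tensoring, taking Chevalley-Eilenberg invariants along the middle $\frb$-action, and pushing forward along the projection $p_{13}$ that forgets the middle pair $(g_2, Y_2)$. The potentials telescope so that $\calC_1 \star \calC_2$ indeed lies in $\MF_n$ with the expected potential.

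Applying $\bb$, the first step $\res^*$ sets $X = 0$; since $p_{13}$ does not touch the $\fg_n$-direction, $\res^*$ commutes with $p_{13*}$ by the base change of Section~\ref{sec:base-change}, and after this reduction the potential vanishes, so we work with $\ZZ/2$-graded complexes. The crucial step is then base change in the square
\[
\begin{tikzcd}
(\GL_n\times\frb)^3\times\CC^n \ar[r,"p_{13}"] \ar[d,"\pi^{(3)}_\frh"]
& (\GL_n\times\frb)^2\times\CC^n \ar[d,"\pi_\frh"] \\
\frh\times\frh\times\frh \ar[r,"q_{13}"]
& \frh\times\frh
\end{tikzcd}
\]
in which $\pi^{(3)}_\frh$ extracts the three Cartan diagonals of $Y_1, Y_2, Y_3$ and $q_{13}$ forgets the middle $\frh$-factor. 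Base change yields $\pi_{\frh *}\circ p_{13*} \cong q_{13*}\circ\pi^{(3)}_{\frh *}$, and applied to the outer product $\calC_1\boxtimes\calC_2$ the right-hand composition delivers $q_{13*}\bigl(\bb(\calC_1)\boxtimes\bb(\calC_2)\bigr)$. Pushing forward along $q_{13}$ implements precisely the bimodule tensor product over $R_n = \CC[\frh]$ obtained by identifying the two middle Cartan factors, so this recovers $\bb(\calC_1)\star\bb(\calC_2)$.

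It remains to verify compatibility with the Chevalley-Eilenberg operations and invariants. The outer $B^2$-action is preserved by the entire diagram, so $\CE_{\frn^2}$ together with the $T^2\times\GL_n$-invariants can be taken after base change on either side. The middle $\frb$-Chevalley-Eilenberg that encodes the convolution is confined to the fibers of $p_{13}$ and is absorbed into $q_{13*}$ under the base change identification.

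The main obstacle I expect is the careful bookkeeping that distinguishes the inner $\frb$-action used to compose $\calC_1$ and $\calC_2$ from the outer $B^2$-action used in $\bb$, and verifying that the equivariant Koszul resolutions intertwine cleanly with base change. A practical way to carry this out is to first restrict everything to the slice $\varkappa$ of~\eqref{eq:kappa}, where $\bb(\calC)$ becomes an explicit $R_n$-bimodule and the convolution on $\MF_n$ can be matched termwise with tensor product over $R_n$ on the slice.
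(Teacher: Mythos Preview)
Your outline has the right architecture --- unwind the convolution, commute $\res^*$ through, then try to match the middle ``gluing'' with tensor product over $R_n$ --- but it glosses over exactly the step where the content lies, and it omits the mechanism the paper uses to make that step go through.

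The sentence ``the middle $\frb$-Chevalley--Eilenberg that encodes the convolution is confined to the fibers of $p_{13}$ and is absorbed into $q_{13*}$'' is where the argument breaks. Taking $\CE_{\frn^{(2)}}(\cdot)^{T^{(2)}}$ is not a smooth pushforward, so there is no base change result that lets you trade it for the projection $q_{13}$ on Cartan factors; nor does $q_{13*}$ on $\frh^3$ implement $\otimes_{R_n}$ without further input. Your commuting square is also not Cartesian, so even the formal base change you invoke does not apply as stated.

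What the paper actually does is use the \emph{stability condition} coming from the framing vector, which you never mention. Two points are essential. First, one works on the strongly stable locus $(\calX_{con}\times\CC^n)^{\sst}$ and uses the shrinking lemma (the fact that on the critical locus of $W$ one has $\Ad_{g_1}Y_1=\Ad_{g_2}Y_2$) to extend the stability condition from the outer indices $(1),(3)$ to all three indices $(1),(2),(3)$. Second, on this full strongly stable locus the middle $B^{(2)}$-action is \emph{free}, and therefore $\CE_{\frn^{(2)}}(\cdot)^{T^{(2)}}$ is naturally isomorphic to restriction to the $\varkappa$-slice. Only after these two steps does the convolution decompose as a product of two pieces, each of which is visibly $\bb(\calC_i)$ computed on its own slice, and the residual pushforward becomes the honest bimodule tensor product.

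Your closing remark about restricting to the slice $\varkappa$ is exactly the right instinct, but you need to earn the right to use it: the slice only exists after passing to the stable locus, and the stable locus for the middle factor only agrees with the one you start from because of the shrinking lemma. Make those two points explicit and the rest of your outline will go through.
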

\begin{proof}
  Let us recall that the convolution for the matrix factorizations is defined with push-pull construction
  that evolves the space:
  \[\calX_{con}=\frg\ti (G_n\ti \frb)^3\]
  and the \(G_n\ti B^3\)-equivariant projections
  \[\pi_{ij}:\calX_{con}\to \calX_n.\]
  The convolution product of two matrix factorizations \(\calC_1\) and \(\calC_2\) is defined as
  \[\calC_1\star\calC_2=\pi_{13*}(\CE_{\frn^{(2)}}(\pi_{12}^*(\calC_1)\ot\pi_{23}^*(\calC_2))^{T^{(2)}}),\]
   where \(\frn^{(i)}\) and \(T^{(i)}\) stands for the \(i\)-th copy of the corresponding group in inside \(B^3\).

  Thus the object \(\bb(\calC_1\star\calC_2)\) can be described in terms of the open piece inside
  \(\calX_{con}\ti \CC^n\) with the following stability conditions:
  \[\bigg(\calX_{con}\ti \CC^n\bigg)^{\sst}_{(1),(3)}=\{(X,g_1,Y_1,g_2,Y_2,g_3,Y_3,v)| \CC[\Ad_{g_i}Y_i]v=\CC^n,i=1,3\}.\]

  We can naturally extend the maps \(\pi_{ij}\) to the maps between \(\calX_{con}\ti \CC^n\) and \(\calX_n\ti \CC^n\).
  After the extension we have:
  \[\pi_{13}^{-1}\Bigg(\res^{-1} \Bigg(\bigg( (\GG_n\ti \frb)^2\ti \CC^n\bigg)^{\st}\Bigg)\Bigg)=\bigg(\calX_{con}\ti \CC^n\bigg)^{\sst}_{(1),(3)}.\]

  The element \(\bb(\calC_1\star\calC_2)\) can be rewritten as result of application of the
  functor \(\CE_{\frn^2}(\cdot)^{T^2}\) to the pull-back:
    \[\res^*\bigg(\pi_{13*}\CE_{\frn^{(2)}}\bigg( (\pi_{12}^*( \calC_1)\ot\pi_{23}^*(\calC_2))_{(1),(3)}\bigg)^{T^{(2)}}\bigg),\]
    where the subscript \((1),(3)\) indicates that we work with the matrix factorization on \((\calX_{con}\ti\CC^n)^{\sst}_{(1),(3)}\).
    Next we observe that the \(\pi_{12}\) and \(\pi_{23}\) images of the strongly stable spaces also has
    a stability-condition description:
    \[\pi_{12}\bigg((\calX_{con}\ti \CC^n)^{\sst}_{(1),(3)}\bigg)=(\calX_n\ti \CC^n)_{(1)}^{\sst},\quad
      \pi_{23}\bigg((\calX_{con}\ti \CC^n)^{\sst}_{(1),(3)}\bigg)=(\calX_n\ti \CC^n)_{(2)}^{\sst},\]
    \[(\calX_n\ti\CC^n)^{\sst}_{(i)}=\{(X,g_1,Y_1,g_2,Y_2,v)|\CC[\Ad_{g_i}Y_i]v=\CC^n.\}\]

    On the critical locus of \(W\) we have \(\Ad_{g_1}Y_1=\Ad_{g_2}Y_2\). Thus by the shrinking lemma  in \cite{OblomkovRozansky16}
    the restriction from \((\calX_n\ti\CC^n)_{(i)}^{\sst}\) to
    \[\bigg(\calX_n\ti \CC^n\bigg)^{\sst}_{(1),(2)}=\{(X,g_1,Y_1,g_2,Y_2,v)|\CC[\Ad_{g_i}Y_i]v=\CC^n,i=1,2\},\]
    induces the equivalence of the categories.

  Thus  the element \(\bb(\calC_1\star\calC_2)\) can be rewritten as result of application of the
  functor \(\CE_{\frn^2}(\cdot)^{T^2}\) to the pull-back:
    \[\res^*\bigg(\pi_{13*}\CE_{\frn^{(2)}}\bigg( (\pi_{12}^*( \calC_1)_{(1),(2)}\ot\pi_{23}^*(\calC_2)_{(1),(2)})_{(1),(2),(3)}\bigg)^{T^{(2)}}\bigg),\]
    where the subscript \((1),(2)\) indicates that we apply
    the pull-back \(\pi_{ij }\) to the the matrix factorization on \((\calX_n\ti\CC^n)^{\sst}_{(1),(2)}\). Respectively,
    the subscript \((1),(2),(3)\) indicates that we work with the matrix factorizations on
    \[\bigg(\calX_{con}\ti \CC^n\bigg)^{\sst}_{(1),(2),(3)}=\{(X,g_1,Y_1,g_2,Y_2,g_3,Y_3,v)| \CC[\Ad_{g_i}Y_i]v=\CC^n,i=1,2,3\}.\]

    Finally, let us observe that \(B^{(2)}\)-action on \((\calX_{con}\ti \CC^n)_{(1),(2),(3)}^{\sst}\) is free. Hence on this
    space the functor \(\CE_{\frn^{(2)}}(\cdot)^{T^{(2)}}\) is equivalent to the functor of restriction to the slice
    to \(B^{(2)}\)-action:
    \[\bigg(\calX_{con}\ti \CC^n\bigg)_{(1),\varkappa,(3)}^{\sst}\subset  \bigg(\calX_{con}\ti \CC^n\bigg)_{(1),(2),(3)}^{\sst}\]
    that is defined by the condition:
    \[(Y_2,g_2v)\in \varkappa(\CC^n),\]
    where \(\varkappa\) is defined by \eqref{eq:kappa}.

    The  last observation implies that the element \(\bb(\calC_1\star\calC_2)\) can be rewritten as result of application of the
  functor \(\CE_{\frn^2}(\cdot)^{T^2}\) to the pull-back:
    \[\res^*\bigg(\pi_{13*}\bigg( \pi_{12}^*( \calC_1)_{(1),\varkappa}\ot\pi_{23}^*(\calC_2)_{\varkappa,(2)}\bigg)_{(1),\varkappa,(3)}\bigg),\]
    where the subscripts \((1),\varkappa\) and \(\varkappa,(2)\) indicate that we apply the pull-back to the
    matrix factorizations on the spaces
    \[\bigg(\calX_2\ti \CC^n\bigg)^{\sst}_{(1),\varkappa},\quad \bigg(\calX_2\ti\CC^n\bigg)^{\sst}_{\varkappa,(2)}\]
    which are the images of \((\calX_{con}\ti \CC^n)_{(1),\varkappa,(3)}^{\sst}\) under projections \(\pi_{12}\) and \(\pi_{23}\), respectively.

    Since the spaces \((\calX_n\ti \CC^n)^{\sst}_{(1),\varkappa}\) and \((\calX\ti \CC^n)^{\sst}_{\varkappa,(2)}\) are slices to \(B\)-actions, we
    have:
    \[\bb(\calC_1)=\CE_{\frn}\left(\res^*(\calC_1)_{(1),\varkappa}\right)^T,\quad \bb(\calC_2)=\CE_{\frn}\left(\res^*(\calC_2)_{\varkappa,(2)}\right)^T,\]
    where the subindices \((1),\varkappa\) and \(\varkappa,(2)\) indicate the restriction to the \(B\)-slices
   \((\calX_n\ti \CC^n)^{\sst}_{(1),\varkappa}\) and \((\calX\ti \CC^n)^{\sst}_{\varkappa,(2)}\).

   Thus to complete our proof we observe that
   \begin{multline*}
     \CE_{\frn^2}\Bigg(\res^*\bigg(\pi_{13*}\bigg( \pi_{12}^*( \calC_1)_{(1),\varkappa}\ot\pi_{23}^*(\calC_2)_{\varkappa,(2)}\bigg)_{(1),\varkappa,(3)}\bigg)\Bigg)^{T^2}=\\
     \pi_{13*}\Bigg(\pi_{12}^*\bigg(\CE_{\frn}(\res^*(\calC_1)_{(1),\varkappa})^T\bigg)\ot\pi_{23}^*\bigg(\CE_{\frn}(\res^*(\calC_2)_{\varkappa,(2)})^T\bigg)\Bigg).
    \end{multline*}

\end{proof}

In the rest of the text we work with the stable 
of the category that is defined to be
\[\MF_n^{\st}=\MF_{\GG_n\times B^2}(\calX_n^{\st},W), \quad\calX_n^{\st}\xhookrightarrow{j_{\st}}\calX_n=\fg_n\times (\GG_n\times \frb)^2\times V_n,\]
where \((X,g_1,Y_1,g_2,Y_2,v)\) is stable if \[\CC^n=\CC\langle X,\Ad_{g_i}Y_i\rangle v.\]

It is shown in \cite{OblomkovRozansky16} that the pull-back map \(j^*_{\st}\) is monoidal.

The functor \(\bb\) naturally extends to \(\MF_n^{\st}\) and  the previous proposition holds for the functor:
\[\bb:\MF_n^{\st}\to\mathrm{D}^{\per}(\CC^n\ti\CC^n).\]
Moreover, since the stability condition restrict naturally:
\[\res^{-1}(\calX_n^{\st})=\bigg(\GG_n\ti\frb\ti\GG_n\ti\frb\ti \CC^n\bigg)^{\st}\]
we have
\begin{corollary}
  For any \(n\) there is a commuting diagram of monoidal functors:
  \[\begin{tikzcd}
      \MF_n\arrow[r,"\bb"]\arrow[rd,"j^*_{\st}"'] & \mathrm{D}^{\per}(\CC^n\ti\CC^n)\\
      &\MF_n^{\st}\arrow[u,"\bb"]
    \end{tikzcd}
  \]
\end{corollary}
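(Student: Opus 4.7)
The plan is to deduce the corollary from Proposition \ref{prop:monoid} together with the stability‐compatibility identity highlighted immediately above the statement. First, I would verify that \(\bb\) factors through \(j_{\st}^*\) as a plain functor. By construction \(\bb(\calC) = \CE_{\frn^2}\bigl(\pi_{\frh *}\circ \res^*(\calC)\bigr)^{T^2\ti \GG_n}\); since \(\res\) sets \(X=0\), the stability condition \(\CC[\Ad_{g_i}Y_i]v=\CC^n\) on the source is precisely the pullback of the stability condition \(\CC\langle X,\Ad_{g_i}Y_i\rangle v=\CC^n\) on the target. This is the identity \(\res^{-1}(\calX_n^{\st}) = (\GG_n\ti\frb\ti\GG_n\ti\frb\ti\CC^n)^{\st}\) recorded just above the corollary. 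Therefore \(\res^*\) factors canonically as \(\res_{\st}^*\circ j_{\st}^*\), where \(\res_{\st}\) is the restriction of \(\res\) between the stable loci, and this yields the desired equality of functors \(\bb = \bb\circ j_{\st}^*\).

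Second, I would check that \(\bb\) remains monoidal when applied to \(\MF_n^{\st}\). This is a line-by-line transcription of the proof of Proposition \ref{prop:monoid}: every intermediate open subspace \((\calX_n\ti\CC^n)^{\sst}_{(\cdot)}\) and \((\calX_{con}\ti\CC^n)^{\sst}_{(\cdot)}\) appearing there is replaced by its intersection with the strong stability locus on \(X\). All three ingredients of that argument, namely the shrinking from the one-sided to the two-sided stability \((1),(2)\) on the critical locus \(\Ad_{g_1}Y_1=\Ad_{g_2}Y_2\), the base-change identity between \(\pi_{13*}\) and \(\pi_{12}^*,\pi_{23}^*\), and the equivalence between \(\CE_{\frn^{(2)}}(\cdot)^{T^{(2)}}\) and restriction to the \(\varkappa\)-slice, are all insensitive to imposing an additional open condition on the coordinate \(X\), because none of the maps \(\res\), \(\pi_{ij}\) or \(\pi_{\frh}\) alters \(X\). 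Hence Proposition \ref{prop:monoid} carries over verbatim to give monoidality of \(\bb\) on \(\MF_n^{\st}\).

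Finally, the monoidality of \(j_{\st}^*\) is the cited result from \cite{OblomkovRozansky16}: the convolution product on \(\MF_n\) is defined on the ambient space \(\calX_{con}=\frg\ti(\GG_n\ti\frb)^3\), and open restriction along \(\calX_n^{\st}\hookrightarrow \calX_n\) is compatible with it because on the critical locus one has \(\Ad_{g_1}Y_1=\Ad_{g_2}Y_2=\Ad_{g_3}Y_3\), so stability for the outer strands forces stability for the middle one. Combining the three ingredients, the triangle is a diagram of monoidal functors. The only mildly delicate point to verify is that the intermediate stability subscripts appearing in the proof of Proposition \ref{prop:monoid} remain well-behaved after intersecting with the strong-stability locus, which follows from the propagation of stability across the critical locus just noted; this is the step I would write out most carefully.
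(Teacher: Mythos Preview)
Your proposal is correct and matches the paper's approach: the corollary is stated in the paper as an immediate consequence of Proposition~\ref{prop:monoid}, the monoidality of $j_{\st}^*$ cited from \cite{OblomkovRozansky16}, and the preimage identity $\res^{-1}(\calX_n^{\st})=(\GG_n\ti\frb\ti\GG_n\ti\frb\ti\CC^n)^{\st}$, which is exactly the decomposition you give. One small clarification in your second step: there is no need to ``intersect with the strong stability locus on $X$'' in the intermediate spaces, because after $\res^*$ sets $X=0$ the $\calX_n^{\st}$-stability condition $\CC\langle X,\Ad_{g_i}Y_i\rangle v=\CC^n$ already collapses to the strong stability $\CC[\Ad_{g_i}Y_i]v=\CC^n$ used throughout the proof of Proposition~\ref{prop:monoid}, so that argument applies verbatim rather than requiring modification.
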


\subsection{Knorrer reduction for $\mathsf{B}$}
\label{sec:knorr-reduct-mathbbb}

In this section we explain how the Knorrer periodicity allows us to simplify the functor \(\bb\). Computationally, it is easier to
work with the simplified form of the functor.

The  \(\GG_n\) action on the space \(\calX\) is free. We choose a slice to this action  as
\[\calX^{\circ}=\fg_n\ti \frb_n\ti \GG_n\ti \frb_n \]
that is the image of \(B^2\)-equivariant projection
\begin{equation}\label{eq:proj-circ}
  \calX\to \calX^\circ,\quad (X,g_1,Y_1,g_2,Y_2)\mapsto (\Ad_{g_1}^{-1}X,Y_1,g_1^{-1}g_2,Y_2).\end{equation}
Respectively, we have the induced \(B^2\)-invariant potential:
\[W^\circ(X_1,Y_1,g_{12},Y_2)=\Tr(X_1(Y_1-\Ad_{g_{12}}Y_2)).\]
and we have an equivalence of categories:
\[\MF_n\simeq \MF_n^\circ=\MF_{B^2}(\calX^\circ,W^\circ).\]

Next we observe that the potential \(W^\circ\) has a quadratic summand:
\[W^\circ=\overline{W}^\circ+W_{kn}, \quad W_{kn}(X_1,Y_1,g_{12},Y_2)=\Tr((X_1)_{--}((Y_1)_{++}-(\Ad_{g_{12}}Y_2)_{++})),\]
\[ \overline{W}^\circ(X_1,Y_1,g_{12},Y_2)=\Tr((X_1)_+(Y_1-\Ad_{g_{12}}Y_2)).\]

Above and everywhere in the text we use notations for the projections on the upper triangular subspaces \(\frb,\frn\) and
the lower-triangular \(\frb_-,\frn_-\):
\[X=X_++X_{--},\quad X_+\in \frb,\quad X_{--}\in \frn_-,\]
\[X=X_{++}+X_-,\quad X_{++}\in \frn,\quad X_-\in \frb_{-}.\]

The  potential is \(W_{kn}\) is quadratic of \((X_1)_{--}\) and \( (Y_1)_{++}-(\Ad_{g_{12}}Y_2)_{++}\) and the embedding:
\[j_{kn}:\quad \frb\ti \frb\ti \GG_n\ti\frb\to \fg_n\ti\frb\ti\GG_n\ti \frb,\]
is \(B^2\)-equivariant and regular. Also the  \(B^2\)-equivariant projection
\[\pi_{kn}:\quad \frb\ti \frb\ti \GG_n\ti\frb\to\frb\ti \frh\ti\GG_n\ti \frb=\overline{\calX}^\circ,\]
\[\pi_{kn}:(X_1,Y_1,g_{12},Y_2)\mapsto (X_1,\lambda(Y_1),g_{12},Y_2)\]
is smooth with fibers affine spaces that have the coordinates \((Y_1)_{++}-(\Ad_{g_{12}}Y_2)_{++}\).
Hence Knorrer functor
\[KN=j_{kn*}\circ\pi_{kn}^*: \quad\overline{\MF}_n^\circ=\MF_{B^2}(\overline{\calX}^\circ,\overline{W}^\circ)\to \MF_n, \]
is an equivalence of categories. The inverse functor \(\pi_{kn*}\circ j_{kn}^*\) is conjugate to \(KN\).

Thus we want to construct a version of the functor \(\bb\) for the reduced category
\[\overline{\bb}=\bb\circ KN: \quad \overline{\MF}_n^\circ\to \rmD^{\per}(\CC^n\ti \CC^n).\]

First let us simplify the construction of the functor \(\bb\) by eliminating the \ChE step. That
could be achieved with use of the slices \(\CC^n_\varkappa\) for \(B\)-action on the space \(\fg_n\ti \CC^n\):
\[\CC^n_\varkappa=\varkappa(\CC^n)\subset \fg_n\ti \CC^n.\]

Thus we  can define \(B^2\)-slice in the strongly stable locus:
\[\bigg(\calX^\circ\ti \CC^n\bigg)^{\sst}_{\varkappa,\varkappa}=\{(X_1,Y_1,g_{12},Y_2,v)|(Y_1,v)\in \CC^n_\varkappa, (Y_2,g_{12}^{-1}v)\in \CC^n_\varkappa\}.\]

Let us denote by \(i_{\varkappa,\varkappa}\) the embedding of last space into  \(\calX^\circ\ti \CC^n\). Thus we have a simplified
formula for the functor:
\[\bb=i^*_{\varkappa,\varkappa}\circ \res^*\circ \pi_{\frh,*}.\]

The maps underlying the Kn\"orrer functor and the functor \(\bb\) fits into the commutative diagram:
\[\begin{tikzcd}
  &\calX^\circ_n\ti \CC^n& \arrow[l,"i_{\varkappa,\varkappa}"](\calX^\circ_n\ti \CC^n)^{\sst}_{\varkappa,\varkappa}& (\frb\ti \GG_n\ti \frb\ti \CC^n)_{\varkappa,\varkappa}^{\st}\arrow[r,"\pi_{\frh}"]\arrow[l,"\res"] & \frh^2\\
  \overline{\calX}^\circ_n & \frb^2\ti\GG_n\ti\frb\ti\CC^n\arrow[u,"j_{kn}"]\arrow[l,"\pi_{kn}"] &
  & (\frh\ti\GG_n\ti\frb\ti\CC^n)_{\varkappa,\varkappa}^{\st}\arrow[u,"\bar{j}"]\arrow[ll,"\tilde{i}_{\varkappa,\varkappa}"] \arrow[ur,"\pi_\frh"']&
  \end{tikzcd}
\]
In the last diagram  we set
\[(\frb\ti\GG_n\ti\frb\ti \CC^n)^{\st}_{\varkappa,\varkappa}=\{(Y_1,g,Y_2,v)|(Y_1,v)\in\CC^n_\varkappa,(Y_2,g_2^{-1}v)\in\CC^n_\varkappa\}\]
the    the closed embedding
\[\bar{j}:\quad (\frh\ti\GG_n\ti\frb\ti \CC^n)^{\st}_{\varkappa,\varkappa}\to (\frb\ti\GG_n\ti\frb\ti \CC^n)^{\st}_{\varkappa,\varkappa}\]
is defined by the equation:
\[\bar{j}(h,g,Y_2,v)=(h+(\Ad_gY_2)_{++},g,Y_2,v).\]
The inclusion  \(\tilde{i}_{\varkappa,\varkappa}\) is uniquely defined by the commutativity of the diagram.

Now we observe that the composition of the maps:
\[\bar{i}_{\varkappa,\varkappa}=\pi_{kn}\ti \mathrm{Id}_{\CC^n}\circ \tilde{i}_{\varkappa,\varkappa}:\quad   (\frh\ti\GG_n\ti\frb\ti\CC^n)_{\varkappa,\varkappa}^{\st}\to \overline{\calX}_n^\circ\ti \CC^n,\]
defines  a natural embedding of  \((\frh\ti\GG_n\ti\frb\ti\CC^n)_{\varkappa,\varkappa}^{\st}\)
into \(\overline{\calX}_n^\circ\). That is the
map \(\tilde{i}_{\varkappa,\varkappa}\) is the section of the projection \(\pi_{kn}\ti \mathrm{Id}_{\CC^n}\).

The image of \(j_{kn}\circ\tilde{i}_{\varkappa,\varkappa}\) consists of \((X_1,Y_1,g,Y_2,v)\in (\calX^\circ_n\ti \CC^n)^{\sst}_{\varkappa,\varkappa}\)
in the vanishing locus of the ideal with generators
\[(X_1)_{--},\quad (Y_1)_{++}-(\Ad_gY_2)_{++}.\]

Since  the push-forward \(j_{kn*}\) is defined in terms of Koszul matrix factorization of the last two group of elements, we conclude that
\[i_{\varkappa,\varkappa}^*\circ j_{kn*}\circ \pi_{kn}^*=\bar{j}_*\circ \tilde{i}_{\varkappa,\varkappa}^*\circ \pi_{kn}^*.\]

Hence we obtain a simplified formula for the functor:
\[\overline{\bb}=\pi_{\frh*}\circ\bar{i}_{\varkappa,\varkappa}^*.\]

The further simplification comes from the following observation:

\begin{proposition}\label{prop:simple-slice} For any \(n\) we have:
\[(\frh\ti\GG_n\ti\frb\ti\CC^n)_{\varkappa,\varkappa}^{\st}=\CC^n\ti U_-\ti \CC^n\]
  where \(U_-\) is the group of strictly-lower triangular matrices.
\end{proposition}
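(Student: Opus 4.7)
The plan is to exhibit an explicit isomorphism of affine varieties
\[
\Psi\colon \CC^n \ti U_- \ti \CC^n \longrightarrow (\frh\ti\GG_n\ti\frb\ti\CC^n)_{\varkappa,\varkappa}^{\st}, \qquad (h,u,y)\mapsto (h,u,Y(y),e_n),
\]
with regular inverse extracting $h$, $g$, and the unique $y$ with $Y_2=Y(y)$. Well-definedness of $\Psi$ and surjectivity of its inverse both reduce to a single nontrivial characterization of which $g$'s occur in the slice.

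First I would unpack the defining conditions of the codomain. The requirement $(Y_2,g^{-1}v)\in\CC^n_\varkappa$ forces $g^{-1}v=e_n$ and $Y_2=Y(y)$ for a unique $y\in\CC^n$, and hence $v=ge_n$. The requirement that $\bar j$ land in $(\frb\ti\GG_n\ti\frb\ti\CC^n)^{\st}_{\varkappa,\varkappa}$ then forces $v=e_n$ (so $ge_n=e_n$) and $h+(\Ad_gY(y))_{++}=Y(y')$ for some $y'$. Matching diagonals gives $y'=h$ for free, and matching strictly upper triangular parts leaves the sole nontrivial constraint $(\Ad_gY(y))_{++}=N$. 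So the proposition reduces to showing that, for every $y\in\CC^n$, the set
\[
\{g\in\GG_n \;:\; ge_n=e_n,\ (\Ad_g Y(y))_{++}=N\}
\]
equals $U_-$.

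For the inclusion $U_-\subseteq\{g:\ldots\}$, I would use the descending flag $F_j=\mathrm{span}(e_j,\dots,e_n)$. Any $u\in U_-$ preserves each $F_j$ and acts trivially on $F_j/F_{j+1}$, and so does $u^{-1}$, while $Y(y)$ sends $F_{j+1}$ into $F_j$ with $Y(y)e_j\equiv e_{j-1}\pmod{F_j}$ for $j\ge 2$. Composing these observations yields $uY(y)u^{-1}e_j\equiv e_{j-1}\pmod{F_j}$, which is precisely the condition that the strictly upper triangular part of $\Ad_u Y(y)$ equals $N$. For the reverse inclusion, I would argue by downward induction on $k$ that $ge_k\in e_k+F_{k+1}$, the case $k=n$ being given. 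For the step from $\{k,k+1,\dots,n\}$ to $\{k-1,k,\dots,n\}$, the induction hypothesis says $g$ has block form with lower-right $(n-k+1)\ti(n-k+1)$ lower-triangular-unipotent block and zeros above it in those columns; consequently $g^{-1}e_k\in e_k+F_{k+1}$ and $gF_k\subseteq F_k$. Combined with $Y(y)e_k\in e_{k-1}+F_k$ and $Y(y)F_{k+1}\subseteq F_k$, this yields
\[
gY(y)g^{-1}e_k \in g(e_{k-1}+F_k) \subseteq ge_{k-1} + F_k.
\]
Comparing with $gY(y)g^{-1}e_k\in e_{k-1}+F_k$, which follows from $(\Ad_gY(y))_{++}=N$, forces $ge_{k-1}\in e_{k-1}+F_k$, closing the induction.

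The main obstacle is this downward induction: one has to propagate the lower-triangular-unipotent structure from the trivially-known last column all the way to the left, using only the rigid ``superdiagonal equals $1$'' pattern imposed on the strictly upper triangular part of $\Ad_gY(y)$. Once the inductive step is in place, the remaining verifications that $\Psi$ and its inverse are regular morphisms of affine varieties, and that $\Psi$ indeed lands in the slice, are immediate consequences of the forward inclusion.
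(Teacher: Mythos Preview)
Your proof is correct and takes a genuinely different route from the paper's. The paper argues infinitesimally: it asserts that the conditions cut out a subgroup $G'$, linearizes at $g=1$, and computes that the Lie algebra $\frg'=\{\alpha:\ad_\alpha Y_2\in\frb_-,\ \alpha e_n=0\}$ equals $\frn_-$ by decomposing $\alpha=\alpha_{--}+\alpha_+$ and showing that $\alpha_+$ must lie in the upper-triangular centralizer of a regular element and be killed by the last-column condition. The passage from $\frg'=\frn_-$ to the group-level conclusion $G'=U_-$ is left implicit.

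Your argument stays at the group level throughout. The forward inclusion $U_-\subseteq\{g:ge_n=e_n,\ (\Ad_gY(y))_{++}=N\}$ via the descending flag $F_j$ is the natural check, and your downward induction for the reverse inclusion is the substantive part: knowing that columns $k,\dots,n$ of $g$ already have lower-unipotent shape, you use $(\Ad_gY(y))_{++}=N$ on the $k$-th column to force $ge_{k-1}\in e_{k-1}+F_k$. This is a clean and complete argument. Compared with the paper, your approach is more elementary and avoids the need to justify why the infinitesimal computation determines the global slice; the paper's approach is shorter on the page but leans on that implicit step (and on identifying the centralizer condition correctly for arbitrary diagonal part of $Y_2$). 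Both ultimately hinge on the same rigidity: the single Jordan block $N$ in the strictly upper-triangular part, together with the cyclic vector $e_n$, leaves no room beyond $U_-$.
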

\begin{proof}
  We need to show that if \[(Y_1,v),(Y_2,g^{-1}v)\in \CC^n_\varkappa\quad \mbox{ and }\quad
Y_1-\Ad_gY_2\in \frb_-\]
then \(g\in U_-\).

By the first assumption we have:
\[v=e_n,\quad Y_i=\frh(Y_i)+J,\quad g_{in}=\delta_n^i.\]
where \(J\) is the Jordan block of size \(n\).

The second assumption defines subgroup \(G'\subset \GG_n\).
The Lie algebra \(\frg'\) of this group is defined by
\[\ad_\alpha Y_2\in \frb_-.\]

An element \(\alpha\in \frg'\) uniquely decomposes into the sum
\[\alpha=\alpha_{--}+\alpha_+,\quad \alpha_+\in \frb, \quad \alpha_{--}\in \frn_{--}.\]
The defining equation of \(\frg'\) only constraints \(\alpha_+\) since
\[\ad_{\alpha_{--}}(\frh(Y_2)+J)\in \frb_-.\]

Moreover, the condition on \(\alpha_+\) is equivalent to
\[\ad_{\alpha_+}(J)=0\]
Hence \(\alpha_+\) is a polynomial of \(J\):
\[\alpha_+=\sum_{i=0}^{n-1}c_i J^i.\]
However, by the first assumption \((\alpha)_{in}=0\), \(i<n\) and thus \(c_i=0\), \(i>0\) and we have shown
that \(\frg'=\frn_-\).
\end{proof}

Let us summarize the discussion of the section by reminding that the embedding
\[j':\quad \frb\ti\frh\ti\GG_n\ti\frb=\overline{\calX}^\circ_n\to\calX_n,\quad j'(X_1,Y_1,g,Y_2)\mapsto (X_1,Y_1+(\Ad_gY_2)_{++},g,Y_2),\]
and the potential \(\overline{W}^\circ\) on \(\overline{\calX}^\circ_n\) is obtained from
\(W \) by pull-back along \(j'\).
The inverse to the Kn\"orrer equivalence is given by:
\[KN^{-1}=\pi_{kn*}\circ j_{kn}^*: \quad \MF_n\to \overline{\MF}^\circ_n=\MF_{B^2}(\overline{\calX}^\circ,\overline{W}^\circ).\]

As we have shown the reduced functor \(\overline{\bb}\), which is obtain by pre-composing \(\bb\) with \(KN\) has  the following
simple decryption:
\begin{theorem}\label{thm:simple}
  The reduced functor
  \[ \overline{\bb}:\quad \overline{\MF}_n^\circ=\MF_{B^2}(\overline{\calX}^\circ_n,\overline{W}^\circ)\to D(\CC^n\ti \CC^n),\]
  is defined by
  \[\overline{\bb}=\pi_{\frh*}\circ\bar{i}^*_{\varkappa,\varkappa} \]
  where
  \[\bar{i}_{\varkappa,\varkappa}:\quad \CC^n\ti U_-\ti \CC^n\to \overline{\calX}^\circ_n, \quad \bar{i}_{\varkappa,\varkappa}(\vec{y}_1,g,\vec{y}_2)=(0,\frh(\vec{y_1}),g,\frh(\vec{y}_2)+J),\]
  and \(\pi_{\frh}\) is the projection to \(\CC^n\ti\CC^n\).
\end{theorem}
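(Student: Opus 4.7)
The plan is to assemble the reductions already carried out in this section into the compact formula stated in the theorem. There is no new geometric input to produce; what remains is to trace the definitions carefully and verify the explicit form of $\bar{i}_{\varkappa,\varkappa}$.

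First, I would invoke the Kn\"orrer equivalence $KN=j_{kn*}\circ\pi_{kn}^*$ to write $\overline{\bb}=\bb\circ KN$, and use the simplified presentation $\bb=\pi_{\frh*}\circ\res^*\circ i^*_{\varkappa,\varkappa}$ obtained by replacing the Chevalley--Eilenberg step $\CE_{\frn^2}(-)^{T^2}$ with restriction to the $B^2$-slice $\CC^n_\varkappa\times\CC^n_\varkappa$; this is legitimate because $B^2$ acts freely on the strongly stable locus. Next I would apply the base-change identity coming from the commuting square of maps displayed just before Proposition \ref{prop:simple-slice}, namely
\[
i^*_{\varkappa,\varkappa}\circ j_{kn*}\circ\pi_{kn}^*=\bar{j}_*\circ\tilde{i}^*_{\varkappa,\varkappa}\circ\pi_{kn}^*,
\]
so that the composition $\res^*\circ i^*_{\varkappa,\varkappa}\circ j_{kn*}\circ\pi_{kn}^*$ collapses to a single pullback $\bar{i}^*_{\varkappa,\varkappa}$ along the map $\bar{i}_{\varkappa,\varkappa}=(\pi_{kn}\times\Id_{\CC^n})\circ\tilde{i}_{\varkappa,\varkappa}$ into $\overline{\calX}^\circ_n\times\CC^n$. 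The vanishing of $\res$ on the $\fg_n$-factor accounts for the coordinate $X_1=0$.

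Second, I would invoke Proposition \ref{prop:simple-slice} to identify the source $(\frh\times\GG_n\times\frb\times\CC^n)^{\st}_{\varkappa,\varkappa}$ of $\bar{i}_{\varkappa,\varkappa}$ with $\CC^n\times U_-\times\CC^n$. Under this identification the $\CC^n\times\CC^n$ appearing in the target is precisely the image of $\pi_\frh$, so $\overline{\bb}=\pi_{\frh*}\circ\bar{i}^*_{\varkappa,\varkappa}$ as stated.

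The main point requiring verification is the explicit formula for $\bar{i}_{\varkappa,\varkappa}$. This is a direct calculation from the definition \eqref{eq:kappa}: the slice condition $(Y_1,v)\in\CC^n_\varkappa$ forces $v=e_n$ and $Y_1=\frh(\vec y_1)+J$, and similarly $(Y_2,g^{-1}v)\in\CC^n_\varkappa$ forces $Y_2=\frh(\vec y_2)+J$. Composing with $\pi_{kn}$, which extracts the diagonal $\lambda(Y_1)=\frh(\vec y_1)$ in the second factor, and recalling that $\bar{j}$ reconstructs the upper-triangular part of $Y_1$ from $h+(\Ad_g Y_2)_{++}$ (so that upon pulling back along $\pi_{kn}$ only the $\frh$-part survives), one reads off $\bar{i}_{\varkappa,\varkappa}(\vec y_1,g,\vec y_2)=(0,\frh(\vec y_1),g,\frh(\vec y_2)+J)$. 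The only real bookkeeping hazard is keeping track of which factor of $\frb$ is projected to $\frh$ and which retains the Jordan summand coming from the $\varkappa$-parametrization; this is a routine but essential check.
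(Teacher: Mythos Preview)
Your proposal is correct and follows essentially the same route as the paper. The theorem is stated in the paper as a summary of the discussion in Section~\ref{sec:knorr-reduct-mathbbb}: replace the Chevalley--Eilenberg step by restriction to the $B^2$-slice $\CC^n_\varkappa\times\CC^n_\varkappa$, apply the base-change identity from the commuting diagram to collapse $\bb\circ KN$ to $\pi_{\frh*}\circ\bar{i}^*_{\varkappa,\varkappa}$, and then invoke Proposition~\ref{prop:simple-slice} to identify the source as $\CC^n\times U_-\times\CC^n$. Your additional explicit verification of the formula for $\bar{i}_{\varkappa,\varkappa}$ from the definition of the slice $\varkappa$ is a useful sanity check that the paper leaves implicit.
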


\subsection{Unit element}
\label{sec:unit-el}

In this section we apply the results of the previous section and check that \(\bb\) send the convolution unit to the unit.
Let us first recall that \(R_n\) has a natural structure of \(R_n\) bimodule and is naturally a unit of the
bimodule monoidal structure.

To describe the unit on the geometric side we introduce \(B^2\)-equivariant space
\[\overline{\calX}^\circ_n(B_n)=\frb\ti \frh\ti B_n\ti \frb,\quad \bar{j}_B:\overline{\calX}^\circ_n(B_n)\to \overline{\calX}^\circ_n,\]
where the last map is induced by the natural inclusion of \(B_n\) inside \(\GG_n\).
The pull-back of the potential \(\overline{W}^\circ\) is quadratic:
\[\bar{j}_B^*(\overline{W}^\circ)=\sum_{i=1}^n X_{1,ii}(Y_{1,ii}-Y_{2,ii}).\]
In particular, the category \(\MF_{B^2}(\overline{\calX}_n^\circ(B_n), \bar{j}_B^*(\overline{W}^\circ))\)
contains a canonical Koszul matrix factorization:
\[K_\Delta=\begin{bmatrix} X_{1,11} & Y_{1,11}-Y_{2,11}\\ \vdots &\vdots \\
    X_{1,nn}& Y_{1,nn}-Y_{2,nn}
  \end{bmatrix}
\]

Respectively, the unit in the monoidal  category  \(\MF_n\) is defined by :
\[\calC_{\parallel}=KN(\overline{\calC}_\parallel)\in\MF_n,\quad \overline{\calC}_\parallel=j_{B*}(K_\Delta) \]

\begin{proposition}\label{prop:unit-br}
  For any \(n\) we have
  \[\bb(\calC_\parallel)=R_n.\]
\end{proposition}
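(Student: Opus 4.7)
The plan is to invoke Theorem~\ref{thm:simple} and reduce the computation of $\bb(\calC_\parallel)$ to an explicit Koszul calculation. Since $\calC_\parallel=KN(\overline{\calC}_\parallel)$ and $\overline{\bb}=\bb\circ KN$, we have
\[
\bb(\calC_\parallel) \;=\; \overline{\bb}(\overline{\calC}_\parallel) \;=\; \pi_{\frh*}\circ \bar{i}^*_{\varkappa,\varkappa}\bigl(j_{B*}(K_\Delta)\bigr),
\]
so it suffices to evaluate the right-hand side and identify it with the diagonal bimodule $R_n\in\rmD^{\per}(\CC^n\ti\CC^n)$.

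First I would carry out a base change along the Cartesian square
\[
\begin{tikzcd}
\CC^n\ti\{1\}\ti\CC^n \ar[r,"j'"] \ar[d,"\bar{i}'"'] & \CC^n\ti U_-\ti\CC^n \ar[d,"\bar{i}_{\varkappa,\varkappa}"]\\
\overline{\calX}^\circ_n(B_n) \ar[r,"\bar{j}_B"] & \overline{\calX}^\circ_n
\end{tikzcd}
\]
The fiber is $\CC^n\ti\{1\}\ti\CC^n$ because the image of $\bar{i}_{\varkappa,\varkappa}$ lies over $U_-\subset\GG_n$ while the image of $\bar{j}_B$ lies over $B_n\subset\GG_n$, and $U_-\cap B_n=\{1\}$. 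The intersection is transverse at $g=1$ since $\gl_n=\frn_-\oplus\frb$, which means the Koszul sequences defining each closed embedding concatenate into a single regular sequence on $\overline{\calX}^\circ_n$. This is exactly the setting in which the Koszul model of $j_{B*}$ from section~\ref{sec:push-forwards} is compatible with pullback, and the base-change identity of section~\ref{sec:base-change} yields
\[
\bar{i}^*_{\varkappa,\varkappa}\bigl(j_{B*}(K_\Delta)\bigr) \;\simeq\; j'_*\bigl((\bar{i}')^*(K_\Delta)\bigr),
\]
where $\bar{i}'(\vec{y}_1,\vec{y}_2)=(0,\frh(\vec{y}_1),1,\frh(\vec{y}_2)+J)$.

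Next I would compute $(\bar{i}')^*(K_\Delta)$ entry by entry. The $i$-th row $(X_{1,ii},\;Y_{1,ii}-Y_{2,ii})$ of $K_\Delta$ pulls back to $(0,\;y_{1,i}-y_{2,i})$: the first column is killed because the $X_1$-coordinate of $\bar{i}'$ is zero, while $J$ is strictly upper triangular and so contributes nothing to the diagonal. The pulled-back potential vanishes, and the resulting matrix factorization on $\CC^n\ti\CC^n$ has differential $D=\sum_i (y_{1,i}-y_{2,i})\,\theta_i^*$, which is the standard Koszul resolution of $\CC[\vec{y}_1,\vec{y}_2]/(y_{1,i}-y_{2,i}:i=1,\dots,n)=R_n$, i.e.\ of the diagonal bimodule.

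Finally, since $\pi_\frh\circ j'=\Id_{\CC^n\ti\CC^n}$, the push-forward $\pi_{\frh*}\circ j'_*$ is the identity, producing $\bb(\calC_\parallel)=R_n$. The main obstacle I anticipate is the rigorous justification of base change for the Koszul-type push-forward $j_{B*}$ against the closed embedding $\bar{i}_{\varkappa,\varkappa}$; this is handled by the transversality observation above, which allows the ideal of $B_n\subset\GG_n$ and the ideal of the image of $\bar{i}_{\varkappa,\varkappa}$ to be combined into one regular sequence and reduces the statement to a direct manipulation of Koszul matrix factorizations.
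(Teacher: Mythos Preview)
Your proof is correct and follows essentially the same approach as the paper's: you reduce to $\overline{\bb}(\overline{\calC}_\parallel)$, set up the same Cartesian square (your $j'$ and $\bar{i}'$ are the paper's $\tilde{j}_B$ and $\tilde{i}_{\varkappa,\varkappa}$), apply base change, and observe that $\pi_\frh\circ j'=\Id$ so the result is the pullback of $K_\Delta$ along $\bar{i}'$, which is the Koszul resolution of the diagonal. Your additional transversality justification for the base-change step is a helpful elaboration that the paper leaves implicit.
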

\begin{proof}
  By above discussion it is enough to show that
  \[\overline{\bb}(\overline{\calC}_\parallel)=R_n.\]
  The diagram of  maps that participate the construction of \(\overline{\bb}\) could be
  completed to the commuting diagram:
  \[
    \begin{tikzcd}
      \overline{\calX}_n^\circ(B_n)\arrow[r,hook,"\bar{j}_B"]& \overline{\calX}_n^\circ&\\
      \CC^n\ti\CC^n\arrow[u,"\tilde{i}_{\varkappa,\varkappa}"]\arrow[r,"\tilde{j}_B"]&\arrow[u,"\bar{i}_{\varkappa,\varkappa}"]\CC^n\ti U_-\ti \CC^n
      \arrow[r,"\pi_\frh"]& \CC^n\ti \CC^n.
    \end{tikzcd}
  \]
  where
  \[\tilde{i}_{\varkappa,\varkappa}(Y_1,Y_2)=(0,Y_1,1,Y_2+J_n).\]
  The map \(\tilde{j}_B\) is uniquely determined by the commutativity of the diagram and
  \[\pi_\frh\circ \tilde{j}_B=id.\]
  Thus we can apply the base change formula to obtain
  \[\overline{\bb}(\overline{\calC}_\parallel)=\pi_{\frh*}\circ\bar{i}^*_{\varkappa,\varkappa}\circ \bar{j}_{B*}(K_\Delta)=
    \pi_{\frh*}\circ \tilde{j}_{B*}\circ \tilde{i}_{\varkappa,\varkappa}^*(K_\Delta)=\tilde{i}_{\varkappa,\varkappa}^*(K_\Delta).\]
  Since \(K_\Delta|_{X=0}\)  is the Koszul complex of the diagonal in \(\CC^n\ti \CC^n\) the statement follows.
\end{proof}

\subsection{Elementary braid diagrams}
\label{sec:elem-braid-diagr}

For our argument we would need to calculate the functor \(\bb\) on the matrix factorizations that represent the braid diagrams.

Let us recall that the basic building blocks for the Soergel bimodules are
the singular Soergel bimodules:
\[B_i=R_n\otimes_{R_n^i} R_n,\]
where \(R_n=\CC[y_1,\dots,y_n]\) and \(R_n^i=R_n^{(i,i+1)}\) is the subring
of polynomials invariant with respect to switching \(y_i\) and \(y_{i+1}\).
The bimodule \(R_n\) is the unit of the convolution.

In this section we would like to discuss the case \(n=2\) for both
Soergel bimodules and for matrix factorizations.

The analog of the bimodule \(B_1\in \sbim_2\) is the element \(\calC_\bullet\) of the category
\(\MF_n\) which is easier to describe in the different model \(\overline{\MF}_n\) of the category
\(\MF_n\). The category \(\overline{\MF}_n\) is defined as category of matrix factorizations
on the space \[\overline{\calX}_n=\frn\times \GG_n\times \frb,\] with the potential
\[\overline{W}(X,g,Y)=\Tr(X\Ad_gY).\]

The  left and right \(B\)-actions on \(G\) could be extended in a unique to preserve the potential.
Respectively, we use the following notations for the \(B^2\)-equivariant category of matrix factorizations:
\[\overline{\MF}_n=\MF_{B^2}(\overline{\calX}_n,\overline{W}).\]

As it is shown in \cite{OblomkovRozansky19a} there is a natural Kn\"orrer equivalence functor:
\[KN: \overline{\MF}_n\to \MF_n.\]

If \(n=2\) the potential \(\overline{W}\) factors:
\[\overline{W}(X,g,Y)=x_{12}g_{21}\tilde{y}\det(g)^{-1},\quad \tilde{y}=g_{22}(y_{11}-y_{22})-g_{21}y_{12}.\]
Thus we can define three  Koszul matrix factorizations:
\[\overline{\calC}_\parallel=[g_{21},x_{12}\tilde{y}/\det(g)],\quad \overline{\calC}_\bullet=[x_{12}g_{21},\tilde{y}/\det(g)],\quad
\overline{\calC}_+=[x_{12},g_{12}\tilde{y}/\det(g)].\]

Using the Kn\"orrer functor we get the elements:
\[\calC_\parallel=KN(\overline{\calC}_\parallel),\quad \calC_\bullet=KN(\overline{\calC}_\bullet),\quad \calC_+=KN(\overline{\calC}_+).\]

Below we show the basic case of our main result:

\begin{proposition}\label{prop:el-braid}
  If \(n=2\) then
  \[\bb(\calC_\bullet)=B_1.\]
\end{proposition}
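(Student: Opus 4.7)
My plan is to compute $\bb(\calC_\bullet)$ by routing through the explicit description of $\overline{\bb}$ in Theorem \ref{thm:simple}. Since by construction $\overline{\bb}=\bb\circ KN_1$ (where $KN_1:\overline{\MF}^\circ_n \to \MF_n$ is the Knörrer equivalence introduced in \S\ref{sec:knorr-reduct-mathbbb}) and $\calC_\bullet = KN(\overline{\calC}_\bullet)$, the computation reduces to
\[
\bb(\calC_\bullet)\;=\;\overline{\bb}\bigl(KN_1^{-1}\circ KN(\overline{\calC}_\bullet)\bigr)\;=\;\pi_{\frh*}\bar i^*_{\varkappa,\varkappa}\bigl(KN_1^{-1}\circ KN(\overline{\calC}_\bullet)\bigr).
\]

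\textbf{Step 1 (combined Knörrer).} I would first identify the composition $\overline{KN}:=KN_1^{-1}\circ KN:\overline{\MF}_n\to \overline{\MF}^\circ_n$ with an explicit tensor-with-Koszul operation. Decomposing $X=X_{++}+X_{\mathrm{diag}}+X_{--}$ and splitting the potentials on the relevant ambient spaces, one checks the identity
\[
\overline{W}^\circ(X_1,Y_1,g,Y_2)+\overline{W}(X_{++},g,Y_2)\;=\;\sum_{i=1}^n (X_1)_{ii}\bigl((Y_1)_{ii}-(\Ad_g Y_2)_{ii}\bigr),
\]
so that $\overline{KN}$ acts (up to a sign adjustment in the Koszul) by
\[
\overline{\calC}_\bullet\;\longmapsto\;\overline{\calC}_\bullet\otimes\bigotimes_{i=1}^{n}\bigl[(X_1)_{ii},\,(Y_1)_{ii}-(\Ad_g Y_2)_{ii}\bigr].
\]

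\textbf{Step 2 (restriction to the slice).} For $n=2$, apply $\bar i^*_{\varkappa,\varkappa}$ by substituting $X_1=0$, $Y_1=\diag(y_{1,1},y_{1,2})$, $g=\bigl(\begin{smallmatrix}1&0\\ g_{21}&1\end{smallmatrix}\bigr)$, $Y_2=\diag(\vec y_2)+J_2$. Since $X_1=0$ kills the $a$-entry of every Koszul factor, the restriction becomes
$[0,f_0]\otimes[0,f_1]\otimes[0,f_2]$ on $\CC^2\times U_-\times \CC^2$, where $f_0$ is obtained by evaluating $\tilde y/\det g$ on the slice and $f_1,f_2$ are the diagonal components of $Y_1-\Ad_g Y_2$. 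A direct computation of $\Ad_g Y_2$ for the specific $g\in U_-$ and $Y_2=\diag(\vec y_2)+J_2$ gives each $f_j$ as an explicit linear form in the five variables $(\vec y_1,g_{21},\vec y_2)$.

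\textbf{Step 3 (Koszul and push-forward).} One verifies by a linear-independence check that $(f_0,f_1,f_2)$ is a regular sequence, so the tensor product of the three factors is a free resolution of the quotient ring $\CC[\vec y_1,g_{21},\vec y_2]/(f_0,f_1,f_2)$. Pushing forward along $\pi_\frh$ (which forgets $g_{21}$) amounts to solving one of the equations for $g_{21}$ and substituting, yielding an explicit $\CC[\vec y_1,\vec y_2]$-module. Finally, one identifies this module with the Soergel bimodule $B_1=\CC[\vec y_1,\vec y_2]/(e_1(\vec y_1)-e_1(\vec y_2),\,e_2(\vec y_1)-e_2(\vec y_2))$.

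The principal obstacle is the identification in Step 3: carefully handling the signs introduced by the Knörrer equivalence, the $\ZZ_2$-grading on the Koszul factors, and confirming that both irreducible components of the support of $B_1$ (the diagonal and the swapped diagonal in $\CC^2\times \CC^2$) are correctly produced by the computation. The analogous (easier) check for $\overline{\calC}_\parallel$ — where the factor $[g_{21},0]$ replaces $[0,f_0]$ and eliminating $g_{21}$ forces the diagonal relations $y_{1,i}=y_{2,i}$ — served in Proposition \ref{prop:unit-br} as a sanity check for this strategy.
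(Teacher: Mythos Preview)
There is a concrete error in Step~1 that propagates to the wrong final answer. Your displayed identity for the potentials is fine, but it says that the pullback of $\overline{W}$ plus your Koszul potential equals $\overline{W}^\circ+2\,\overline{W}$, not $\overline{W}^\circ$; so $\overline{\calC}_\bullet\otimes\bigotimes_i[(X_1)_{ii},\ldots]$ is not even a matrix factorization for $\overline{W}^\circ$. Passing to the dual $\overline{\calC}_\bullet^\vee$ fixes the potential but does not fix the outcome: on the slice, $\overline{\calC}_\bullet=[x_{12}g_{21},\tilde y/\det g]$ contributes (after $X_1=0$) only the \emph{linear} element $f_0=\tilde y|_{\text{slice}}=y_{2,1}-y_{2,2}-g_{21}$. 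Together with $f_1=y_{1,1}-y_{2,1}+g_{21}$ and $f_2=y_{1,2}-y_{2,2}-g_{21}$ you obtain, after eliminating $g_{21}$, the module
\[
\CC[\vec y_1,\vec y_2]/\bigl(y_{1,1}-y_{2,2},\;y_{1,2}-y_{2,1}\bigr),
\]
which is the ``swap'' bimodule $R_{s_1}$, not $B_1$. In other words, your Step~3 produces only the anti-diagonal component and misses the diagonal one.

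The paper's proof avoids the composite $KN_1^{-1}\circ KN$ altogether: it works directly in $\MF_n^\circ$, fixes the $B^2$-slice (with both $Y_1,Y_2$ in Jordan form and $g_{12}=\bigl(\begin{smallmatrix}a_{11}&0\\a_{21}&1\end{smallmatrix}\bigr)$), and reads off that the $\calC_\bullet$-factor is $[x_{12},\,a_{21}(y_{11}^{(1)}-y_{22}^{(1)}-a_{21})]$. The crucial point is that the resulting Koszul generator is $g_{21}\cdot f_0=(\Ad_g Y_2)_{21}$, which is \emph{quadratic} in $g_{21}$; its two linear factors are exactly what produce the two components of $B_1$. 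The factor of $g_{21}$ you lose is precisely the difference between $\overline{\calC}_\bullet=[x_{12}g_{21},\tilde y/\det g]$ and $\overline{\calC}_+=[x_{12},g_{21}\tilde y/\det g]$, so your Step~1 is implicitly assuming the two Kn\"orrer functors compose to a naive ``pull back and tensor with Koszul''; they do not, and you would need to track the actual push/pull definitions of both $KN$'s to get the correct transport of $\overline{\calC}_\bullet$ into $\overline{\MF}_n^\circ$. Your sanity check with $\overline{\calC}_\parallel$ does not detect this, since there the slice relation forces $g_{21}=0$ and both routes coincide.
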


\begin{proof}
  Since the \(\GG_n\)-action on the space \(\calX^{\st}\) is free it is more convenient to
  work with the \(\GG_n\)-quotient  space with the potential
  \[\calX^{\circ,\st}\subset\fg_n\times\frb\times G\times \frb\times V,\quad W^\circ(X,Y^1,g_{12},Y^2,v)=
    \Tr(X(\Ad_{g_{12}}Y_1-Y_2)),\]
  and the stability condition \(\langle X,Y_2\rangle v=\CC^n\), \(\langle X,\Ad_{g_{12}}Y_1\rangle v=\CC^n\) and \(B^2\) action:
  \[(b_1,b_2)\cdot v= b_2 v,\quad  (b_1,b_2)\cdot Y_i=\Ad_{b_i}Y_i,\quad (b_1,b_2)\cdot X=\Ad_{b_2}X,\quad (b_1,b_2)\cdot g_{12}=b_2g_{12}b_1^{-1}.\]

  To distinguish two \(B\)-actions we write \(B^2=B^{(1)}\times B^{(2)}\).

  We can use the \(B^{(2)}\)-action on \(V\) to put the vector \(v\) in the standard position
  \[v_0=\begin{bmatrix} 0\\1\end{bmatrix}.\]

  Let \(G'\subset G\) be a open \(B^{(1)}\)-equivariant locus inside the group:
  \[G'=\{\begin{bmatrix} a&b\\c&d\end{bmatrix}| a\ne 0\}.\]

  The pull-back along the inclusion map \[j':\fg_n\times\frb\times G'\times\frb\times v_0\to\fg_n\times\frb\times G\times\frb\times v_0\]
  is induces the equivalence of categories of \(B^{(1)}\)-equivariant matrix factorizations.

  Indeed, if \(g\in G\setminus G'\) then for any \(Y\in \frb\) there is vanishing of one of the matrix
  coefficients of the conjugate:
  \[\big(\Ad_g(Y)\big)_{12}=0.\]
  On the other hand on the critical locus of \(W^\circ\) we have
  \[\Ad_{g_{12}}Y_1=Y_2.\]
  Thus if \(g_{12}\in G\setminus G'\) the critical relations imply that \(Y_2\) is diagonal. On the other hand \(v=v_0\)
  thus \(\CC[Y_2]v\ne \CC^2\) and we get a contradiction to the strong stability.

  The \(B^{(1)}\)-action and \(T^{(2)}\)-actions  allows us to move the point of \(\calX^{\st}\) to the standard position:
  \[g_{12}=\begin{bmatrix} a_{11}&0\\a_{21}&1\end{bmatrix},\quad Y_i=\begin{bmatrix}y^{(i)}_{11}& 1\\0&y^{(i)}_{22}\end{bmatrix},\quad v=v_0\]
  This closed locus has a trivial stabilizer inside \(B^2\).

  By direct computation we obtain:
  \[\Ad_{g_{12}}Y_1=\begin{bmatrix}
      y_{11}^{(1)}-a_{21}& a_{11}\\(y^{(1)}_{11}-y^{(1)}_{22}-a_{21})a_{21}/a_{11}&a_{21}+y_{22}^{(1)},
    \end{bmatrix}
  \]
  respectively the potential is equal:
  \begin{multline*}W^\circ=x_{11}(y^{(1)}_{11}-y^{(2)}_{11}-a_{21})+x_{21}(a_{11}-1)+x_{22}(y^{(1)}_{22}-y^{(2)}_{22}+a_{21})\\
   + x_{12} a_{21 }(y^{(1)}_{11}-y^{(1)}_{22}-a_{21}).
  \end{multline*}

  The first line of the formula for the potential is exactly the quadratic term that is responsible for the the Kn\"orrer functor
  \(KN\). Thus obtain a presentation of the matrix factorizations as curved Koszul complexes:
  \[ \CE_{\frn^2}(\calC_\bullet)^{T^2}=Q\otimes [x_{12},a_{21}(y_{11}^{(1)}-y^{(1)}_{22}-a_{21})],\]
  \[Q= [x_{11},y^{(1)}_{11}-y^{(2)}_{11}-a_{21}]\otimes[x_{21},a_{11}-1]\otimes[x_{22},y^{(1)}_{22}-y^{(2)}_{22}+a_{21}].\]

  The functor \(\bb\) sets variables \(x_{ij}\) to zero. Thus  \(\bb(\calC_\bullet)\) is a Koszul complex for the regular sequence  \[a_{21}(y_{11}^{(1)}-y_{22}^{(1)}-a_{21}),\quad y^{(1)}_{11}-y^{(2)}_{11}-a_{21},\quad a_{11}-1,\quad y^{(1)}_{22}-y^{(2)}_{22}+a_{21}.\]
  This sequence is equivalent to the sequence
  \[(y_{22}^{(2)}-y_{22}^{(2)})(y_{11}^{(2)}-y^{(1)}_{22}),\quad y_{11}^{(1)}+y_{22}^{(1)}-y_{11}^{(2)}-y_{22}^{(2)},\quad a_{11},
    \quad y^{(1)}_{22}-y^{(2)}_{22}+a_{21}.\]

  The first two terms of the sequence define the bimodule \(B_1\) and the statement follows.
  \end{proof}

\subsection{Induction functors}
\label{sec:induction-functors}

The presentation of \(\CC^n\) as product \(\CC^k\ti\CC^{n-k}\) induces the induction functor:
\[\ind_k:\sbim_k\ti\sbim_{n-k}\to\sbim_n.\]

Similar functor was constructed for the category of matrix factorization,
we provide the details below.
In the next section we use the induction functor  to define
the generators \(\calC_\bullet^{(i)}\) of \(\MF_n^\flat\). In this section we show the compatibility of
these induction functors:

\begin{proposition}
  We have:
  \[\bb\circ \ind_k=\ind_k\circ \bb\ti \bb.\]
\end{proposition}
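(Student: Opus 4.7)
The plan is to reduce the statement to a commutative diagram of spaces and then apply the base change formula from Section~\ref{sec:base-change}. I would begin by making explicit the induction functor on the matrix factorization side. Using the standard parabolic $P_k \subset \GL_n$ with Levi $L_k = \GL_k \times \GL_{n-k}$, the induction functor is constructed as a pull–push through the correspondence $\GL_k\ti\GL_{n-k}\leftarrow P_k \hookrightarrow \GL_n$, combined with the block-decomposition $\frb_n = \frb_k \oplus \frb_{n-k} \oplus \frn_{+,k}^{(n-k)}$ and the obvious identification $\frh_n = \frh_k \times \frh_{n-k}$. On the Soergel side, $\ind_k$ is simply the external tensor product of bimodules along the ring inclusion $R_k\otimes R_{n-k} \hookrightarrow R_n$.

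Next I would invoke the reduced description from Theorem~\ref{thm:simple}, $\overline{\bb} = \pi_{\frh*} \circ \bar{i}^*_{\varkappa,\varkappa}$, where the slice is $\CC^n \times U_{-,n} \times \CC^n$. The geometric heart of the proof is the identification
\[
\bar{i}_{\varkappa,\varkappa}^{(n)}\bigl(\CC^k\ti U_{-,k}\ti \CC^k \;\ti\; \CC^{n-k}\ti U_{-,n-k}\ti \CC^{n-k}\bigr)
= \bar{i}_{\varkappa,\varkappa}^{(n)}(\CC^n\ti U_{-,n}^{\block}\ti \CC^n),
\]
i.e. that the block-diagonal embedding $U_{-,k}\ti U_{-,n-k}\hookrightarrow U_{-,n}$ together with the product identification on the $\CC^\bullet$-factors is precisely the trace on slices of the parabolic correspondence that defines $\ind_k$ on the MF side. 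In particular, the Jordan block $J_n$ used in the definition of $\varkappa$ decomposes block-diagonally into $J_k\oplus J_{n-k}$ up to terms lying in the parabolic nil-radical, which disappear after restriction to the slice and after the Kn\"orrer step. Assembling these into a single commutative square relating the four spaces
$\overline{\calX}_k^\circ\ti\overline{\calX}_{n-k}^\circ$, $\overline{\calX}_n^\circ$, $\CC^k\ti U_{-,k}\ti\CC^k \,\ti\, \CC^{n-k}\ti U_{-,n-k}\ti\CC^{n-k}$, and $\CC^n\ti U_{-,n}\ti\CC^n$, with the parabolic correspondence on one side and the projections to $\frh$ on the other, puts us in position to use the smooth base change equality $g^* f_* = f'_* g'^*$ of Section~\ref{sec:base-change}.

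Finally, I would verify that the potentials match on the nose under these identifications (both reduce to $\sum x_{ii}(y^{(1)}_{ii}-y^{(2)}_{ii})$ in block-diagonal coordinates), so that no extra curved differentials are introduced. Base change then exchanges $\bar{i}^{(n)*}_{\varkappa,\varkappa}$ with the parabolic pull-back, and exchanges $\pi^{(n)}_{\frh*}$ with the external push-forward $\pi^{(k)}_{\frh*}\boxtimes \pi^{(n-k)}_{\frh*}$, yielding the claimed equality on objects. Functoriality and compatibility with morphisms follow by the same diagram applied to morphism complexes.

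The main obstacle will be the bookkeeping required to ensure that the stability conditions, the $\varkappa$-slices, and the Kn\"orrer reductions all respect the parabolic/block decomposition in a strict (and not merely up to homotopy) sense; once the slice identification is verified and seen to intertwine the two $\varkappa$-maps and the two $\pi_\frh$-projections, the rest is a direct application of base change and of the monoidal property (Proposition~\ref{prop:monoid}) applied factorwise.
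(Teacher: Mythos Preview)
Your proposal is correct and follows essentially the same route as the paper: reduce to the Kn\"orrer model $\overline{\bb}=\pi_{\frh*}\circ\bar i_{\varkappa,\varkappa}^*$ and the reduced induction $\overline{\ind}_k=\bar i_{k*}\circ\bar p_k^*$, build a commutative diagram matching the parabolic correspondence $\overline{\calX}_n^\circ(P_k)$ against the block-diagonal inclusion $U^-_k\times U^-_{n-k}\hookrightarrow U^-_n$ on the slice, and then apply base change. Two small points to tighten: first, after $\bar i_{\varkappa,\varkappa}^*$ the coordinate $X$ is set to $0$, so the potential is already identically zero and there is nothing to match---your discussion of $\sum x_{ii}(y^{(1)}_{ii}-y^{(2)}_{ii})$ is superfluous; second, the extra super-diagonal entry $J_n-(J_k\oplus J_{n-k})$ is killed not by the Kn\"orrer step but simply by the projection $\bar p_k$ from $\frb_n$ to $\frb_k\times\frb_{n-k}$, which drops the off-diagonal block. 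With these adjustments your diagram coincides with the one in the paper and the chain of base changes $\bar i_{\varkappa,\varkappa}^*\circ\bar i_{k*}=\tilde i_{k*}\circ\tilde i_{\varkappa,\varkappa}^*$ and $\tilde i_{\varkappa,\varkappa}^*\circ\bar p_k^*=(\bar i_{\varkappa,\varkappa}\times\bar i_{\varkappa,\varkappa})^*$ goes through exactly as you outline.
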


Before proceed with the proof let us remind the construction for the induction functors for categories \(\MF_n\).
The induction functor is easier to describe in the reduced case:
\[\overline{\ind}_k: \overline{\MF}^\circ_{k}\ti \overline{\MF}_{n-k}^\circ\to \overline{\MF}_n^\circ.\]
To define the functor we introduce the intermediate space:
\[\overline{\calX}^\circ_n(P_k)=\frb_n\ti\frh_n\ti P_k \ti \frb_n,\quad P_k\subset\GG_n.\]
where \(P_k\) the standard parabolic subgroups.

The last space has a natural \(B_n^2\)-action.
Since the standard  group homomorphisms:
\[P_k\to \GG_k\ti \GG_{n-k},\quad P_k\to \GG_n\]
are \(B^2\)-equivariant we get the corresponding \(B^2\)-maps and the induction functor:
\[\begin{tikzcd}
    \overline{\calX}_n^\circ(P_k)\arrow[d,"\bar{p}_k"]\arrow[r,"\bar{i}_k"]& \overline{\calX}_n^\circ\\
    \overline{\calX}_k^\circ\ti\overline{\calX}_{n-k}^\circ&
  \end{tikzcd},\quad \overline{\ind}_k=\bar{i}_{k*}\circ\bar{p}_k^*.
\]

To define the functor in the non-reduced version of the category we introduce the intermediate space:
\[\calX_n^\circ(P_k)\xhookrightarrow{i_k}\calX_n^\circ,\quad \calX_n^\circ(P_k)=\frp_k\ti \frb_n\ti P_k\ti \frb_n,\]
where \(\frp_k=\mathrm{Lie}(P_k)\).

Analogously to the reduced case we have \(B^2_n\)-equivariant maps \(p_k,i_k\) which allow us to define the induction
functor:
\[\begin{tikzcd}
    \calX_n^\circ(P_k)\arrow[d,"p_k"]\arrow[r,"i_k"]& \calX_n^\circ\\
    \calX_k^\circ\ti\calX_{n-k}^\circ&
  \end{tikzcd},\quad \ind_k=i_{k*}\circ p_k^*.
\]

Let us list the properties of these functors.

\begin{proposition}
  For any \(n\) and \(1\le k< n\) we have
  \begin{enumerate}
  \item \(\ind_k\circ KN_k\ti KN_{n-k}=KN_n\circ \overline{\ind}_k\).
    \item \(\ind_k(\calF\star\calF'\ti \calG\star\calG')=\ind_k(\calF\ti \calG)\star\ind_k(\calF'\ti\calF')\).
  \end{enumerate}
\end{proposition}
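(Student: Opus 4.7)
The plan is to deduce both claims from the base change isomorphisms of Section~\ref{sec:base-change}, since the induction functors, Kn\"orrer reduction, and convolution are all compositions of equivariant pull-backs and push-forwards along explicit intermediate spaces. The task reduces to exhibiting the relevant cartesian squares and checking that the flat/transverse hypotheses hold; no new constructions are needed.

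For part (1), I would introduce the intermediate space $\widetilde{\calX}_n^\circ(P_k) := \frb_n \times \frb_n \times P_k \times \frb_n$ equipped with analogues of the Kn\"orrer maps: a smooth projection $\widetilde{\pi}_{kn}$ onto $\overline{\calX}_n^\circ(P_k)$ extracting the Cartan of the second factor, and a closed embedding $\widetilde{j}_{kn}$ into $\calX_n^\circ(P_k)$ given by $(X_1,Y_1,g,Y_2)\mapsto(X_1,Y_1+(\Ad_g Y_2)_{++},g,Y_2)$. It fits into a pair of squares with the reduced/non-reduced induction maps: one relates $\widetilde{\pi}_{kn}$ to the smooth Levi projection $\bar{p}_k$, the other relates $\widetilde{j}_{kn}$ to the parabolic closed embedding $i_k$. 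The first square is cartesian with flat horizontal legs, and the second is cartesian with closed embeddings cut out by regular sequences that remain regular after the Levi restriction. Smooth base change applied to the first and the Koszul-style base change of Section~\ref{sec:push-forwards} applied to the second assemble to $\ind_k \circ (KN_k \times KN_{n-k}) = KN_n \circ \overline{\ind}_k$.

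For part (2), I would mimic the proof of Proposition~\ref{prop:monoid}: convolution on $\MF_n$ is computed as a push-pull through the triple space $\calX_{con} = \fg_n \times (\GL_n \times \frb_n)^3$, with the quotient step $\CE_{\frn^{(2)}}(-)^{T^{(2)}}$ killing the middle $B$-action. The analogous parabolic triple $\calX_{con}(P_k) := \frp_k \times (P_k \times \frb_n)^3$ admits the evident projections $\pi_{ij}^{P_k}$, a closed embedding into $\calX_{con}$ and a Levi projection to $\calX_{con,k} \times \calX_{con,n-k}$. Each face of the resulting cube relating $P_k$-variants to $\GL_n$-variants is a transverse regular embedding, each face relating to the Levi factorization is flat and cartesian, and the middle quotient step is compatible with these because $\frn^{(2)}$ sits inside $\frp_k$ as a Levi-compatible subalgebra. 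Applying base change throughout rewrites both sides of the identity as the same iterated push-pull on $\calX_{con}(P_k)$.

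The main obstacle will be verifying that the strong stability conditions used to define $\MF_n^{\st}$ and invoked in Proposition~\ref{prop:monoid} via the shrinking lemma restrict correctly along the parabolic-level cubes. Strong stability on $\calX_{con}(P_k)$ amounts to cyclicity of $v$ under block-upper-triangular pairs, which should imply the cyclicity required for the Levi blocks after projection; however, a dedicated check of the parabolic analogue of the shrinking lemma is required so that the restrictions to the various stability loci assemble into honest equivalences. Once this stability bookkeeping is in place, both parts of the proposition follow from routine application of the base change isomorphisms listed above.
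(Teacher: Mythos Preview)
The paper does not actually prove this proposition: the entire argument consists of the sentence ``The second property is shown in \cite[Proposition 6.2]{OblomkovRozansky16} the first property is an easy modification of \cite[Proposition 6.1]{OblomkovRozansky16}.'' So there is no proof in the paper to compare against; the authors defer entirely to their earlier work.

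Your base-change sketch is a reasonable reconstruction of what such a proof should look like, and is almost certainly in the spirit of the cited arguments, since the induction functors, Kn\"orrer equivalence, and convolution are all built from the same push-pull formalism. One remark: your closing paragraph about strong stability and the shrinking lemma is likely unnecessary here. The proposition as stated concerns $\ind_k$ and $\overline{\ind}_k$ as functors between the \emph{non-stable} categories $\MF_k^\circ \times \MF_{n-k}^\circ \to \MF_n^\circ$ and their reduced counterparts; stability only enters later when one passes to $\MF_n^{\st}$ via $j_{\st}^*$. So the ``main obstacle'' you flag does not actually arise for this statement, and the argument should go through with just the cartesian-square and regularity checks you outline.
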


The second property is shown in \cite[Proposition 6.2]{OblomkovRozansky16} the first property is an easy modification of
\cite[Proposition 6.1]{OblomkovRozansky16}. It is also shown in \cite{OblomkovRozansky16} that the induction functor is
transitive:

\begin{proposition}\label{prop:ind}\cite[Proposition 6.4]{OblomkovRozansky16}
  The following functors
  \[\MF_k^\circ\ti \MF_{m-k}^\circ\ti\MF_{n-m}^\circ\to \MF_n^\circ\]
  are naturally isomorphic:
  \[\ind_k\circ \Id\ti \ind_{m-k}\simeq \ind_m\circ\ind_k\ti \Id.\]
\end{proposition}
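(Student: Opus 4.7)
The plan is to exhibit both functors as arising from a common triple-parabolic intermediate construction, and deduce the equivalence by base change for equivariant matrix factorizations. Let $Q \subset \GG_n$ denote the standard parabolic subgroup with Levi $\GG_k \times \GG_{m-k} \times \GG_{n-m}$, so that $Q \subset P_k$ and $Q \subset P_m$, and define
\[
\calX_n^\circ(Q) = \mathfrak{q} \times \frb_n \times Q \times \frb_n,
\]
where $\mathfrak{q} = \Lie Q$, with the natural $B_n^2$-action. There are closed embeddings $\calX_n^\circ(Q) \hookrightarrow \calX_n^\circ(P_k)$ and $\calX_n^\circ(Q) \hookrightarrow \calX_n^\circ(P_m)$ coming from the inclusions of parabolics, and a smooth $B_n^2$-equivariant projection $q\colon \calX_n^\circ(Q) \to \calX_k^\circ \times \calX_{m-k}^\circ \times \calX_{n-m}^\circ$ coming from the Levi quotient. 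This yields a candidate triple-induction functor $\ind_Q := i_{Q*} \circ q^*$ where $i_Q$ is the embedding $\calX_n^\circ(Q) \hookrightarrow \calX_n^\circ$.

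The core step is to show that each side of the stated isomorphism is canonically identified with $\ind_Q$. For the right-hand side $\ind_m \circ (\ind_k \times \Id)$, the inner induction $\ind_k \times \Id$ acts on the first two factors via the parabolic $P_k \subset \GG_m$; the relevant fiber square
\[
\begin{tikzcd}
\calX_n^\circ(Q) \ar[r] \ar[d] & \calX_n^\circ(P_m) \ar[d] \\
\calX_m^\circ(P_k) \times \calX_{n-m}^\circ \ar[r] & \calX_m^\circ \times \calX_{n-m}^\circ
\end{tikzcd}
\]
is Cartesian (both spaces parametrize the same data: an element of $P_m$ together with its refinement to a $Q$-flag and compatible Borel data). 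The bottom horizontal map is a closed embedding cut out by a regular sequence of strict-lower-triangular entries, and the right-hand vertical map is smooth, so the base change result of Section~\ref{sec:base-change} intertwines the pull-push. Composing with the outer $p_m^*$ on the source and $i_{m*}$ on the target gives $\ind_m \circ (\ind_k \times \Id) \simeq \ind_Q$. The symmetric Cartesian square with $P_k$ and $P_m$ exchanged yields the same identification for $\ind_k \circ (\Id \times \ind_{m-k})$, and transitivity follows.

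The main obstacle will be the verification that the Koszul resolutions used to define the push-forwards $i_{P_k *}$, $i_{P_m *}$, and $i_{Q *}$ are genuinely compatible: one must show that the ideal of $\calX_n^\circ(Q)$ inside $\calX_n^\circ$ is generated by the union of the regular sequences cutting out $\calX_n^\circ(P_k)$ and $\calX_n^\circ(P_m)$, and that this combined sequence remains regular so the two successive Koszul resolutions compose to the Koszul resolution for $Q$. Because the defining equations are linear in the strict-lower-triangular matrix entries indexed by disjoint sets of positions, this regularity is clear, but care is required with the ambient $\frp$-factor (as opposed to the group factor) so that potentials match under pull-back. All $B_n^2$-equivariance is automatic because $B_n \subset Q \subset P_k, P_m$. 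The equivalence then propagates from the reduced $\circ$-model to the full $\MF_n$ via the Kn\"orrer equivalence using part~(1) of the preceding proposition.
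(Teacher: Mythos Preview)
The paper does not supply its own proof of this proposition; it is quoted from \cite[Proposition~6.4]{OblomkovRozansky16}, and the \texttt{proof} environment that follows it in the text is actually the delayed proof of the earlier statement $\bb\circ\ind_k=\ind_k\circ\bb\times\bb$.

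Your argument via a common triple-parabolic refinement $Q\subset P_k\cap P_m$ and two Cartesian squares is the standard one and is in essence what appears in the cited reference. The verification that $\calX_n^\circ(Q)$ is the fiber product of $\calX_n^\circ(P_m)$ and $\calX_m^\circ(P_k)\times\calX_{n-m}^\circ$ over $\calX_m^\circ\times\calX_{n-m}^\circ$ is correct, as is your observation that the two Koszul sequences (for the $\frp$-factor and the group factor) are given by vanishing of disjoint collections of strict-lower-triangular matrix entries, hence concatenate to a regular sequence cutting out $\calX_n^\circ(Q)$ in $\calX_n^\circ$.

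One caveat: you invoke Section~\ref{sec:base-change} for the base change step, but that section as written only records the case where the \emph{bottom} map is flat, whereas in your square the bottom map $i_k\times\Id$ is a regular closed embedding and it is the \emph{right vertical} map $p_m$ that is smooth. The base change you need still holds---it is the Tor-independent case, or equivalently exactly the Koszul compatibility you spell out in your last paragraph---but it is not literally the statement in Section~\ref{sec:base-change}, so you should justify it directly from the Koszul construction of $j_*$ in Section~\ref{sec:push-forwards} rather than by citation.
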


\begin{proof}[Proof of ]
  By proposition~\ref{prop:ind}  it is enough to show
  \[\overline{\bb}\circ \overline{\ind}_k=\overline{\ind}_k\circ\overline{\bb}\ti \overline{\bb}.\]
  To prove the last statement we observe there is a commutative diagram that includes all the map that participate in both sides
  of the equation:
  \[\begin{tikzcd}[column sep=small]
      \CC^k\ti U^-_k\ti\CC^k\ti \CC^{n-k}\ti U^-_{n-k}\ti \CC^{n-k}\arrow[ddd,bend right= 80,"\bar{i}_{\varkappa,\varkappa}\ti\bar{i}_{\varkappa,\varkappa}"]\arrow[d,equal]\arrow[dr,"\tilde{i}_k"]\arrow[rr,"\pi_\frh\ti \pi_\frh"]&
     &\CC^k\ti\CC^k\ti \CC^{n-k}\ti\CC^{n-k}\arrow[d,equal]\\
      \CC^n\ti U^-_k\ti U^-_{n-k}\ti \CC^n\arrow[d,"\tilde{i}_{\varkappa,\varkappa}"]\arrow[r,"\tilde{i}_k"]&\CC^n\ti U^-_n\ti \CC^n\arrow[r,"\pi_\frh"]\arrow[d,"\bar{i}_{\varkappa,\varkappa}"]&\CC^n\ti\CC^n\\
      \overline{\calX}_n^\circ(P_k)\arrow[r,"\bar{i}_k"]\arrow[d,"\bar{p}_k"]&\overline{\calX}_n^\circ&\\
      \overline{\calX}_k^\circ\ti \overline{\calX}_{n-k}^\circ&&
    \end{tikzcd},
  \]
  where the new maps are
  \[\tilde{i}_{\varkappa,\varkappa}(Y_1,g',g'',Y_2)=(0,Y_1,g'\ti g'',Y_2+J_n),\]
  and the map \(\tilde{i}_\varkappa\) is induced by the inclusion of \(U^-_k\ti U^-_{n-k}\to U^-_n\).

  Thus we can use base-change relation to show:
  \begin{multline*}
    \overline{\bb}\circ \overline{\ind}_k=\pi_{\frh*}\circ \bar{i}_{\varkappa,\varkappa}^*\circ \bar{i}_{k*}\circ \bar{p}_k=
    \pi_{\frh*}\circ\tilde{i}_{k*}\circ \tilde{i}_{\varkappa,\varkappa}^*\circ \bar{p}_k^*=    \pi_{\frh*}\circ\tilde{i}_{k*}\circ \bar{i}_{\varkappa,\varkappa}^*\ti \bar{i}_{\varkappa,\varkappa}^*\\=\pi_{\frh*}\ti \pi_{\frh*}\circ \bar{i}_{\varkappa,\varkappa}^*\ti \bar{i}_{\varkappa,\varkappa}^*=\ind_k\circ \overline{\bb}\ti \overline{\bb}.
  \end{multline*}
\end{proof}
\subsection{Generators of $\MF_n^{\flat}$}
\label{sec:generators-mf_nflat}

Let us set as in \cite{OblomkovRozansky16}:
\[\calC_\bullet^{(i)}=\ind_i(\calC''_\parallel\ti
  \ind_2(\calC_\bullet\ti\calC'_\parallel))\in \MF_n,\]
where \(\calC'_\parallel=\calC_\parallel\in \MF_{n-i-1} \) and
\(\calC''_\parallel=\calC_\parallel\in \MF_{i-1}\).

Thus combination of  propositions~\ref{prop:ind}, \ref{prop:unit-br}
and \ref{prop:el-braid} implies

\begin{proposition}
  For any \(i\in 1,\dots,n-1\) we have:
  \[\bb(\calC^{(i)}_\bullet)=B_i.\]
\end{proposition}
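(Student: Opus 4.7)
The plan is to simply combine the three ingredients cited in the statement and unwind the definition of $\calC_\bullet^{(i)}$. First I would apply the induction-compatibility proposition, namely $\bb\circ\ind_k = \ind_k\circ(\bb\times\bb)$, to the outermost $\ind_i$:
\[
\bb(\calC_\bullet^{(i)}) = \bb\bigl(\ind_i(\calC''_\parallel\ti\ind_2(\calC_\bullet\ti\calC'_\parallel))\bigr) = \ind_i\bigl(\bb(\calC''_\parallel)\ti\bb(\ind_2(\calC_\bullet\ti\calC'_\parallel))\bigr).
\]
Applying induction compatibility once more to the inner $\ind_2$, I get
\[
\bb(\calC_\bullet^{(i)}) = \ind_i\bigl(\bb(\calC''_\parallel)\ti\ind_2(\bb(\calC_\bullet)\ti\bb(\calC'_\parallel))\bigr).
\]

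Next I would substitute the values computed in the previous propositions: by Proposition~\ref{prop:unit-br} we have $\bb(\calC''_\parallel)=R_{i-1}$ and $\bb(\calC'_\parallel)=R_{n-i-1}$, while by Proposition~\ref{prop:el-braid} we have $\bb(\calC_\bullet)=B_1\in \sbim_2$. Hence
\[
\bb(\calC_\bullet^{(i)}) = \ind_i\bigl(R_{i-1}\ti\ind_2(B_1\ti R_{n-i-1})\bigr).
\]

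The remaining task is the bookkeeping identification $\ind_i(R_{i-1}\ti\ind_2(B_1\ti R_{n-i-1})) = B_i$ in $\sbim_n$. This follows from the definition of the Soergel induction: on the algebraic side $\ind_k$ is $R_n\otimes_{R_k\ot R_{n-k}}(-)$, so inducing $R_{i-1}$ on the first $i-1$ strands and $R_{n-i-1}$ on the last $n-i-1$ strands while placing $B_1\in\sbim_2$ on strands $i,i+1$ yields exactly $R_n\ot_{R_n^{s_i}}R_n = B_i$; transitivity of induction (the analog of Proposition~\ref{prop:ind} on the Soergel side, or a direct check) lets one assemble the two inductions into a single step. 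No step here is genuinely difficult; the only mild point is to make sure the induction-compatibility statement used above really applies to \emph{both} the geometric and algebraic inductions, i.e.\ that the functor $\bb$ intertwines them in the way the previous proposition asserts, which by construction it does. Once this identification is in place, the equality $\bb(\calC_\bullet^{(i)})=B_i$ is immediate.
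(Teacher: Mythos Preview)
Your proposal is correct and follows exactly the argument the paper intends: the paper states (without writing out details) that the result follows by combining the induction compatibility $\bb\circ\ind_k=\ind_k\circ(\bb\times\bb)$, Proposition~\ref{prop:unit-br}, and Proposition~\ref{prop:el-braid}, and you have simply spelled this out. The only additional content you supply is the routine Soergel-side identification $\ind_i(R_{i-1}\times\ind_2(B_1\times R_{n-i-1}))=B_i$, which the paper leaves implicit.
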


\section{MOY relations for matrix factorizations}
\label{sec:relations-mfflat}

In this section we discuss the properties of the functor \(\bb\). In context of our previous work we assign
a matrix factorization to a braid graph  \(D\in\Br_n^\flat\)
on \(n\) strands.

The building blocks of our category of graphs are elementary
graphs \(D^{(i,i+1)}_\bullet\). We label the strand and show only
non-trivial part of the graph, the strands \(j\)-th strand for \(j\ne i,i+1\) go up.

\begin{wrapfigure}{r}{0.35\textwidth}
  \vspace{-50pt}
  \begin{center}
    \includegraphics[width=0.32\textwidth, height=0.32\textwidth]{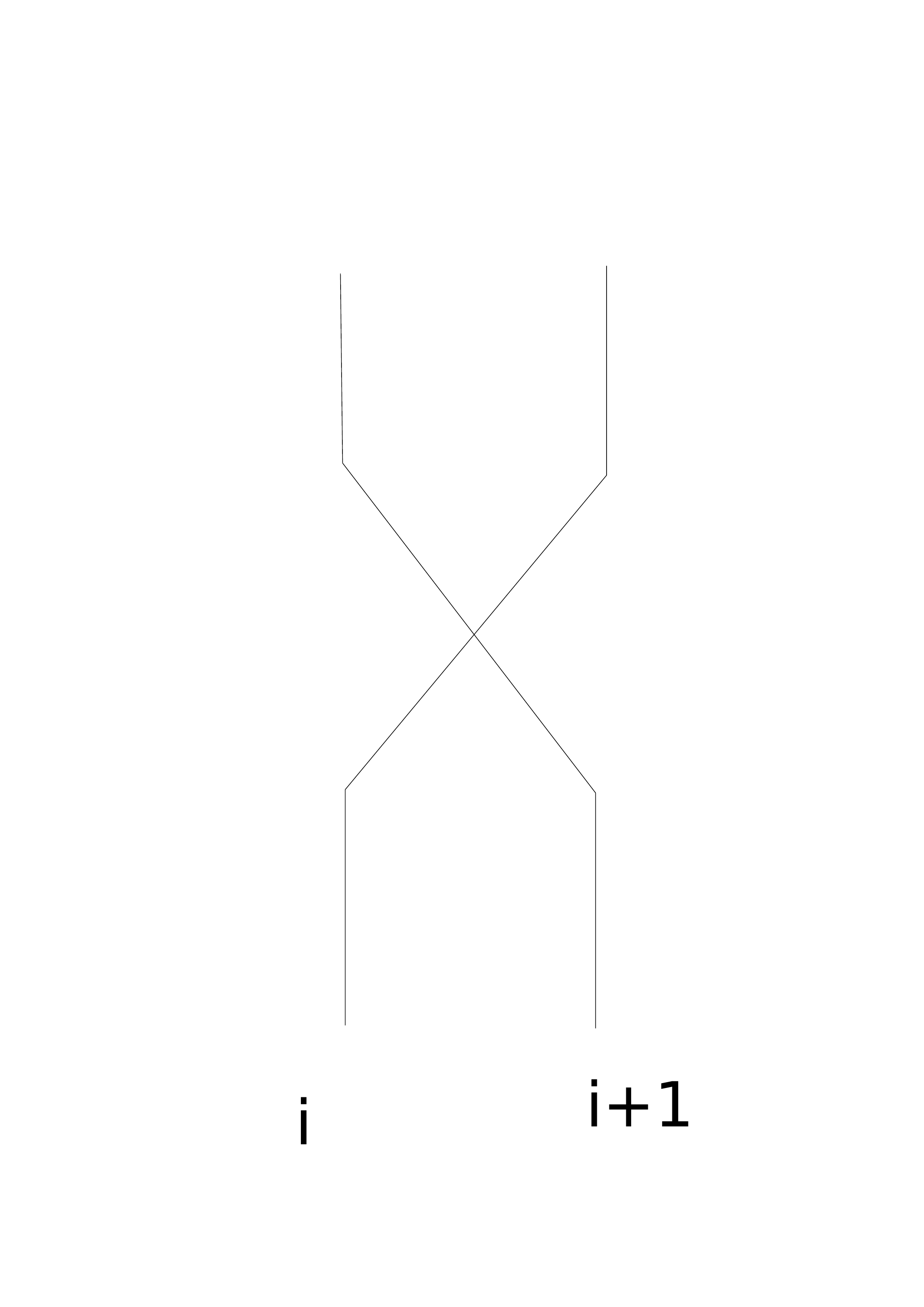}
  \end{center}
  \vspace{-30pt}
  \caption{$D^{(i,i+1)}$}
\end{wrapfigure}

We compose the graphs vertically and use notation \(\circ\) for the composition. The set of all possible
compositions of \(D_\bullet^{(i,i+1)}\), \(i=1,\dots,n-1\)
we denote by \(\mathfrak{Br}_n^\flat\).

There is a natural homomorphism
\[ \Phi^\flat:\quad \Br_n^\flat\to \MF_n, \] \[\Phi^\flat(D^{(i,i+1)}_\bullet)=\calC_\bullet^{(i)}.\]
Respectively, \(\Phi^\flat\) sends composition \(\circ\)
to \(\star\).

The graphs \(\mathfrak{Br}_n^\flat\) are particular
cases of so-called MOY graphs \cite{MurakamiOhtsukiYamada98}. To be more precise the
graphs from \(\mathfrak{Br}_n^\flat\) are braid graphs. In \cite{MurakamiOhtsukiYamada98} it is
explained how one can the braid diagram in term of the braid graphs. After resolution
the braid relations become so called MOY relations between the braid diagrams. The main result of this section is a proof of the braid MOY relations inside
\(\MF_n\).

\begin{lemma}\label{lem:MOY1}
  For any \(1\le i\le n-2\) we have:
  \[\calC_\bullet^{(i+1)}\star\calC_\bullet^{(i)}\star\calC_\bullet^{(i+1)}\oplus
    \mathbf{q}^2\calC_\bullet^{(i)}=
  \calC_\bullet^{(i)}\star\calC_\bullet^{(i+1)}\star\calC_\bullet^{(i)}\oplus
  \mathbf{q}^2\calC_\bullet^{(i+1)}\]
\end{lemma}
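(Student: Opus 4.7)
The plan is to reduce to the minimal rank case and then perform an explicit splitting of both sides into a common ``full-merge'' summand plus a rank-one correction.

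\textbf{Step 1: Reduction to $n=3$, $i=1$.} Since $\calC_\bullet^{(i)}$ is constructed from $\calC_\bullet$ by repeated application of $\ind$ against identity matrix factorizations $\calC_\parallel$, and since the induction functors are monoidal with respect to $\star$ (the second part of the last proposition on induction) and transitive (Proposition~\ref{prop:ind}), both sides of the desired identity are images of $\calC_\parallel^{(i-1)}\ti X\ti \calC_\parallel^{(n-i-2)}$ under $\ind_{i-1}\circ(\Id\ti\ind_3)$, where $X$ is one of the two triple convolutions in $\MF_3$. It therefore suffices to establish
\[
\calC_\bullet^{(2)}\star\calC_\bullet^{(1)}\star\calC_\bullet^{(2)}\oplus \mathbf{q}^2\,\calC_\bullet^{(1)} \;\cong\; \calC_\bullet^{(1)}\star\calC_\bullet^{(2)}\star\calC_\bullet^{(1)}\oplus \mathbf{q}^2\,\calC_\bullet^{(2)}
\]
in $\MF_3$.

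\textbf{Step 2: Explicit models in the reduced category.} I would work in $\overline{\MF}_3^\circ$ and transport the statement through the Knörrer equivalence $KN$. By the same open-cell analysis as in the proof of Proposition~\ref{prop:el-braid}, one may parametrize strongly stable configurations by a distinguished slice in which $v=e_3$, the matrices $Y_i$ are in ``companion'' form, and the three group elements $g_{12}$, $g_{23}$, $g_{34}$ appearing in the triple convolution lie in small affine cells of $B\backslash G/B$. Each elementary factor $\calC_\bullet^{(i)}$ is presented as a two-term Koszul factor corresponding to the ``merge'' polynomial on the $(i,i+1)$-block, analogous to $[x_{12}g_{21},\tilde y/\det(g)]$ for $n=2$. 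Composing three such factors produces an explicit (curved) Koszul matrix factorization whose underlying module is an exterior algebra on six generators over the coordinate ring of the open cell. I would write out both $\calC_\bullet^{(2)}\star\calC_\bullet^{(1)}\star\calC_\bullet^{(2)}$ and $\calC_\bullet^{(1)}\star\calC_\bullet^{(2)}\star\calC_\bullet^{(1)}$ in this form, keeping track of both the internal $\mathbf{q}$-grading and the $T$-equivariant weights.

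\textbf{Step 3: Extracting the common ``full-merge'' summand.} The categorical MOY relation on the Soergel side reads $B_2 B_1 B_2 \oplus B_1\cong B_1 B_2 B_1\oplus B_2$, the common summand being the ``long'' Soergel bimodule $B_{s_1 s_2 s_1}$. I expect its matrix factorization lift to be the image $\calC_\square$ under induction of the ``total merge'' matrix factorization on three strands, obtained from the three-strand braid graph in which all three strands are merged. On both sides, one of the Koszul generators becomes a unit on a specific affine piece of the base (the locus where the intermediate companion coefficient is invertible), allowing a Gauss-type contraction of the Koszul complex. The complementary piece, supported on the vanishing locus of that coefficient, matches the Koszul presentation of $\calC_\bullet^{(1)}$ (respectively $\calC_\bullet^{(2)}$), and an explicit check of the internal degree of the relevant Koszul generator shows that the shift is precisely $\mathbf{q}^2$. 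Producing the splitting maps and their homotopy inverses concretely on the Koszul presentation then yields the desired direct-sum decomposition in $\MF_3$.

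\textbf{Main obstacle.} The difficulty is not conceptual but bookkeeping: one must exhibit honest splitting morphisms of matrix factorizations (not just an abstract Krull--Schmidt argument), because the category is only additive and the Gauss elimination lemmas must be applied in the curved-Koszul setting with mixed $B^2$-equivariance. In particular, the verification that the residual summand is genuinely $\mathbf{q}^2\calC_\bullet^{(i)}$ rather than some twist requires tracking the weight of the $\frb_-$-component of the matrix factorization differential across the shrinking argument. I would model this step closely on the Gauss elimination procedures in \cite{OblomkovRozansky16} so that the signs and shifts follow automatically once the splitting generator is identified.
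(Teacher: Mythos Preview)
Your reduction to $n=3$ in Step~1 is correct and matches the paper, and your identification of a common ``full-merge'' summand is exactly the right idea: the paper calls it $\calC_\bullet^{(1,2)}$, the matrix factorization attached to the full Steinberg variety $St^{(1,2)}\subset\tfrg_3^2$. Where you diverge is in the mechanism for extracting that summand.

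You propose to work entirely inside $\overline{\MF}_3^\circ$, write out the triple convolutions as explicit curved Koszul complexes on an affine slice, and produce splitting idempotents by Gauss elimination. The paper instead passes through the monoidal push-forward $i_*\colon \rmD^{b,T_q}_{G,\xprop}(\tfrg^2)\to\MF_n$ from the coherent-sheaf Steinberg category. There the triple convolution is computed geometrically: $\calB_\bullet^{(1)}\star\calB_\bullet^{(2)}$ is identified with $\calO_{Z_{12}}$ for an explicit union of Bruhat-cell closures $\Gamma_w$, and convolving once more with $\calB_\bullet^{(1)}$ yields a short exact sequence
\[
0\to \calB_\bullet^{(1)}\to \calB_\bullet^{(1)}\star\calB_\bullet^{(2)}\star\calB_\bullet^{(1)}\to \calB_\bullet^{(1,2)}\to 0
\]
obtained by decomposing the convolution space $Z_{123}$ into two pieces whose images under $\pi_{13}$ are computed using standard facts about graph closures $\Gamma_w$. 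After applying $i_*$, the resulting triangle in $\MF_3$ is shown to split not by exhibiting explicit maps but by the vanishing $\Ext^{>0}(\calC_\bullet^{(1,2)},\calC_\bullet^{(1)})=0$, which reduces (via self-duality and Lemma~\ref{lem:triple-double}) to a Chevalley--Eilenberg computation of $H^*_{Lie}(\frn,\CC[\frb])^T$. The symmetric argument with the roles of $1$ and $2$ exchanged gives the same decomposition for $\calC_\bullet^{(2)}\star\calC_\bullet^{(1)}\star\calC_\bullet^{(2)}$, and the lemma follows.

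The advantage of the paper's route is precisely that it sidesteps the obstacle you flag: no explicit splitting morphisms in the curved-Koszul setting are ever written down, and the $\mathbf{q}$-shift falls out of the divisor computation on $Z'_{123}\cap Z''_{123}$ rather than from tracking $B^2$-weights through a contraction. Your approach would in principle work, but the Ext-vanishing argument is both shorter and more robust.
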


The skein-relation (or quadratic Hecke relation) becomes the
MOY relation depicted below and prove the relation also
holds in category \(\MF_n\);
\begin{lemma}\label{lem:MOY2}
  For any \(1\le i \le n-1\) we have
  \[\calC_\bullet^{(i)}\star \calC_\bullet^{(i)}=\calC_\bullet^{(i)} \oplus\mathbf{q}^2\calC^{(i)}_\bullet.\]
\end{lemma}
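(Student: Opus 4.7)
I would reduce to the case $n=2,\ i=1$ and then compute the convolution explicitly in the reduced model.

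For the reduction, apply the monoidal property of the induction functor $\ind_k(\calF\star\calF'\ti\calG\star\calG') = \ind_k(\calF\ti\calG)\star\ind_k(\calF'\ti\calG')$ to the definition $\calC_\bullet^{(i)} = \ind_i(\calC''_\parallel\ti\ind_2(\calC_\bullet\ti\calC'_\parallel))$ and use the idempotence $\calC_\parallel\star\calC_\parallel\simeq\calC_\parallel$ of the monoidal unit to obtain
\[
\calC_\bullet^{(i)}\star\calC_\bullet^{(i)} \simeq \ind_i\bigl(\calC''_\parallel\ti\ind_2(\calC_\bullet\star\calC_\bullet\ti\calC'_\parallel)\bigr).
\]
Since $\ind_i$ and $\ind_2$ are additive functors, the lemma follows immediately from its $n=2$ instance: $\calC_\bullet\star\calC_\bullet\simeq\calC_\bullet\oplus\mathbf{q}^2\calC_\bullet$ in $\MF_2$.

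For the rank-two computation, I would pass to the reduced model $\overline{\MF}_2$ via the Knörrer equivalence $KN$, where $\overline{\calC}_\bullet = [x_{12}g_{21},\ \tilde y/\det g]$ with $\tilde y = g_{22}(y_{11}-y_{22}) - g_{21}y_{12}$. The convolution in this model is the triple-space correspondence followed by the $T^{(2)}$-averaged Chevalley-Eilenberg functor $\CE_{\frn^{(2)}}(\cdot)^{T^{(2)}}$ for the middle Borel and a pushforward onto the outer factors. On the critical locus $\Ad_{g_{12}}Y_1 = Y_2$ of $\overline W$, strong stability forces the middle $g_{12}$ into the open Bruhat cell of $\GG_2/B\cong\PP^1$, and the middle element can be normalized on an explicit slice exactly as in the proof of Proposition~\ref{prop:el-braid}. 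On this slice the middle $B^{(2)}$-action is free, so $\CE_{\frn^{(2)}}(\cdot)^{T^{(2)}}$ reduces to pulling back along the slice inclusion.

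After restriction, the tensor product of two copies of $[x_{12}g_{21},\ \tilde y/\det g]$ joined over the slice reorganises into a length-two Koszul factorization whose defining entries, after simplification, recover the generators $x_{12}g_{21}$ and $\tilde y/\det g$ of $\overline{\calC}_\bullet$, together with one extra polynomial generator of $\mathbf{q}$-degree $2$ coming from the remaining coordinate on the middle $\PP^1$. Expanding along this extra generator produces the direct-sum decomposition $\overline{\calC}_\bullet\oplus\mathbf{q}^2\overline{\calC}_\bullet$; applying $KN^{-1}$ transports the identity back to $\MF_2$.

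The main obstacle will be the bookkeeping of the Koszul differential after the middle-Borel slicing, specifically verifying that its two surviving entries coincide, up to sign and an invertible change of generators, with those of $\overline{\calC}_\bullet$. This step mirrors the classical Soergel identity $R_2\otimes_{R_2^{s_1}}R_2\cong R_2\oplus R_2(-2)$ at the level of matrix factorizations, with the $\mathbf{q}^2$-summand tracking the $T$-weight of the additional generator produced by the one-dimensional big cell on $\GG_2/B$.
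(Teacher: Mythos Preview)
Your reduction to $n=2$ via the monoidal property of $\ind_k$ is fine and is essentially how the paper proceeds implicitly: it states and proves the relation only for $\GL_2$ (Proposition~\ref{lem:buble}) and then transports it by the monoidal pushforward $i_*$.

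The rank-two computation is where your route diverges, and here there is a genuine gap. The paper does \emph{not} compute directly in $\overline{\MF}_2$; it passes to the coherent-sheaf category $\rmD^{b,T_q}_{G_2}(\tilde{\frg}_2^2)$ via the Koszul resolution $\calB_\bullet^{(1)}$ of the small Steinberg variety and proves $\calB_\bullet^{(1)}\star\calB_\bullet^{(1)}=\calB_\bullet^{(1)}\oplus\mathbf{q}^2\calB_\bullet^{(1)}$ there. After eliminating the middle $Y_2$, the convolution becomes $\CE_\frn(K[(\Ad_{g_2^{-1}g_1}Y_1)_{21}])^T\otimes\calB_\bullet^{(1)}$; the first factor is the pushforward along $G_2/B=\PP^1$ of the two-term complex $[\calO\to\mathbf{q}^2\calO(-2)]$, and the $\mathbf{q}^2$-summand comes from $H^1(\PP^1,\calO(-2))$.

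Your slicing argument cannot produce this term. You appeal to ``strong stability'' to push the middle group element into the big cell and then replace $\CE_{\frn^{(2)}}(\cdot)^{T^{(2)}}$ by restriction to an affine slice, as in Proposition~\ref{prop:el-braid}. But the lemma is stated in $\MF_n$, not $\MF_n^{\st}$: the reduced model $\overline{\MF}_2$ carries no framing vector, so the $\varkappa$-slice of Proposition~\ref{prop:monoid} is unavailable. The middle $B$-quotient is genuinely the non-affine $\PP^1$, and restricting to the big cell kills exactly the $H^1(\calO(-2))$ contribution you are after. The ``extra polynomial generator of $\mathbf{q}$-degree $2$'' you describe is the section of $\calO(-2)$ sitting in the Koszul differential, not an extra affine coordinate; expanding along it over an affine slice gives a two-term complex with nonzero differential, not a direct sum. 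The splitting occurs only because $H^0(\PP^1,\calO(-2))=0$ after the global pushforward. If you want to avoid the coherent-sheaf bridge, you must compute $\CE_{\frn^{(2)}}(\cdot)^{T^{(2)}}$ honestly as Lie-algebra cohomology, which reproduces the $\PP^1$-pushforward in disguise.
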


\subsection{Coherent sheaf version}
\label{sec:coher-sheaf-vers}

The categories \(\MF_n\) that we study are closely related to the categories from the work of Riche \cite{Riche08} and further
developed in the work of Bezrukavnikov and Riche \cite{BezrukavnikovRiche2012}.

The relation between the
category of Bezrukavnikov and Riche and matrix factorizations
were studied by Arkhipov and Kanstrup \cite{ArkhipovKanstrup15},
\cite{ArkhipovKanstrup15a} in the context of derived algebraic
geometry. Here we would like to present a link between the categories
studied by Bezrukavnikov-Riche and the equivariant matrix
factorization in the sense of \cite{OblomkovRozansky16}.

We use this link to give a more geometric proofs of some arguments.
It is also possible to do all proofs entirely in the matrix factorization
setting but the reference to the work of Bezrukavnikov and Riche might
be helpful for a reader familiar with the coherent sheaf version of
the construction.

The results of this section hold in the generality of
\cite{BezrukavnikovRiche2012}, that is the
group \(\GL_n\) can be replaced by any complex reductive
group \(G\) with Lie algebra \(\frg\). The category \(\MF_n\)
also can be defined in this generality.

The Grothendieck alteration \(\tilde{\frg}\to \frg\) has a quotient description \(\tilde{\frg}=G\ti\frb/B\). The Steinberg
variety \[St=\tilde{\frg}\ti_\frg\tilde{\frg}\subset \tilde{\frg}\ti\tilde{\frg}\] has a property that
the projections \(\pi_i:\tilde{\frg}\ti_\frg\tilde{\frg}\to \frg\), \(i=1,2\) has compact fibers.
Thus \(\calO_{St}\) is an element of the category \(\rmD^b_{prop}(\tilde{\frg}\ti\tilde{\frg})\).
The  last category consists of the complexes \(\calC^\bullet\) of coherent sheaves on \(\tilde{\frg}\times\tilde{\frg}\) such that the
 homology \(\mathcal{H}^*(\calC^\bullet )\) are sheaves with the support proper over each copy of \(\frg\).

Using the projections \(\pi_{ij}:\tilde{\frg}^3\to \tilde{\frg}^2\) Riche defines
the convolution product
\[\star: \rmD^b_{prop}(\tilde{\frg}^2)\ti \rmD^b_{prop}(\tilde{\frg}^2)\to \rmD^b_{prop}(\tilde{\frg}^2).\]
In \cite{Riche10} the homomorphism \(\Br_n^{aff}\to (\rmD^{b}_{prop}(\tilde{\frg}^2),\star)\) is constructed.

The  category that is most relevant for our model is the category \(T_{q}\)-equivariant  complexes
\(    \mathcal{D}_{G}^{b,T_{q}}(\tilde{\frg}^2)\)
that \(G\)-equivariant. The  torus \(T_{q}=\CC^*_q\) acts on the space \(\tfrg\) by scaling of
the \(\frg\) with weight \(\mathbf{q}\).

We can enhance the category by  enlarging the torus \(T_q\)
to \(T_{qt}=\CC_q\ti \CC_t\)
The  other factor \(\CC^*_t\) acts trivially on \(\tfrg^2\) but the
 differentials in the complexes have degree \(\mathbf{t}\).

There is a natural folding functor \(\mathrm{Per}: \rmD_{G}^{b,T_{qt}}(\tfrg^2)\to \rmD_G^{\per,T_{qt}}(\tfrg^2)\) that folds
the homological grading to the \(2\)-periodic grading. The  homological grading could be restored from the \(2\)-periodic grading
since the differential in the two-periodic complexes is of degree \(\mathbf{t}\).

Since \(\tilde{\frg}^2\) is smooth and every coherent sheaf has a finite projective resolution, this category is equivalent to the category of the strictly \(G\times B^2\)-equivariant matrix factorizations
with zero potential
\[\MF^{str}_{G \ti B^2}(\frb\ti G\ti\frb\ti G,0)\simeq \rmD^{\per,T_{qt}}_{G}(\tilde{\frg}^2). \]

An equivariant matrix factorization \((M,D,\partial)\) is strict if
\(D\) is equivariant and \(\partial=0\). Thus a pull-back of
a strictly equivariant matrix factorization is a strictly
equivariant. Similarly, the push-forward along the
smooth map and group quotient from section \ref{sec:push-forwards}
preserve strict equivariance. Thus the subcategory
\[ \MF^{str}_{G \ti B^2}(\frb\ti G\ti\frb\ti G,0)\subset
  \MF_{G \ti B^2}(\frb\ti G\ti\frb\ti G,0)\]
is a monoidal subcategory (the monoidal structure is \(\star\)).

The  pull-back along the inclusion \[i:\quad \frb\ti G\ti \frb\ti G\to \frg\ti \frb\ti G\ti \frb\ti G,\quad
i(z)=(0,z)\] annihilates the potential \(W\). Since the inclusion is clearly a regular lci inclusion
by the results from \cite{OblomkovRozansky16} there is a well-defined functor:
\[i_*: \rmD^{b,T_{q}}_G(\tilde{\frg}^2)\to \MF_n.\]

Let us also use notation \(i_*\) for the composition of the folding functor and the push-forward. If we specify the
domain of our functor there is no chance for a confusion.

The subcategory  \(\mathrm{D}^{b,T_{q}}_{G,prop}(\tilde{\frg}^2)\) that consists of the complexes with the homological supports
proper with respect to the projections \(\pi_i:\tilde{\frg}^2\to \tilde{\frg}\) is the analog of the category
\(\mathcal{D}_{prop}(\tilde{\frg}^2)\) and thus has a convolution product \(\star\).

\begin{proposition}
The push-forward functor
\(i_*\) is monoidal:
\[i_*(\calB)\star i_*(\calB')=i_*(\calB\star\calB').\]
\end{proposition}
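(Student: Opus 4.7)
The plan is to prove monoidality of $i_*$ by writing both convolutions as push-pulls along compatible diagrams and interchanging $i_*$ with these operations via base change. The key observation is that the triple Steinberg-type space $\tilde{\frg}^3 = (G\times\frb)^3$ embeds into $\calX_{con} = \frg\times(G\times\frb)^3$ as the zero section of the $\frg$-factor (call this embedding $i_3$), and this embedding fits into Cartesian squares with the convolution projections: $\pi_{ij}\circ i_3 = i\circ\pi_{ij}$ for $ij\in\{12,23,13\}$. Since the projections $\pi_{ij}$ on $\calX_{con}$ are flat, base change gives $\pi_{ij}^*\circ i_* = (i_3)_*\circ\pi_{ij}^*$.

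Using this, one rewrites
\[
i_*(\calB)\star i_*(\calB') = \pi_{13*}\,\CE_{\frn^{(2)}}\!\bigl((i_3)_* \pi_{12}^* \calB \ot (i_3)_* \pi_{23}^* \calB'\bigr)^{T^{(2)}}.
\]
The central technical step is to merge the two inner pushforwards into a single $(i_3)_*$. Using the explicit Koszul model for $i_*$ as tensoring with the matrix factorization $[X_{ij},(\mu(z_a)-\mu(z_b))_{ji}]$, whose total curvature matches $\Tr(X(\mu(z_a)-\mu(z_b)))$, the tensor product $(i_3)_* \pi_{12}^* \calB \ot (i_3)_* \pi_{23}^* \calB'$ is represented by a Koszul matrix factorization in the $X$-variables with two sets of linear factors $(\mu(z_1)-\mu(z_2))_{ji}$ and $(\mu(z_2)-\mu(z_3))_{ji}$. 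An elementary triangular change of basis, replacing the second set by the sum $(\mu(z_1)-\mu(z_3))_{ji}$, identifies this with $(i_3)_*(\pi_{12}^* \calB\ot\pi_{23}^* \calB')$ tensored with an acyclic Koszul factor in the residual directions, which collapses because those directions form a regular sequence independent of $X$. The tensor product therefore agrees with $(i_3)_*\bigl(\pi_{12}^*\calB\Ltimes\pi_{23}^* \calB'\bigr)$.

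Finally, one commutes $(i_3)_*$ past the Chevalley-Eilenberg functor $\CE_{\frn^{(2)}}(\cdot)^{T^{(2)}}$, which is computed using only the $B^{(2)}$-action and is thus compatible with the $B^3$-equivariant embedding $i_3$, and past the push-forward $\pi_{13*}$ by proper base change along $\pi_{13}\circ i_3 = i\circ\pi_{13}$. Combining these identifications with the sheaf-theoretic formula $\calB\star\calB' = \pi_{13*}(\pi_{12}^*\calB\Ltimes\pi_{23}^*\calB')$ yields
\[
i_*(\calB)\star i_*(\calB') = i_*\pi_{13*}\bigl(\pi_{12}^*\calB\Ltimes\pi_{23}^*\calB'\bigr) = i_*(\calB\star\calB').
\]
The principal obstacle is the merging step: in general the tensor product of two pushforwards from a common subvariety requires twisting by the conormal bundle via a self-intersection argument. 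Here the construction is rigged so that both Koszul matrix factorizations share the same $X$-coordinates, so an explicit triangular change of basis rewrites their combined curved differential as the single Koszul differential for $i_3$ plus an acyclic summand, avoiding any derived correction term.
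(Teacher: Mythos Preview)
The paper states this proposition without proof, so there is no reference argument to compare against; the following evaluates your attempt on its own.

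Your base-change identifications $\pi_{ij}^*\circ i_* \simeq (i_3)_*\circ\pi_{ij}^*$ are correct, and the plan of commuting a single $(i_3)_*$ past $\CE_{\frn^{(2)}}(\cdot)^{T^{(2)}}$ and $\pi_{13*}$ at the end is fine. The gap is in the merging step. After your triangular change of basis on
\[
\bigotimes_{ij}[X_{ij},(\mu(z_1)-\mu(z_2))_{ji}]\otimes\bigotimes_{ij}[X_{ij},(\mu(z_2)-\mu(z_3))_{ji}]
\]
one obtains
\[
\bigotimes_{ij}[X_{ij},(\mu(z_1)-\mu(z_3))_{ji}]\otimes\bigotimes_{ij}\bigl[\,0,\;(\mu(z_2)-\mu(z_3))_{ji}\,\bigr],
\]
and the residual block is \emph{not} contractible. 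A rank-one Koszul factor $[0,f]$ with $f$ regular is, as an object of $\MF(\cdot,0)=\mathrm{D}^{\per}$, quasi-isomorphic to $\calO/(f)$ concentrated in one parity, not to $0$ or $\calO$. The full residual is therefore the (two-periodically folded) Koszul resolution of $\calO_{\{\mu(z_2)=\mu(z_3)\}}$, and tensoring by it imposes a genuine derived restriction to that locus. Because both original blocks share the same first-slot coordinates $X_{ij}$, any invertible change of basis leaves one block with zero first entries; you cannot avoid a $[0,\cdot]$ residual. Your remark that ``those directions form a regular sequence independent of $X$'' is exactly what makes the residual a resolution of a nontrivial structure sheaf rather than something contractible.

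What presumably makes the proposition true is not the vanishing of this residual on $\calX_{con}$ but its interaction with the remaining push-forward $\pi_{13*}\circ\CE_{\frn^{(2)}}(\cdot)^{T^{(2)}}$. On the matrix-factorization side this push-forward is affine in the middle variables $(g_2,Y_2)$ followed by a derived $B^{(2)}$-quotient, whereas on the sheaf side the convolution uses the \emph{proper} push-forward along the $\tilde\frg$-fibre. The Koszul residual for $\mu(z_2)=\mu(z_3)$ is precisely what cuts the affine integration down to something matching the proper one (a section of the $\frg$-direction), and verifying this is the real content of the monoidality. That step is missing from your argument; as written, the ``collapses because regular sequence'' clause asserts something false, and without it the two sides are not yet identified.
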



\subsection{Outline of proof of MOY relations}
\label{sec:outline-proof}
The geometry of irreducible components of the Steinberg varieties
their behavior under convolution product
are well-studied (see for example \cite{ChrisGinzburg}).
Thus we can use the previous proposition to transfer some
of these result into matrix factorization setting and prove
the MOY relation from the beginning of the section. We outline
our approach in this section.

There are natural Koszul complexes that are sent to the generators \(\calC^{(i)}_\bullet\) by \(i_*\)
\begin{equation}\label{eq:calB}
    \calB^{(i)}_\bullet\in \rmD^{b,T_{q}}_{G,\xprop}(\tilde{\frg}^2),\quad i_*(\calB^{(i)}_\bullet)=\calC_\bullet^{(i)}.\end{equation}

The  \(\calB^{(i)}\) is a Koszul complex of the subvariety \(St^{(i)}\subset \widetilde{\fg}_n\ti\widetilde{\fg}_n \) that is a \(B^2\)-quotient of the subvariety
\(\widetilde{St^{(i)}}\). The  subvariety \(\widetilde{St^{(i)}}\) consists of quadruples \( (Y_1,g_1,Y_2,g_2)\) that satisfy
equations:
\[g_{12}=g^{-1}_1g_2\in P_i,\quad \Ad_{g_1}Y_1=\Ad_{g_2}Y_2.\]

Here and everywhere below we define a parabolic subgroup \(P_I\subset G_n\), \(I\subset\{1,\dots,n-1\}\) as a group with the
Lie algebra
\(\mathrm{Lie}(P_I)\subset \frg\) generated by \(\frb\) and \(E_{i+1,i}\), \(i\in I\).

The  variety \(\widetilde{St^{(i)}}\) is a complete intersection and the  \(G\ti B^2\)-equivariant Koszul complex provides
a resolution of its structure sheaf:
\[\calB^{(i)}_\bullet=K(\widetilde{St^{(i)}})\in \rmD_{G,\xprop}^{b,T_{q}}(\tilde{g}^2).\]

The  equation \eqref{eq:calB} is almost immediate from our construction of the induction functor. Thus we need to show two propositions which imply
the MOY relations.

\begin{proposition}\label{lem:buble}
  For \(\calB_\bullet^{(1)}\in \rmD_{G_2,prop}^{b,T_{q}}(\tilde{\frg}_2^2)\) we have:
  \[\calB_\bullet^{(1)}\star\calB_\bullet^{(1)}=\calB_\bullet^{(1)}\oplus\mathbf{q}^2\cdot\calB_\bullet^{(1)}.\]
\end{proposition}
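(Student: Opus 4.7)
For $n=2$, $i=1$ the parabolic $P_1$ is all of $GL_2$, so $St^{(1)} \subset \tilde{\frg}_2\times\tilde{\frg}_2$ coincides with the classical Steinberg variety $\tilde{\frg}_2\times_{\frg_2}\tilde{\frg}_2$, and $\calB_\bullet^{(1)}$ is a Koszul resolution of its structure sheaf. The statement is the quadratic Hecke-type relation for the associated coherent Soergel bimodule in the sense of Bezrukavnikov--Riche; my plan is to reduce it to a transparent $\PP^1$-bundle calculation for $GL_2$ and then propagate the answer back through the Koszul resolution.

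First I would unfold the convolution via the projection formula,
\[
\calB_\bullet^{(1)}\star\calB_\bullet^{(1)} = R\pi_{13*}\bigl(L\pi_{12}^*\calO_{St^{(1)}}\otimes^L L\pi_{23}^*\calO_{St^{(1)}}\bigr),
\]
with $\pi_{ij}:\tilde{\frg}_2^3\to\tilde{\frg}_2^2$ the usual projections. A dimension count (the Steinberg for $GL_2$ is cut out in $\tilde{\frg}_2^2$ by $\dim\frg_2=4$ equations, so the total intersection inside $\tilde{\frg}_2^3$ is expected $12-8=4$-dimensional, matching $\dim\bigl(\tilde{\frg}_2\times_{\frg_2}\tilde{\frg}_2\times_{\frg_2}\tilde{\frg}_2\bigr)$) together with the Koszul complex $\calB_\bullet^{(1)}$ shows that the derived tensor product is concentrated in degree zero and equals $\calO_Y$, where $Y := \tilde{\frg}_2\times_{\frg_2}\tilde{\frg}_2\times_{\frg_2}\tilde{\frg}_2$ is the triple fibre product.

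Next I would analyze $\pi_{13}\colon Y\to St^{(1)}$ geometrically: a point of $Y$ is a quadruple $(Y,B_1,B_2,B_3)$ with $Y\in\frb_i$ for every $i$, and the fibre of $\pi_{13}$ over $(Y,B_1,B_3)\in St^{(1)}$ parametrizes intermediate Borels $B_2\ni Y$. In the $GL_2$ case the variety of Borels containing a given matrix is a point when the matrix is regular and all of $\PP^1\cong G_2/B$ when the matrix is scalar, so $\pi_{13}$ is a birational map that contracts a $\PP^1$-bundle over the zero-section stratum of $St^{(1)}$. Applying the projection formula together with the standard $R\Gamma(\PP^1,\calO) = \CC$ and $R\Gamma(\PP^1,\calO(-2))[1] = \CC$ (Serre duality) then decomposes $R\pi_{13*}\calO_Y$ into two summands, each isomorphic to $\calO_{St^{(1)}}$ up to an equivariant shift.

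The hard part will be bookkeeping the $T_q$-equivariant grading: converting the Serre-duality shift on the fibre $\PP^1$ into the weight $\mathbf{q}^2$ requires pinning down the $T_q$-action on the relative cotangent bundle of $\tilde{\frg}_2\to G_2/B$ and on the tautological line bundle on $G_2/B$, which in the conventions of the paper $\bigl(\deg Y_{ij}=\mathbf{q}^2\bigr)$ contributes precisely the factor $\mathbf{q}^2$ on the non-identity summand. Once these weights are tracked back through the Koszul resolution $\calB_\bullet^{(1)}$, the two summands of $R\pi_{13*}\calO_Y$ assemble to $\calB_\bullet^{(1)}\oplus\mathbf{q}^2\cdot\calB_\bullet^{(1)}$. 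The secondary technical point of ruling out higher Tor contributions in the derived intersection defining $Y$ reduces to the transversality check indicated above, which I would justify directly from the Koszul regularity of the defining equations $\Ad_{g_1}Y_1=\Ad_{g_2}Y_2$.
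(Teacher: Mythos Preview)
Your geometric analysis of $\pi_{13}\colon Y\to St^{(1)}$ contains a genuine error. You assert that the fibre is a single point when $Y$ is regular and $\PP^1$ when $Y$ is scalar, hence that $\pi_{13}$ is birational. But for $\GL_2$ the Grothendieck resolution $\tilde{\frg}_2\to\frg_2$ is generically $|W|=2$-to-$1$, not birational: a regular \emph{semisimple} matrix lies in exactly two Borels. Consequently over the generic (regular semisimple) point of $St^{(1)}$ the fibre of $\pi_{13}$ consists of two points, and $\pi_{13}$ is a degree-$2$ map, not a birational contraction. With a birational $\pi_{13}$, the pushforward $R\pi_{13*}\calO_Y$ would be $\calO_{St^{(1)}}$ plus something supported only on the special locus, and you could never produce a second \emph{full} copy of $\calB_\bullet^{(1)}$; your invocation of $R\Gamma(\PP^1,\calO(-2))$ has no source in the picture you have drawn.

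The paper avoids this pitfall by never collapsing to $\calO_Y$. After combining the two Koszul complexes and eliminating $Y_2$, one is left with $\calB_\bullet^{(1)}$ tensored with the Koszul complex of the single residual equation $(\Ad_{g_2^{-1}g_1}Y_1)_{21}=0$ on the $\PP^1$ parametrising the middle Borel. That equation is a section of $\calO(2)$ (it has $T$-weight $2$ in $g_2$), so the relevant two-term complex on the fibre is $[\calO\to\mathbf{q}^2\calO(-2)]$. Pushing this forward along $\PP^1$ gives $\calO\oplus\mathbf{q}^2\calO[1]$, which is exactly the two summands. Note that the generic vanishing locus of a section of $\calO(2)$ is two points, matching the degree-$2$ picture above. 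Your Tor-vanishing step and your instinct that the answer comes from cohomology of line bundles on $\PP^1$ are correct; what needs fixing is the identification of \emph{which} complex on $\PP^1$ is being pushed forward and why.
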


For  the next proposition we need a slight generalization of the main generators. We define
\(\widetilde{St^{(i,i+1)}}\subset \tfrg^2\) as variety of quadruples \((g_1,Y_1,g_2,Y_2)\) that
satisfy the system of equations:
\[g_{12}=g_1^{-1}g_2\in P_{i,i+1},\quad \Ad_{g_1}Y_1=\Ad_{g_2}Y_2.\]

This variety is a complete intersection and the corresponding \(G\ti B^2\)-equivariant Koszul
complex defines the element:
\[\calB^{(i,i+1)}=K(\widetilde{St^{(i,i+1)}})\in \rmD_{G,\xprop}^{\per,T_{q}}(\tfrg^2).\]

Respectively, we have the corresponding element of the category of matrix
factorization:
\[\calC^{(i,i+1)}=i_*(\calB^{(i,i+1)}).\]

\begin{proposition}\label{lem:triple-dot}
  For \(\calC_\bullet^{(1)},\calC_\bullet^{(2)},\calC_\bullet^{(1,2)}\in \MF_3\) we have:
  \[\calC_\bullet^{(1)}\star\calC_\bullet^{(2)}\star\calC_\bullet^{(1)}=\calC_\bullet^{(1)}\oplus\calC_\bullet^{(1,2)}.\]
\end{proposition}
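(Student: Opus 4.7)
My strategy is to transfer the identity to the coherent sheaf avatar of the previous subsection. By the monoidality of the push-forward functor $i_\ast$ and the identifications $i_\ast(\calB_\bullet^{(i)})=\calC_\bullet^{(i)}$, $i_\ast(\calB^{(i,i+1)})=\calC^{(i,i+1)}$, it suffices to establish the identity
\[
\calB_\bullet^{(1)}\star\calB_\bullet^{(2)}\star\calB_\bullet^{(1)} \;\cong\; \calB_\bullet^{(1)} \oplus \calB^{(1,2)}
\]
in $\rmD^{b,T_{q}}_{G,\xprop}(\tfrg^2)$ with $G=\GL_3$. For $n=3$ the parabolic $P_{\{1,2\}}$ is all of $\GL_3$, so $\widetilde{St^{(1,2)}}$ is the full Steinberg cut out only by the relation $\Ad_{g_1}Y_1=\Ad_{g_2}Y_2$.

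\textbf{Geometric model.} I would compute the triple convolution by iterated base change: the product is the derived push-forward of the structure sheaf of the triple fibered product
\[
Z_{121} \;=\; \widetilde{St^{(1)}}\times_{\tfrg}\widetilde{St^{(2)}}\times_{\tfrg}\widetilde{St^{(1)}},
\]
viewed in $\tfrg^2$ via the outer two flags. Each pairwise fibered product is a smooth complete intersection of the expected codimension, so the Koszul resolutions of the $\widetilde{St^{(i)}}$ stay exact under convolution and no higher $\Tor$ terms appear. Because $s_1 s_2 s_1$ is the longest element of $S_3$, the image of $Z_{121}$ in $\tfrg^2$ is the whole $\widetilde{St^{(1,2)}}$, and the induced map $Z_{121}\to \widetilde{St^{(1,2)}}$ is birational with exceptional locus $\widetilde{St^{(1)}}$ (the points where the middle flag is forced to coincide with one of the outer flags), over which the fiber is a $\mathbb{P}^1$.

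\textbf{Splitting and main obstacle.} To finish I would produce a distinguished triangle
\[
\calB^{(1,2)} \longrightarrow \calB_\bullet^{(1)}\star\calB_\bullet^{(2)}\star\calB_\bullet^{(1)} \longrightarrow \calB_\bullet^{(1)} \xrightarrow{+1}
\]
coming from the (exceptional divisor, total space) pair for $Z_{121}$, and split it by observing that the two outer terms are concentrated in different $T_q$-weights, which forces the obstruction $\Ext^1$ to vanish. The main obstacle is the careful computation of $R\pi_\ast\calO_{Z_{121}}$: one has to verify that it equals $\calO_{\widetilde{St^{(1,2)}}}\oplus \calO_{\widetilde{St^{(1)}}}$ up to the appropriate equivariant shift, with no further higher direct images. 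This is a local calculation along the $\mathbb{P}^1$-fiber combined with the projection formula, using smoothness of $Z_{121}$. A useful sanity check is the classical Soergel-bimodule identity $B_1\otimes_R B_2\otimes_R B_1 \cong B_1 \oplus B_{121}$ in $\sbim_3$, which this proposition categorifies and which must agree with the functor $\mnfn$ of Theorem~\ref{thm:main-cat}.
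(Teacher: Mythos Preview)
Your overall plan---pass to the coherent avatar via $i_*$, produce a distinguished triangle relating the triple convolution to $\calB^{(1,2)}$ and $\calB_\bullet^{(1)}$, then split it---is exactly the paper's strategy. But two of your steps diverge from what actually works, and one of them is a real gap.

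First, the geometry of the triple product. The paper does not analyze $Z_{121}\to\widetilde{St^{(1,2)}}$ as a birational contraction with $\mathbb{P}^1$-fibers. Notice that if that picture were literally correct, $R\pi_*\calO_{Z_{121}}$ would simply equal $\calO_{\widetilde{St^{(1,2)}}}$ (since $H^1(\mathbb{P}^1,\calO)=0$), and the extra $\calB_\bullet^{(1)}$ summand would never appear. What the paper does instead (Lemma~\ref{lem:short-ex}) is decompose the convolution space $Z_{123}$ into two unions of irreducible components $Z'_{123}\cup Z''_{123}$, indexed by the Bruhat cells of $S_3$, and use the divisor short exact sequence $0\to\calO_{Z'_{123}}\to\calO_{Z_{123}}\to\calO_{Z''_{123}}\to 0$. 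The piece $Z''_{123}$ maps isomorphically to the full Steinberg, while $Z'_{123}$ pushes forward to $\calB_\bullet^{(1)}$ (via an auxiliary computation $\calB_\bullet^{(1)}\star\calO_{\Gamma_{s_1}}$). So the short exact sequence the paper obtains is
\[
0\longrightarrow \calB_\bullet^{(1)}\longrightarrow \calB_\bullet^{(1)}\star\calB_\bullet^{(2)}\star\calB_\bullet^{(1)}\longrightarrow \calB^{(1,2)}\longrightarrow 0,
\]
with the two outer terms in the opposite order from your proposal.

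Second, and this is the genuine gap: your splitting argument does not work. The claim that ``the two outer terms are concentrated in different $T_q$-weights'' is not correct---both $\calB_\bullet^{(1)}$ and $\calB^{(1,2)}$ are structure sheaves of positive-dimensional subvarieties and carry a whole range of $T_q$-weights; there is no weight gap forcing $\Ext^1=0$ in the coherent category. In fact the paper never splits the sequence on the coherent side at all. The key move is to apply $i_*$ first, obtaining a triangle in $\MF_3$, and then prove
\[
\Ext^{>0}_{\MF_3}(\calC_\bullet^{(1,2)},\calC_\bullet^{(1)})=0
\]
by an honest computation: use self-duality of $\calC_\bullet^{(1,2)}$, Lemma~\ref{lem:triple-double} to reduce $\calC_\bullet^{(1,2)}\star\calC_\bullet^{(1)}$ to copies of $\calC_\bullet^{(1,2)}$, and finally identify the relevant $\Tor$ with the $T$-invariants of the Chevalley--Eilenberg cohomology $H^*_{\mathrm{Lie}}(\frn,\CC[\frb])^T$, which is concentrated in degree zero by a direct weight analysis of the complex $\CC[\frb]\otimes\Lambda^\bullet\frn^*$. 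That last step is where the weight argument actually lives, and it is specific to the matrix factorization side.
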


The  first proposition is an easy computation. The  second of the proposition could be proven by
a laborious computation but we provide a proof of the lemma that relies on a computations of the extension
groups and two lemmas.

\begin{lemma}\label{lem:short-ex}
  There is a short exact sequence of sheaves on \(\tfrg^2\):
  \[0\to \calB_\bullet^{(1)}\to \calB_\bullet^{(1)}\star\calB_\bullet^{(2)}\star\calB_\bullet^{(1)}\to \calB^{(1,2)}\to 0. \]
\end{lemma}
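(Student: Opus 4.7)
The plan is to realize the triple convolution $\calB_\bullet^{(1)}\star\calB_\bullet^{(2)}\star\calB_\bullet^{(1)}$ as a derived pushforward from an iterated Bott--Samelson-type Steinberg variety, and then extract the short exact sequence from the geometry of the projection onto $\widetilde{St^{(1,2)}}$. Concretely, let $Z$ parametrize tuples $(g_0,Y_0,g_1,Y_1,g_2,Y_2,g_3,Y_3)\in \tfrg^4$ satisfying $g_i^{-1}g_{i+1}\in P_{s_i}$ for the word $(s_0,s_1,s_2)=(1,2,1)$ together with the identifications $\Ad_{g_i}Y_i=\Ad_{g_j}Y_j$. Each $\widetilde{St^{(s_i)}}$ is a complete intersection cut out by a regular sequence, so base change for convolution combined with the Koszul presentations of the $\calB^{(i)}$'s identifies the triple convolution with $Rp_*\calO_Z$, where $p\colon Z\to \tfrg^2$ is the outer projection $(g_0,Y_0;g_3,Y_3)$.

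To build the surjection onto $\calB^{(1,2)}$, I would use the rank-three identity $P_1\cdot P_2\cdot P_1=\GL_3=P_{1,2}$, which places the image of $p$ inside $\widetilde{St^{(1,2)}}$. On the open dense Bruhat stratum $\{g_0^{-1}g_3\in B w_0 B\}$ with $w_0=s_1 s_2 s_1$, the intermediate factorization $g_0^{-1}g_3=(g_0^{-1}g_1)(g_1^{-1}g_2)(g_2^{-1}g_3)$ is unique: the Bott--Samelson map for the reduced word is a birational isomorphism onto the big cell. Consequently $p$ is an isomorphism over this stratum, yielding a canonical surjection $p_*\calO_Z\twoheadrightarrow \calO_{\widetilde{St^{(1,2)}}}=\calB^{(1,2)}$.

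To identify the kernel, I would restrict attention to the closed complement, which is precisely $\widetilde{St^{(1)}}\subset\widetilde{St^{(1,2)}}$ (the stratum $g_0^{-1}g_3\in P_1$). Over this locus the fibers of $p$ become rational varieties parameterizing the freedom in the middle factor once the shorter outer product has been specified. A projection-formula computation on these fibers, combined with the vanishing $H^{>0}(\mathbb{P}^1,\calO_{\mathbb{P}^1})=0$ and the Koszul resolution of $\widetilde{St^{(1)}}$, identifies the kernel of the surjection with $\calO_{\widetilde{St^{(1)}}}=\calB_\bullet^{(1)}$ (without grading shift, matching the Kazhdan--Lusztig identity $b_{s_1}b_{s_2}b_{s_1}=b_{s_1 s_2 s_1}+b_{s_1}$).

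The main obstacle I expect is to show that $Rp_*\calO_Z$ is concentrated in cohomological degree zero, so that the birational resolution produces an honest short exact sequence of sheaves rather than a distinguished triangle. Vanishing of $R^i p_*$ for $i\geq 2$ is immediate from the fiber dimensions being at most one in the relevant sense, while the $R^1 p_*$ contribution vanishes by the rationality of the exceptional fibers, provided the relative dualizing sheaf and the $T_q$-weight are tracked carefully. Once this vanishing is in place, assembling the surjection with the kernel computation produces the stated short exact sequence.
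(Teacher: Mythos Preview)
Your Bott--Samelson setup realizing the triple convolution as $Rp_*\calO_Z$ is correct and is essentially how the paper begins, but the extraction of the short exact sequence has a genuine gap. The target $\widetilde{St^{(1,2)}}$ is the full Steinberg variety for $\GL_3$: it has six equidimensional irreducible components $\Gamma_w$, $w\in S_3$. The big Bruhat stratum $\{g_0^{-1}g_3\in Bw_0B\}$ meets only $\Gamma_{w_0}$, so it is \emph{not} open dense in $\widetilde{St^{(1,2)}}$, and its closed complement is the union of the five remaining components, not $\widetilde{St^{(1)}}=\Gamma_1\cup\Gamma_{s_1}$. More seriously, birationality over an open set produces the unit map $\calO_Y\to p_*\calO_Z$, not a surjection $p_*\calO_Z\twoheadrightarrow\calO_Y$; for a reducible non-normal $Y$ there is no canonical map in the direction you need. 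Your fiberwise picture therefore does not furnish the quotient map to $\calB^{(1,2)}$.

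The paper obtains the surjection by a different mechanism: it decomposes the \emph{source} rather than stratifying the target. After first computing $\calB^{(1)}\star\calB^{(2)}=\calO_{Z_{12}}$ with $Z_{12}=\Gamma_1\cup\Gamma_{s_1}\cup\Gamma_{s_2}\cup\Gamma_{s_1s_2}$, it writes the convolution space $Z_{123}=\pi_{12}^{-1}(Z_{12})\cap\pi_{23}^{-1}(St^{(1)})$ as a union $Z'_{123}\cup Z''_{123}$, where $Z''_{123}$ is chosen (using length-additivity $\ell(w)+\ell(s_1)=\ell(ws_1)$ on the relevant pieces) so that $\pi_{13}\vert_{Z''_{123}}$ is an \emph{isomorphism onto all of} $St^{(1,2)}$. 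The restriction $\calO_{Z_{123}}\twoheadrightarrow\calO_{Z''_{123}}$ then pushes forward to the desired surjection, and the kernel is the ideal sheaf $\calO_{Z'_{123}}(-D)$ for the Cartier divisor $D=Z'_{123}\cap Z''_{123}$ inside $Z'_{123}$; a separate convolution computation $\calB^{(1)}\star\calO_{\Gamma_{s_1}}$ identifies its pushforward with $\calB^{(1)}$ up to a grading twist. The point you are missing is this explicit section of $\pi_{13}$ covering every component of the Steinberg variety.
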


The push forward \(i_*\) sends the short exact sequence from the proposition to the
short exact sequence in \(\MF_3\). Thus to complete our argument we
need to compute the extension groups:


\begin{lemma}
  For  \(\calC_\bullet^{(1,2)}\in\MF_3\) we have the following vanishing of the
  extension groups:
  \[\Ext^i(\calC_\bullet^{(1,2)},\calC_\parallel)=0, \quad i>0.\]
\end{lemma}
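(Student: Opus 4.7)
My plan is to translate the Ext computation into a statement about coherent sheaves on the Steinberg variety and reduce to an elementary fact. Observe that $P_{1,2} = \GL_3$: indeed, $\frb$ together with $E_{21}$ and $E_{32}$ generates $\gl_3$ (they produce $E_{31}$ via the commutator $[E_{32}, E_{21}]$). Hence the constraint on $g_{12}$ in the definition of $\widetilde{St^{(1,2)}}$ is vacuous, and $St^{(1,2)} = \widetilde{\frg}_3 \times_{\frg_3} \widetilde{\frg}_3$ is the full Steinberg variety for $\gl_3$. Thus $\calC_\bullet^{(1,2)} = i_*(\calO_{St^{(1,2)}})$, and similarly $\calC_\parallel = i_*(\calO_\Delta)$, where $\Delta \subset St^{(1,2)}$ is the diagonal component, corresponding to the identity element of the Weyl group $S_3$.

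The heart of the argument is a linear Koszul / Kn\"orrer-type reduction. The potential $W = \Tr\bigl(X \cdot (\Ad_{g_1}Y_1 - \Ad_{g_2}Y_2)\bigr)$ is linear in the $X$-variables, and its critical locus in the $X$-direction is precisely $St^{(1,2)} \subset \widetilde{\frg}_3^2$. Consequently the monoidal pushforward $i_*$ (of the proposition just preceding) factors through an equivalence between the equivariant derived category $D^b_{G \times B^2}(St^{(1,2)})$ and the corresponding subcategory of $\MF_3$, sending the structure sheaf $\calO_Z$ of a closed subscheme $Z \subset St^{(1,2)}$ to $i_*(\calO_Z)$. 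Under this equivalence extension groups are preserved:
\[\Ext^i_{\MF_3}\bigl(i_*(\calO_{Z_1}), i_*(\calO_{Z_2})\bigr) \;\cong\; \Ext^i_{D^b_{G \times B^2}(St^{(1,2)})}(\calO_{Z_1}, \calO_{Z_2}).\]

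With this identification the conclusion is immediate. Since $\calO_{St^{(1,2)}}$ is free over itself,
\[\RHom_{\calO_{St^{(1,2)}}}(\calO_{St^{(1,2)}}, \calO_\Delta) \;=\; \calO_\Delta,\]
which is concentrated in cohomological degree zero. Thus $\Ext^0(\calC_\bullet^{(1,2)}, \calC_\parallel)$ recovers the canonical surjection $\calO_{St^{(1,2)}} \twoheadrightarrow \calO_\Delta$, while $\Ext^i = 0$ for all $i > 0$, as claimed.

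The main technical obstacle is making the Kn\"orrer identification rigorous in the equivariant framework. A naive attempt to compute the Ext by resolving $\calO_{St^{(1,2)}}$ via its Koszul complex in $\widetilde{\frg}_3^2$ and applying $\RHom(-, \calO_\Delta)$ would yield $\calO_\Delta \otimes \Lambda^\bullet \frg_3^\vee$: the defining equations $\Ad_{g_1}Y_1 - \Ad_{g_2}Y_2 = 0$ of $St^{(1,2)}$ all vanish on $\Delta$, so the Koszul differentials become trivial and we obtain nonvanishing higher cohomology. The compensating $\Lambda^\bullet \frg_3$-factor introduced by the Koszul pushforward $i_*$ (which effects the self-intersection formula) must cancel this contribution exactly. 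Verifying this cancellation carefully, accounting for the Chevalley--Eilenberg complex encoding the $\frn^2$-equivariance and the quotient by $T^2 \times G$, is the technical heart of the proof.
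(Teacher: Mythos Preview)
Your proposal is an outline rather than a proof: you correctly identify the objects ($St^{(1,2)}$ is the full Steinberg variety, $\calC^{(1,2)}_\bullet = i_*(\calO_{St^{(1,2)}})$, $\calC_\parallel = i_*(\calO_\Delta)$), but the two steps you label as the ``technical heart'' are exactly where the content lies, and you defer both.

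There is a concrete problem with the Kn\"orrer step. The potential $W$ is linear in $X$, so the relevant equivalence (Orlov/Isik linear Koszul duality) identifies $\MF_3$ with the \emph{singularity} category $D^b_{\mathrm{sing}}(St^{(1,2)})$, not with $D^b_{G\times B^2}(St^{(1,2)})$. In $D^b_{\mathrm{sing}}$ the structure sheaf $\calO_{St^{(1,2)}}$ is perfect and hence zero, so your computation $\RHom(\calO_{St},\calO_\Delta)=\calO_\Delta$ has no direct meaning on the matrix-factorization side. Even setting this aside, your step~5 computes only the sheaf $\RHom$; to get the actual $\Ext$ in $\MF_3$ you must still take $B^2$-equivariant derived global sections, and since $B$ is not reductive this is not exact. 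Your final paragraph acknowledges that this Chevalley--Eilenberg bookkeeping is the crux, but you do not carry it out.

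The paper proceeds differently and more directly. It first uses self-duality of $\calC^{(1,2)}_\bullet$ (a Koszul matrix factorization) to rewrite $\Ext^*$ as $\Tor^*(\calC^{(1,2)}_\bullet,\calC_\parallel)$. It then pulls back along the diagonal slice $j_e\colon \frg\times\frb\to \frg\times(G\times\frb)^2$, $j_e(X,Y)=(X,e,Y,e,Y)$: the defining equations $\Ad_{g_1}Y_1-\Ad_{g_2}Y_2$ vanish identically under $j_e^*$, so the Koszul matrix factorization degenerates to the ordinary Koszul complex on the $X$-variables. This identifies the answer with the Lie algebra cohomology $H^*_{\mathrm{Lie}}(\frn,\CC[\frb])^T$. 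The vanishing in positive degree then follows from an elementary $T$-weight count: the odd generators $\theta_{ij}$ ($i<j$) carry strictly positive weight $\epsilon_i-\epsilon_j$, so the $T$-invariant part of $\CC[\frb]\otimes\Lambda^{>0}(\theta_{ij})$ is zero, leaving only $\CC[y_1,y_2,y_3]$ in degree~$0$. This weight argument is precisely the ``cancellation'' you anticipated but did not perform.
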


To derive the proposition from the lemmas we need to explain the relation between the extension groups and convolution product.
It leads us to study of duality on our category.  Given two characters \(\lambda,\mu\in T^\vee\) and
a module \(M\)  with \(B^2\) action
\[(b',b'',m)\mapsto (b',b'')\cdot m, \quad b',b''\in B, m\in M,\] we denote
\(M\langle \lambda,\mu\rangle\) the module \(M\) with the twisted \(B^2\)-action:
\[(b',b'',m)\mapsto \lambda(b')\mu(b'')(b',b'')\cdot m, \quad b',b''\in B, m\in M.\]

Respectively, we use the same convention for twisting objects from \(\MF_n\) and \(\rmD_{G_n}^{b,T_{q}}(\tfrg^2)\). Let us fix
\(\rho\) for the half sum of positive roots of \(G_n\). Using these notations let us list the properties of the \(B^2\)-twisting.

\subsection{Duality}
\label{sec:duality}

In this section we describe the action of the duality functor on the categories \(\MF_n\) and \(\rmD_{G_n}(\tfrg_n^2)\).
Just as in the case  of matrix factorizations we can introduce the
an additive subcategory of \(\rmD^{\flat}\subset \rmD^{b,T_{q}}_G(\tfrg^2)\) that is spanned by all
products of elements \(\calB^{(i)}\) and by the products of the direct summands of these products.

The  both categories \(\MF_n\) and \(\rmD_{G_n}(\tfrg_n)\) have duality functor:
\[\calF\mapsto\calF^\vee=\chh\mathrm{om}(\calF,\calO).\]

The  duality preserves the objects of the both graph  categories:

\begin{proposition}\label{prop:dual}
  For any \(\calA\) from \(\MF_n^\flat\) or \(\rmD^\flat\) we have:
  \[\calA^\vee=\calA.\]
\end{proposition}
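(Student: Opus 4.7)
The plan is to reduce the self-duality of an arbitrary object in $\MF_n^\flat$ or $\rmD^\flat$ to the self-duality of the elementary generators, and then verify the latter by a direct Koszul/Serre-duality computation. Recall that $\rmD^\flat$ is the additive monoidal envelope of the elementary objects $\calB^{(i)}_\bullet = K(\widetilde{St^{(i)}})$ under convolution and direct summands, and similarly $\MF_n^\flat$ is generated by the $\calC^{(i)}_\bullet = i_*(\calB^{(i)}_\bullet)$.

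First I would reduce the matrix-factorization case to the sheaf case. The functor $i_*$ is the push-forward along the regular lci embedding $\frb \times G \times \frb \times G \hookrightarrow \fg \times \frb \times G \times \frb \times G$ sending $z \mapsto (0,z)$, and such Koszul-type push-forwards intertwine the duality functors (up to a canonical twist which is precisely absorbed by our grading conventions). Consequently $i_*(\calA^\vee) \cong i_*(\calA)^\vee$, and self-duality on the coherent-sheaf side implies the same on the matrix-factorization side. Moreover, since duality interchanges with convolution of proper Fourier--Mukai kernels via $(\calA_1 \star \calA_2)^\vee \cong \calA_2^\vee \star \calA_1^\vee$ and commutes with direct summands (idempotents being preserved), it suffices to check $\calB^{(i)\vee}_\bullet = \calB^{(i)}_\bullet$ for each $i$.

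Next I would verify the self-duality of a single generator. The variety $\widetilde{St^{(i)}} \subset G \times \frb \times G \times \frb$ is cut out by the regular sequence $g_{12} \in P_i$ (a codimension-one condition on $G$) together with the matching condition $\Ad_{g_1}Y_1 = \Ad_{g_2}Y_2$ (which imposes as many equations as $\dim \fg$). Since $\widetilde{St^{(i)}}$ is a smooth complete intersection, Grothendieck--Serre duality for the closed embedding $St^{(i)} \hookrightarrow \tfrg^2$ gives
\[
(\calO_{St^{(i)}})^\vee \;\cong\; \calO_{St^{(i)}} \otimes \det\bigl(N^*_{St^{(i)}/\tfrg^2}\bigr)\,[-c],
\]
where $c = \codim St^{(i)}$. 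The goal is then to identify the twist $\det(N^*)\,[-c]$ as canonically trivial in the $G\times B^2$-equivariant $T_{qt}$-graded category. The conormal bundle splits as a sum of the line bundle coming from the condition $g_{12}\in P_i$ (contributing the character associated with the simple root $\alpha_i$) and the vector bundle coming from the matching condition, whose $B^2$-character is symmetric under swapping the two factors. Combining the two contributions with the built-in $\mathbf{q}$- and $\mathbf{t}$-grading shifts on $\calB^{(i)}_\bullet$ collapses the whole twist to the identity.

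The main obstacle will be the equivariant bookkeeping: one must verify that the total weight of $\det(N^*)\,[-c]$ under $T_{qt} \times B \times B$ matches exactly the grading/twist normalization built into the definition of $\calB^{(i)}_\bullet$. Once this character computation is carried out, self-duality of the generator is immediate, and hence so is self-duality of every object obtained from the generators by iterated convolution and passage to direct summands, completing the proof of the proposition.
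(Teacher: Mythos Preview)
Your reduction strategy has a gap at the convolution step. You invoke the formula $(\calA_1 \star \calA_2)^\vee \cong \calA_2^\vee \star \calA_1^\vee$, but with this \emph{reversed} formula the self-duality of each generator only yields
\[
\bigl(\calC^{(i_1)}_\bullet \star \cdots \star \calC^{(i_m)}_\bullet\bigr)^\vee \;\cong\; \calC^{(i_m)}_\bullet \star \cdots \star \calC^{(i_1)}_\bullet,
\]
which is in general a different object, so self-duality does not propagate. The paper instead proves the \emph{non-reversed} identity $(\calA_1 \star \calA_2)^\vee = \calA_1^\vee \star \calA_2^\vee$: since $\tfrg_n$ is holomorphic symplectic, the relative canonical bundle of $\pi_{13}$ is trivial, and Serre duality gives $\pi_{13*}(\calF^\vee) = (\pi_{13*}\calF)^\vee$ with no flip of the two factors. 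This non-reversal is precisely what makes the reduction to generators work, and it is the point you are missing.

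For the generators themselves, your Grothendieck--Serre computation of $\det(N^*)[-c]$ is a reasonable plan, but the paper takes a more concrete route that avoids the equivariant bookkeeping you anticipate as the ``main obstacle''. It writes down two equivalent regular sequences cutting out $\widetilde{St^{(i)}}$, one organized around $g_2^{-1}g_1\in P_i$ and the other around $g_1^{-1}g_2\in P_i$, and checks that the total $B^2$-weights of the two sequences are exact negatives, namely $\langle -\rho+\epsilon_{i+1},\,\rho-\epsilon_{i+1}\rangle$ versus $\langle \rho-\epsilon_{i+1},\,-\rho+\epsilon_{i+1}\rangle$. The two Koszul complexes are therefore manifestly dual to one another while resolving the same sheaf, giving $(\calB^{(i)}_\bullet)^\vee=\calB^{(i)}_\bullet$ with no residual twist to chase. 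The same two-system argument is run verbatim for $\calC^{(i)}_\bullet$, so no separate compatibility of $i_*$ with duality is needed.
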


The  statement is almost immediate from the observation below
and the self-duality of the generators \(\calB^{(i)}_\bullet\)
and \(\calC^{(i)}\) which is discussed next.
\begin{proposition}
  For any \(\calA_1,\calA_2\) from \(\MF_n\) or \(\rmD_{G_n}(\tfrg_n^2)\) we have
  \[(\calA_1\star\calA_2)^\vee=\calA_1^\vee\star\calA_2^\vee.\]
\end{proposition}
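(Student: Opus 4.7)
The plan is to unpack the definition of the convolution $\calA_1\star\calA_2$ as pull--tensor--quotient--push along the standard three-flag diagram, and then commute duality past each of the four operations, verifying that the twisting corrections at each step cancel globally. Concretely, recall that (in the matrix factorization model)
\[
\calA_1\star\calA_2 \;=\; \pi_{13*}\Bigl(\CE_{\frn^{(2)}}\bigl(\pi_{12}^*\calA_1\otimes\pi_{23}^*\calA_2\bigr)^{T^{(2)}}\Bigr),
\]
and analogously in $\rmD^{b,T_q}_{G_n}(\tfrg_n^2)$ with the Steinberg--type triple product replacing $\calX_{con}$.

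First I would record three compatibility lemmas. (i) For the flat projections $\pi_{12},\pi_{23},\pi_{13}$, pull-back commutes with the sheaf-theoretic dual: $\pi_{ij}^*(\calF^\vee)\cong(\pi_{ij}^*\calF)^\vee$. (ii) On perfect (in the matrix factorization setting, finite-rank) complexes the tensor product is compatible with duality: $(\calF\otimes\calG)^\vee\cong\calF^\vee\otimes\calG^\vee$. (iii) The Chevalley--Eilenberg resolution $\CE_{\frn^{(2)}}$ is self-dual up to a twist by $\det\frn^{(2)}\cong\langle 2\rho^{(2)}\rangle$ and a homological shift $[\dim\frn^{(2)}]$, because $\CE_\frh$ is the Koszul complex on a finite-dimensional vector space and its $\Hom(-,U(\frh))$-dual realises $\Lambda^{\dim\frh-\bullet}\frh\otimes(\det\frh)^{-1}$. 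These are all standard; the equivariant versions follow from the fact that each of the maps, tensor operations, and the CE resolution are built out of free modules with strictly equivariant boundary operators, so the twists manifest as characters of $B^{(2)}$ that can be tracked explicitly.

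The main obstacle is the push-forward step: by Grothendieck--Serre duality for the (properly supported) projection $\pi_{13}$ one has
\[
\pi_{13*}(\calF^\vee)\;\cong\;\bigl(\pi_{13*}\calF\bigr)^{\!\vee}\otimes\omega_{\pi_{13}}^{-1}[-d],
\]
where $\omega_{\pi_{13}}$ is the relative dualizing complex and $d$ is the relative dimension. The fibers of $\pi_{13}$, after dividing by the $B^{(2)}$-action through the $\CE_{\frn^{(2)}}(\cdot)^{T^{(2)}}$ procedure, are the full flag variety $G/B$ together with its Borel $\frb$, so $\omega_{\pi_{13}}$ is the well-known line bundle $\calO(-2\rho^{(2)})$ on the $G/B$-factor (the $\frb$-contribution is trivial since $\frb$ is a vector bundle with trivial determinant after the adjoint twist). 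The $-2\rho^{(2)}$-twist produced here is precisely the opposite of the $+2\rho^{(2)}$-twist produced by the self-duality of $\CE_{\frn^{(2)}}$ in lemma~(iii), while the homological shift $[-d]$ cancels against the $[\dim\frn^{(2)}]$ shift in the same lemma (using $d=\dim G/B=\dim\frn$). Hence after composing the four steps all twists cancel and we obtain the claimed isomorphism, naturally in $\calA_1$ and $\calA_2$.

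Finally, I would verify the matrix-factorization case follows from the same computation: the embedding $i\colon\tfrg^2\hookrightarrow\frg\times\tfrg^2$ used to define $i_*$ has trivial relative dualizing complex (the normal bundle is the trivial bundle $\frg^\vee$, but after the appropriate $\mathbf{t}$-grading it contributes only a grading shift that is built into the definition of $\vee$ on $\MF_n$). Thus the monoidality of $i_*$ already recorded in the preceding proposition transports the statement from $\rmD_{G_n}^{b,T_q}(\tfrg_n^2)$ to $\MF_n$, making the two cases logically equivalent. The main technical point throughout is the bookkeeping of the $2\rho$-twist; once that is carried out consistently, the proposition reduces to general functorial nonsense.
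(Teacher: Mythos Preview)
Your plan---commute $\vee$ past pull-back, tensor, the $\CE$-quotient, and push-forward, and check that the accumulated twists cancel---is reasonable in principle, but the paper's argument is a single observation that bypasses all of the character-chasing, and your execution contains a double-counting error.

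The paper simply notes that $\tfrg_n$ is holomorphic symplectic and therefore has trivial canonical bundle. Serre duality for $\pi_{13}\colon\tfrg_n^3\to\tfrg_n^2$ then reads $\pi_{13*}(\calF^\vee)\cong(\pi_{13*}\calF)^\vee$ with no twist, and the proposition follows at once, the compatibility of $\vee$ with flat pull-back and tensor product being immediate. The same reasoning covers $\MF_n$, since the additional affine $\frg_n$-factor in $\calX_n$ contributes trivially to the relative dualizing complex.

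In your accounting there is a specific mistake: the claim that ``$\frb$ is a vector bundle with trivial determinant after the adjoint twist'' is false. The adjoint action of $B$ on $\frb=\frh\oplus\frn$ has determinant character $2\rho$, coming entirely from the $\frn$-summand; it is exactly this $2\rho$ that cancels the contribution of $\omega_{G/B}$ and makes $\omega_{\tfrg}$ trivial. Since you describe the fiber of $\pi_{13}$ as ``$G/B$ together with its Borel $\frb$'' \emph{after} already performing the $\CE_{\frn^{(2)}}(\cdot)^{T^{(2)}}$ quotient, the $\CE$ self-duality twist has by that point been absorbed into the geometry of $\tfrg$ and should not be counted a second time. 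Your two errors (wrong $\det\frb$, and counting the $\CE$-twist separately) happen to compensate, so you reach the right conclusion, but the argument as written is not internally consistent. The clean fix is just to invoke the Calabi--Yau property of $\tfrg$ directly, which is the paper's proof.
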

\begin{proof}
  Since \(\tfrg_n\) is holomorphic symplectic it has a trivial canonical class. Hence the statement follows
  from the Serre duality:
  \[\calA_1^\vee\star\calA_2^\vee=\pi_{13*}(\pi_{12}^*(\calA_1)^\vee\ot\pi_{23}^*(\calA_2)^\vee)=\pi_{13*}(\pi_{12}^*(\calA_1)\ot\pi_{23}^*(\calA_2))^\vee,\]
 for the second equality we used the Serre duality.
\end{proof}

Let us denote the unit object in both categories as \(\mathbbm{1}\):
\[\mathbbm{1}=\calC_\parallel\in \MF_n,\quad\mathbbm{1}=\calB_\parallel\in \rmD_{G_n}(\tfrg_n^2).\]

\begin{corollary}\label{cor:cycle}
  For any \(\calA_1,\calA_2\) from \(\MF_n\) or \(\rmD_{G_n}(\tfrg_n^2)\) we have
  \[\Ext^i(\calA_1,\calA_2)=Tor_i(\calA_1\star\calA_2,\mathbbm{1}).\]
\end{corollary}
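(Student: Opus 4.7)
The plan is to combine the self-duality from Proposition~\ref{prop:dual} with the standard Hom--tensor adjunction in the convolution category. First I would invoke Proposition~\ref{prop:dual} to replace $\calA_1^\vee$ by $\calA_1$, which reduces the claim to the identity
\[
\RHom(\calA_1,\calA_2) \;\simeq\; (\calA_1^\vee \star \calA_2)\,\otimes^{L}\,\mathbbm{1}
\]
in $\rmD^{\per}$, up to an overall cohomological shift that becomes invisible in the $\ZZ_2$-periodic matrix factorization setting.

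The key step is the adjunction
\[
\RHom(\calA_1,\calA_2)\;\simeq\;\RHom(\mathbbm{1},\,\calA_1^\vee\star\calA_2),
\]
which is the standard composition law for integral-transform kernels, interpreting $\calA_i$ as kernels of endofunctors and $\mathbbm{1}$ as the identity. In our equivariant matrix factorization setting, this is a direct consequence of the pull--push formula defining $\star$ together with the monoidality already established in Proposition~\ref{prop:monoid}; indeed, the relevant base-change along the projections $\pi_{ij}\colon\tfrg^3\to\tfrg^2$ is the very same computation carried out there.

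The remaining task is to rewrite $\RHom(\mathbbm{1},\calF)$ as $\mathbbm{1}\otimes^{L}\calF$ for arbitrary $\calF$. The diagonal $\Delta\subset\tfrg^2$ is a regular complete intersection, with conormal bundle $\Omega^1_{\tfrg}|_\Delta$; since $\tfrg$ is holomorphic symplectic, the canonical bundle $K_\tfrg$ is trivial, so dualising the Koszul resolution of $\calO_\Delta$ term-by-term produces a quasi-isomorphism between $\RHom(\calO_\Delta,\calF)$ and $(\calO_\Delta\otimes^{L}\calF)[-\dim\tfrg]$. Passing to the $\ZZ_2$-periodic category absorbs the shift, and combining with the previous step yields the corollary.

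The main technical obstacle is to make both the adjunction and the Koszul duality precise in the equivariant matrix factorization category $\MF_n$ rather than in ordinary coherent sheaves. For the adjunction this is automatic from the monoidal structure of $\star$. For the Koszul step I would argue on the coherent-sheaf side $\rmD^{\per,T_{qt}}_G(\tfrg^2)$, where Grothendieck duality is classical, and then transport the result back to $\MF_n$ through the monoidal closed-embedding push-forward $i_*$ developed in Section~\ref{sec:coher-sheaf-vers}; the triviality of $K_\tfrg$ ensures that $i_*$ commutes with the dualising shift, so no extra twist appears.
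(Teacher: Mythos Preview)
The paper offers no proof of this corollary; it is simply placed immediately after the proposition $(\calA_1\star\calA_2)^\vee=\calA_1^\vee\star\calA_2^\vee$, presumably as a formal consequence. Your proposal supplies a plausible outline of such an argument, but it has two concrete problems.

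First, invoking Proposition~\ref{prop:dual} to replace $\calA_1^\vee$ by $\calA_1$ is not justified in the stated generality: that proposition applies only to objects of $\MF_n^\flat$ or $\rmD^\flat$, whereas the corollary is asserted for arbitrary $\calA_1,\calA_2$ in $\MF_n$ or $\rmD_{G_n}(\tfrg_n^2)$. The paper itself may be imprecise on this point---its only applications of the corollary are to self-dual objects, and the later computation of $\Ext^*(\calC_\bullet^{(1,2)},\calC_\bullet^{(1)})$ explicitly separates the step ``$\calC_\bullet^{(1,2)}$ is self-dual, hence $\Ext=\Tor$'' from the trace identity $\Tor^*(\calA_1,\calA_2)=\Tor^*(\calA_1\star\calA_2,\mathbbm{1})$---but as written you cannot assume $\calA_1^\vee=\calA_1$.

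Second, the reference to Proposition~\ref{prop:monoid} for the adjunction $\RHom(\calA_1,\calA_2)\simeq\RHom(\mathbbm{1},\calA_1^\vee\star\calA_2)$ is misplaced: that proposition says the functor $\bb$ is monoidal and has nothing to do with adjoints in the convolution category. More substantively, the ``standard'' kernel adjunction identifies $\RHom(\calA_1,\calA_2)$ with $\RHom(\mathbbm{1},\calA_1^R\star\calA_2)$ where $\calA_1^R$ is the right-adjoint kernel, which in Fourier--Mukai theory is $\sigma^*\calA_1^\vee$ twisted by $\omega[\dim]$, not the bare sheaf dual $\calA_1^\vee$. The Calabi--Yau property you only invoke in step~4 is already needed here to kill the $\omega$-twist, and the swap $\sigma^*$ is not addressed at all.
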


The  only thing that we check to complete a proof of proposition~\ref{prop:dual} is the action of the duality on the generators.
For  that let us fix notations for the twisting of \(B^2\)-action.

Given two characters \(\lambda,\mu\in T^\vee\) and
a module \(M\)  with \(B^2\) action
\[(b',b'',m)\mapsto (b',b'')\cdot m, \quad b',b''\in B, m\in M,\] we denote
\(M\langle \lambda,\mu\rangle\) the module \(M\) with the twisted \(B^2\)-action:
\[(b',b'',m)\mapsto \lambda(b')\mu(b'')(b',b'')\cdot m, \quad b',b''\in B, m\in M.\]

Respectively, we use the same convention for twisting objects from \(\MF_n\) and \(\rmD_{G_n}(\tfrg^2)\). Let us fix
\(\rho\) for the half sum of positive roots of \(G_n\).

\begin{proposition}For any \(\calA\) from  \(\rmD_{G_n}(\tfrg_n)\) (or \(\MF_n\)) and \(\lambda,\mu\in T^\vee\) we have:
  \begin{enumerate}
  \item \(\calA\star\mathbbm{1}\langle \lambda,\mu\rangle=\calA\langle 0,\lambda+\mu\rangle\)
  \item  \(  \mathbbm{1}\langle\lambda,\mu\rangle\star\calA=\calA\langle \lambda+\mu,0\rangle\).
 \item \(\mathbbm{1}\langle\lambda,\mu\rangle=\mathbbm{1}\langle\lambda+\mu,0\rangle=
    \mathbbm{1}\langle 0,\lambda+\mu\rangle\).
   \item \(\mathbbm{1}^\vee=\mathbbm{1}\).
  \end{enumerate}

\end{proposition}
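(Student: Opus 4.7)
I will establish the four identities in the order (3), (1), (2), (4), with (3) being the main point; once it is in hand, the remaining identities are routine manipulations with the convolution and duality.

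The strategy for (3) is to exploit that the unit $\mathbbm{1}$ is (a Koszul resolution of) the structure sheaf of the ``diagonal'' subvariety. Its $B^2$-equivariant lift to $G \times \frb \times G \times \frb$ parametrizes quadruples $(g_1, Y_1, g_2, Y_2)$ with $b := g_1^{-1}g_2 \in B$ and $Y_2 = \Ad_{b^{-1}} Y_1$, and is isomorphic to $G \times \frb \times B$ via $(g, Y, b) \mapsto (g, Y, gb, \Ad_{b^{-1}} Y)$. Under this identification the $B^2$-action reads $(b_1, b_2)\cdot(g,Y,b) = (gb_1^{-1}, \Ad_{b_1} Y, b_1 b b_2^{-1})$. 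Because a $T$-character $\mu$ extends to a group homomorphism $B \to \CC^*$ (trivial on the unipotent radical), multiplication by $\mu(b)^{-1}$ on sections is a well-defined automorphism of the structure sheaf, and a direct check shows it intertwines the $\langle \lambda, \mu\rangle$-twist with the $\langle \lambda+\mu, 0\rangle$-twist; similarly, multiplication by $\lambda(b)$ handles the other identity in (3). The parallel argument in $\MF_n$ uses the presentation $\calC_\parallel = KN(j_{B*}(K_\Delta))$ from Section~\ref{sec:unit-el}: the Koszul factorization $K_\Delta$ is supported on $\overline{\calX}_n^\circ(B_n) = \frb \times \frh \times B_n \times \frb$, whose $B_n$-factor plays exactly the role of the $b$ above.

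For (1), I will use (3) to rewrite $\calA \star \mathbbm{1}\langle \lambda, \mu\rangle = \calA \star \mathbbm{1}\langle 0, \lambda+\mu\rangle$ and then trace the twist through the convolution: the right-factor $B^{(3)}$-twist on $\mathbbm{1}$ pulls back via $\pi_{23}^*$ to a $B^{(3)}$-twist on the triple convolution space, is untouched by $\pi_{13*}$ (which only integrates out the middle $B^{(2)}$-factor), and combines with the unit identity $\calA \star \mathbbm{1} = \calA$ to give $\calA\langle 0, \lambda+\mu\rangle$; property (2) is the mirror computation using the $B^{(1)}$-twist.

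For (4), I use that $\tfrg_n$ is holomorphic symplectic (the sum of roots of $\GL_n$ vanishes, so $\det \frg_n$ is the trivial $B$-character and $\omega_{\tfrg_n}$ is trivial). Grothendieck duality for the smooth closed embedding $\Delta \hookrightarrow \tfrg_n^2$ then yields $\mathbbm{1}^\vee = \omega_{\Delta/\tfrg_n^2}[-\dim \tfrg_n]$; since $\omega_{\Delta/\tfrg_n^2} = \omega_\Delta \otimes \omega_{\tfrg_n^2}^{-1}|_\Delta \cong \calO_\Delta$, this collapses to $\mathbbm{1}$ once the homological shift is absorbed into the two-periodic grading conventions (with the compensating $\mathbf{q}$-twist from the symplectic form). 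The matrix factorization version follows from the self-duality of a Koszul matrix factorization of a regular sequence. The main obstacle will be the equivariant bookkeeping in (3): one must select the parametrization of the diagonal's lift in which the $B^2$-action takes a tractable form, and recognize that a $T$-character of the $B$-valued gluing coordinate is multiplicative; once this setup is in place, (1), (2), and (4) are routine applications of the convolution structure and Grothendieck duality.
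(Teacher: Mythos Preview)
Your proposal is correct, and in fact more self-contained than the paper's proof: the paper simply cites \cite{OblomkovRozansky16} for items (1)--(3) and only argues (4). Your argument for (3) via multiplication by the invertible function $\mu(b)$ on the diagonal (using that a $T$-character extends to a group homomorphism $B\to\CC^*$) is exactly the kind of computation the paper defers to the earlier reference.

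Where you genuinely diverge is in (4). The paper writes $\mathbbm{1}$ as the Koszul complex of the explicit equations $g_2^{-1}g_1\in B$ and $(\Ad_{g_2^{-1}g_1}Y_1)_{ij}=(Y_2)_{ij}$ for $i\ge j$, computes that the sum of the $B^2$-weights of these equations is $\langle-\rho,-\rho\rangle+\langle 0,2\rho\rangle=\langle-\rho,\rho\rangle$, observes that dualizing a Koszul complex inverts this character, and then invokes (3) with $\lambda=\rho$, $\mu=-\rho$ to conclude $\mathbbm{1}^\vee=\mathbbm{1}\langle\rho,-\rho\rangle=\mathbbm{1}$. Your route via Grothendieck duality and the triviality of $\omega_{\tfrg_n}$ (from $\det\frg_n$ having trivial $B$-weight) is more conceptual and, notably, does not need (3) as an input. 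The trade-off is that your sentence about the homological shift being ``absorbed into the two-periodic grading conventions (with the compensating $\mathbf{q}$-twist from the symplectic form)'' is doing real work and deserves to be spelled out: you should record that $\dim\tfrg_n$ is even (so the parity shift vanishes) and that the $T_q$-weight of the symplectic form cancels the residual $\mathbf{q}$-degree, since otherwise a reader may suspect a hidden twist. The paper's explicit weight count sidesteps this bookkeeping at the cost of a direct dependence on (3).
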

\begin{proof}
  The three properties are proven in \cite{OblomkovRozansky16} so we explain a proof of last statement.
  Let us fix coordinates \((g_1,Y_1,g_2,Y_2)\) on \((G\ti \frb)^2\).
  Then the element \(\mathbbm{1}\) is a Koszul complex of the equations:
  \[g_2^{-1}g_1\in B,\quad (\Ad_{g_2^{-1}g_1}Y_1)_{ij}=(Y_2)_{ij},\quad i\ge j\]
  The sum of the \(B^2\) weights of the first group of  equations  is \(\langle -\rho,-\rho\rangle\)  and of the
  second is \(\langle 0,2\rho\rangle\). To compute the dual of \(\mathbbm{1}\) we need to invert the Borel weights
  in the Koszul complex, hence  by the third property
  \[\mathbbm{1}^\vee=\mathbbm{1}\langle \rho,-\rho\rangle=\mathbbm{1}.\]
\end{proof}

Finally, let us compute the dual of \(\calB_\bullet^{(i)}\in \rmD^\flat_n\). The  corresponding variety \(\widetilde{St}^{(i)} \)
can be described by two equivalent systems of equations. The first system is
\[ g_2^{-1}g_1\in P_i,\quad (\Ad_{g_2^{-1}g_1}Y_1)_{kj}=(Y_2)_{kj},\quad k\ge j, \quad (\Ad_{g_2^{-1}g_1}Y_1)_{i+1,i}=0.\]
The other system is
\[ g_1^{-1}g_2\in P_i,\quad (\Ad_{g_1^{-1}g_2}Y_2)_{kj}=(Y_1)_{kj},\quad k\ge j, \quad (\Ad_{g_1^{-1}g_2}Y_2)_{i+1,i}=0.\]

The sum of the weights of the first system of equations is \(\langle-\rho+\epsilon_{i+1}, \rho-\epsilon_{i+1}\rangle\),
respectively the sum of the weights of the equations of the second system is
\(\langle \rho-\epsilon_{i+1},-\rho+\epsilon_{i+1}\rangle\). Thus the Koszul complexes of these two systems
are dual to each other. Since these two Koszul complexes are two presentations of the element \(\calB_\bullet^{(i)}\) we
obtain:
\[(\calB_\bullet^{(i)})^\vee=\calB_\bullet^{(i)}.\]

The same argument implies the self-duality of the corresponding matrix factorization:
\[(\calC_\bullet^{(i)})^\vee=\calC_\bullet^{(i)}.\]

\subsection{Two strands}
\label{sec:two-strands}

Before we proceed with the proof of lemma~\ref{lem:buble} let us write explicitly the generators of the Koszul
complex \(\calB^{(1)}_\bullet\in \rmD_{G,prop}^{b,T_{q}}\). If the space \(\frb\ti G\ti\frb\ti G\)  has coordinates \((Y_1,g_1,Y_2,g_2)\) then
collection of equations:
\begin{equation}\label{eq:gen-B}
  (\Ad_{g_2^{-1}g_1}Y_1)_{21}=0,\quad (\Ad_{g_2^{-1}g_1}Y_1)_{ij}=(Y_2)_{ij}, \quad ij=11,22,12, \end{equation}
defines the variety \(\widetilde{St^{(1)}}\) as complete intersection. Thus these elements provide a choice of
generators for the Koszul complex \(\calB^{(1)}_\bullet\).

\begin{proof}[Proof of Lemma~\ref{lem:buble}]
  The convolution space \((\frb\ti G)^3\) has coordinates \((Y_1,g_1,Y_2,g_2,Y_3,g_3)\).
  In these coordinates the pull-back \(\pi_{12}^*(\calB^{(1)}_\bullet)\)   is a Koszul complex with generators as in
  \eqref{eq:gen-B}. For  the generators of \(\pi_{23}^*(\calB^{(1)}_\bullet)\) we choose a system of generators of
  the Koszul complex as follows:
  \begin{equation}\label{eq:gen-B23}
    (\Ad_{g_2^{-1}g_3}Y_3)_{21}=0,\quad     (\Ad_{g_2^{-1}g_3}Y_3)_{ij}=(Y_2)_{ij},\quad ij=11,22,12.
  \end{equation}
  Thus the combination of the systems \eqref{eq:gen-B} and \eqref{eq:gen-B23} is equivalent to the system:
  \begin{align*}
    \Ad_{g_2^{-1}g_1}Y_1&=\Ad_{g_2^{-1}g_3}Y_3,&\quad  (\Ad_{g_2^{-1}g_1}Y_1)_{ij}&=(Y_2)_{ij}, \quad ij=11,22,12, \\
 & &(\Ad_{g_2^{-1}g_1}Y_1)_{21}&=0.\end{align*}

  The equations in the first line of the last system allow us to eliminate variable \(Y_2\). Also the first group
  of equations in the first line is equivalent to the system
  \[\Ad_{g_1}Y_1=\Ad_{g_3}Y_3.\]
  Thus the convolution of interest is given by
  \[\CE_\frn(K[(\Ad_{g_2^{-1}g_1}Y_1)_{21}])^T\otimes \calB_\bullet^{(1)}.\]
  Finally let us observe that the element
  \[(\Ad_{g_2^{-1}g_1}Y_1)_{21}=a_{21}(a_{22}(y_{22}-y_{11})+a_{21}y_{12}),\quad g=(a_{ij})\]
  has weight \((2,0)\) with respect to the action of the \(T=\CC^*\ti \CC^*\) since the elements
  \(a_{ij}\) have weight \(e_i\).

  Since \(G/B=\mathbb{P}^1\) we need to compute the push-forward along the projection of the two step complex
  \[\pi_{\PP^1}([\calO\to\mathbf{q}^2\calO(-2)]),\quad \pi_{\PP^1}:\mathbb{P}^1\ti \frb\ti G\to \frb\ti G.\]
  Since \(\pi_{\PP^1*}(\calO)=\calO\) and \(\pi_{\PP^1*}(\calO(-2))= \calO[1]\)
  the statement follows.
\end{proof}

\subsection{Three stands}
\label{sec:three-stands}

Before we prove the lemma~\ref{lem:triple-dot} let us provide the details on the complexes that appear in the statement of the lemma.

Let us fix coordinates on the space \(\frb_3\ti G_3\ti \frb_3\ti G_3\) as \((Y_1,g_1,Y_2,g_2)\).
A minimal set of equations defining \(\widetilde{St^{(1)}}\subset \frb_3\ti G_3\ti \frb_3\ti G_3\) is
\[(g_1^{-1}g_2)_{ij},\quad ij=31,32,\quad (\Ad_{g_1}Y_1)_{21}=(\Ad_{g_2}Y_2)_{21}.\]
These equations define \(\widetilde{St^{(1)}}\) as a complete intersection.

Similarly, a minimal set of equations for \(\widetilde{St^{(2)}}\) is
\[(g_1^{-1}g_2)_{ij}=0,\quad ij=21, 23,\quad (\Ad_{g_1}Y_1)_{32}=(\Ad_{g_2}Y_2)_{32}.\]

Finally, the variety \(\widetilde{St^{(1,2)}}\) has a description as a complete intersection:
\begin{equation}\label{eq:St12}\Ad_{g_1}Y_1=\Ad_{g_2}Y_2.\end{equation}

\subsubsection{}
\label{sec:triple-double}

In the next arguments we need a convolution statements that is parallel to the lemma~\ref{lem:buble}.

\begin{lemma}\label{lem:triple-double} For \(\calB_\bullet^{(1)},\calB_\bullet^{(1,2)}\) we have
  \[\calB_\bullet^{(1)}\star \calB_\bullet^{(1,2)}=\calB_\bullet^{(1,2)}\star\calB_\bullet^{(1)}=\calB_\bullet^{(1,2)}\oplus \mathbf{q}^2\calB_\bullet^{(1,2)}.\]
\end{lemma}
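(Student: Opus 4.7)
The plan is to mirror the Koszul push-forward argument of Lemma~\ref{lem:buble}. Working in coordinates $(Y_i,g_i)_{i=1,2,3}$ on $(\frb_3\times G_3)^3$, I compute
\[\calB_\bullet^{(1)}\star\calB_\bullet^{(1,2)}=\pi_{13*}\bigl(\pi_{12}^*\calB_\bullet^{(1)}\otimes\pi_{23}^*\calB_\bullet^{(1,2)}\bigr)\]
directly, with $\pi_{12}^*\calB_\bullet^{(1)}$ the Koszul complex on the three minimal generators of $\widetilde{St^{(1)}}$ restricted to strands $(1,2)$, namely $(g_1^{-1}g_2)_{31}$, $(g_1^{-1}g_2)_{32}$, and $(\Ad_{g_1}Y_1-\Ad_{g_2}Y_2)_{21}$, and $\pi_{23}^*\calB_\bullet^{(1,2)}$ the Koszul complex on the matrix entries of $\Ad_{g_2}Y_2-\Ad_{g_3}Y_3$ defining $\widetilde{St^{(1,2)}}$ on strands $(2,3)$.

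The next step is to reorganize the combined Koszul complex. Modulo the ideal generated by the $\calB^{(1,2)}$-generators, the third generator of $\calB^{(1)}$ becomes $(\Ad_{g_1}Y_1-\Ad_{g_3}Y_3)_{21}$, which is independent of the middle variables $(Y_2,g_2)$. After replacing this generator inside the Koszul complex by the quasi-isomorphic representative, the combined Koszul factors as a tensor product of an outer piece $K_{\text{outer}}$ on $(Y_1,g_1,Y_3,g_3)$ (pulled back along $\pi_{13}$) and a middle piece $K_{\text{middle}}$ involving the remaining eleven generators. By the projection formula,
\[\pi_{13*}(K_{\text{outer}}\otimes K_{\text{middle}})=K_{\text{outer}}\otimes \pi_{13*}K_{\text{middle}}.\]
Using the $\calB^{(1,2)}$-generators to eliminate $Y_2$, the two remaining $\calB^{(1)}$-generators $(g_1^{-1}g_2)_{3j}$ cut the middle flag variety $G_3/B$ down to $P_1/B\cong\PP^1$, and the residual content on this $\PP^1$ is a single Koszul generator of $T_q$-weight $\mathbf{q}^2$ and line-bundle degree $-2$, exactly as in the proof of Lemma~\ref{lem:buble}. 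Its push-forward is $\calO\oplus\mathbf{q}^2\calO[1]$, and combined with $K_{\text{outer}}$ (which together with the nine matrix-entry generators assembles $\calB_\bullet^{(1,2)}$ on strands $(1,3)$) one obtains $\calB_\bullet^{(1,2)}\oplus\mathbf{q}^2\calB_\bullet^{(1,2)}$. The opposite convolution $\calB_\bullet^{(1,2)}\star\calB_\bullet^{(1)}$ follows by the symmetric argument interchanging $\pi_{12}$ and $\pi_{23}$, or equivalently from the self-duality of Proposition~\ref{prop:dual}.

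The main technical obstacle will be making the factorization of the combined Koszul complex into outer and middle pieces rigorous at the level of resolutions rather than merely supports. Several of the $\calB^{(1,2)}$-generators couple the middle variables $(Y_2,g_2)$ to $(Y_3,g_3)$, so a careful tracking of the Koszul differentials, $B^2$-weights, and of the regular-sequence transversality of the twelve combined generators is required. This is the direct three-strand analog of the two-strand verification carried out in the proof of Lemma~\ref{lem:buble}; the new feature is the appearance of the genuine $P_1$-condition $(g_1^{-1}g_2)_{31}=(g_1^{-1}g_2)_{32}=0$, which is what reduces the middle flag from the full $G_3/B$ to the $\PP^1$ on which the bubble lives.
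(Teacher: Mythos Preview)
Your overall strategy—Koszul-complex manipulation, elimination of the middle $Y_2$, and reduction to a $\PP^1$ push-forward—is exactly the paper's. The paper carries out the symmetric computation $\calB_\bullet^{(1,2)}\star\calB_\bullet^{(1)}$ and lands on the same two-term complex $[\calO\to\mathbf{q}^2\calO(-2)]$ on $P_1/B\cong\PP^1$ that you describe.

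There is, however, a genuine gap in your generator count. The Koszul complex $\calB_\bullet^{(1)}$ in the three-strand case is \emph{not} the complex on the three ``minimal'' generators $(g_1^{-1}g_2)_{31}$, $(g_1^{-1}g_2)_{32}$, $(\Ad_{g_1}Y_1-\Ad_{g_2}Y_2)_{21}$: the variety $\widetilde{St^{(1)}}\subset(\frb_3\times G_3)^2$ has codimension~$9$, so the resolving Koszul complex needs nine generators, including the six upper-triangular equations $(\Ad_{g_2^{-1}g_1}Y_1)_{ij}=(Y_2)_{ij}$, $i\le j$, that pin down $Y_2$. (The paper's own ``minimal set'' sentence in \S\ref{sec:three-stands} is misleading on this point; compare with the full generator list~\eqref{eq:gen-B23-tripl} actually used in the proof.) With only three generators your factorization breaks down: using the nine $\calB^{(1,2)}$-generators on $(2,3)$ to eliminate $Y_2$ consumes only six of them, and the three residual lower-triangular conditions $(\Ad_{g_2^{-1}g_3}Y_3)_{21,31,32}=0$ mix the middle variable $g_2$ with the outer variables $g_3,Y_3$, so $K_{\mathrm{outer}}\otimes K_{\mathrm{middle}}$ is not a tensor factorization. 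Moreover your single outer generator $(\Ad_{g_1}Y_1-\Ad_{g_3}Y_3)_{21}$ is eight equations short of the nine needed for $\calB_\bullet^{(1,2)}$ on strands $(1,3)$.

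The fix is precisely what the paper does: work with the full nine-generator Koszul model of $\calB_\bullet^{(1)}$. Then the six $Y_2$-determining generators of $\calB_\bullet^{(1)}$ eliminate $Y_2$, the nine $\calB_\bullet^{(1,2)}$-generators pass through (modulo the single $(21)$-residual) to the nine equations $\Ad_{g_1}Y_1=\Ad_{g_3}Y_3$ defining $\calB_\bullet^{(1,2)}$ on $(1,3)$, and the only genuinely middle content left is the one weight-$\mathbf{q}^2$ generator on $P_1/B\cong\PP^1$, exactly as in Lemma~\ref{lem:buble}.
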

\begin{proof}
  We show the last equality since the others are analogous.
  The convolution space \(\calX_{conv}\subset (\frb\ti G)^3\) has coordinates \((Y_1,g_1,Y_2,g_2,Y_3,g_3)\) with
  constraint \(g_2^{-1}g_3\in P_1\)
  In these coordinates the pull-back \(\pi_{12}^*(\calB^{(1)}_\bullet)\)   is a Koszul complex with generators as in
  \eqref{eq:St12}. For  the generators of \(\pi_{23}^*(\calB^{(1)}_\bullet)\) we choose a system of generators of
  the Koszul complex as follows:
  \begin{equation}\label{eq:gen-B23-tripl}
    (\Ad_{g_2^{-1}g_3}Y_3)_{21}=0,\quad     (\Ad_{g_2^{-1}g_3}Y_3)_{ij}=(Y_2)_{ij},\quad i\le j.
  \end{equation}
  Thus the combination of the systems \eqref{eq:St12} and \eqref{eq:gen-B23-tripl} is equivalent to the system:
  \begin{align*}
    \Ad_{g_2^{-1}g_1}Y_1&=\Ad_{g_2^{-1}g_3}Y_3,&\quad  (\Ad_{g_2^{-1}g_3}Y_3)_{ij}&=(Y_2)_{ij}, \quad ij=11,22,12, \\
 & &(\Ad_{g_2^{-1}g_3}Y_3)_{21}&=0.\end{align*}

  The equations in the first line of the last system allow us to eliminate variable \(Y_2\). Also the first group
  of equations in the first line is equivalent to the system
  \[\Ad_{g_1}Y_1=\Ad_{g_3}Y_3.\]
  Thus the convolution of interest is given by
  \[\CE_\frn(K[(\Ad_{g_2^{-1}g_3}Y_3)_{21}])^T\otimes \calB_\bullet^{(1,2)}.\]
  Finally let us observe that the element
  \[(\Ad_{g_2^{-1}g_3}Y_3)_{21}=a_{21}(a_{22}(y_{22}-y_{11})+a_{21}y_{12}),\quad g_2^{-1}g_3=(a_{ij})\]
  has weight \((2,0)\) with respect to the action of the \(T=\CC^*\ti \CC^*\) since the elements
  \(a_{ij}\) have weight \(\epsilon_i\).

  Since \(P_1/B=\mathbb{P}^1\) we need to compute the push-forward along the projection of the two step complex
  \[\pi_{\PP^1}([\calO\to\mathbf{q}^2\calO(-2)]),\quad \pi_{\PP^1}:\mathbb{P}^1\ti \frb\ti G\to \frb\ti G.\]
  Since \(\pi_{\PP^1*}(\calO)=\calO\) and \(\pi_{\PP^1*}(\calO(-2))=\calO[1]\)
  the statement follows.

\end{proof}


\begin{lemma} For \(\calB_\bullet^{(1)}\in \rmD_{G_3,prop}^{b,T_q}(\tfrg^2_3)\) we have
  \[\calB_\bullet^{(1)}\star\calB_\bullet^{(2)}=\calO_{Z_{12}},\]
  where \(Z_{12}\subset \tfrg^2\) is a complete intersection defined by
  \[(g_1^{-1}g_2)_{31}=0,\quad \Ad_{g_2}^{-1}\Ad_{g_1}(Y_1)_{ij}=0,\quad ij=32,21,31.\]
  \[(Y_2)_{ij}=(\Ad_{g_2}^{-1}\Ad_{g_1}(Y_1))_{ij},\quad ij=11,22,33,12,23,13.\]
\end{lemma}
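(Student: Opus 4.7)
The plan is to compute the convolution directly on the triple convolution space with coordinates $(Y_i,g_i)_{i=1,2,3}$, mirroring the calculations in Lemmas~\ref{lem:buble} and \ref{lem:triple-double}. First I would choose explicit Koszul generators for $\pi_{12}^*\calB_\bullet^{(1)}$ from the complete-intersection presentation of $\widetilde{St^{(1)}}$ in the $(Y_1,g_1,Y_2,g_2)$-block, namely the entries $(g_1^{-1}g_2)_{31}$, $(g_1^{-1}g_2)_{32}$, $(\Ad_{g_1}Y_1-\Ad_{g_2}Y_2)_{21}$, together with the three $\frb$-components of $\Ad_{g_1}Y_1-\Ad_{g_2}Y_2$; analogously, I would write $\pi_{23}^*\calB_\bullet^{(2)}$ as the Koszul complex with the corresponding generators in the $(Y_2,g_2,Y_3,g_3)$-block.

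Next I would simplify by elimination. On the common zero locus the combined equations imply $\Ad_{g_1}Y_1=\Ad_{g_2}Y_2=\Ad_{g_3}Y_3$, so the six $\frb$-linear equations from the $\calB_\bullet^{(2)}$-factor allow me to express $Y_2=\Ad_{g_2^{-1}}\Ad_{g_1}Y_1$ and eliminate $Y_2$ derivedly. The remaining Koszul complex in $(Y_1,g_1,g_2,g_3)$ has generators imposing $g_1^{-1}g_2\in P_1$, $g_2^{-1}g_3\in P_2$, the containment $\Ad_{g_2^{-1}}\Ad_{g_1}Y_1\in\frb$ (three entries in the lower-triangular complement of $\frb$), and the two residual Steinberg equations $(\Ad_{g_1}Y_1-\Ad_{g_3}Y_3)_{21}$, $(\Ad_{g_1}Y_1-\Ad_{g_3}Y_3)_{32}$. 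Then the derived pushforward $\pi_{13*}\circ\CE_{\frn^{(2)}}(\cdot)^{T^{(2)}}$ eliminates $g_2$ modulo the middle Borel action, and this step reduces geometrically to computing $R\mu_{BS,*}\calO$ for the Bott--Samelson desingularization
\[
\mu_{BS}\colon G_3/B\ti_{G_3/P_1}G_3/B\ti_{G_3/P_2}G_3/B\longrightarrow G_3/B\ti G_3/B
\]
associated to the reduced expression $s_1s_2\in S_3$. Since $\ell(s_1s_2)=2$, the map $\mu_{BS}$ is an isomorphism onto the Schubert subvariety $\{(xB,yB):(x^{-1}y)_{31}=0\}$, and $R^i\mu_{BS,*}\calO=0$ for $i>0$ by Demazure's classical vanishing theorem. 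Incorporating the two surviving Steinberg equations then yields exactly the ten equations defining $Z_{12}$.

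The main obstacle is verifying degree-zero concentration at two derived-functor levels: (i) that the combined Koszul complex on the triple convolution space is a resolution of $\widetilde{St^{(1)}}\ti_{\tfrg_3}\widetilde{St^{(2)}}$ (no higher $\Tor$), which follows once its $4+4=8$ generators form a regular sequence --- a dimension count of the fiber product verifies this; and (ii) the Demazure vanishing for the reduced word $s_1s_2$, which is classical. Combining these two facts, the Koszul complex of the ten equations cuts out $Z_{12}$ as a complete intersection of the stated codimension in $\tfrg_3^2$, and the convolution $\calB_\bullet^{(1)}\star\calB_\bullet^{(2)}$ is represented by this Koszul complex, giving $\calO_{Z_{12}}$ as claimed.
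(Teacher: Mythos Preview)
Your approach is correct and its kernel coincides with the paper's one-line proof: the projection $\pi_{13}$ restricted to $\pi_{12}^{-1}(St^{(1)})\cap\pi_{23}^{-1}(St^{(2)})$ is an isomorphism onto $Z_{12}$, which is exactly your observation that the Bott--Samelson map for the reduced word $s_1s_2$ is an isomorphism onto its Schubert image (this holds because $P_1\cap P_2=B$, so $P_1\times^B P_2\to P_1P_2$ is bijective between smooth varieties of the same dimension). Once you have the isomorphism, invoking Demazure vanishing is redundant.

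Your equation counts contain slips: for $GL_3$ the $\frb$-part of $\Ad_{g_1}Y_1-\Ad_{g_2}Y_2$ has six components, not three, so each $\widetilde{St^{(i)}}$ is a complete intersection of $2+1+6=9$ equations in $(G_3\times\frb_3)^2$, and the ``$4+4=8$'' in your obstacle~(i) does not match the generators you yourself listed. These are bookkeeping errors rather than gaps in the strategy, but they should be fixed before the dimension count in~(i) can be carried out.
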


\begin{proof}
  The statement follows immediately from the observation that the projection: \(\pi_{13}:\pi^{-1}_{12}({St}^{(1)})\cap\pi^{-1}_{23}({St}^{(2)})\to Z_{12}\) is
  an isomorphism of reduced schemes.

\end{proof}


Let us discuss a geometric description of \(Z_{12}\). It is piece of the Steinberg variety \(\widetilde{St}^{(1,2)}\).
The Steinberg variety \(St^{(1,2)}\subset \tfrg^2_3\) has six irreducible components \cite{ChrisGinzburg}:
\[St^{(1,2)}=\bigcup_{w\in S_3} \Gamma_w.\]
The generic point of \(\Gamma_w\) consists of triples \((F',F'',X)\), \(X\in \frg_3\) and \(F',F''\) are in relative position \(w\).

The small Steinberg varieties are unions of subsets of irreducible components:
\[St^{(1)}=\Gamma_1\cup \Gamma_{s_1},\quad St^{(2)}=\Gamma_1\cup \Gamma_{s_2}.\]

Let use notation \(\widetilde{\Gamma}_{w}\) for the corresponding subvariety of \((G_3\ti \frb_3)^2\).
The computation above shows that
\[Z_{12}=\Gamma_1\cup \Gamma_{s_1}\cup \Gamma_{s_2}\cup \Gamma_{s_1s_2}.\]

\begin{proof}[Proof of Lemma~\ref{lem:short-ex}]
  We need to compute the product
\[\calO_{Z_{12}}\star \calB^{(1)}_\bullet=\bigg(\calB_\bullet^{(1)}\star\calB_\bullet^{(2)}\bigg)\star\calB_\bullet^{(1)}.\]

In geometric terms, the convolution is the push-forward:
\[\pi_{13*}(\calO_{Z_{123}}),\quad Z_{123}=\pi_{12}^{-1}(Z_{12})\cap \pi_{23}^{-1}(St^{(1)})\subset \tfrg_3^3.\]

Let us present the variety \(Z_{123}\) as union of two collections of irreducible components:
\[Z_{123}=Z'_{123}\cup Z''_{123},\quad Z'_{123}=\pi_{12}^{-1}(\Gamma_1\cup \Gamma_{s_1})\cap \pi_{23}^{-1}(\Gamma_{s_1}),\]
\[Z''_{123}=\pi_{12}^{-1}(Z_{12})\cap \pi_{23}^{-1}(\Gamma_1)\bigcup \pi_{12}^{-1}(\Gamma_{s_2}\cup\Gamma_{s_1s_2})\cap \pi_{23}^{-1}(\Gamma_{s_1}).\]

The  intersection of the  graph closures \(\Gamma_{w'}\),  \(\Gamma_{w''}\) is sent by the projection
\[\mu_1:(G\ti \frb)^2\to \frb\] to the locus defined by the equations:
\[w'\cdot \lambda(Y)= w''\cdot \lambda(Y).\]

Thus the intersection  \(Z'_{123}\cap Z''_{123}\) is a divisor inside \(Z'_{123}\), the defining equation of the divisor is
\((Y^1_{22}-Y^1_{33})(Y^3_{11}-Y^3_{22})\). The vanishing locus of the  first factor of the equation is the intersection
\(\pi_{12}^{-1}(\Gamma_1\cup \Gamma_{s_1})\cap \pi_{12}^{-1}(\Gamma_{s_2}\cup \Gamma_{s_1s_2})\) and the second
defines the intersection \(\pi_{23}^{-1}(\Gamma_1)\cap\pi_{23}^{-1}(\Gamma_{s_1})\). Hence, we have the short
exact sequence:
\[0\to \mathbf{q}^4\calO_{Z'_{123}}\to \calO_{Z_{123}}\to \calO_{Z''_{123}}\to 0.\]

As a last step of the proof we need to compute the push forward \(\pi_{13*}\) of the last short exact sequence.
For  that we first notice that \[\pi_{13*}(\calO_{Z'_{123}})=\calB_\bullet^{(1)}\star \calO_{\Gamma_{s_1}}.\]
Thus the computation of the last convolution product can done in the category \(\rmD_{\GG_2}(\widetilde{\fg}_2^2)\)  since
both \(\calB_\bullet^{(1)}\) and \(\calO_{\Gamma_{s_1}}\) are induced from \(G_2\subset G_3\). The  argument analogous
to the previously discussed lemma~\ref{lem:triple-double}  yields:
\[\calB_\bullet^{(1)}\star\calO_{\Gamma_{s_1}}=\mathbf{q}^{-2}\calB_\bullet^{(1)}.\]

Next let use the following general fact about the graph closures \(\Gamma_w\). Suppose that \(w',w''\in S_n\) such that
\(\ell(w'w'')=\ell(w')+\ell(w'')\) then the projection \(\pi_{13}:\tfrg^3\to \tfrg^2\) provides an isomorphism
\[\pi_{12}^{-1}(\Gamma_{w'})\cap\pi_{23}^{-1}(\Gamma_{w''})\to \Gamma_{w'w''}.\]

Thus the fact implies that in our situation \(\pi_{13*}\) restricts to the isomorphism: \(Z''_{123}\to St.\) Hence we have
shown that \(\pi_{13*}(\calO_{Z'_{123}})=\calB_\bullet^{(1)}\) and \(\pi_{13*}(\calO_{Z''_{123}})=\calO_{St}\) and the statement of the
lemma follows.

\end{proof}

\subsubsection{Computation of the extension group}
\label{sec:comp-extens-group}

The  statements from the previous section imply the statements for the matrix factorizations, since
\[i_*(\calB^{(i)}_\bullet)=\calC_\bullet^{(i)},\quad i_*(\calB_\bullet^{(1,2)})=\calC_\bullet^{(1,2)}.\]

To show that the short exact sequence (or may be triangle since we are in the triangulated category) of
the matrix factorizations
\[  0\to \calC_\bullet^{(1)}\to \calC_\bullet^{(1)}\star\calC_\bullet^{(2)}\star\calC_\bullet^{(1)}\to \calC^{(1,2)}\to 0. \]
split  we need to compute the corresponding extension group.

\begin{proposition}
  For \(\calC_\bullet^{(1)},\calC_\bullet^{(1,2)}\in \MF_3\) we have:
  \[\Ext^{>0}(\calC^{(1,2)}_\bullet,\calC_\bullet^{(1)})=0,\quad \Ext^0(\calC^{(1,2)}_\bullet,\calC_\bullet^{(1)})=\CC[y_1,y_2,y_3]^{\oplus 2}.\]
\end{proposition}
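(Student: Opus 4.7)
I would begin by reducing the Ext computation to something manageable. By Proposition~\ref{prop:dual}, $\calC_\bullet^{(1,2)}$ is self-dual, so Corollary~\ref{cor:cycle} gives
\[
\Ext^i(\calC_\bullet^{(1,2)}, \calC_\bullet^{(1)}) = \Tor_i\bigl(\calC_\bullet^{(1,2)} \star \calC_\bullet^{(1)},\, \mathbbm{1}\bigr).
\]
Applying the monoidality of $i_*$ to Lemma~\ref{lem:triple-double} splits the convolution as $(1+\mathbf{q}^2)\,\calC_\bullet^{(1,2)}$, producing
\[
\Ext^i(\calC_\bullet^{(1,2)}, \calC_\bullet^{(1)}) = (1+\mathbf{q}^2)\cdot \Ext^i(\mathbbm{1}, \calC_\bullet^{(1,2)}).
\]
The factor $1+\mathbf{q}^2$ accounts for the rank-two answer, so it remains to show that $\Ext^\bullet(\mathbbm{1}, \calC_\bullet^{(1,2)})$ is concentrated in degree zero and equal to $R_3 = \CC[y_1,y_2,y_3]$.

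For the remaining Ext I would transfer to the coherent-sheaf side using the functor $i_*$ from Section~\ref{sec:coher-sheaf-vers}. Since the potential $W = \Tr(X(\Ad_{g_1}Y_1 - \Ad_{g_2}Y_2))$ is linear in the fiber variable $X$, Kn\"orrer periodicity realizes the computation as
\[
\Ext^\bullet_{\widetilde{St^{(1,2)}}}(\calO_\Delta,\, \calO_{\widetilde{St^{(1,2)}}}),
\]
with $\mathbbm{1} \leftrightarrow \calO_\Delta$ (the diagonal $\Delta \cong \tfrg$) and $\calC_\bullet^{(1,2)} \leftrightarrow \calO_{\widetilde{St^{(1,2)}}}$. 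A dimension count $\dim \Delta = \dim \widetilde{St^{(1,2)}} = n^2 = 9$ identifies $\Delta$ with the identity component $\Gamma_1$ in the Weyl decomposition $\widetilde{St^{(1,2)}} = \bigcup_{w \in S_3}\Gamma_w$. Consequently $\Ext^0 = \Hom(\calO_\Delta, \calO_{\widetilde{St^{(1,2)}}})$ is the ideal of the union of the other components inside $\calO_{\widetilde{St^{(1,2)}}}$, which is isomorphic to $\calO_\Delta$ as an $\calO_\Delta$-module (up to a grading shift by the discriminant class), and passing to $G \times B^2$-invariants gives $R_3$.

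The main obstacle is the vanishing of $\Ext^{>0}$. I expect this to follow from the Cohen-Macaulay property of the Steinberg variety together with the explicit description of each pairwise intersection $\Gamma_1 \cap \Gamma_w = \{\lambda(Y_1) = w\cdot\lambda(Y_2)\}$ as a regular-sequence divisor (cf.\ the proof of Lemma~\ref{lem:short-ex}). A periodic Koszul-type resolution of $\calO_\Delta$ over $\calO_{\widetilde{St^{(1,2)}}}$, modeled on the two-term resolution for the nodal curve $\Spec\CC[x,y]/(xy)$, should produce the vanishing directly; the remaining work is careful bookkeeping of the $\mathbf{q},\mathbf{t}$-grading so that the shifted copy of $R_3$ combines with the $(1+\mathbf{q}^2)$ prefactor to reproduce the answer $R_3^{\oplus 2}$.
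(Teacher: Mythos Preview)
Your first reduction---using self-duality of $\calC_\bullet^{(1,2)}$, Corollary~\ref{cor:cycle}, and Lemma~\ref{lem:triple-double} to factor off $(1+\mathbf{q}^2)$ and reduce the question to $\Tor^*(\calC_\bullet^{(1,2)},\calC_\parallel)$---matches the paper exactly.

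Where you diverge is in evaluating that remaining $\Tor$. You pass to the coherent-sheaf side and try to analyze $\Ext^\bullet(\calO_\Delta,\calO_{St})$ through the component geometry of the Steinberg variety, then concede that the $\Ext^{>0}$ vanishing is the ``main obstacle'' and leave it as a sketch involving Cohen--Macaulayness and a hoped-for periodic resolution. The paper's route is far more direct and bypasses all of this. It stays on the matrix-factorization side: the $\Tor$ against $\calC_\parallel$ is computed by pulling back along the $B$-equivariant embedding $j_e\colon\frg\times\frb\to\frg\times(G\times\frb)^2$, $j_e(X,Y)=(X,e,Y,e,Y)$. On this slice the defining equations $\Ad_{g_1}Y_1-\Ad_{g_2}Y_2$ of $\calC^{(1,2)}_\bullet$ become identically zero, so the Koszul matrix factorization collapses to the ordinary Koszul complex $K[X]$ on the entries of $X$. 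After applying $\CE_\frn(\cdot)^T$ this is the Lie algebra cohomology $H^*_{Lie}(\frn,\CC[\frb])^T$, i.e.\ the $T$-invariants of the dg algebra $\CC[\frb]\otimes\Lambda^*(\theta_{ij})_{i<j}$ with differential $\sum_{i<j}\theta_{ij}E_{ij}$. Every off-diagonal generator $Y_{ij}$ and every $\theta_{ij}$ carries the strictly positive $T$-weight $\epsilon_i-\epsilon_j$, so the weight-zero piece of the complex is just $\CC[y_1,y_2,y_3]$ sitting in exterior degree~$0$ with zero differential. The vanishing of higher $\Ext$ is thus a one-line weight observation, not a geometric fact about the Steinberg variety.

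Your geometric approach is not wrong in spirit, but the parts you flag as remaining work---the Cohen--Macaulay input, an explicit resolution of $\calO_\Delta$ over $\calO_{St}$, and the identification of $\Ext^0$ with $\calO_\Delta$ up to a discriminant twist---are each nontrivial for $n=3$, and your Kn\"orrer identification of the matrix-factorization $\Ext$ with the coherent-sheaf $\Ext$ on $St$ would itself need justification. The paper's argument sidesteps all of this by exploiting that $\calC^{(1,2)}_\bullet$ is a Koszul matrix factorization whose generators become tautological on the diagonal slice.
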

\begin{proof}
  First we observe that \(\widetilde{St}\subset (G\ti \frb)^2\) is defined by
  \begin{equation}\label{eq:St-eq}
    \Ad_{g_1}Y_1=\Ad_{g_2}Y_2,\end{equation}
  where \((g_1,Y_1,g_2,Y_2)\) are the coordinates on \((G\ti \frb)^2\). The last system of equations is
  \(B^2\)-invariant. Thus corresponding Koszul matrix factorization:
  \[\calC^{(1,2)}=K^W[\Ad_{g_1}Y_1-\Ad_{g_2}Y_2]\]
  is self-dual and
  \[\Ext^*(\calC^{(1,2)}_\bullet,\calC^{(1)}_\bullet)=\Tor^*(\calC^{(1,2)}_\bullet,\calC^{(1)}_\bullet)=
    \Tor^*(\calC^{(1,2)}_\bullet\star\calC^{(1)}_\bullet,\calC_\parallel)=\Tor^*(\calC^{(1,2)}_\bullet,\calC_\parallel)^{\oplus 2}.\]
  The last equality follows from \ref{lem:triple-double}.

  The  last group can be computed by the standard method:
  \[\Tor^*(\calC^{(1,2)}_\bullet,\calC_\parallel)=\CE_{\frn^2}(K^W[\Ad_{g_1}Y_1-\Ad_{g_2}Y_2]\otimes K^{-W}[g_1g_2^{-1}\in B, Y_1-Y_2])^{T^2\ti G},\]
  where we used used the standard notation for the Koszul matrix factorizations on the
  space \(\frg\ti (G\ti \frb)^2\) with coordinates \((X,g_1,Y_1,g_2,Y_2)\).
  As it is explained in section 13  of \cite{OblomkovRozansky16} the last group is equal to
  the pull-back
  \[\CE_{\frn}(j_e^*(K^W[\Ad_{g_1}Y_1-\Ad_{g_2}Y_2]))^{T},\]
  where \(j_e: \frg\ti \frb\to \frg\ti (G\ti \frb)^2\)  is the \(B\)-equivariant embedding:
  \[j_e(X,Y)=(X,e,Y,e,Y).\]
  Since the pull-back \(j_e^*\) turns the system \eqref{eq:St-eq} to the trivial system we obtain
  \[\CE_{\frn}(j_e^*(K^W[\Ad_{g_1}Y_1-\Ad_{g_2}Y_2]))^{T}=\CE_{\frn}(K[X])^T=H_{Lie}^*(\frn,\CC[\frb])^T.\]

  The last group is the homology of the dg algebra \(\calA=\CC[\frb]\otimes \Lambda^*(\theta_{ij})_{i<j}\) with the differential
  \[D=\sum_{i<j}\theta_{ij}E_{ij}\cdot,\]
  where \(E_{ij}\cdot \) is the action of \(E_{ij}\) on \(\calA\). The  element \(E_{ij}\) acts of \(\CC[\frb]\) by the taking
  derivative along the corresponding vector field generated by the adjoint action of \(\frn\) on \(\frb\) and it acts
  on \(\Lambda^*[\theta_{ij}]\) by the Lie bracket. Thus elements \(\theta_{ij}\) have the \(T\)-weight \(\epsilon_i-\epsilon_j\).
  Hence all elements in the algebra \(\calA\) have the positive \(T\)-grading and thus \(T\)-invariant part of the complex is
  \(\CC[y_1,y_2,y_3]\).
\end{proof}







\section{Markov-type MOY relations}
\label{sec:markov-moves}

\subsection{Markov relations}
\label{sec:markov-relations}

In this section we prove the last two statements that are needed for the comparison of Hom spaces. To state the main result of this section
it is convenient to assemble the traces \(\Tr^i\) from the section~\ref{sec:properties-functor} into one triply graded
vector space:
\[\Tr^*(\calF)=\oplus_i \Tr^i(\calF),\quad \calF\in \MF_n^{st}.\]

Let us recall that each of the spaces \(\Tr^i(\calF)\) is a doubly graded vector space. We call these two grading \(q,t\)-gradings
and use notation \(\mathbf{q}\cdot\), \(\mathbf{t}\cdot\) for  the functors that shift corresponding grading.
The additional grading \(*\) in \(\Tr^*\)  name \(a\)-grading and use \(\mathbf{a}\cdot \) for the corresponding grading shifting
functor.

In section~\ref{sec:properties-functor} we introduce notation \(D_\bullet^{(i,i+1)}\) for the generators of free monoid of planar diagram.
We denote by \(\Br_n^\flat\) the free monoid of the diagram on \(n\) strings. Respectively, to each \(D\in\Br_n^\flat\)
we attach an element \(\calF_D\in \MF_n^\flat\). We use same notation for the pull-back of \(\calF_D\) on the stable locus.

The main statement of this section is analog of the second Markov moves for the plane diagrams:

\begin{proposition}\label{prop:Markov-2}
  Suppose \(D\in \Br_n^\flat\) is element of the sub-monoid \[\langle D_\bullet^{(1,2)},\dots, D_\bullet^{(n-2,n-1)}\rangle.\]

  Respectively, \(D'\in \Br_{n-1}^\flat\) is the diagram \(D\) with the last strand removed.
  Then we have:
  \begin{enumerate}
  \item \(\Tr^*(\calF_D)=\frac{1-\mathbf{q}^2\mathbf{a}}{1-\mathbf{q}^2}\cdot\Tr^*(\calF_{D'})\).
    \item \(\Tr^*(\calF_{D\circ D^{(n-1,n)}_\bullet})=\frac{1-\mathbf{a}}{1-\mathbf{q}^2}\cdot\Tr^*(\calF_{D'})\).
  \end{enumerate}
\end{proposition}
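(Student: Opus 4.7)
The strategy is to reduce both statements to explicit computations on the slice model of Theorem~\ref{thm:simple}, using the compatibility of $\bar{\bb}$ with the induction functor $\ind_k$ established in Section~\ref{sec:induction-functors}.

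The first step is to write both matrix factorizations as inductions from smaller rank. Since $D$ lies in the sub-monoid generated by $D_\bullet^{(i,i+1)}$ with $i\le n-2$, there is a natural isomorphism $\calF_D \simeq \ind_{n-1}(\calF_{D'}\ti \calC_\parallel)$, where $\calC_\parallel\in\MF_1$ denotes the unit on a single strand. Similarly, $\calF_{D\circ D^{(n-1,n)}_\bullet} \simeq \ind_{n-2}(\calF_{D''}\ti \calC_\bullet)$, where $\calC_\bullet\in\MF_2$ is the elementary dot of Proposition~\ref{prop:el-braid} and $D''$ is $D'$ viewed appropriately after absorbing the extra identity strand into the $\MF_2$ factor. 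By Proposition~\ref{prop:ind} together with its $\bar{\bb}$-compatible counterpart, the problem reduces to computing the $\Tr^*$-contribution of the last one or two strands.

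The main computation is then carried out on the slice of Theorem~\ref{thm:simple}. The slice $\CC^n\ti U_-^{(n)}\ti\CC^n$ factors, with respect to the induction $\MF_{n-1}\ti \MF_1\to \MF_n$, as the product of $\CC^{n-1}\ti U_-^{(n-1)}\ti \CC^{n-1}$ with the two extra coordinates $y_n^{(1)}, y_n^{(2)}$ and the new bottom row of $U_-^{(n)}$. For a trivial last strand, the pull-back $\bar i^*_{\varkappa,\varkappa}$ factors $\bar{\bb}(\calF_D)$ as $\bar{\bb}(\calF_{D'})$ tensored with a Koszul block imposing $y_n^{(1)} = y_n^{(2)}$ over the extra $\CC^{n-1}$. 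After pushing forward to $\CC^n\ti\CC^n$, this last factor becomes the diagonal bimodule $\CC[y_n]$, contributing the denominator $\tfrac{1}{1-\mathbf{q}^2}$. The refinement by $\Lambda^\bullet V_n$ in the definition of $\Tr^*$ splits as $\Lambda^\bullet V_{n-1}\otimes\Lambda^\bullet(\CC\cdot e_n)$ via the cyclic-vector slice $\varkappa$, and the extra $\Lambda^\bullet(\CC\cdot e_n)$ factor generates the numerator $1-\mathbf{q}^2\mathbf{a}$ once the torus weights are accounted for, settling case~(1).

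For case~(2), the same framework applies with the last two strands replaced by $\calC_\bullet$. Combining the explicit Kn\"orrer computation of $\calC_\bullet$ from the proof of Proposition~\ref{prop:el-braid} with the MOY relation of Lemma~\ref{lem:MOY2}, one sees that the two-strand trace block equals $\tfrac{1-\mathbf{a}}{1-\mathbf{q}^2}$: the crossing trades one of the diagonal Koszul generators of weight $\mathbf{q}^2$ for a generator carrying only an $\mathbf{a}$-shift, modifying the numerator accordingly. The main obstacle is tracking the $\mathbf{a}$-grading through induction, since it originates from the auxiliary $\Lambda^\bullet V_n$ factor rather than from the matrix-factorization slice itself; this step parallels the torus-weight bookkeeping for $a_{21}$ in the proof of Lemma~\ref{lem:buble}, now applied to the distinguished component $e_n$ as one extends from $n-1$ to $n$ strands.
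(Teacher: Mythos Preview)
Your proposal confuses two different functors. The slice model of Theorem~\ref{thm:simple} computes the functor $\overline{\bb}$, which lands in $\rmD^{\per}(\CC^n\times\CC^n)$; it does \emph{not} compute the trace $\Tr^*$. The trace is defined as
\[
\Tr^i(\calF)=\CE_{\frn^2}\Big(\mathcal{E}xt(\calF,\calF_{\parallel}\ot\Lambda^i\calB)\Big)^{T^2\ti \GG_n},
\]
which involves the closure map $j_e^*$ to the diagonal and the structure of the free flag Hilbert scheme $\FHilb_n^{free}$, not the bimodule slice $\CC^n\times U_-\times\CC^n$. Your sentence ``the refinement by $\Lambda^\bullet V_n$ \dots\ splits via the cyclic-vector slice $\varkappa$'' has no meaning on the $\overline{\bb}$-slice: the variable $v$ and the bundle $\calB$ have already been eliminated there. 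If instead you are implicitly invoking the identity $\Tr^i(\calF)=\Tor^i_{R_n\otimes R_n}(\bb(\calF),R_n)$, that identity is the \emph{output} of Theorem~\ref{thm:main-cat}, whose proof (via Hao~Wu's induction) requires the present proposition as an input --- so the argument is circular.

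The paper's actual proof goes through a different geometric picture. One passes to the closure functor $\bbll^\circ$ landing in $\rmD^{\per}_{T_{qt}}(\FHilb_n^{free})$, and uses the projection $\pi\colon\FHilb_n^{free}\to\CC\times\FHilb_{n-1}^{free}$ whose fibers are $\mathbb{P}^{n-1}$. The key technical step (Lemma~\ref{lem:Markov}) shows that the closure of $\ind_1(\overline{\calF})\star\inc_2(\overline{\calG})$ has the form $\overline{\calF}'\otimes\pi_{\ge 2}^*(\bbll(\overline{\calG}^*))$, where $\overline{\calF}'$ is a deformed complex built from $\pi^*M\otimes\Lambda^\bullet V$ with $V\cong\calO_n(-1)^{\oplus n-2}$. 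The Markov relation then follows from the standard computation of $\pi_*(\calO_n(m)\otimes\Lambda^k\calB_n)$ on projective-space fibers: only the two extreme exterior powers survive, and the difference between the $\bullet$ and $\parallel$ cases is encoded in whether the relevant divisor is $\FHilb_{n,\bullet}$ (generic fiber $\mathbb{P}^{n-2}$) or $\FHilb_{n,\parallel}$ (with a jump over $x_{11}=0$). None of this projective-bundle geometry is visible from your slice, and your treatment of case~(2) --- invoking Lemma~\ref{lem:MOY2} and asserting that ``the crossing trades one of the diagonal Koszul generators'' --- does not supply the missing mechanism.
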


  \subsection{Geometric Markov moves}
\label{sec:geom-mark-moves}

Before we prove the proposition let us recall key points of the construction of \(\Tr^i\) from \cite{OblomkovRozansky16}.
  Our proof is very much analogous to the proof of the Markov move \cite{OblomkovRozansky16}.

  In \cite{OblomkovRozansky16} we work with the free Hilbert  scheme \(\Hilb_{1,n}^{free}\) instead of the usual Hilbert scheme of points
  on plane \(\Hilb_n(\CC^2)\). We define the trace in terms of two-periodic sheaves on the this free Hilbert scheme and
  in subsequent paper \cite{OblomkovRozansky18a} we explain that the trace from \cite{OblomkovRozansky16} could be interpreted
  in terms of \(\Hilb_n(\CC^2)\). We remind the details of this relation in the next section but for now let us
  recall the basic geometric constructions from \cite{OblomkovRozansky16}.

  The free nested Hilbert scheme $\FHilb_n^{free}$ is a $B\times \CC^*$-quotient of the sublocus
\[\widetilde{\FHilb}_{n}^{free}\subset \frb_n\times\frn_n\times V_n\] of the cyclic triples
$$\widetilde{\FHilb}_{n}^{free}=\{(X,Y,v)|\CC\langle X,Y\rangle v=V_n\},$$
here \(V_n=\CC^n\).
The usual nested Hilbert scheme  $\FHilb_{n}$ is the subvariety of $\FHilb^{free}_{n}$, it is defined by the condition that
 $X,Y$ commute.

\begin{remark}
  There is a bit of confusion  in the notations, what we denote here by \(\FHilb_n\)   is denoted in \cite{OblomkovRozansky16}
  by \(\Hilb_{1,n}\) and by \(\FHilb_n(\cc)\) in \cite{GorskyNegutRasmussen16}.
\end{remark}

The torus $T_{qt}=\CC^*\times\CC^*$ acts on $\FHilb_{n}^{free}$ by scaling the matrices. We denote by $\rmD^{\per}_{T_{qt}}(\FHilb_{n}^{free})$ a derived category of the two-periodic complexes of the $T_{qt}$-equivariant quasi-coherent sheaves on $\FHilb_{n}^{free}$. Let us also denote by $
\mathcal{B}^\vee$ the descent of the
trivial vector bundle $V_n$ on $\widetilde{\FHilb}^{free}_{n}$ to the quotient $\FHilb^{free}_{n}$. Respectively, \(\mathcal{B}\) stands for the dual of
\(\mathcal{B}^\vee\). Below we construct for every $\beta\in \Br_n$ an
element $$\mathbb{S}_\beta\in \rmD^{\per}_{T_{qt}}(\FHilb_{n}^{free})$$ such that space of hyper-cohomology of the complex:
$$\mathbb{H}^k(\mathbb{S}_\beta):=\mathbb{H}(\mathbb{S}_\beta\otimes \Lambda^k\mathcal{B}) $$
defines an isotopy invariant.

\begin{theorem}\cite{OblomkovRozansky16}\label{thm:mainOR} For any $\beta\in\Br_n$ the doubly graded space
$$ H^k(\beta):=\mathbb{H}^{(k+\textup{writh}(\beta)-n-1)/2}(\mathbb{S}_\beta)$$
is an isotopy invariant of the braid closure $L(\beta)$.
\end{theorem}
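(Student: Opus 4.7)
The plan is to deduce isotopy invariance of $H^k(\beta)$ from the Alexander--Markov theorem by verifying two things: (i) conjugation invariance $H^k(\beta)\cong H^k(\gamma\beta\gamma^{-1})$ in $\Br_n$, and (ii) stabilization invariance $H^k(\beta) \cong H^k(\iota(\beta)\sigma_n^{\pm 1})$ where $\iota\colon \Br_n\hookrightarrow \Br_{n+1}$ adds a trivial $(n{+}1)$-st strand. The normalization $(k+\textup{writh}(\beta)-n-1)/2$ is designed precisely to absorb the change in writhe and strand count under positive/negative stabilization, so the claim is that with this shift both moves become strict isomorphisms of doubly-graded spaces.

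For conjugation, I would construct $\mathbb{S}_\beta$ as a categorical trace of $\Phi(\beta)\in\MF_n^{\st}$ (the same construction later denoted $\ctr=\CH\circ\Phi$ in this paper). Because the trace glues the two ends of the braid via a geometrically symmetric closure, there is a canonical isomorphism $\mathbb{S}_{\beta_1\beta_2}\cong \mathbb{S}_{\beta_2\beta_1}$ in $\rmD^{\per}_{T_{qt}}(\FHilb_n^{free})$. Taking $\beta_1=\gamma$, $\beta_2=\beta\gamma^{-1}$ yields $\mathbb{S}_{\gamma\beta\gamma^{-1}}\cong \mathbb{S}_\beta$, and tensoring with $\Lambda^k\mathcal{B}$ and taking hypercohomology preserves this isomorphism. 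Since writhe is conjugation-invariant, the normalization does not change, and (i) follows.

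For stabilization, I would analyze $\mathbb{S}_{\iota(\beta)\sigma_n^{\pm 1}}$ on $\FHilb_{n+1}^{free}$ by separating off the extra crossing. Let $p\colon \FHilb_{n+1}^{free}\to \FHilb_n^{free}$ be the natural projection forgetting the top nesting step; on its fibers the rank-$(n{+}1)$ bundle $\mathcal{B}_{n+1}$ sits in an extension $0\to p^*\mathcal{B}_n\to \mathcal{B}_{n+1}\to \mathcal{L}\to 0$ by a tautological line bundle $\mathcal{L}$. The plan is to use an MOY-type relation (in the same spirit as Lemmas \ref{lem:MOY1}--\ref{lem:MOY2} proved in Section~\ref{sec:relations-mfflat}, but at the level of the trace on the free Hilbert scheme) to identify $\mathbb{S}_{\iota(\beta)\sigma_n^{\pm 1}}$ with the pull-back $p^*\mathbb{S}_\beta$ tensored with a Koszul-type complex expressing the extra crossing in terms of $\mathcal{L}$. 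Applying $Rp_*$ and using $\Lambda^k\mathcal{B}_{n+1}\cong \bigoplus_j p^*\Lambda^{k-j}\mathcal{B}_n\otimes\mathcal{L}^{\otimes j}$ together with the projection formula reduces $\mathbb{H}^*\bigl(\mathbb{S}_{\iota(\beta)\sigma_n^{\pm 1}}\otimes \Lambda^k\mathcal{B}_{n+1}\bigr)$ to a sum of $\mathbb{H}^*\bigl(\mathbb{S}_\beta\otimes \Lambda^{k'}\mathcal{B}_n\bigr)$'s with explicit shifts of $\pm 1$ in the $q$ and $a$ degrees, matched by the change of $\textup{writh}(\beta)-n$ by $0$ in the positive case and by $-2$ in the negative case.

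The principal obstacle will be the negative stabilization. The positive case $\sigma_n$ is essentially an arithmetic consequence of the MOY relation plus the projection formula, but $\sigma_n^{-1}$ requires also tracking the duality endo-functor $\vee$, since in the matrix factorization category the inverse crossing is dual to the positive one up to a tensor with $\det V_n$. Handling this carefully is exactly what forces the writhe-dependent normalization $(k+\textup{writh}(\beta)-n-1)/2$, and it explains the grading-shift asymmetry between $\sigma_n$ and $\sigma_n^{-1}$. Once both Markov moves are checked, the Alexander--Markov theorem identifies $H^k(\beta)$ with an invariant of $L(\beta)$.
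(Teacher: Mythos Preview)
This theorem is \emph{cited} from \cite{OblomkovRozansky16} and is not proved in the present paper; the paper simply quotes it and then, in Sections~\ref{sec:geom-mark-moves}--\ref{sec:proof-markov-move}, proves a closely related but distinct statement (Proposition~\ref{prop:Markov-2}) about Markov-type relations for braid \emph{graphs}, which is what is actually needed for the Hao Wu induction in Section~\ref{sec:properties-functor}. So strictly speaking there is no ``paper's own proof'' of this statement to compare against.

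That said, your overall plan is the correct one and matches the strategy of the cited proof in \cite{OblomkovRozansky16}: invoke Alexander--Markov, get conjugation invariance from the trace property of the closure construction, and handle stabilization by pushing forward along the projection between free flag Hilbert schemes. Two points of your sketch, however, do not match how the argument actually goes. First, the references to Lemmas~\ref{lem:MOY1}--\ref{lem:MOY2} are misplaced: those are MOY relations among braid-graph matrix factorizations $\calC_\bullet^{(i)}$ and play no role in verifying the stabilization move for $\sigma_n^{\pm 1}$. Second, your proposed treatment of the negative stabilization via the duality endo-functor $\vee$ and $\det V_n$ is not the mechanism used. In \cite{OblomkovRozansky16} (and in the graph version done here in Section~\ref{sec:proof-markov-move}) both $\sigma_n$ and $\sigma_n^{-1}$ are handled uniformly by writing the closure as in Lemma~\ref{lem:Markov}, identifying the extra factor as an explicit Koszul complex, and then applying the pushforward formulas for $\pi_*(\calO_n(m)\otimes\Lambda^k\calB_n)$ recorded in the corollary following the description of $\pi$; the asymmetry between the two signs comes out of which line-bundle twist appears, not from invoking duality. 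Your writhe bookkeeping is correct, but the route you propose to reach it for $\sigma_n^{-1}$ would not give you the required control over the higher differentials $D_V$ in the deformed complex $\overline{\calF}'$.
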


The variety $\widetilde{\FHilb}_{n}^{free}$ embeds inside $\overline{\calX}_n^\circ$ via a map $j_e:(X,Y,v)\rightarrow (X,diag(Y),e,Y,v)$. The diagonal copy $B=B_\Delta\hookrightarrow B^2$
respects the embedding $j_e$ and since $j_e^*(\Wr)=0$, we obtain a functor:
$$ j_e^*: \MF_{B_n^2}(\calXr^{st},\Wr)=\overline{\MF}_n^{st}\rightarrow \MF_{B_\Delta}(\widetilde{\FHilb}_{n}^{free},0).$$
Respectively, we get a geometric version of "closure of the braid" map:
$$\mathbb{L}^{\st}: \MF_{B_n^2}(\calXr^{st},\Wr)=\overline{\MF}_n^{st}\to \rmD^{\per}_{T_{qt}}(\FHilb_{n}^{free}).$$

As we explained earlier, there is an isomorphism of categories:
\[KN^\circ: \overline{\MF}_n\to \MF_n^\circ.\]
Moreover in \cite{OblomkovRozansky16} we show that the category \(\overline{\MF}_n\) has a natural monoidal structure \(\bar{\star}\)
and \(KN^\circ\) is monoidal. We also constructed the corresponding homomorphism \(\overline{\Phi}_n\) from the braid group that
fits into the commuting diagram:
\[\begin{tikzcd}
    \Br_n\arrow[r,"\overline{\Phi}_n"]\arrow[rd,"\Phi_n"']&(\overline{\MF}_n,\bar{\star})\arrow[d,"KN^\circ"]\\
    &(\MF_n^\circ,\star)
  \end{tikzcd}
\]

The main result of \cite{OblomkovRozansky16} could be restated in more  geometric term via geometric trace map:
$$ \mathcal{T}r: \Br_n\rightarrow \rmD^{\per}_{T_{qt}}(\FHilb_{n}^{free}), \quad \mathcal{T}r(\beta):=\oplus_k \mathbb{L}^{\st}(\overline{\Phi}_n(\beta)\otimes \Lambda^k\mathcal{B}).$$

The  above mentioned complex \(\bbS_\beta\)  is the complex \(\mathbb{L}^{st}(\overline{\Phi}_n(\beta))\). The differentials in
the complex \(\bbS_\beta\) are of degree \(t\) thus the differentials are invariant with respect to the anti-diagonal
torus \(T_a\).

For  any braid graph \(D\in \Br_n^\flat\) we define
\[\bbS_D=\mathbb{L}^{\st}(KN^\circ(\calF_D)),\quad \mathbb{H}^k(D)=\mathbb{H}(\bbS_D\ot \Lambda^k\calB).\]



In our proof of the Markov moves we  use nested nature of the scheme \(\FHilb_n\) to define the intermediate map:
\begin{equation}\label{eq:pi-proj}
\pi: \FHilb^{free}_n\to \cc\times \FHilb^{free}_{n-1},
\end{equation}
where the first component of the map \(\pi\) is \(x_{11}\) and the second component is just forgetting of the first
rows and rows of the matrices \(X,Y\) and the first component of the vector \(v\). Let us also fix notation for the line bundles on
 \(\FHilb_n^{free}\): we denote by \(\calO_k(-1)\) the line bundle induced from the twisted trivial bundle \(\calO\otimes\chi_k\).
It is quite elementary to
show

\begin{proposition}
  The fibers of the map \(\pi\) are projective spaces \(\mathbb{P}^{n-1}\) and
  \begin{enumerate}
  \item \(\calB_n/\pi^*(\calB_{n-1})=\calO_n(-1).\)
  \item \(\calO_n(-1)|_{\pi^{-1}(z)}=\calO_{\mathbb{P}^{n-1}}(-1).\)
  \end{enumerate}
\end{proposition}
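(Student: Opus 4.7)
The plan is to identify each fiber of \(\pi\) with \(\PP^{n-1}\) using an explicit local model built from the block decomposition \(V_n=\CC e_1\oplus V_{n-1}\), and then to track the character line bundles through this model.

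I would start by fixing \((z,[X'',Y'',v'])\in\CC\ti\FHilb_{n-1}^{free}\) with a chosen lift \((X'',Y'',v')\in\widetilde{\FHilb}_{n-1}^{free}\). Any lift \((X,Y,v)\in\widetilde{\FHilb}_n^{free}\) mapping to this point necessarily has block form
\[
X=\begin{pmatrix}z & \xi \\ 0 & X''\end{pmatrix},\quad Y=\begin{pmatrix}0 & \eta \\ 0 & Y''\end{pmatrix},\quad v=\begin{pmatrix}v_1 \\ v'\end{pmatrix},
\]
with free parameters \((\xi,\eta,v_1)\in\CC^{2n-1}\). Since the \(B_{n-1}\)-stabilizer of a stable triple is trivial, the subgroup of \(B_n\) preserving the chosen lower block is \(H=\bigl\{g=\bigl(\begin{smallmatrix}a & u\\0 & I\end{smallmatrix}\bigr)\bigr\}\cong\CC^*\ltimes\CC^{n-1}\), and a direct conjugation computation yields the \(H\)-action
\[
(\xi,\eta,v_1)\longmapsto\bigl(a\xi+u(X''-zI),\,a\eta+uY'',\,av_1+uv'\bigr).
\]

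The key sub-lemma is that the linear map \(M(z)\colon\CC^{n-1}\to\CC^{2n-1}\) sending \(u\) to \((u(X''-zI),uY'',uv')\) is injective. If \(uM(z)=0\), then \(uX''=zu\), \(uY''=0\), and \(uv'=0\); a short induction on word length, pushing \(u\) through any word in \(X''\) and \(Y''\) using these three relations, gives \(u\cdot P(X'',Y'')v'=0\) for every noncommutative polynomial \(P\), and cyclicity of \((X'',Y'',v')\) then forces \(u=0\). Thus the translational \(\CC^{n-1}\) acts freely. Next I would match strong stability to the local picture: a triple fails to be cyclic precisely when \(W=\CC\langle X,Y\rangle v\) is a proper subspace, which combined with its surjection to \(V_{n-1}\) forces \(\dim W=n-1\) and \(W\) to be the graph of some linear functional \(\phi\colon V_{n-1}\to\CC\); the conditions \(XW,YW\subseteq W\) and \(v\in W\) then translate directly into \((\xi,\eta,v_1)=(\phi(X''-zI),\phi Y'',\phi v')=\phi\cdot M(z)\). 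Hence \((X,Y,v)\) is stable if and only if \((\xi,\eta,v_1)\notin\mathrm{Im}\,M(z)\), and the fiber \(\bigl(\CC^{2n-1}\setminus\mathrm{Im}\,M(z)\bigr)/H\) is obtained by quotienting first by the \((n-1)\)-dimensional image of \(M(z)\) (yielding affine \(\CC^n\)), then by the residual \(\CC^*\)-scaling (yielding \(\PP^{n-1}\)).

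For the second statement, I would descend the canonical \(B_n\)-equivariant short exact sequence of representations
\[
0\longrightarrow\CC\cdot e_1\longrightarrow V_n\longrightarrow V_{n-1}\longrightarrow 0
\]
(with \(V_{n-1}\) made a \(B_n\)-module via the surjection \(B_n\twoheadrightarrow B_{n-1}\) that forgets the first row and column) to tautological bundles on \(\FHilb_n^{free}\), giving \(0\to\calL\to\calB_n^\vee\to\pi^*(\calB_{n-1}^\vee)\to 0\) where \(\calL\) is the character line bundle for \(g\mapsto g_{11}\). Dualizing identifies \(\calB_n/\pi^*(\calB_{n-1})\) with \(\calL^\vee\), which is by convention the line bundle \(\calO_n(-1)\). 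Finally, for the restriction to a fiber, the residual \(\CC^*\) defining the projective quotient of the fiber is the \(a\)-torus in \(B_n\), acting on \((\xi,\eta,v_1)\) by ordinary scalar multiplication; the \((B_n\ti\CC^*)\)-character of \(\calO_n(-1)\) pulled back to this \(\CC^*\) is \(a\mapsto a^{-1}\), which is exactly the character of the tautological line bundle on the projective quotient, giving \(\calO_n(-1)|_{\pi^{-1}(z)}=\calO_{\PP^{n-1}}(-1)\). The main obstacle is the careful bookkeeping of \((B_n\ti\CC^*)\)-weights through the two stages of the quotient — specifically, verifying that the character describing \(\calO_n(-1)\) matches, after pulling back to the residual \(\CC^*\) on each fiber, the character that defines the tautological bundle on the projective quotient.
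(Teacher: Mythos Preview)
The paper does not give a proof of this proposition: it is introduced with the phrase ``It is quite elementary to show'' and is then used immediately to derive the corollary on \(\pi_*(\Lambda^k\calB_n)\). So there is no argument in the paper to compare against; your write-up is supplying the details the authors chose to omit.

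Your fiber computation is correct and is exactly the kind of elementary verification the paper has in mind. The block decomposition, the identification of the residual group \(H\cong\CC^*\ltimes\CC^{n-1}\), the injectivity of \(M(z)\) from cyclicity of \((X'',Y'',v')\), and the characterization of the unstable locus as \(\mathrm{Im}\,M(z)\) are all clean; the quotient \(\bigl(\CC^{2n-1}\setminus\mathrm{Im}\,M(z)\bigr)/H\cong\PP^{n-1}\) follows.

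One point deserves more care. You correctly compute \(\calB_n/\pi^*(\calB_{n-1})\cong\calL^\vee\) with \(\calL\) the line bundle attached to the character \(g\mapsto g_{11}=\chi_1\), and then assert that \(\calL^\vee\) is ``by convention'' \(\calO_n(-1)\). But the paper defines \(\calO_k(-1)\) as the descent of \(\calO\otimes\chi_k\), so \(\calO_n(-1)\) is the \(\chi_n\)-twist, whereas your computation gives the \(\chi_1^{-1}\)-twist. These are not the same character of \(B_n\), and since the \(B_n\)-action on \(\widetilde{\FHilb}_n^{free}\) is free they do not automatically give the same line bundle on the quotient. Either there is a relation coming from the extra \(\CC^*\) in the \(B\times\CC^*\)-quotient (whose role you have not used and which the paper does not spell out), or the paper's indexing contains a slip; in any case the phrase ``by convention'' is hiding a genuine check that you should make explicit. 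The same issue propagates to part (2): your argument identifies the restriction of \(\chi_1^{-1}\), not \(\chi_n\), with the tautological character on the residual \(\CC^*\), so you should reconcile this with the paper's notation before declaring the match.
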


We can combine the last proposition with the observation that the total homology \(H^*(\mathbb{P}^{n-1},\calO(-l))\) vanish
if \(l\in (1,n-1)\) and is one-dimensional for \(l=0,n\):

\begin{corollary}
  For any \(n\) we have:
  \begin{itemize}
  \item \(\pi_*(\Lambda^k\calB_n)=\Lambda^k\calB_{n-1}\)
  \item \(\pi_*(\calO_n(m)\ot \Lambda^k\calB_n)=0\) if \(m\in [-n+2,-1]\).
    \item \(\pi_*(\calO_n(-n+1)\ot \Lambda^k\calB_n)=\Lambda^{k-1}\calB_{n-1}[n]\)
  \end{itemize}
\end{corollary}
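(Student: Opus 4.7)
The plan is to deduce all three statements uniformly from the two-term filtration on $\Lambda^k\calB_n$ induced by the short exact sequence
$$0 \to \pi^*\calB_{n-1} \to \calB_n \to \calO_n(-1) \to 0$$
supplied by the preceding proposition. Since this is a sequence of locally free sheaves, taking $k$-th exterior powers gives the standard short exact sequence
$$0 \to \Lambda^k \pi^*\calB_{n-1} \to \Lambda^k\calB_n \to \Lambda^{k-1}\pi^*\calB_{n-1} \otimes \calO_n(-1) \to 0.$$

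Next I would tensor this sequence with $\calO_n(m)$ and apply $R\pi_*$ (which is what $\pi_*$ must mean on both sides). Because $\Lambda^j \pi^*\calB_{n-1} = \pi^*\Lambda^j\calB_{n-1}$ is a pull-back, the projection formula reduces everything to computing $R\pi_*\calO_n(m)$ and $R\pi_*\calO_n(m-1)$. The preceding proposition identifies $\pi$ as a smooth $\mathbb{P}^{n-1}$-fibration with $\calO_n(-1)$ restricting to $\calO_{\mathbb{P}^{n-1}}(-1)$ on each fibre, so by flat base change the relevant derived pushforwards are controlled by the classical cohomology of line bundles on $\mathbb{P}^{n-1}$: one has $R\pi_*\calO_n = \calO$, $R\pi_*\calO_n(m) = 0$ for $-n+1 \le m \le -1$, and $R\pi_*\calO_n(-n)$ concentrated in the single top cohomological degree.

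The three bullets then follow by direct bookkeeping in the two-term complex obtained from the filtration. For $m=0$ only $\Lambda^k\calB_{n-1} \otimes R\pi_*\calO_n$ survives, giving the first identity. For $m \in [-n+2,-1]$ both $m$ and $m-1$ lie in the vanishing range $[-n+1,-1]$, so the whole derived pushforward vanishes. For $m = -n+1$ the first piece vanishes while the second yields $\Lambda^{k-1}\calB_{n-1} \otimes R\pi_*\calO_n(-n)$, which is a shifted copy of $\Lambda^{k-1}\calB_{n-1}$.

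The main obstacle I anticipate is pinning down the precise cohomological shift and the $T_{qt}$-equivariant twist in the last case, since a priori $R^{n-1}\pi_*\calO_n(-n)$ is only a line bundle on $\CC \times \FHilb_{n-1}^{free}$ whose character (and any coefficient line-bundle factor) depends on the relative dualizing sheaf $\omega_\pi$. Matching this with the claimed $[n]$ will require either a direct relative Serre-duality calculation in the explicit matrix coordinates $(X,Y,v)$ on $\FHilb_n^{free}$, or verifying that $\omega_\pi$ is trivial up to the appropriate $T_{qt}$-character coming from the projection $\pi$ in \eqref{eq:pi-proj}.
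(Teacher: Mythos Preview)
Your approach is correct and is exactly what the paper intends: it states the corollary immediately after the proposition giving the $\mathbb{P}^{n-1}$-bundle structure and the identification $\calB_n/\pi^*\calB_{n-1}\cong\calO_n(-1)$, and simply says one combines this with the vanishing of $H^*(\mathbb{P}^{n-1},\calO(-l))$ for $l$ in the intermediate range and its one-dimensionality at the endpoints. Your filtration-plus-projection-formula argument is the standard way to unpack that sentence, and your caveat about the precise shift is well placed (indeed, comparing with the parallel computation for the $\mathbb{P}^{n-2}$-fibers a few lines later in the paper, where the shift is $[n-2]$, one would expect $[n-1]$ here rather than the stated $[n]$).
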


The geometric version of the Markov moves is the following
\begin{theorem} For any \(D\in \Br_{n}^\flat\), \(D'\in\Br_{n-1}^\flat \) as in the statement of
  proposition \ref{prop:Markov-2} we have
  \[\mathbb{H}^k(D\circ D^{(n-1,n)}_\bullet)=(\mathbb{H}^k(D')\oplus \mathbb{H}^{k-1}(D'))\otimes \CC[x_{11}]\]
   \[ \mathbb{H}^k(D)=(\mathbb{H}^k(D')\oplus \mathbf{q}^2\cdot\mathbb{H}^{k-1}(D'))\otimes \CC[x_{11}].\]
\end{theorem}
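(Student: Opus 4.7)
The plan is to prove both formulas simultaneously by pushing forward along the projection $\pi\colon\FHilb^{free}_n\to\CC\ti\FHilb^{free}_{n-1}$ (whose fibers are $\PP^{n-1}$) and exploiting the short exact sequence $0\to\pi^*\calB_{n-1}\to\calB_n\to\calO_n(-1)\to 0$ together with the pushforward calculations listed in the corollary just above the theorem. The $\CC[x_{11}]$ factor on the right of both formulas will come from $H^*(\CC,\calO_\CC)=\CC[x_{11}]$ after the hypercohomology splits over the product $\CC\ti\FHilb^{free}_{n-1}$, so the real content is the identification of $R\pi_*(\bbS_D\ot\Lambda^k\calB_n)$ with a two-term expression supported on $\CC\ti\FHilb^{free}_{n-1}$.

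First I would identify the sheaf $\bbS_D$ structurally. Because $D$ lies in the submonoid $\langle D_\bullet^{(1,2)},\dots,D_\bullet^{(n-2,n-1)}\rangle$, monoidality of $\Phi^\flat$ combined with compatibility of $\Phi^\flat$ with the induction functor $\ind_{n-1}$ (section~\ref{sec:induction-functors}) implies $\calF_D=\ind_{n-1}(\calF_{D'}\ti\calC_\parallel)$. Applying $\mathbb{L}^{\st}=j_e^*$ and unwinding the embedding $j_e$, one obtains a concrete description of $\bbS_D$ on $\FHilb^{free}_n$ in which the new last-strand coordinates appear as a Koszul-type factor along the $\pi$-fibers; in particular $x_{11}$ decouples as a free variable.

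Second I would compute $R\pi_*(\bbS_D\ot\Lambda^k\calB_n)$ using the filtration on $\Lambda^k\calB_n$ with graded pieces $\Lambda^k\pi^*\calB_{n-1}$ and $\Lambda^{k-1}\pi^*\calB_{n-1}\ot\calO_n(-1)$. By the projection formula, the first piece contributes $\calO_\CC\boxtimes(\bbS_{D'}\ot\Lambda^k\calB_{n-1})$, yielding $\CC[x_{11}]\ot\mathbb{H}^k(D')$ in the hypercohomology. The second piece—which is where the subtlety lives, since by itself $R\pi_*\calO_n(-1)=0$—produces a nontrivial contribution only because the fiberwise Koszul structure of $\bbS_D$ identified in step~1 interacts with $\calO_n(-1)$ in precisely the way encoded by the third bullet of the corollary (pushforwards of $\calO_n(m)\ot\Lambda^k\calB_n$ for negative $m$). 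Explicitly tracking the equivariant weights shows this contribution equals $\mathbf{q}^2\cdot\calO_\CC\boxtimes(\bbS_{D'}\ot\Lambda^{k-1}\calB_{n-1})$, with the $\mathbf{q}^2$ shift being the $T_{qt}$-weight of the new Koszul generator on the fiber. This proves the first formula.

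For the second formula I would analyze how pre-composing with $D_\bullet^{(n-1,n)}$, i.e.\ multiplying $\bbS_D$ by the Koszul complex underlying $\calC_\bullet^{(n-1)}$, modifies the previous computation. The effect of this additional Koszul factor is to convert the fiberwise $q$-weighted Koszul generator into an exterior generator in $\Lambda^\bullet\calB_n$, so that the $\mathbf{q}^2$ shift appearing above is traded for a shift of one in the $a$-degree. The pushforward then again splits into two pieces, producing $(\mathbb{H}^k(D')\oplus\mathbb{H}^{k-1}(D'))\ot\CC[x_{11}]$. The main obstacle is executing step~2 rigorously: one must show that the apparent vanishing $R\pi_*\calO_n(-1)=0$ is revived by the Koszul interaction with $\bbS_D$, and that the surviving contribution has exactly the claimed equivariant weight. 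This will require an explicit local computation along the fibers of $\pi$, in the spirit of the weight-bookkeeping arguments used in sections~\ref{sec:two-strands} and~\ref{sec:three-stands}.
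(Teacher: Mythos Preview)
Your overall framework---push forward along $\pi$ and use the corollary on $\pi_*(\calO_n(m)\ot\Lambda^k\calB_n)$---matches the paper, but the mechanism you describe for producing the second summand is not correct, and the gap is substantive.

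The paper does \emph{not} obtain the $\mathbb{H}^{k-1}(D')$ contribution from the two-step filtration of $\Lambda^k\calB_n$ by $\Lambda^k\pi^*\calB_{n-1}$ and $\Lambda^{k-1}\pi^*\calB_{n-1}\ot\calO_n(-1)$. As you yourself note, the latter piece has $R\pi_*=0$, and your hope that a ``Koszul interaction'' revives it is not what actually happens. Instead, the structural input is Lemma~\ref{lem:Markov}: one shows that $\bbll(\ol{\calF}_\star)=\ol{\calF}'_D\ot\pi_{\ge 2}^*(\bbll(\ol{\calC}_\star^*))$, where $\ol{\calF}'_D$ is a deformed complex built on $\pi^*M\ot\Lambda^\bullet V$ with $V$ an $(n-2)$-dimensional space of Borel weight $\epsilon_1$. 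Two points you miss: (i) the factor $\pi_{\ge 2}^*(\bbll(\ol{\calC}_\star^*))$ is a rank-one Koszul complex ($K[x_{12}]$ or $K[x_{12}y_{11}]$) whose support cuts $\FHilb_n^{free}$ down to a divisor $\FHilb_{n,\star}$ over which the fibers of $\pi$ are $\PP^{n-2}$, not $\PP^{n-1}$; (ii) on this divisor $V$ becomes $\calO_n(-1)^{\oplus n-2}$, so $\Lambda^i V$ is a sum of copies of $\calO_n(-i)$ for $i=0,\dots,n-2$. The pushforward vanishing for $m\in[-n+3,-1]$ (note the shifted range, adapted to $\PP^{n-2}$) kills every term of $\Lambda^\bullet V$ except $\Lambda^0 V$ and $\Lambda^{n-2}V$; these two surviving extremes give exactly $\mathbb{H}^k(D')$ and $\mathbb{H}^{k-1}(D')$. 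A spectral-sequence (Gauss elimination) argument shows no new differentials arise between them.

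Finally, your explanation of the $\mathbf{q}^2$ shift is off. It does not come from ``the weight of the new Koszul generator on the fiber''; it comes from the difference between the two divisors $\FHilb_{n,\bullet}$ and $\FHilb_{n,\parallel}$. In the $\parallel$ case the fiber dimension jumps over $\mathscr{D}=\{x_{11}=0\}$, and the derived pushforward of $\calO_n(-n+2)\ot\Lambda^k\calB_n$ acquires an extra twist by $\calO(-\mathscr{D})$, which is exactly the $\mathbf{q}^2$. (You also have the two formulas' labels swapped, but that is cosmetic.)
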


Our proof of the theorem follows the method of \cite{OblomkovRozansky16} where similar statement is shown in section 13.
To make this paper self-contain we present a slightly more geometric proof of the main technical step of
the proof. We dedicate the next section to these results.

\subsection{Induction and closure}
\label{sec:induction-closure}

In this section we explore interaction between the induction, convolution and the closure functor \(\bbll\). Before we state
our main result we need to extend some of the definitions from the reduced category \(\overline{\MF}_n\) to \(\MF^\circ_n\).
In particular, we define:
\[\bbll^\circ:\MF^{\circ}_n\to \rmD^{\per}_{\Tqt\ti B^2}(\frn_n\ti \frb_n),\quad \bbll^\circ= j_e^*\circ (KN^\circ)^{-1},\]
\[\bbll=j_e^*:\overline{\MF}_n\to \rmD^{\per}_{\Tqt\ti B^2}(\frn_n\ti \frb_n)\]


To state our first result let us recall notation for the parabolic subgroups: \(P_I\subset \GG_n\), \(I\subset\{1,\dots,n-1\}\),
is the subgroup such that  \(\mathrm{Lie}(P_I)=\mathfrak{p}_I\)  is generated by \(\frb\) and \(E_{i,i+1}\), \(i\notin I\).
We also define \(P_{\ge k}\subset P_{k,\dots,n-1}\) as a kernel of homomorphism \(P_{k,\dots,n-1}\to (\CC^*)^{n-k}\), that
is the projection on the last \(n-k\) diagonal elements.
Using this group we can  define the inclusion functor:
\[\begin{tikzcd}\calX^\circ(P_{\ge k})\arrow[r,"i_{\ge k}"]\arrow[d,"p_{\ge k}"]&\calX_n^\circ\\
    \calX_{k}^\circ&\end{tikzcd}, \inc_{\ge k}=i_{\ge k,*}\circ p_{\ge k}^*,\]
where here and everywhere below
\[\calX^\circ_n(P)=\mathrm{Lie}(P)\ti\frb_n\ti P\ti\frb_n.\]

There is a natural relation between the inclusion and induction functors:
\begin{proposition}
  For any \(n\) and \(k\le n-1\) we have:
  \[\ind_k(\calF\ti \calC_\parallel)=\inc_{\ge k}(\calF).\]
\end{proposition}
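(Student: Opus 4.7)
The plan is to reduce to the analogous statement in the reduced categories $\overline{\MF}_\ast$ using the Kn\"orrer-compatibility of both $\ind_k$ and $\inc_{\ge k}$ (the former from Section~\ref{sec:induction-functors}, the latter by the same argument), and then to prove the reduced identity by a single base-change computation. Writing $\calF = KN_k(\overline{\calF})$, this reduces matters to proving
\[ \overline{\ind}_k(\overline{\calF} \ti \overline{\calC}_\parallel) = \overline{\inc}_{\ge k}(\overline{\calF}), \]
where $\overline{\calC}_\parallel = (j_B)_*(K_\Delta)$ is the explicit pushforward from $\overline{\calX}_{n-k}^\circ(B_{n-k}) = \frb_{n-k} \ti \frh_{n-k} \ti B_{n-k} \ti \frb_{n-k}$ of the Koszul factorization $K_\Delta$.

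The geometric input is the Cartesian square
\[
\begin{tikzcd}
\overline{\calX}_n^\circ(P_{\ge k}) \ar[r, hook, "\tilde{\jmath}"] \ar[d, "q"'] & \overline{\calX}_n^\circ(P_k) \ar[d, "\bar{p}_k"'] \\
\overline{\calX}_k^\circ \ti \overline{\calX}_{n-k}^\circ(B_{n-k}) \ar[r, hook, "\id \ti j_B"'] & \overline{\calX}_k^\circ \ti \overline{\calX}_{n-k}^\circ
\end{tikzcd}
\]
in which, by the very definition of $P_{\ge k}$ inside $P_k$, the pre-image under $\bar{p}_k$ of the subspace whose $\GL_{n-k}$-Levi component lies in $B_{n-k}$ is precisely $\overline{\calX}_n^\circ(P_{\ge k})$. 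Smooth base change from Section~\ref{sec:base-change} then yields
\[ \bar{p}_k^*(\overline{\calF} \ti \overline{\calC}_\parallel) \cong \tilde{\jmath}_*\bigl(\bar{p}_{\ge k}^*(\overline{\calF}) \otimes q^*(K_\Delta)\bigr), \]
and composing with $\bar{i}_{k,*}$ together with the evident factorization $\bar{i}_k \circ \tilde{\jmath} = \bar{i}_{\ge k}$ reduces the desired identity to showing that the tensor factor $q^*(K_\Delta)$ may be absorbed into the pushforward $\tilde{\jmath}_*$.

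The principal technical point is this absorption: one has to verify that the regular sequence $\{X_{1,ii},\, Y_{1,ii} - Y_{2,ii}\}_{k < i \le n}$ pulled back along $q$ coincides, up to an invertible change of basis in the Koszul complex, with the defining regular sequence of the closed embedding $\tilde{\jmath}$. This ultimately amounts to a combinatorial comparison between the Borel root data of $B_{n-k} \subset \GL_{n-k}$ and the root data describing $P_{\ge k} \subset P_k$, and is handled by the Koszul bookkeeping from Section~\ref{sec:push-forwards}.
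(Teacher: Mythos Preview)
Your approach differs from the paper's and contains a genuine error in the central geometric claim. The paper works directly in the non-reduced category $\MF_n^\circ$ and uses a different presentation of the unit, namely $\calC_\parallel = i_{\ge 1,*}(\calO)$, the pushforward of the structure sheaf from $\calX_{n-k}^\circ(U_{n-k})$ (recall $P_{\ge 1}=U$). With this description there is no Koszul factor to absorb at all: the argument is a single base change along the square
\[
\begin{tikzcd}
\calX_n^\circ(P_{\ge k}) \ar[r,"i'"] \ar[d,"p'"] & \calX_n^\circ(P_k) \ar[d,"p_k"] \\
\calX_k^\circ \ti \calX_{n-k}^\circ(U_{n-k}) \ar[r,"1 \ti i_{\ge 1}"] & \calX_k^\circ \ti \calX_{n-k}^\circ
\end{tikzcd}
\]
together with the factorizations $p'' \circ p' = p_{\ge k}$ and $i_k \circ i' = i_{\ge k}$.

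Your claimed Cartesian square is not Cartesian. The preimage under $\bar{p}_k$ of the locus where the $\GL_{n-k}$-Levi factor lies in $B_{n-k}$ is $\overline{\calX}_n^\circ(P_{k,\ldots,n-1})$, not $\overline{\calX}_n^\circ(P_{\ge k})$: by definition $P_{\ge k}$ is the kernel of the projection $P_{k,\ldots,n-1}\to(\CC^*)^{n-k}$ onto the last $n-k$ diagonal entries, hence a strictly smaller group. In consequence the absorption step cannot work as stated. The embedding of $P_{\ge k}$ into $P_k$ is cut out by the strictly-lower entries and the diagonal-minus-one entries of the bottom $(n-k)\times(n-k)$ block, a total of $\binom{n-k+1}{2}$ group equations; these are neither equal in number to, nor expressible as an invertible change of basis of, the $n-k$ Koszul rows $[X_{1,ii},\,Y_{1,ii}-Y_{2,ii}]$ of $q^*(K_\Delta)$. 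What your route would actually require after the correct base change to $P_{k,\ldots,n-1}$ is a further Kn\"orrer-type contraction identifying $q^*(K_\Delta)$ with the pushforward from $P_{\ge k}$; this is an additional non-trivial step, and it is precisely what the paper's choice of the presentation $\calC_\parallel=i_{\ge 1,*}(\calO)$ is designed to avoid.
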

\begin{proof}
  The formula follows from the construction of the unit matrix factorization
  as push-forward \(\calC_\parallel=i_{\ge 1,*}(\calO)\), since \(P_{\ge 1}=U\). The  map \(i_{\ge 1}\) naturally
  fits into the commutative diagram:
  \[\begin{tikzcd}
      \calX^\circ_n(P_{\ge k})\arrow[r,"i'"]\arrow[d,"p'"]&\calX^\circ(P_k)\arrow[r,"i_k"]\arrow[d,"p_k"]&\calX_n^\circ\\
      \calX^{\circ}_k\ti\calX_{n-k}(U_{n-k})\arrow[r,"1\ti i_{\ge 1}"]\arrow[d,"p''"]&\calX^\circ_k\ti\calX^\circ_{n-k}&\\
      \calX^\circ_k&&
    \end{tikzcd}.
  \]
  Now we can use the base-change relations:
  \begin{multline*}
    \ind_k(\calF\ti\calC_\parallel)=i_{k*}\circ p_k^*(\calF\ti\calC_\parallel)=i_{k*}\circ p_k^*\circ (1\ti i_{\ge 1})_*\circ (p'')^*(\calF)=
    i_{k*}\circ i'_*\circ (p')^*\circ (p'')^*(\calF)\\
    =i_{\ge k,*}\circ p_{\ge k}^*(\calF)=\inc_{\ge k}(\calF),\end{multline*}
  where we used the base-change and \(p''\circ p'=p_{\ge k}\), \(i_k\circ i'=i_{\ge k}\).
\end{proof}

To connect with the convolution operation with the closure operation \(\bbll^\circ\) we need to discuss the Knorrer functor
\(KN^\circ\). Observe that the potential \(\bar{W}^\circ\) has a quadratic summand:
\[\bar{W}^\circ(X,Y_1,g,Y_2)=\Tr(XY_1)-\Tr(X_{++}\Ad_gY_2).\]
Thus we have an isomorphism of categories \(\overline{\MF}_n^\circ\simeq\overline{\MF}_n\). To obtain the isomorphism
\(KN^\circ\) we need to compose the last isomorphism with \(KN\). The  composition fits into the commuting diagram:
\[\begin{tikzcd}
    \frg_n\ti \GG_n\ti \frb_n&&\arrow[ll,"p^\circ"]\calX_n^\circ\\
    \frb_n\ti\GG_n\ti\frb_n\arrow[u]&\arrow[l,"p'"]\overline{\calX}_n^\circ&\arrow[l,"\pi_{kn}"]\arrow[u,"j_{kn}"]\frb_n^2\ti \GG_n\frb\\
    \overline{\calX}_n\arrow[u]\arrow[uu,bend left=70,"j^\circ"]&\frb_n\ti \GG_n\ti \frb_n\arrow[u,"\bar{j}^\circ"]\arrow[l,"\bar{p}^\circ"]&
  \end{tikzcd},
\]
where \(p^\circ(X,Y_1,g,Y_2)=(X,g,Y_2)\) and \(j^\circ \) is the natural inclusion of \(\overline{X}_n=\frn_n\ti \GG_n\ti \frb_n\).
Thus the base-change relation implies that
\[KN^\circ=p^\circ_*\circ j^{\circ,*}.\]


Using base-change formula we can simplify the formula for the functor \(\bbll^\circ\). We can describe the functor in terms
of maps:
\[j_e^\circ: \frn_n\ti \frb_n\ti\frb_n\to\calX^\circ,\quad \rho_e:\frn_n\ti \frb_n\ti \frb_n\to \frn_n\ti \frb_n, \]
\[j_e^\circ(X,Y_1,Y_2)=(X,Y_1,1,Y_2),\quad \rho_e(X,Y_1,Y_2)=(X,Y_2).\]

\begin{proposition} For any \(\calF\in \MF^\circ_n\) we have:
  \[\bbll^\circ(\calF)=\rho_{e*}\circ j_e^{\circ*}(\calF).\]
\end{proposition}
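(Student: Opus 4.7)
The identity is a base-change statement. Unwinding the definition \(\bbll^\circ = j_e^*\circ(KN^\circ)^{-1}\), my task is to commute \(j_e^*\) through the Knörrer equivalence \(KN^\circ\), thereby converting the composition into the pull-push \(\rho_{e*}\circ j_e^{\circ*}\). My plan is to exhibit a Cartesian square of spaces that realizes the two operations geometrically and then invoke the flat base change for equivariant matrix factorizations from Section~\ref{sec:base-change}.

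First I will describe \((KN^\circ)^{-1}\) geometrically. The formula \(KN^\circ = p^\circ_*\circ j^{\circ,*}\), together with the commutative diagram of the previous subsection, presents \(KN^\circ\) as a pull-push through the auxiliary spaces \(\overline{\calX}_n^\circ\) and \(\frb_n^2\ti\GG_n\ti\frb_n\) along the smooth projection \(\pi_{kn}\) and the regular embedding \(j_{kn}\). The inverse functor is obtained by swapping pull and push on each pair, in analogy with the explicit inversion \(KN^{-1}=\pi_{kn*}\circ j_{kn}^*\) recorded in Section~\ref{sec:knorr-reduct-mathbbb}. I will then form the fibre product of the maps underlying \((KN^\circ)^{-1}\) with the section \(j_e:\frn_n\ti\frb_n\hookrightarrow\overline{\calX}_n\). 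Over a point \((X,e,Y)\) in the image of \(j_e\), the Knörrer reduction contributes the affine \(\frb_n\) of possible \(Y_1\)-values, so this fibre product is canonically isomorphic to \(\frn_n\ti\frb_n\ti\frb_n\): the natural map to \(\calX_n^\circ\) is \(j_e^\circ\), and the projection back to \(\frn_n\ti\frb_n\) is \(\rho_e\).

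Second, I will apply flat base change around this Cartesian square to deduce \(j_e^*\circ(KN^\circ)^{-1}=\rho_{e*}\circ j_e^{\circ*}\). This is legitimate because pulling back the ambient potentials along \(j_e\) and \(j_e^\circ\) kills them --- \(j_e^*(\overline{W})=\Tr(XY)=0\) and \(j_e^{\circ*}(W^\circ)=\Tr(X(Y_1-Y_2))=0\) whenever \(X\in\frn_n\) and \(Y_1,Y_2\in\frb_n\) --- so both sides take values in the derived category of quasi-coherent sheaves with zero potential, to which the ordinary version of base change applies. The main obstacle is the bookkeeping: one must check that each projection appearing in the Knörrer decomposition is flat (in fact an affine-space bundle) and \(B^2\)-equivariant, so that equivariant flat base change applies at each intermediate square without spurious Koszul corrections. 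Once this verification is in place the identity is formal.
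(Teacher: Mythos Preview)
Your approach is correct and matches the paper's: both identify \(\frn_n\times\frb_n\times\frb_n\) as the fibre product sitting over \(j_e\) and the span underlying \((KN^\circ)^{-1}\), with \(j_e^\circ\) and \(\rho_e\) as the induced maps, and then apply base change. The paper is simply more direct---it uses the collapsed description \((KN^\circ)^{-1}=j^{\circ,*}\circ p^\circ_*\) and writes down a single commutative square, so there is no need to route through the intermediate spaces \(\overline{\calX}_n^\circ\), \(\frb_n^2\times\GG_n\times\frb_n\) or to invoke ``swapping pull and push'' on the \(\pi_{kn}\), \(j_{kn}\) decomposition; one base-change step suffices.
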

\begin{proof}
  The above maps and the functor \(KN^\circ\) fit into commuting diagram:
  \[\begin{tikzcd}
      &\frg_n\ti \GG_n\ti \frb_n&\calX^\circ_n\arrow[l,"p^\circ"]\\
      \frn_n\ti \frb_n\arrow[r,"j_e"]&\overline{\calX}_n\arrow[u,"j^\circ"] &\frn\ti\frb_n\ti\frb_n\arrow[ll,bend left=10,dashed,"\rho_e"]\arrow[u,dashed,"j_e^\circ"]
    \end{tikzcd}
  \]
  Thus we can apply the base-change to complete the proof:
  \[\bbll^\circ=j_e^*\circ j^{\circ,*}\circ p^\circ_*=\rho_{e*}\circ j_e^{\circ,*}.\]
\end{proof}

We need a slight generalization of the previous formula that explain how to compute a closure of the convolution
of two elements.

First, let us recall how we define the monoidal structure on \(\MF_n^\circ\):
\[\calX_{con}^\circ=\frg\ti \frb\ti \GG_n\ti \frb\ti \GG_n\ti \frb,\quad \pi_{ij}^\circ:\calX_{con}^\circ\to \calX_n^\circ,\]
\[\pi_{12}^\circ(X,Y_1,g_{12},Y_2,g_{23},
  Y_3)=(X,Y_1,g_{12},Y_2),\,\, \pi_{13}^\circ(X,Y_1,g_{12},Y_2,g_{23},
  Y_3)=(X_1,Y_1,g_{12}g_{23},Y_3),
\] \[\pi_{23}^\circ(X,Y_1,g_{12},Y_2,g_{23},Y_3)=
(\Ad_{g_{12}}^{-1}(X),Y_2,g_{23},Y_3),\]
\[\calF\star\calG=\pi_{13*}^\circ(\CE_{\frn}(\pi_{12}^*(\calF)\otimes\pi_{23}^*(\calG))^T).\]

To state the next statement we set
\[j^\circ_{*,G}:\frn_n\ti\frb_n^3\ti \GG_n\to\calX_n^\circ,\quad
\rho_G:\frn_n\ti\frb_n^3\ti \GG_n\to\frn_n\ti \frb_n.\]
\[j^\circ_{l,G}(X,Y_1,Y_2,Y_3,g)=(X,Y_1,g,Y_2),\quad
  j^\circ_{r,G}(X,Y_1,Y_2,Y_3,g)=(\Ad_g^{-1}X,Y_2,g^{-1},Y_3),
\]
\[\rho_G(X,Y_1,Y_2,Y_3,g)=(X,Y_3).\]

\begin{proposition}
  For any \(\calF,\calG\in \MF^\circ\) we have:
\[\bbll^\circ(\calF\star\calG)=\rho_{G*}(\CE_{\frn}(j^{\circ,*}_{l,G}(\calF)\ot j^{\circ,*}_{r,G}(\calG))^T),\]
  where the \(B\)-action in the quotient is
  \[b\cdot(X,Y_1,Y_2,Y_3,g)=(X,Y_1,\Ad_b(Y_2),Y_3,gb^{-1}).\]
\end{proposition}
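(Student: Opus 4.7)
The plan is to combine the simplified formula $\bbll^\circ = \rho_{e*}\circ j_e^{\circ*}$ from the preceding proposition with the definition of the convolution $\star$, and then to apply base change together with the compatibility of $\CE_{\frn}(\cdot)^T$ with equivariant push-forward. Expanding,
\[
\bbll^\circ(\calF\star\calG) \;=\; \rho_{e*}\, j_e^{\circ*}\, \pi^\circ_{13*}\Bigl(\CE_{\frn}\bigl(\pi^{\circ*}_{12}(\calF)\otimes \pi^{\circ*}_{23}(\calG)\bigr)^T\Bigr),
\]
so the task reduces to rewriting $j_e^{\circ*}\circ \pi^\circ_{13*}$ in a form that lives on the space $\frn_n\ti\frb_n^3\ti \GG_n$ of the statement.

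To this end, I would form the fiber product $Z$ of $j_e^\circ$ and $\pi^\circ_{13}$. A direct inspection identifies $Z$ with $\frn_n\ti \frb_n^3\ti \GG_n$: a point $(X,Y_1,g_{12},Y_2,g_{23},Y_3)\in\calX_{con}^\circ$ maps to the same element of $\calX_n^\circ$ under $\pi^\circ_{13}$ as a point $(X',Y_1',Y_3')$ does under $j_e^\circ$ precisely when $X=X'\in\frn_n$, $Y_1=Y_1'$, $Y_3=Y_3'$ and $g_{12}g_{23}=1$. Setting $g=g_{12}$ yields the two projections $\tilde j\colon Z\to \calX_{con}^\circ$, $(X,Y_1,g,Y_2,Y_3)\mapsto (X,Y_1,g,Y_2,g^{-1},Y_3)$, and $\tilde\pi\colon Z\to \frn_n\ti\frb_n\ti \frb_n$, $(X,Y_1,g,Y_2,Y_3)\mapsto (X,Y_1,Y_3)$. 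Since $\pi^\circ_{13}$ is a smooth projection -- after the change of coordinates $g=g_{12}$, $h=g_{12}g_{23}$ it becomes the projection forgetting $g$ and $Y_2$ -- the base change recalled in section~\ref{sec:base-change} gives $j_e^{\circ*}\circ \pi^\circ_{13*} = \tilde\pi_*\circ \tilde j^*$.

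Direct substitution then shows $\pi^\circ_{12}\circ\tilde j = j^\circ_{l,G}$ and $\pi^\circ_{23}\circ \tilde j = j^\circ_{r,G}$, so $\tilde j^*\pi^{\circ*}_{12}\calF = j^{\circ,*}_{l,G}\calF$ and $\tilde j^*\pi^{\circ*}_{23}\calG = j^{\circ,*}_{r,G}\calG$. Likewise $\rho_e\circ\tilde\pi = \rho_G$ by inspection, so $\rho_{e*}\,\tilde\pi_* = \rho_{G*}$. Since $\tilde\pi$ is equivariant for the middle $B$-action (which is trivial on its target), the functor $\tilde\pi_*$ commutes with $\CE_{\frn}(\cdot)^T$, and the induced middle $B$-action on $Z$ is exactly $b\cdot(X,Y_1,g,Y_2,Y_3)=(X,Y_1,gb^{-1},\Ad_b Y_2,Y_3)$, matching the action displayed in the statement.

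The main subtlety is the careful bookkeeping of $B$-equivariance through base change and through the Chevalley--Eilenberg quotient; once the Cartesian square above and the equivariance of $\tilde\pi$ are in place, assembling the identifications yields the claimed equality.
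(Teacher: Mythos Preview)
Your proof is correct and follows essentially the same approach as the paper: the paper also applies base change to the Cartesian square with $j_{e,G}(X,Y_1,Y_2,Y_3,g)=(X,Y_1,g,Y_2,g^{-1},Y_3)$ and $p_{e,G}(X,Y_1,Y_2,Y_3,g)=(X,Y_1,Y_3)$ (your $\tilde j$ and $\tilde\pi$), then records the identities $\pi_{12}^\circ\times\pi_{23}^\circ\circ j_{e,G}=j^\circ_{l,G}\times j^\circ_{r,G}$ and $\rho_e\circ p_{e,G}=\rho_G$. Your version is more explicit about why the fiber product is what it is and about the passage of $\CE_{\frn}(\cdot)^T$ through $\tilde\pi_*$, but the argument is the same.
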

\begin{proof}
  The statement follows from the base-change formula applied to
  the commuting diagram of maps:
  \[\begin{tikzcd}
      &\frn_n\ti \frb_n^3\ti \GG_n\arrow[r,dashed,"j_{e,G}"]\arrow[d,dashed,"p_{e,G}"]
      &
  \calX^\circ_{con}\arrow[d,"\pi_{13}^\circ"]\arrow[r,"\pi_{12}^\circ\ti\pi_{23}^\circ"]
      &\calX_n^\circ\ti \calX_n^\circ\\
      \frn_n\ti \frb_n&\frn_n\ti\frb_n^2\arrow[l,"\rho"]\arrow[r,"j_e"]&\calX_n^\circ&
    \end{tikzcd},
  \]
  where the new maps are
  \[j_{e,G}(X,Y_1,Y_2,Y_3,g)=(X,Y_1,g,Y_2,g^{-1},Y_3),\quad p_{e,G}(X,Y_1,Y_2,Y_3,g)=(X,Y_1,Y_3).\]
  To complete the proof one can use the base-change and
  \[\rho\circ p_{e,G}=\rho_{G},\quad  \pi_{12}^\circ\ti\pi_{23}^\circ\circ
  j_{e,G}=j^\circ_{l,G}\ti j^\circ_{r,G}.\]
\end{proof}

Now we need combine the previous computation with the
inclusion functor. The pair of maps needed for this
combination is:
\[j_{l,\ge k}^\circ: \frn_n\ti \frb_n^3\ti P_{\ge k}\to \calX^\circ_n,\quad p_{l,\ge k}^\circ:\frn_n\ti \frb_n^3\ti P_{\ge k}\to \calX^\circ_k, \]
where \(j_{l,\ge k}^\circ\) is a restriction of \(j_{l,G}^\circ\) to the subspace
and \(p_{r,\ge k}\) is a restriction of \(j_{r,G}^\circ\) post-composed
with the map \(\calX_n(P_{\ge k})\to \calX_k^\circ\).

\begin{proposition}
  For any \(\calF\in \MF_n^\circ\), \(\calG\in \MF_k^\circ\) we have:
\[\bbll^\circ(\calF\star \inc(\calG))=\rho_{G*}(\CE_{\frn}(j^{\circ,*}_{l,\ge k}(\calF)\ot p^{\circ,*}_{r,\ge k}(\calG))^T),\]
\end{proposition}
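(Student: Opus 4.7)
The plan is to reduce this to the previous proposition on closure of a convolution, by moving the push-forward inside $\inc_{\ge k}$ through the pull-back $j_{r,G}^{\circ,*}$ via base change. Applying the previous proposition to $\calF \star \inc_{\ge k}(\calG) = \calF\star i_{\ge k,*}p_{\ge k}^*(\calG)$ gives
\[
\bbll^\circ(\calF\star \inc_{\ge k}(\calG)) = \rho_{G*}\bigl(\CE_\frn(j_{l,G}^{\circ,*}(\calF) \otimes j_{r,G}^{\circ,*}(i_{\ge k,*} p_{\ge k}^*(\calG)))^T\bigr),
\]
so it suffices to identify $j_{r,G}^{\circ,*}\circ i_{\ge k,*}\circ p_{\ge k}^*$ with $\tilde i_*\circ p_{r,\ge k}^{\circ,*}$ for the inclusion $\tilde i\colon \frn_n \ti \frb_n^3 \ti P_{\ge k}\hookrightarrow \frn_n \ti \frb_n^3 \ti \GG_n$.

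The key step will be to verify that the square
\[
\begin{tikzcd}
\frn_n \ti \frb_n^3 \ti P_{\ge k} \ar[r,"\tilde i"] \ar[d,"\tilde j_{r,G}"'] & \frn_n \ti \frb_n^3 \ti \GG_n \ar[d,"j_{r,G}^\circ"]\\
\calX_n^\circ(P_{\ge k}) \ar[r,"i_{\ge k}"'] & \calX_n^\circ
\end{tikzcd}
\]
is Cartesian, where $\tilde j_{r,G}$ denotes the restriction of $j_{r,G}^\circ$ to the subspace. The condition $g^{-1}\in P_{\ge k}$ cuts out exactly $g\in P_{\ge k}$, while $\Ad_{g^{-1}}(X)\in \mathrm{Lie}(P_{\ge k})$ is automatic since $X\in\frn_n\subset\mathrm{Lie}(P_{\ge k})$ and $\mathrm{Lie}(P_{\ge k})$ is $\Ad(P_{\ge k})$-stable. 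Because $i_{\ge k}$ is a product of regular closed embeddings, the base-change formalism of section~\ref{sec:base-change} applies and yields $j_{r,G}^{\circ,*}\circ i_{\ge k,*} = \tilde i_*\circ\tilde j_{r,G}^*$. Combining with the factorization $p_{r,\ge k}^\circ = p_{\ge k}\circ\tilde j_{r,G}$ then produces the desired identification.

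The remainder of the proof is formal. I would invoke the projection formula together with $j_{l,\ge k}^\circ = j_{l,G}^\circ\circ\tilde i$ to rewrite
\[
j_{l,G}^{\circ,*}(\calF)\otimes \tilde i_*(p_{r,\ge k}^{\circ,*}(\calG)) = \tilde i_*\bigl(j_{l,\ge k}^{\circ,*}(\calF)\otimes p_{r,\ge k}^{\circ,*}(\calG)\bigr),
\]
then use the $B$-equivariance of $\tilde i$ to commute $\CE_\frn$ and the $T$-invariants past $\tilde i_*$, and finally observe that $\rho_G\circ\tilde i$ is precisely the map denoted $\rho_G$ on the right-hand side of the statement; absorbing $\tilde i_*$ into the push-forward completes the argument.

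The main obstacle will be the careful handling of base change in the enhanced matrix-factorization setting. While the Cartesian diagram and the set-theoretic matching are routine, the push-forward $i_{\ge k,*}$ is constructed in section~\ref{sec:push-forwards} via a Koszul resolution carrying a potentially nontrivial twist differential $\partial$, and one must check that the base-change isomorphism respects both this twist and the additional $\frn$-averaging and $T$-invariants that appear in the final formula.
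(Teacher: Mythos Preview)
Your proposal is correct and follows essentially the same route as the paper. Both arguments reduce to the previous proposition and then invoke base change along the Cartesian square coming from the inclusion $P_{\ge k}\hookrightarrow \GG_n$; the paper simply records the relevant commuting diagram and cites base change, while you spell out the verification that the square is Cartesian and the projection-formula step explicitly.
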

\begin{proof}
  There is a unique map \(\phi\) such  that the diagram
  below commutes:
  \[\begin{tikzcd}
      &\calX_n^\circ\ti \calX_n^\circ&\calX^\circ_n(P_{\ge k})\ti\calX^\circ_n\arrow[r]\arrow[l,"j_{\ge k\ti 1}"]&\calX^\circ_k\ti\calX_n^\circ\\
      \frn_n\ti\frb_n&\arrow[u,"j_{l,G}^\circ\ti j_{r,G}^\circ"]\frn_n\ti \frb_n^3\ti \GG_n\arrow[l,"\rho_G"]&\frn_n\ti\frb_n^3\ti P_{\ge k}\arrow[l,dashed,hook]\arrow[ur,"j_{l,\ge k}\ti p_{r,\ge k}"']\arrow[u,dashed,"\phi"]&
    \end{tikzcd}
  \]
  Thus the statement follow from the base-change formula.
\end{proof}

The  closure functor in reduced category \(\overline{\MF}_n\) is simpler than in the full category \(\MF^\circ\). Thus we use the
the matrix factorizations from the reduced category in our main lemma and we give a version of the previous result in the reduced
category.

To state the reduced version of the result let us recall that we the automorphism of the space \(\overline{\calX}_n\):
\[ \frn_n\ti \GG_n\ti \frb_n\ni (X,g,Y)\to (X,g,-Y)\]
induces an equivalence of categories \(\overline{\MF}_n\) and \(\overline{\MF}_n^*=\MF_{B^2}(\overline{\calX}_n,-\overline{W})\).
For  \(\calF\in \overline{\MF}_n\) we denote by \(\calF^*\) the corresponding element of \(\overline{\MF}_n^*\).

Next we fix a pair of natural maps from the space \(\overline{\calX}_n(P_{\ge k})=\frn_n\ti P_{\ge k}\ti \frb_n\):
\[\bar{j}_{l,\ge k}:\overline{\calX}_n(P_{\ge k})\to \overline{\calX}_n,\quad \bar{p}_{r,\ge k}:\overline{\calX}_n(P_{\ge k})\to \overline{\calX}_k\]

\begin{proposition}\label{prop:last-redu}
  For any \(\overline{\calF}\in \overline{\MF}_n\), \(\ol{\calG}\in \ol{\MF}_k\) we have:
  \[\bbll^\circ(KN^\circ(\ol{\calF})\star \inc_{\ge k}(KN^\circ(\ol{\calG}))))=
    \rho_{G*}(\CE_{\frn}(\ol{j}^{\circ,*}_{l,\ge k}(\ol{\calF})\ot \ol{p}^{\circ,*}_{r,\ge k}(\ol{\calG)}^*)^T)
  \]
\end{proposition}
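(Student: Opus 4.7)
The idea is to deduce the reduced formula from the preceding proposition (its $\MF^\circ$-version) by substituting the Kn\"orrer equivalence $KN^\circ = p^\circ_* \circ j^{\circ,*}$ into both arguments and then using base change to commute the various pull-backs and push-forwards. Thus I would start by specializing the preceding proposition to $\calF = KN^\circ(\ol{\calF})$ and $\calG = KN^\circ(\ol{\calG})$, obtaining
\[
\bbll^\circ\bigl(KN^\circ(\ol{\calF})\star\inc_{\ge k}(KN^\circ(\ol{\calG}))\bigr)
=\rho_{G*}\bigl(\CE_\frn(j^{\circ,*}_{l,\ge k}\,p^\circ_* j^{\circ,*}(\ol{\calF})\otimes p^{\circ,*}_{r,\ge k}\,p^\circ_* j^{\circ,*}(\ol{\calG}))^T\bigr).
\]
Then I would assemble a single commutative diagram that incorporates the maps $j^\circ$, $p^\circ$ underlying $KN^\circ$ together with the inclusion/projection maps $\ol{j}^\circ_{l,\ge k}$, $\ol{p}^\circ_{r,\ge k}$, $j^\circ_{l,\ge k}$, $p^\circ_{r,\ge k}$ appearing on the two sides of the claim, plus the convolution space $\frn_n \times \frb_n^3 \times P_{\ge k}$ and its reduced analogue $\overline{\calX}_n(P_{\ge k})$.

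Next, I would apply smooth/flat base change repeatedly, moving $j^\circ_{l,\ge k}{}^*$ past $p^\circ_*$ on the left factor and $p^{\circ,*}_{r,\ge k}$ past $p^\circ_*$ on the right factor. The left factor is straightforward and gives $\ol{j}_{l,\ge k}^{\circ,*}(\ol{\calF})$ directly, because the left pull-back $j^\circ_{l,\ge k}$ only uses the $g$-coordinate of $P_{\ge k}$ positively (just as $j^\circ$ does in the definition of $KN^\circ$). The right factor is the subtle one: the map $j^\circ_{r,G}(X,Y_1,Y_2,Y_3,g) = (\Ad_g^{-1}X,\,Y_2,\,g^{-1},\,Y_3)$ features both the conjugation $\Ad_g^{-1}$ on $X$ and the group inversion $g\mapsto g^{-1}$. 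After the Kn\"orrer cancellation coming from $KN^\circ(\ol{\calG})$, these two operations combine to produce a sign flip on the $Y$-coordinate of $\ol{\calG}$: because $\ol{W}(\Ad_g^{-1}X,\,g^{-1},\,Y) = \Tr(\Ad_g^{-1}X\cdot\Ad_{g^{-1}}Y) = \Tr(XY)$, the quadratic Kn\"orrer piece acquires the opposite sign relative to the standard presentation of $\ol{\calG}$, which is precisely the content of passing to $\ol{\calG}^* \in \overline{\MF}_n^*$ via $Y \mapsto -Y$.

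With the sign flip identified, all remaining Kn\"orrer quadratic pieces on the convolution space cancel in pairs (one from each $KN^\circ$), and the resulting expression simplifies to the $\CE_\frn$-equivariant tensor product of $\ol{j}^{\circ,*}_{l,\ge k}(\ol{\calF})$ with $\ol{p}^{\circ,*}_{r,\ge k}(\ol{\calG}^*)$, pushed forward by $\rho_{G*}$ (now regarded as the analogous projection on the reduced convolution space). The only non-cosmetic check left is that the $B$-action used to form the $T$-invariants and Chevalley--Eilenberg complex in the reduced formula is the correct restriction of the $B$-action in the non-reduced formula, which follows from $P_{\ge k}$ being a $B$-stable subgroup and $\ol{j}_{l,\ge k}$, $\ol{p}_{r,\ge k}$ being $B^2$-equivariant.

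\emph{Main obstacle.} The delicate step is item~2 above: tracking how the inversion $g\mapsto g^{-1}$ in $j^\circ_{r,G}$, combined with the Kn\"orrer elimination of the quadratic part of $W^\circ$ built into $KN^\circ$, intertwines with the $Y\mapsto -Y$ involution that defines $\overline{\MF}_n^*$. One has to verify not only the identity of the underlying matrix factorization up to sign but also the compatibility of its $B^2$-equivariance data and of the curved differential under this identification. The rest of the argument is a diagram chase based on base change and the two preceding propositions.
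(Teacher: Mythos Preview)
Your approach is essentially the same as the paper's: start from the preceding proposition with $\calF=KN^\circ(\ol{\calF})$, $\calG=KN^\circ(\ol{\calG})$, organise the Kn\"orrer maps $j^\circ,p^\circ$ together with $j^\circ_{l,\ge k},p^\circ_{r,\ge k}$ and their reduced counterparts into a single commutative diagram, and then invoke base change. The paper's proof is in fact just that diagram, with the dashed arrows being the uniquely determined fillers; you are more explicit than the paper about where the $*$ on $\ol{\calG}$ comes from, which the paper's proof does not spell out at all.
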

\begin{proof}
  In the diagram below the dashed arrows correspond to the unique
  maps that makes diagram commute:
  \[\begin{tikzcd}
      \frn_n\ti\frb_n\arrow[r,equal]&\arrow[r,equal] \frn_n\ti\frb_n& \frn_n\ti\frb_n\\ \arrow[d,"j_{l,\ge k}\ti p_{r,\ge k}"]
      \frn_n\ti\frb_n^3\ti P_{\ge k}\arrow[u,"\rho_G"]&\arrow[l,equal]\arrow[r,dashed]\frn_n\ti\frb_n^3\ti P_{\ge k}\arrow[u,"\rho_G"]\arrow[d,dashed]&
      \ol{\calX}_n(P_{\ge k})\arrow[u,"\rho_G"]\arrow[d,"\bar{j}_{l,\ge k}\ti \bar{p}_{r,\ge k}"]\\
      \calX_n^\circ\ti \calX_k^\circ&(\frn_n\ti \GG_n\ti \frb_n^2)\ti
     ( \frn_k\ti \GG_k\ti \frb_k^2)\arrow[r,"p^\circ\ti p^\circ"]\arrow[l,"j^\circ\ti j^\circ"]&\ol{\calX}_n\ti\ol{\calX}_k
    \end{tikzcd}
  \]
\end{proof}

The  \(B^2_n\)-equivariant projection
\[\pi:\frn_n\ti \frb_n\to \frb_{n-1}\ti \frb_{n-1},\]
that projects matrices to the subspace spanned by the matrix units \(E_{ij}\), \(i,j>1\) descends to the projection
\(\pi\) between the free flag Hilbert scheme \eqref{eq:pi-proj}. On the other hand the projection
\[\pi_{\ge k}:\frn_n\ti\frb_n\to \frn_k\ti \frb_k.\]
that projects matrices to the subspace spanned by the matrix units \(E_{ij}\), \(i,j\le k\) does not descend to
to the map between the free flag Hilbert schemes. The  failure is due to the fact that the stability condition is incompatible
with the projection. However, we can use this projection to understand the homology of the graphs that appear in the
the Markov move theorem:

\begin{lemma}\label{lem:Markov}
  Let \(\overline{\calF}\in \overline{\MF}_{n-1}\), \(\overline{\calG}\in \overline{\MF}_2\) and \(V=\CC^{n-2}\langle \epsilon_1\rangle \)  is a vector space
  with the  \(B_n\)-action given by the character \(\chi_1=\exp(\epsilon_1)\).
  Suppose we have \[\bbll(\overline{\calF})=(M,d)\in \rmD^{\per}_{B^2}(\frn_{n-1}\ti\frb_{n-1})\]
then
  there is a deformed
  \(\ZZ_2\)-graded complex \[\overline{\calF}'=(\pi^*M\ot \Lambda^{even}V\oplus \pi^*M\ot \Lambda^{odd}V,D_V+\pi^* d)\in \rmD^{\per}_{B}(\frn_n\ti \frb_n)\]
  \begin{equation*}\label{dia:big-1}
 D_V:    \begin{tikzcd}[column sep=small]
             \pi^* M\ar[r]\ar[rrr,bend right=15]\ar[rrrrr,bend right=20]&
      \pi^* M\ot V\ar[r]\ar[rrr,bend right=15]&
       \pi^*M\ot\Lambda^2V\ar[r]\ar[rrr,bend right=15]&
       \pi^*M\ot\Lambda^3V\ar[r]&
      \pi^*M\ot\Lambda^4 V\ar[r]&\cdots
  \end{tikzcd}
\end{equation*}
such that
\[\bbll^\circ\bigg(\ind_1\circ KN^\circ(\overline{\calF})\star \inc_2\circ KN^\circ(\overline{\calG})\bigg)=\overline{\calF}'\ot \pi^*_{\ge 2}(\bbll(\overline{\calG}^*)).\]
\end{lemma}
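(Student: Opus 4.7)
The plan is to reduce the left-hand side to a single geometric push-forward via an analogue of Proposition~\ref{prop:last-redu}, and then match it with the claimed tensor product on the right. First I would combine the defining diagrams of $\ind_1$ and $\inc_2$ into one intermediate space $\ol{\calX}_n(P)$, where $P$ is a parabolic simultaneously adapted to both functors. This presents the left-hand side as a push-forward of the form
\[
\rho_{G*}\Bigl(\CE_{\frn}\bigl(\bar j_l^{*}(\ol{\ind}_1(\ol\calF))\otimes \bar p_r^{*}(\ol\calG)^{*}\bigr)^{T}\Bigr),
\]
in which the first pullback depends only on the lower $(n-1)\ti(n-1)$ sub-block (because the first strand is parallel in $\ind_1$), and the second pullback depends only on the top $2\ti 2$ sub-block.

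Second, I would factor the two pullbacks. The pullback of $\ol{\ind}_1(\ol\calF)$ splits as $\pi^{*}\bbll(\ol\calF)$ tensored with a free module on the extra parabolic coordinates attached to the first row; after a slice that kills the free $T\ti U$-action, these coordinates form an $(n-2)$-dimensional vector space of $B_n$-weight $\epsilon_1$, accounting precisely for the space $V$ in the statement. The second pullback, after its own slice, reduces to $\pi_{\ge 2}^{*}(\bbll(\ol\calG^{*}))$. Applying the remaining $\CE_\frn(\cdot)^{T}$ then replaces the $(n-2)$ extra directions by their exterior algebra $\Lambda^\bullet V$ via a Koszul-type computation, producing the module underlying $\ol\calF'\otimes \pi_{\ge 2}^{*}\bbll(\ol\calG^{*})$.

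Third, one reads off the differential. The piece $\pi^{*}d$ is inherited directly from $\bbll(\ol\calF)$, while $D_V$ assembles out of the linear Koszul contraction of $\Lambda^\bullet V$ against the first-row coordinates of $\bar j_l^{*}(\ol{\ind}_1(\ol\calF))$ (giving the nearest-neighbour arrows), together with the higher components of the non-strict $B^{2}$-equivariance piece $\partial$ of $\ol\calF$ and the non-linear dependence of the potential on the parabolic variables (which contribute the longer zigzag arrows that skip to higher odd relative degrees).

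The main obstacle will be verifying that exactly the arrow pattern drawn in the statement appears and no extras; in particular one must check that the equivariance correctly weakens from $B^{2}$ on $\ol\calF$ to $B$ on $\ol\calF'$ (the second $B$-factor being consumed by the slice of the left action on the parabolic coordinate in $P$), and that the $\mathbf{q},\mathbf{t}$-weights of the first-row coordinates forbid the even-length jumps that would destroy the stated zigzag shape. A careful choice of slice, combined with the explicit Koszul description of the generators $\calC_\bullet^{(i)}$ used earlier in the paper, should make these bookkeeping issues tractable.
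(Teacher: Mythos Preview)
Your overall shape is right --- reduce via Proposition~\ref{prop:last-redu}, introduce an intermediate parabolic, and separate the two factors --- but you have misidentified where the exterior algebra $\Lambda^\bullet V$ and the differential $D_V$ actually come from, and this is the heart of the lemma.

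In the paper the intermediate subgroup is $P'=\{g\in G_n\mid g_{1i}=0,\ i\ge 3\}$, and both $P_1$ and $P_{\ge 2}$ sit inside $P'$. After using the projection formula for $\bar i':\ol{\calX}_n(P')\to\ol{\calX}_n$, the crucial object is
\[
\bar i^{\prime *}\circ \bar i'_{*}(\calF'),\qquad \calF'=\bar j'_{1*}\circ \bar p_1^{*}(\ol\calF).
\]
It is \emph{this} push--pull along the closed embedding $\bar i'$ that produces $(M'\otimes\Lambda^\bullet V,\ d'+D'_V)$: by the very construction of $j_*$ for matrix factorizations, pushing forward along a regular sequence builds a Koszul matrix factorization on the normal directions, and pulling back again leaves you with the Koszul exterior algebra plus its correction terms. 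Here $V$ is literally the span of the defining equations $g_{1i}$, $i\ge 3$, of $P'\subset G_n$, which is why $\dim V=n-2$ and why the $B_n$-weight is $\epsilon_1$.

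You instead claim that the free module on the first-row coordinates is converted to $\Lambda^\bullet V$ by $\CE_{\frn}(\cdot)^T$. That is not what happens: once one has passed to $\ol{\calX}_n(B_n)$ the $B_n$-action is free, so $\CE_{\frn}(\cdot)^T$ is simply restriction to the slice $\bar j_e$ and contributes no exterior algebra at all. Likewise, the longer arrows in $D_V$ are not built from the $\partial$-piece of $\ol\calF$ or from nonlinearities of the potential; they are exactly the higher correction maps in the equivariant push-forward $\bar i'_*$ (the terms that make $D_{tot}^2=W$ hold after extending over the Koszul generators), then restricted along $\bar j_e\circ\delta$. Your proposed source for the arrows would not give a differential that squares to zero on $\pi^*M\otimes\Lambda^\bullet V$, and your ``Koszul via $\CE_\frn$'' step would produce an exterior algebra of the wrong rank ($\dim\frn$ rather than $n-2$). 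Rewrite the argument around the $\bar i^{\prime *}\bar i'_{*}$ mechanism and the rest of your outline goes through.
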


In the proof of the lemma we use the reduced version of the induction and inclusion functors:
\[\begin{tikzcd}\overline{\calX}_n(P_{\ge k})\arrow[r,"\bar{i}_{\ge k}"]\arrow[d,"\bar{p}_{\ge k}"]&\overline{\calX}_n\\
    \overline{\calX}_{k}&
  \end{tikzcd} \quad\quad
\begin{tikzcd}\overline{\calX}_n(P_k)\arrow[r,"\bar{i}_{k}"]\arrow[d,"\bar{p}_{ k}"]&\overline{\calX}_n\\
    \overline{\calX}_{k}\ti\overline{\calX}_{n-k}&
  \end{tikzcd}\quad\quad
\begin{tikzcd}
  \overline{\inc}_{\ge k}=\bar{i}_{\ge k,*}\circ \bar{p}_{\ge k}^*,\\\overline{\ind}_{ k}=\bar{i}_{ k,*}\circ \bar{p}_{k}^*
\end{tikzcd}
\]
where here and everywhere below
\[\overline{\calX}_n(P)=\frn_n\ti P\ti\frb_n.\]

The  Knorrer functor \(KN^\circ\) intertwines these functors with the functors
\(\inc_{\ge k}\) and \(\ind_k\) thus the lemma above is equivalent to the formula:
\begin{equation}\label{eq:inc-ind}
\bbll^\circ\bigg( KN^\circ\circ\ol{\ind_1}(\overline{\calF})\star  KN^\circ\circ\ol{\inc}_2(\overline{\calG})\bigg)=\overline{\calF}'\ot \pi^*_{\ge 2}(\bbll(\overline{\calG}^*)).
\end{equation}

\begin{proof}[Proof of Lemma~\ref{lem:Markov}]
  Let us denote by \(P'\subset \GG_n\) the \(B^2\) invariant
  subspace defined by
  \[P'=\{ g\in \GG_n| g_{1i}=0, i=3,\dots,n\}.\]
  The natural inclusions
  \[j'_1: P_1\to P',\quad j_{\ge 2}: P_{\ge 2}\to P', \quad i': P'\to \GG_n,\]
  induces the dashed arrow maps in the diagram:
  \[\begin{tikzcd}
&\overline{\calX}_n(P')\ar[rd,dashed,"\bar{i}'"]&\ar[d,"\bar{j}_1"]\ar[r,"\bar{p}_1"]\ar[rd,dashed,"\bar{j}'_1"]\overline{\calX}_n(P_1)&\overline{\calX}_{n-1}\ti\overline{\calX}_1\\
\overline{\calX}_2&\ar[l,"\bar{p}_{\ge 2}"]\ar[r,"\bar{i}_{\ge 2}"]\ar[u,dashed,"\bar{j}'_{\ge 2}"]\overline{\calX}_n(P_{\ge 2})&\ol{\calX}_n&\ol{\calX}_n(P')\ar[l,dashed,"\bar{i}'"]
    \end{tikzcd}
  \]
  There are natural maps \(\rho_G\)  from the spaces in the second and third  diagram to \(\frn_n\ti \frb_n\) which intertwine the maps in the diagram.

  Now we show how one can use base-change and projection formula
  to prove \eqref{eq:inc-ind}.
  According to proposition~\ref{prop:last-redu} the  LHS of the formula is obtained by application of
  \(\CE_{\frn}(\cdot)^T\) to
  \[
    \rho_{G*}(\bar{j}_{1*}\circ \bar{p}_1^*(\ol{\calF})\ot
    \bar{i}_{\ge 2*}\circ\bar{p}_{\ge 2}^*(\ol{\calG}^*))=
\rho_{G*}(\bar{i}^{\prime*}\circ\bar{j}_{1*}\circ \bar{p}_1^*(\ol{\calF})\ot
    \bar{j}'_{\ge 2*}\circ\bar{p}_{\ge 2}^*(\ol{\calG}^*))
  \]
  here we used that \(\bar{i}_{\ge 2,*}=\bar{i}'_*\circ \bar{j}'_{\ge 2}\) and the projection formula for the map \(\bar{i}'\).
  Similarly we have \(\bar{j}_{1*}=\bar{i}'_*\circ \bar{j}'_{1*}\), hence
  \[
    \bar{i}^{\prime*}\circ\bar{j}_{1*}\circ \bar{p}_1^*(\ol{\calF})=\bar{i}^{\prime *}\circ \bar{i}'_*\circ \bar{j}'_{1*}\circ \bar{p}_1^*(\ol{\calF})= \bar{i}^{\prime *}\circ \bar{i}'_*(\calF'),\quad \calF'=\bar{j}'_{1*}\circ \bar{p}_1^*(\ol{\calF}).\]
  The key observation follows from the construction of the push-forward  for matrix factorizations from \cite{OblomkovRozansky16}. Indeed, if \(\calF'=(M',d')\) then
  the composition
  \(\bar{i}^{\prime *}\circ \bar{i}'_*(\calF')\) is equal to the matrix factorization
  \[(M'\ot \Lambda^\bullet V, d'+D'_V),\quad D'_V\in \oplus_{k,i\ge 0}
    \Hom(M'\ot \Lambda^i V, M'\ot \Lambda^{i+2k}V), \]
  where \(V\) is a vector space spanned by the matrix elements
  \(g_{1i}\), \(i\ge 3\). These are exactly the matrix elements
  vanish on \(P'\). We note the last matrix factorization by
  \(\calF'\ot \Lambda^\bullet V\) for brevity.

  To complete our proof we need to apply the base-change relation one
  more time. We work with the diagram:
  \[
\begin{tikzcd}    &\ar[d,dashed,"\gamma"]\overline{\calX}_n(B_n)\ar[r,dashed,"\delta"]&\ar[d,"\bar{j}'_1"]\ar[r,"\bar{p}_1"]\overline{\calX}_n(P_1)&\overline{\calX}_{n-1}\ti\overline{\calX}_1\\
\overline{\calX}_2&\ar[l,"\bar{p}_{\ge 2}"]\ar[r,"\bar{j}'_{\ge 2}"]\overline{\calX}_n(P_{\ge 2} )&\ol{\calX}_n(P')&
    \end{tikzcd}
  \]

  Thus by the previous argument:
  \begin{multline*}
  \rho_{G*}(\bar{j}'_{1*}\circ \bar{p}_1^*(\ol{\calF})\ot \Lambda^\bullet V\ot
    \bar{j}'_{\ge 2*}\circ\bar{p}_{\ge 2}^*(\ol{\calG}^*))=
  \rho_{G*}( \bar{p}_1^*(\ol{\calF})\ot \bar{j}^{\prime,*}(\Lambda V^\bullet)\ot
  \bar{j}^{\prime *}\circ \bar{j}'_{\ge 2*}\circ\bar{p}_{\ge 2}^*(\ol{\calG}^*))\\=
    \rho_{G*}( \bar{p}_1^*(\ol{\calF})\ot \bar{j}^{\prime,*}(\Lambda V^\bullet)\ot
 \delta_*\circ \gamma^*\circ\bar{p}_{\ge 2}^*(\ol{\calG}^*))=
    \rho_{G*}(\delta^*\circ \bar{p}_1^*(\ol{\calF})\ot \delta^*\circ \bar{j}^{\prime *}(\Lambda V^\bullet)\ot
  \gamma^*\circ\bar{p}_{\ge 2}^*(\ol{\calG}^*)).
\end{multline*}

The \(B_n\)-action on \(\ol{\calX}_n(B_n)\)  is free. Thus the functor
\(\CE_{\frn}(\cdot)^T\) on this space is equivalent to
the functor of restriction on the \(B_n\)-action slice:
\[\bar{j}_e:\ol{\calX}_n\to\frn_n\ti \frb_n.\]

Thus by the previous argument the LHS of \eqref{eq:inc-ind} is equal
\[\bar{j}_{e}^*(\delta^*\circ \bar{p}_1^*(\ol{\calF})\ot \delta^*\circ \bar{j}^{\prime *}(\Lambda V^\bullet)\ot
  \gamma^*\circ\bar{p}_{\ge 2}^*(\ol{\calG}^*)).
\]

The  statement follows the last formula because
\[\bar{j}_e\circ\gamma^*\circ \bar{p}_{\ge 2}^*(\ol{\calG}^*)=
  \pi_{\ge 2}^*(\bbll(\ol{\calG}^*)), \quad \bar{j}_{e}^*(\delta^*\circ \bar{p}_1^*(\ol{\calF})\ot \delta^*\circ \bar{j}^{\prime *}(\Lambda V^\bullet))=\ol{\calF}',\]
where \(D_V=\bar{j}_e^*\circ \delta^*\circ \bar{j}^{\prime*}(D'_V)\).

\end{proof}
\subsection{Proof  of Markov move relations}
\label{sec:proof-markov-move}
Let us set \[\ol{\calF}_\star=\ol{\calF}_{D\circ D_\star^{(n-1,n)}}\in\MF(\ol{\calX}_n,\Wr)\] and \(\star\) can be either \(\bullet\) or \(\parallel\).

The  lemma~\ref{lem:Markov} is directly applicable in the
setting of the proof, that is:
\[\bbll(\ol{\calF}_\star)=\ol{\calF}'_D\ot \pi_{\ge 2}^*(\bbll(\ol{\calC}_\star^*)).\]

To describe the second factor in the last formula let us
fix coordinates on \(\ol{\calX}_n=\frn_n\ti \GG_n\ti \frb_n\) as \(X\in\frn_n\), \(Y\in\frb_n \). Then the pull-backs are the
following Koszul complexes:
\[\pi_{\ge 2}^*(\bbll(\ol{\calC}_\bullet^*))=\mathrm{K}_\bullet=\mathrm{K}[x_{12}],\quad
  \pi_{\ge 2}^*(\bbll(\ol{\calC}_\parallel^*))=\mathrm\mathrm{K}_\parallel={K}[x_{12}y_{11}].\]

After restricting to the stable locus these Koszul complexes
define the divisors  \(\FHilb_{n,\star}\subset\FHilb_n^{free}\). Let us describe these divisors.

 The  projection \(\pi\) is surjective on the divisors \(\FHilb_{n,\star}\). Moreover,  {\it every} fiber of the projection
 \(\pi|_{\FHilb_{n,\bullet}}\) is a projective space \(\mathbb{P}^{n-2}\). The  divisor \(\FHilb_{n,\parallel}\) is bigger,
 \[\FHilb_{n,\parallel}=\FHilb_{n,\bullet}\cup \pi^{-1}(0\times \FHilb_{n-1}^{free}).\]
 That is the fibers of  \(\pi|_{\FHilb_{n,\parallel}}\) are \(\mathbb{P}^{n-2}\) outside the divisor \(x_{11}=0\) and
 the fiber is \(\mathbb{P}^{n-1}\) over the divisor.

The vector space
 \(V\) in the definition of complex \(\ol{\calF}'_D\) becomes the vector bundle \(\calO_n(-1)^{\oplus n-2}\).
Hence  to compute \(\pi_*(\ol{\calF}_{\star}\ot \Lambda^k\calB_n))\), we need to modify the previous corollary.

We denote \(\pi_\bullet\) and \(\pi_\parallel\) the restriction of the projection \(\pi\) on the corresponding
divisor:
\[\pi_{\star*}(\calC)=\pi_{*}(\mathrm{K}_\star\ot \calC).\]

Let us discuss the case \(\star=\bullet\). Then for any \(z\in \CC\ti \FHilb_{n-1}\) the fiber \(F_z=\pi^{-1}(z)\)
is the projective space \(\mathbb{P}^{n-2}\). Hence
\begin{itemize}
  \item \(\pi_{\bullet*}(\Lambda^k\calB_n)=\Lambda^k\calB_{n-1}\)
  \item \(\pi_{\bullet*}(\calO_n(m)\ot \Lambda^k\calB_n)=0\) if \(m\in [-n+3,-1]\).
    \item \(\pi_{\bullet*}(\calO_n(-n+2)\ot \Lambda^k\calB_n)=\Lambda^{k-1}\calB_{n-1}[n-2]\)
  \end{itemize}

  The statement is similar for \(\pi_{\parallel}\) except we have take into account that over the divisor
  \(\mathscr{D}=\{x_{11}=0\}\) the dimension of the fiber jumps:
\begin{itemize}
  \item \(\pi_{\parallel*}(\Lambda^k\calB_n)=\Lambda^k\calB_{n-1}\)
  \item \(\pi_{\parallel*}(\calO_n(m)\ot \Lambda^k\calB_n)=0\) if \(m\in [-n+3,-1]\).
    \item \(\pi_{\parallel*}(\calO_n(-n+2)\ot \Lambda^k\calB_n)=\calO(-\mathscr{D})\otimes\Lambda^{k-1}\calB_{n-1}[n-2]\).
  \end{itemize}
Here we used the derived version of the push-forward \(\pi_{\parallel *}(\calF):=\pi_*(\calF\otimes \calO_{\FHilb_{n,\parallel}})\).

Thus in both case then only the left and the right extreme terms of \(\Lambda^\bullet(V)\)  survive the push-forward \(\pi_*\).  The contraction of the \(\pi_*\)-acyclic terms
  could potential lead to appearance of new correction arrows. However, this does not happen since only two extreme ends of the complex \(\Lambda^\bullet V,D_V\)
  survive the push-forward. Hence the standard spectral sequence (or Gauss elimination lemma ) argument implies that
  \[\pi_*(\ol{\calF}_{\bullet}\ot \Lambda^k\calB_n)= \CC[x_{11}]\ot\bigg(\bbll(\ol{\calF}_{D'})\ot \Lambda^k\calB_{n-1}\oplus
    \bbll(\ol{\calF}_{D'})\ot \Lambda^{k-1}\calB_{n-1}[n-2]\bigg).\]
  \[\pi_*(\ol{\calF}_{\parallel}\ot \Lambda^k\calB_n)= \CC[x_{11}]\ot \bbll(\ol{\calF}_{D'})\ot \Lambda^k\calB_{n-1}\oplus
 x_{11}\CC[x_{11}]\ot    \bbll(\ol{\calF}_{D'})\ot \Lambda^{k-1}\calB_{n-1}[n-2].\]

\section{Properties of the functor}
\label{sec:properties-functor}


\subsection{Geometric Soergel category}
\label{sec:geom-soerg-categ}



The  general braid graph  \(D\) is a composition of the elementary graphs \(D_\bullet^{(i,i+1)}\), \(D=D_\bullet^{(i_1,i_1+1)}\circ \dots
\circ D_\bullet^{(i_m,i_m+1)}\). Thus \(\Phi_n^\flat\) assigns to the graph \(D\) the matrix factorization:
\[\calF_D:=\calC_\bullet^{(i_1)}\star\dots \star \calC_\bullet^{(i_m)}.\]

Categories of matrix factorizations are triangulated
\cite{Orlov04} hence additive.
We defined an additive a \(\MF_n^\flat\) as Karoubi envelope of the smallest full subcategory of additive category  \(\MF_n\) that contains the
elementary matrix factorizations \(\calC_\bullet^{(i)}\), \(i=1,\dots,n-1\) and closed under convolution \(\star\). The  matrix factorizations
\(\calF_D\) are objects of this category but it contains more objects that correspond to more singular diagrams.

The main technical statement of the paper now follows from the
MOY relations by application of the  technique developed by Hao Wu \cite{Wu08}.
The same technique used for example in the work of Jake Rasmussen \cite{Rasmussen15}
we refer for the details to his work and below we the key steps of the proof

\begin{proof}[Proof of Theorem~\ref{thm:main-cat}]
  The first part of the theorem is proven in \ref{prop:monoid}.
  The second part follows from the MOY relations from the section~\ref{sec:relations-mfflat}, see lemma~\ref{lem:MOY1} and lemma~\ref{lem:MOY2}.

  For third part we use Hao Wu technique \cite{Wu08}. He shows (see also \cite{Rasmussen15}) that
  if one has homology theory for braid graphs such that the  MOY relation from lemma~\ref{lem:MOY1}, lemma~\ref{lem:MOY2} and
  proposition~\ref{prop:Markov-2} hold
  then there is a unique inductive procedure for the reduction to the unknot.
  It was shown in \cite{Rasmussen15} that
  \[\Ext_{\rmD(\CC^n\ti\CC^n)}(\Phi_S(\xId),\Phi_S(\gamma))\]
  satisfies the MOY relations. Thus we have the statement for the case \(\gamma_1=\xId\).  The general case follows from the special since the
  corollary~\ref{cor:cycle}.

  The part follows from the previous ones after combining with
  \(\calB^\vee\otimes \det(\calB)=\calB\).
\end{proof}

Thus we immediately obtain

\begin{corollary}
  The functor \(\bb\) is  a faithful functor when restricted to \(\MF_n^\flat\):
  \[\bb: \MF_n^\flat\to \sbim_n.
  \]
\end{corollary}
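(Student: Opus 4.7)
The plan is to deduce faithfulness directly from the Hom-comparison in Theorem~\ref{thm:main-cat}(3). By its very construction, $\MF_n^\flat$ is the Karoubi envelope of the full additive subcategory of $\MF_n$ generated by finite convolutions $\Phi^\flat(\gamma)$ for $\gamma\in\xGrn$. Since Karoubi completion preserves faithfulness of additive functors (a morphism between Karoubi summands lifts to a morphism between the ambient objects, and injectivity on the latter implies injectivity on the former), it suffices to show that for any pair of braid graphs $\gamma_1,\gamma_2\in\xGrn$ the induced map
\[
\bb\colon \Hom_{\MF_n^{\st}}\bigl(\Phi^\flat(\gamma_1),\Phi^\flat(\gamma_2)\bigr)\longrightarrow \Hom_{\sbimn}\bigl(\PhiRgr(\gamma_1),\PhiRgr(\gamma_2)\bigr)
\]
is injective. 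Well-definedness of this map is immediate from part (2) of Theorem~\ref{thm:main-cat}, which identifies $\bb(\Phi^\flat(\gamma_i))=\PhiRgr(\gamma_i)$.

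To prove the injectivity, I would realize the source as the $\Lambda^0 V_n$-summand of the triply-graded Hom
\[
\Hom_{\MF_n^{\st}}\bigl(\Phi^\flat(\gamma_1),\Phi^\flat(\gamma_2)\otimes\Lambda^\bullet V_n\bigr),
\]
and the target as the $\Ext^0$-summand of $\Ext^\bullet_{\Dbxy}(\PhiRgr(\gamma_1),\PhiRgr(\gamma_2))$. Part (3) of Theorem~\ref{thm:main-cat} furnishes an isomorphism of the full $\Lambda^\bullet V_n$-graded Hom with the full graded Ext, and this isomorphism respects the degree decomposition. Extracting the degree-zero component then gives a bijection between the displayed Hom spaces, and in particular injectivity of $\bb$.

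The one place where genuine care is required is in checking that the isomorphism produced by part (3) of Theorem~\ref{thm:main-cat} is in fact the map induced by $\bb$ itself, and not merely some other abstract identification of the two graded vector spaces. This is essentially tautological from the construction of $\bb$ as $\pi_{\frh*}\circ\xiota^*$: the Chevalley--Eilenberg resolution of $\frn^2$ used to compute $\frh$-invariants on the matrix factorization side is precisely what converts the exterior algebra $\Lambda^\bullet V_n$ into the derived Hom structure on the bimodule side, so the comparison of Hom spaces is by definition implemented by the functor $\bb$. Once this identification is made explicit, the corollary follows immediately: faithfulness on generators propagates through the Karoubi envelope to give faithfulness of $\bb$ on all of $\MF_n^\flat$.
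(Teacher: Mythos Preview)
Your proposal is correct and follows the same route the paper intends: the corollary is stated immediately after Theorem~\ref{thm:main-cat} with only the words ``Thus we immediately obtain'', so the paper offers no explicit argument beyond invoking part~(3). Your reduction to generators via the Karoubi envelope and extraction of the $\Lambda^0 V_n$ (equivalently $\Ext^0$) component from part~(3) is exactly the intended mechanism, and the paper later makes the same Hom-identification explicit through the trace formalism, writing $\Hom(\bb(\calF),\bb(\calF'))=\Hom(\calF,\calF')$ via $\Tr^0$.

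The one point you flag---whether the isomorphism in part~(3) is genuinely the map induced by $\bb$ rather than an abstract dimension count---is a fair concern, and the paper does not spell it out either. In the paper's proof of part~(3), the Hao Wu induction shows that both sides satisfy the same MOY relations and agree on the unknot; since the functor $\bb$ itself intertwines the convolution and induction structures (parts~(1) and~(2) and Proposition~\ref{prop:ind}), the inductive identifications are carried along $\bb$ at each step, so the resulting isomorphism is the one $\bb$ induces. Your justification via the Chevalley--Eilenberg description of $\bb$ points in the same direction but is less precise than simply tracking naturality through the MOY induction.
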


The  Hochschild homology functor defines the natural traces on the  category \(\sbim_n\):
\[ B\to \Tor^i_{R_n\ot R_n}(B,R_n).\]

As we have shown in \cite{OblomkovRozansky16} the category \(\MF_n \) also has natural trace functors
\[\Tr^i(\calF):=\CE_{\frn^2}\Big(\mathcal{E}xt(\calF,\calF_{\parallel}\ot\Lambda^i\calB)\Big)^{T^2\ti \GG_n}.\]
Thus the theorem~\ref{thm:main-cat} and the properties of the duality
functor from the section \ref{sec:duality}
imply that these functors are intertwined by the functor \(\bb\):

\begin{corollary}
  For any \(\calF\in \MF^\flat_n\) we have an isomorphism of graded vector spaces
  \[\Tr^i(\calF)=\Tor^i_{R_n\ot R_n}(\bb(\calF),R_n).\]
\end{corollary}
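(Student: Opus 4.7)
The plan is to reduce the statement to the graph-closure isomorphism already established in the corollary preceding \eqref{eq:griso}, together with additivity.

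First I would rewrite both sides as derived Hom spaces. On the matrix factorization side, the functor $\CE_{\frn^2}(\cdot)^{T^2 \ti \GG_n}$ computes derived invariants, so by the standard trace-to-Hom adjunction in $\MF_n^{\st}$ one has
\[
\Tr^i(\calF) \cong \Hom_{\MF_n^{\st}}\bigl(\calF_\parallel^\vee,\ \calF \otimes \Lambda^i V_n\bigr),
\]
using that $\calF_\parallel$ is the monoidal unit and that $\Lambda^i V_n$ and $\Lambda^i \calB$ agree after descent. On the Soergel side, Hochschild homology is $\Tor^i_{R_n \ot R_n}(B, R_n) = \Ext^i_{R_n \ot R_n}(\idosb^\vee, B)$, using the Koszul resolution of $R_n = \idosb$ as an $R_n \ot R_n$-module (this is exactly the $\rmHH_\bullet(B) = \Ext^\bullet(\idosb^\vee, B)$ identification stated in the introduction).

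Next I would apply Theorem~\ref{thm:main-cat} to match the two Hom expressions. By additivity of both $\Tr^i$ and $\Tor^i$ (they commute with direct sums and therefore with splitting of idempotents), it suffices to check the identity on the generators $\calF = \calF_\gamma = \Phi^\flat(\gamma)$ for $\gamma \in \xGrn$, where $\bb(\calF_\gamma) = \PhiRgr(\gamma)$ by part~(2). Part~(4) gives $\bb(\calF_\parallel^\vee \otimes \det V_n) = R_n^\vee[n]_a$, and combining this with the identity $\det V_n \otimes \Lbul V_n^\vee = \Lambda^{n-\bullet} V_n$ (used already in the graph corollary), we can rewrite
\[
\Hom_{\MF_n^{\st}}\bigl(\calF_\parallel^\vee,\ \calF_\gamma \otimes \Lambda^i V_n\bigr)
\cong \Hom_{\MF_n^{\st}}\bigl(\calF_\parallel^\vee \otimes \det V_n,\ \calF_\gamma \otimes \Lambda^{n-i} V_n^\vee \otimes \det V_n \bigr)
\]
up to the $[n]_a$ shift. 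Part~(3) then converts the resulting Hom into $\Ext^i_{\Dbxy}\bigl(\idosb^\vee,\ \PhiRgr(\gamma)\bigr)$, which is precisely the degree-$i$ Hochschild homology $\HH_i(\PhiRgr(\gamma)) = \Tor^i_{R_n \ot R_n}(\PhiRgr(\gamma), R_n)$.

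The main obstacle will be the careful bookkeeping of the grading shifts: the matching of the $a$-grading on the MF side with the homological $\Ext$-grading on the Soergel side involves both a $\det V_n$ twist and an $[n]_a$ shift, and one must check that these combine correctly with $\Lambda^i V_n \leftrightarrow \Lambda^{n-i} V_n^\vee \otimes \det V_n$ to produce the same index $i$ on both sides. Once the gradings are pinned down and additivity is invoked, no further computation is needed: the isomorphism is just the graph-closure corollary repackaged, together with Corollary~\ref{cor:cycle} (the Ext/Tor compatibility via convolution with the unit) to handle the duality on the source of the Hom uniformly.
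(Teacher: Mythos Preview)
Your proposal is correct and follows the same approach the paper indicates: the paper states the corollary as an immediate consequence of Theorem~\ref{thm:main-cat} together with the duality properties of Section~\ref{sec:duality}, and you have simply unpacked that derivation, reducing to graph objects via additivity and invoking parts (2)--(4) exactly as needed. The only thing to watch is that the paper's definition of $\Tr^i(\calF)$ has the arguments in the order $\mathcal{E}xt(\calF,\calF_\parallel\otimes\Lambda^i\calB)$ rather than $\Hom(\calF_\parallel^\vee,\calF\otimes\Lambda^i V_n)$, so your first displayed identification implicitly uses the self-duality $\calF^\vee=\calF$ for $\calF\in\MF_n^\flat$ from Proposition~\ref{prop:dual}; once that is made explicit, the rest of your grading bookkeeping goes through as you describe.
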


Let us notice that element \(\Tr^i(\calF_D)\) naturally has gradings: \(\mathbf{q}\) and \(\mathbf{t}\)-gradings. On the
other hand the Soergel bimodule side of the equality has only one grading: \(\mathbf{q}\)-grading. Thus we see that
the elements \(\calF_D\) are actually pure:

\begin{corollary}
  For any diagram \(D\) all elements of \(\Tr^i(\calF_D)\) are of the same \(\mathbf{t}\)-degree.
\end{corollary}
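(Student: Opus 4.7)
The plan is induction on the braid graph $D$ using the MOY relations of Section~\ref{sec:relations-mfflat}. The key observation is that Lemma~\ref{lem:MOY1} and Lemma~\ref{lem:MOY2} exhibit isomorphisms in the additive category $\MF_n^\flat$ whose grading shifts involve only the $\mathbf{q}$-grading, never $\mathbf{t}$. By Wu's reduction technique already invoked in the proof of Theorem~\ref{thm:main-cat}, every $\calF_D$ can be reduced in $\MF_n^\flat$ to a direct sum $\bigoplus_k \mathbf{q}^{p_k} \calC_\parallel$ of $\mathbf{q}$-shifted copies of the unit object. Since $\Tr^i$ is an additive functor and $\mathbf{q}$-shifts commute with the $\mathbf{t}$-grading, this reduces the claim to the base case $\calF_D = \calC_\parallel$.

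For the base case, the definition
\[
\Tr^i(\calF)=\CE_{\frn^2}\bigl(\mathcal{E}xt(\calF,\calC_\parallel\otimes\Lambda^i\calB)\bigr)^{T^2\times\GG_n}
\]
together with Proposition~\ref{prop:unit-br} identifies $\Tr^i(\calC_\parallel)$ with $R_n\otimes\Lambda^i\CC^n$ concentrated in $\mathbf{t}$-degree zero: all generators of the Koszul presentation of $\calC_\parallel$, as well as the differentials of the $\CE_{\frn^2}$-resolution, carry $\mathbf{t}$-degree zero, since $\frn^2$ and the equivariant torus act only in the $\mathbf{q}$-direction. Combining with the inductive step yields $\Tr^i(\calF_D)\cong\bigoplus_k\mathbf{q}^{p_k}\bigl(R_n\otimes\Lambda^i\CC^n\bigr)$, which sits in a single $\mathbf{t}$-degree for each $i$.

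The main obstacle is to confirm that every MOY-type isomorphism in the reduction is genuinely $\mathbf{t}$-graded, i.e.\ that no hidden homological shift is smuggled in. Reviewing the proofs of Lemmas~\ref{lem:buble}, \ref{lem:triple-dot}, and~\ref{lem:triple-double}, every intermediate object is a Koszul complex of a regular sequence of $B^2$-equivariant polynomial equations whose generators have $\mathbf{t}$-degree zero, together with flat pushforwards along $\PP^1$-bundles; the $\mathbf{q}^2$-summand in Lemma~\ref{lem:MOY2} arises from the contribution of $\calO_{\PP^1}(-2)$ to $\pi_{\PP^1*}$ and carries no $\mathbf{t}$-shift. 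Once this check is in place, Wu's induction propagates purity in the $\mathbf{t}$-grading from the base case to arbitrary braid graphs $D$, completing the argument.
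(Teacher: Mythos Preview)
There is a genuine gap. Your central claim, that Wu's reduction decomposes $\calF_D$ \emph{inside} $\MF_n^\flat$ as $\bigoplus_k \mathbf{q}^{p_k}\calC_\parallel$, is false. Already for $n=2$ the object $\calC_\bullet^{(1)}$ corresponds under $\bb$ to the Soergel bimodule $B_1$, which is indecomposable and not isomorphic to any direct sum of $\mathbf{q}$-shifted copies of $R_2$; hence $\calC_\bullet^{(1)}$ is not a sum of shifted units. The MOY relations of Lemmas~\ref{lem:MOY1} and~\ref{lem:MOY2} rewrite products of the generators in terms of other products, but they never collapse a nontrivial generator to the unit object. Wu's inductive procedure operates at the level of the \emph{trace} $\Tr^*$, not at the level of objects, and it requires in addition the Markov-type relations of Proposition~\ref{prop:Markov-2}, which reduce the number of strands. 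You invoke only the in-strand MOY lemmas and never the Markov step, so the induction cannot terminate.

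The paper's argument is different and much shorter: it simply reads purity off the preceding corollary $\Tr^i(\calF_D)\cong\Tor^i_{R_n\otimes R_n}(\bb(\calF_D),R_n)$. The right-hand side is Hochschild homology of a Soergel bimodule and carries only the polynomial $\mathbf{q}$-grading; since the isomorphism respects gradings, the $\mathbf{t}$-grading on the left must be concentrated in a single degree. Your strategy can be repaired by running the induction on $\Tr^*(\calF_D)$ rather than on $\calF_D$, including Proposition~\ref{prop:Markov-2}, and checking that the Markov shifts $\frac{1-\mathbf{q}^2\mathbf{a}}{1-\mathbf{q}^2}$ and $\frac{1-\mathbf{a}}{1-\mathbf{q}^2}$ involve no $\mathbf{t}$---but once you have the comparison corollary in hand, that detour is unnecessary. (A minor side issue: your claim that the Koszul generators of $\calC_\parallel$ have $\mathbf{t}$-degree zero is also off, since the $X_{1,ii}$ entries of $K_\Delta$ carry $\deg\mathbf{t}^2\mathbf{q}^{-2}$.)
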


The  trace functor also computes the space of morphisms between the objects \(\MF_n^\flat\) since:
\[\Tr^0(\calF_D\star\calF_{D'})=\Hom(\calF_D,\calF_{D'}).\]

Similarly, any two Soergel bimodules \(B,B'\) we have
\[\Tor^0_{R_n}(B\ot_{R_n}B', R_n)=\Hom(B,B').\]

Thus the matching of the traces implies
\[\Hom(\bb(\calF),\bb(\calF'))=\Hom(\calF,\calF'),\]
for any \(\calF,\calF'\in \MF_n^\flat\).

\subsection{Extension to Rouquier complexes}
\label{sec:extens-rouq-compl}
Let us denote by \(\ho(\sbim_n)\) and \(\ho(\MF^\flat)\) the homotopy categories of the bounded complexes of
the objects of these two categories. Rouquier constructed the homomorphism from the braid group \(\Br_n\) to
the monoidal category \(\ho(\sbim_n)\), \(\Phi_R:\Br_n\to \ho(\sbim_n)\):
\[\Br_n\ni\beta\mapsto \calC_\beta=(C_{\beta,\bullet},d^R)\in\ho(\sbim_n).\]

Imitating Rouquier's construction we define the geometric version of the homomorphism
\[\Phi_{ho}:\Br_n\to \ho(\MF^\flat_n), \quad \beta\mapsto C_\beta,\]
and this homomorphism is compatible with the Rouquier's construction
\begin{equation}
  \label{eq:Rou-mf}
  \Phi_{R}=\bb\circ\Phi_{ho}.
\end{equation}

Probably, the most important result is the homotopy of the complexes

\begin{theorem}
  For any \(\beta\in\Br_n\) and \(i\) we have a homotopy of the complexes of graded vector spaces
  \[\Tr^i(\Phi_{ho}(\beta))\sim \Tor^i_{R_n}(R_n,\Phi_R(\beta)).\]
\end{theorem}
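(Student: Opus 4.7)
The plan is to interpret both sides as totalizations of explicit bicomplexes and to compare them via a spectral sequence argument, exactly along the lines sketched in the introduction. First I would use the presentation
\[
\Phi_{ho}(\beta) \;=\; \Bigl(\bigoplus_{\gamma\in\xGr(\beta)} \Phi^\flat(\gamma),\, \xdgeo\Bigr) \;\in\; \ho(\MF_n^\flat),
\]
with $\mnfn$ sending $\xdgeo$ to Rouquier's $\xdalg$ by \eqref{eq:Rou-mf}. Applying $\Tr^i$ term by term, the complex $\Tr^i(\Phi_{ho}(\beta))$ acquires two commuting (anti-)differentials: the internal matrix-factorization differentials $D_\gamma$ that participate in the definition of $\Tr^i(\Phi^\flat(\gamma))$, and the differential induced by $\xdgeo$ on the outer homotopy direction.

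Next I would run the spectral sequence of this bicomplex obtained by filtering along the outer degree. On the $E_1$ page one has $\bigoplus_\gamma \Tr^i(\Phi^\flat(\gamma))$, which by the corollary proved right after Theorem~\ref{thm:main-cat} is canonically identified with $\bigoplus_\gamma \Tor^i_{R_n\ot R_n}(\mnfn(\Phi^\flat(\gamma)), R_n) = \bigoplus_\gamma \Tor^i_{R_n\ot R_n}(\PhiRgr(\gamma), R_n)$. Because the identification is induced by the monoidal, fully-faithful functor $\mnfn$, the differential $d_1$ on $E_1$ agrees with $\mnfn(\xdgeo) = \xdalg$, so $E_2 \;=\; \Tor^i_{R_n}\bigl(R_n,\Phi_R(\beta)\bigr)$.

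The main obstacle is to show that the spectral sequence collapses at $E_2$, which is the $t$-grading count indicated in the introduction. Concretely, the matrix-factorization differentials $D_\gamma$ and the Rouquier differential $\xdgeo$ all have $\mathbf{t}$-degree $1$, while the matrix-factorization grading is two-periodic; after the purity statement that for each graph all of $\Tr^i(\calF_D)$ sits in a single $\mathbf{t}$-degree, the $r$-th differential on $E_r$ for $r\ge 2$ would be forced to change $\mathbf{t}$-degree by an amount incompatible with these two facts, hence vanishes. I would write out this bookkeeping carefully, paralleling Hao Wu's setup used in Theorem~\ref{thm:main-cat}.

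Finally, to upgrade the collapse at $E_2$ from an equality of homologies to a genuine homotopy of complexes of graded vector spaces, I would realize the spectral sequence as a zig-zag of quasi-isomorphisms: replace each $\Phi^\flat(\gamma)$ by an explicit Koszul-type resolution computing $\Tor^\bullet_{R_n\ot R_n}(\PhiRgr(\gamma),R_n)$, use that $\mnfn$ transforms these resolutions one into the other (by part (3) of Theorem~\ref{thm:main-cat}), and then invoke standard homological algebra for double complexes with one collapsing direction to obtain the required homotopy equivalence $\Tr^i(\Phi_{ho}(\beta)) \simeq \Tor^i_{R_n}(R_n, \Phi_R(\beta))$.
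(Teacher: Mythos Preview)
Your proposal is essentially correct but over-engineered: you have conflated this theorem with the lemma that immediately follows it in the paper. In the paper's conventions, $\Tr^i$ applied to an object of $\MF_n^\flat$ already yields a graded vector space (the purity corollary in Section~\ref{sec:geom-soerg-categ} makes this explicit), so $\Tr^i(\Phi_{ho}(\beta))$ is simply the complex whose $j$-th term is the graded vector space $\Tr^i(C_{\beta,j})$ with the single differential induced by $d^R$---there is no surviving internal matrix-factorization differential to filter against. The theorem then follows directly from the termwise identification $\Tr^i(\calF)\cong\Tor^i_{R_n\ot R_n}(\bb(\calF),R_n)$ established in the preceding corollary, combined with the compatibility $\Phi_R=\bb\circ\Phi_{ho}$ of \eqref{eq:Rou-mf}; since $\bb$ is a functor it carries the Rouquier-type differential on the geometric side to the Rouquier differential on the algebraic side, so the two complexes are actually isomorphic, not merely homotopic.

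The spectral sequence argument you outline---filtering a bicomplex by outer degree and using purity to force collapse at $E_2$---is precisely the content of the \emph{lemma} stated after the theorem, where one compares the full hypercohomology $\mathbb{H}(\CE_{\frn^2}(\mathcal{E}xt(\Phi_{ho}(\beta),\Phi_{ho}(\xId)\ot\Lambda^i\calB))^T)$ (in which the matrix-factorization differentials $D_\gamma$ are still present) to $H^*(\Tr^i(\Phi_{ho}(\beta)),d^R)$. So your route does eventually arrive at the conclusion, but by passing through that lemma rather than taking the direct path. Your final paragraph, upgrading from collapse at $E_2$ to a homotopy via an explicit zig-zag of Koszul resolutions, is also unnecessary: for bounded complexes of graded vector spaces over $\CC$, having isomorphic homology already forces homotopy equivalence, since every such complex splits as its homology plus a contractible summand.
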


There is a natural projection \(\Pi\) from the homotopy category \(\ho(\MF_n^\flat)\) to the triangulated category
\(\MF_n\). This projection turns the \(\ZZ\)-grading of complexes to two-periodic grading of matrix factorizations.
In our previous work we constructed the homomorphism
\[\Phi: \Br_n\to \MF_n\]
and we show that our new construction is consistent with the previous result:
\begin{equation}
  \label{eq:Pi}
  \Phi=\Pi\circ\Phi_{ho}.
\end{equation}

It was shown in \cite{OblomkovRozansky16} that the total homology
\[\mathrm{H}^\bullet(\beta)=\mathbb{H}(\mathcal{E}xt(\Phi(\beta),\Phi(1)\ot \Lambda^\bullet\calB)),\]
is an isotopy invariant of the closure of  \(\beta\). In particular, the triply-graded invariant
from the introduction is
\[\HHH_{geo}(\beta)=\oplus_i \mathrm{H}^i(\beta).\]

Finally, using purity result from above we show

\begin{lemma}
  For any \(\beta\) we have
  \[\mathbb{H}(\CE_{\frn^2}(\mathcal{E}xt(\Phi_{ho}(\beta),\Phi_{ho}(\xId)\ot\Lambda^i\calB))^T)=H^*(\Tr^i(\Phi_{ho}(\beta)),d^R),\]
  and homological grading \(*\) matches with \(\mathbf{t}\)-grading on the other side as well as \(\mathbf{q}\)-grading
  matches with the polynomial grading.
\end{lemma}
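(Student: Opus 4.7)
The plan is to build a spectral sequence that bridges the two sides, using the fact that $\Phi_{ho}(\beta)$ is a bounded complex in $\ho(\MF_n^\flat)$ whose terms $\calF_{D_\gamma}$ are indexed by resolutions $\gamma\in\xGr(\beta)$ and glued by the Rouquier differential $d^R$. Applying the functor $\CE_{\frn^2}\bigl(\mathcal{E}xt(-,\Phi_{ho}(\xId)\ot\Lambda^i\calB)\bigr)^T$ termwise and keeping track of $d^R$ between terms produces a bicomplex whose total complex computes the left hand side; this follows from the naturality of $\CE_{\frn^2}$ and $\mathcal{E}xt$ with respect to the inclusion of each summand $\calF_{D_\gamma}\hookrightarrow \Phi_{ho}(\beta)$ and from the fact that these functors commute with finite direct sums.

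First I would filter by the outer homological degree (the $d^R$ direction), so that the $E_1$-page is obtained by taking cohomology with respect to the inner matrix factorization differentials. By the definition of $\Tr^i$ recalled just above the lemma, each column of $E_1$ equals $\Tr^i(\calF_{D_\gamma})$. The functoriality of $\Tr^i$, combined with the compatibility $\Phi_R=\mnfn\circ\Phi_{ho}$ identifying the Rouquier differential on the matrix factorization side with the one on the Soergel side, identifies the $d_1$ of the spectral sequence with the differential induced by $d^R$ on $\Tr^i(\Phi_{ho}(\beta))$. Consequently $E_2 = H^\ast(\Tr^i(\Phi_{ho}(\beta)),d^R)$, which is the right hand side.

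Next I would establish degeneration $E_2 = E_\infty$ using the purity corollary. Each $\Tr^i(\calF_{D_\gamma})$ is concentrated in a single $\mathbf{t}$-degree fixed by the combinatorics of $\gamma$. The Rouquier differential $d^R$ raises the homological degree by one with a compensating $\mathbf{t}$-shift, so that $d^R$ preserves the ambient $\mathbf{t}$-grading. A putative higher differential $d_r$ with $r\ge 2$ would change the outer homological degree by $r$ while only compensating by a fixed $\mathbf{t}$-shift per unit of homological degree; it would therefore have to connect sectors of different pure $\mathbf{t}$-degree. Purity rules this out and forces $d_r=0$, so the spectral sequence collapses at $E_2$ and yields the claimed equality. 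The grading assertions follow for free: the $\mathbf{q}$-grading is preserved by each of the functors $\mathcal{E}xt$, $\CE_{\frn^2}$ and $(-)^T$, while the $\mathbf{t}$-grading on the left matches the outer homological grading $\ast$ on the right by the very construction of $\Phi_{ho}$, whose shifts categorify the MOY expansion.

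The main obstacle is the degeneration step: one must check that the normalization of $\mathbf{t}$-shifts built into $\Phi_{ho}$ is precisely the one for which purity annihilates every higher differential. In practice this reduces to a careful shift-bookkeeping exercise, tracing shifts through the identifications $\Phi_R=\mnfn\circ\Phi_{ho}$ and $\Phi=\Pi\circ\Phi_{ho}$ and verifying that the map induced by $d^R$ on $\Tr^i$ genuinely preserves the single $\mathbf{t}$-degree of each $\Tr^i(\calF_{D_\gamma})$. Once this alignment is nailed down, purity is strong enough to collapse the spectral sequence, completing the proof.
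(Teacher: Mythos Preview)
Your proposal is correct and follows essentially the same approach as the paper: both set up the bicomplex with the matrix factorization differential $d^{mf}$ and the Rouquier differential $d^R$, compute $E_1$ by taking $d^{mf}$-homology to get $\Tr^i$ of the terms, and then use purity to force degeneration at $E_2$. The paper makes the degeneration step slightly more concrete by explicitly grading the bicomplex by the $\mathbf{t}$-degree (so the entries are $\mathcal{S}^i_{\beta,j}[k]$), observing that both $d^{mf}$ and $d^R$ have $\mathbf{t}$-degree $1$, and then noting that purity forces $\Tr^i(C_{\beta,j})[k]=0$ unless $k=j$; thus the $E_1$-page is concentrated on a single diagonal and the higher differentials vanish for the trivial reason that their sources or targets are zero---this is exactly the ``shift-bookkeeping exercise'' you flagged, and it resolves cleanly once you track the $\mathbf{t}$-degree as an independent index.
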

\begin{proof}

  Let us fix the notation the two-periodic complex:
  \[\mathcal{S}^i_{\beta,\bullet}=
    (\Phi_{ho}(\beta)_\bullet\otimes \Phi_{ho}(\xId)^\dagger\ot \Lambda^i\calB, d^R),\]
  where \(d^R: \mathcal{S}^i_{\beta,\bullet}\to \mathcal{S}^i_{\beta,\bullet+1}\)
  is the Rouquier differential.

  In general,
  given an element \(\calF\in \MF_n\) the two-periodic complex \[C=\CE_{\frn^2}(\mathcal{E}xt(\calF,\calF_\parallel\ot \Lambda^i\calB))^{T^2}\] is
  has \(\mathbf{t}\)-degree one. That is if we decompose the \(C\) into the \(\mathbf{t}\)-homogeneous pieces
  \(C=\oplus_i C[i]\) then the differential of the complex \(d^{mf}\) is graded \(d^{mf}: C[i]\to C[i+1]\).
  The differential \(d^{mf}\) is the sum of the matrix factorization differential and \ChE   differential with the corrections.

  On the other hand the complex \(\Tr^i(\Phi_{ho}(\beta))\) is naturally a bi-complex. Indeed, \(\Phi_{ho}(\beta)\) is
  a complex whose terms are elements of \(\MF_n^\flat\), we denote these differential by \(d^R\) since we imitate Rouquier
  construction here:
  \[\Phi_{ho}(\beta)=(\oplus_j C_{\beta,j}, d^R),\quad C_{\beta,j}\in \MF_n^\flat\]
  where  the differential \(d^R\) is of \(\mathbf{t}\)-degree \(1\).

  On the other hand \(\Tr^i(C_{\beta,\bullet})\) has its own differential \(d^{mf,\bullet,i}_\beta\) and the
  total differential \(d^{tot,i}\) is the sum \(d^{tot,i}=d^R+d^{mf,\bullet,i}\).

  Tautologically we have
  \[\mathbb{H}(\mathcal{S}^i_\beta)=H^*(\mathcal{S}^i_\beta,d^{tot,i}).\]
  Since \(H^*(\mathcal{S}^i_\beta,d^{mf,\bullet,i})=\Tr^i(C_{\beta,\bullet})\)  we need to show that \(H^*(S^i_\beta,d^{tot,i})=H^*(H^*(\mathcal{S}_\beta^i,d^{nf,\bullet,i}),d^R))\).
  Now we let us recall that both differentials are of \(\mathbf{t}\)-degree \(1\).
  Thus we need to study the spectral sequence of the bi-complex
 \[ \begin{tikzcd}
    &\dar  & \dar &\dar& \dar&\\
    \rar&\calS^i_{\beta,j}[1]\arrow{r}\arrow{d} &\calS^i_{\beta,j}[2]\dar\rar & \calS^i_{\beta,j}[3]\dar\rar&  \calS^i_{\beta,j}[4]\dar &  \\
      \rar&  \calS^i_{\beta,j+1}[2]\arrow{r}\arrow{d} &\calS^i_{\beta,j+1}[3]\dar\rar & \calS^i_{\beta,j+1}[4]\dar\rar&  \calS^i_{\beta,j+1}[5]\dar & \\
      \rar&\calS^i_{\beta,j+2}[3]\arrow{r} &\calS^i_{\beta,j+2}[4]\arrow{r}&\calS^i_{\beta,j+2}[5]\arrow{r}&\calS^i_{\beta,j+2}[6]&
  \end{tikzcd}.
 \]

The horizontal differential in the bi-complex is \(d^{mf,\bullet,\bullet}\) and
the vertical differential is \(d^R\). Thus to compute \(E_1\) page of the spectral sequence  we compute the homology
of the \(d^{mf}\) differential and we obtain:
 \[ \begin{tikzcd}
    & \dar & \dar\arrow[ddl,dashed] &\dar\arrow[ddl,dashed]& \dar\arrow[ddl,dashed]&\\
    &\Tr^i(C_{\beta,1})[1]\arrow{d} &\Tr^i(C_{\beta,1})[2]\dar\arrow[ddl,dashed] & \Tr^i(C_{\beta,1})[3]\dar\arrow[ddl,dashed]&  \Tr^i(C_{\beta,1})[4]\dar\arrow[ddl,dashed] &  \\
      &  \Tr^i(C_{\beta,2})[2]\arrow{d} &\Tr^i(C_{\beta,2})[3]\dar & \Tr^i(C_{\beta,2})[4]\dar&  \Tr^i(C_{\beta,2})[5]\dar & \\
      &\Tr^i(C_{\beta,3})[3] &\Tr^i(C_{\beta,3})[4]&\Tr^i(C_{\beta,3})[5]&\Tr^i(C_{\beta,3})[6]&
  \end{tikzcd}.
 \]
 where the solid arrows are the differentials \(d^R\) and the dashed arrows are
 the induced differentials that govern page \(E_3\) of the spectral sequence.

 Since the complexes \(C_{\beta,j}\) are elements of \(\MF_n^\flat\) shifted
 by \(\mathbf{t}^j\), we conclude that the implies that
 \(\Tr^i(C_{\beta,j})[k]\) is zero if \(k\ne j\). Thus only the first column of the
 last diagram is non-zero and the spectral sequence converges at \(E_2\) page.
\end{proof}

\subsection{Proof of the comparison theorem}
\label{sec:proof-comp-theor}

Thus we can conclude our main result since

\begin{multline*}
\HHH_{alg}(\beta)=\oplus_{i,j} H^j(\Tor^i_{R_n}(\Phi_R(\beta),R_n))=\oplus_{i,j}H^j(\Tr(\Phi_{ho}(\beta)))\\=\oplus_i\mathbb{H}(\CE_{\frn^2}(\mathcal{E}xt(\Phi_{ho}(\beta),\Phi_{ho}(1)\ot\Lambda^i\calB)))^T=\HHH_{geo}(\beta).
\end{multline*}

\section{Applications and further directions}
\label{sec:appl-furth-direct}

\subsection{Torus knots}
\label{sec:torus-links}

In this subsection we explain how the results of the current paper
provide a geometric proof of the conjectures for the torus knots.
More algebraic approach  is used in work
\cite{Hogancamp17}, \cite{Mellit17}.

Here we only treat the case of the torus knots \(T_{n,nk+i}\).
The case of the general torus link follows from the localization
computation in our earlier work \cite{OblomkovRozansky17a} if know
the parity statement for the homology. The parity statement
for the torus link was shown algebraically in the work of Hogancamp and Mellit.
There is a geometric approach to the parity that will be discussed
in our forthcoming work.

The Hilbert-Chow map sends an ideal \(I\) to the support of the
quotient \(\CC[x,y]/I\):
\[HC: \Hilb_n(\CC^2)\to \Sym^n(\CC^2).\]
Thus we define the punctual Hilbert scheme as \(\Hilb_n(\CC^2,0):=HC^{-1}(0^n)\).
Respectively, we define \(\Hilb_n(\CC^2,0)\times \CC_{\mathbf{q}^2}\subset
\Hilb_n(\CC^2)\) to be the \(HC\) preimage of the diagonal
embedding of \(\CC_{\mathbf{q}^2}\)
Let us remind that \(\CC^2=
 \CC_{\mathbf{q}^2}\times \CC_{\mathbf{q}^{-2}\mathbf{t}^2}\) is the \(T_{qt}\)-weight decomposition.

 The results of \cite{OblomkovRozansky17a} allowed us to compute the geometric
 trace for the simplest braid that closes to unknot:
 \[\mathcal{T}r(cox_n)=\mathcal{O}_{\Hilb_n(\CC^2,0)
   \times \CC_{\mathbf{q}^2}},\quad
   cox_n=\sigma_1\cdot \sigma_2\cdots\sigma_{n-1}\in \Br_n\]

 The torus link \(T_{n,m}\) is defined as \(L(cox_n^m)\). The  full twist \(
 FT=cox_n^n\) generates the center of the braid group \(\Br_n\). Hence
 \(T_{n,nk+1}=L(cox_n\cdot FT^k)\). Thus the relation \eqref{eq:FT} implies
 \[(1-\mathbf{q}^2)\cdot\HHH_{alg}(T_{n,nk+1})=H^*(\Hilb_n(\CC^2,0),\det(\calB)^k\ot \Lambda(\calB)).\]
 The vanishing of the higher cohomology of this sheaf is proven by Haiman
 \cite{Haiman02}. Thus the statement of \ref{thm:torus-knots} follows.
 In the same paper of Haiman one can find a localization
 for formula for \(T_{qt}\)-character of the space of global sections.

 \subsection{Poincare duality for knot homology}
 In our earlier work \cite{OblomkovRozansky19a} we constructed a categorical invariant of the
 link:
 \[\calE(L)\in \mathrm{D}^{\per}_{T_{qt}}(R(L))\]
 where \(R(L)=\CC[x_1,\dots,x_\ell,y_1,\cdot,y_\ell]\), \(\ell=|\pi_0(L)|\) and
 \(\deg(x_i)=q^2\), \(\deg(y_i)=t^2/q^2\). The object \(\calE(L)\) also
 has additional \(a\)-grading and it is a direct sum of \(a\)-isotypical
 components.

 The category \(\mathrm{D}^{\per}_{T_{qt}}(R(T))\) has a natural automorphism
 \(\mathfrak{F}\) that is induced by switching \(x_i\) and \(y_i\) for
 all \(i\). We show in \cite{OblomkovRozansky19a} that \(\calE(L)\) has
 the following properties:
 \begin{enumerate}
 \item \(\mathfrak{F}(\calE(L))=\calE(L).\)
 \item \(\HHH_{geo}(\beta)=\calE\otimes^{\mathrm{L}}_{R(L(\beta))}\CC[x_1,\dots,x_\ell]\),
   \(\ell=|\pi_0(L(\beta))|\).
 \item \(\calE(L)\) is the sum of free \(\CC[x,y]\)-modules if
   \(|\pi_0(L(\beta))|=1\).
   \end{enumerate}

   The combination of these there properties together with the comparison property
   \eqref{eq:iso} implies the duality theorem \ref{cor:duality}.

   \subsection{Conjectures}
   \label{sec:conjectures}

   As we show the functor \(\bb\) is fully faithful. Thus we have
   a realization of the category of category Soergel bimodules
   \(\sbim_n\) inside of
   the category of matrix factorizations \(\MF_n^{st}\). It is natural
   to ask whether the functor \(\bb\) is surjective:

   \begin{conjecture}
     The functor \(\bb\) extends to the equivalence of categories:
     \[\MF_n^{st}\cong \mathrm{Ho}(\sbim_n).\]
   \end{conjecture}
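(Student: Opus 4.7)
The plan is to combine the fully faithful embedding $\bb: \MF_n^\flat \hookrightarrow \sbim_n$ established in Theorem~\ref{thm:main-cat} with the Rouquier-complex machinery developed in Section~\ref{sec:extens-rouq-compl}. First I would lift $\bb$ termwise to a monoidal functor on the homotopy categories,
\[
\mathrm{Ho}(\bb): \mathrm{Ho}(\MF_n^\flat) \longrightarrow \mathrm{Ho}(\sbim_n),
\]
which is automatically fully faithful because $\bb$ itself is. Combined with the natural $2$-periodic folding $\Pi: \mathrm{Ho}(\MF_n^\flat) \to \MF_n^{\st}$ appearing in~\eqref{eq:Pi}, the conjecture reduces to two statements: (i) $\Pi$ is an equivalence onto its essential image inside $\MF_n^{\st}$, and (ii) the image of $\mathrm{Ho}(\bb)$ generates $\mathrm{Ho}(\sbim_n)$ under cones and retracts.

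For step (ii), the key observation is that every Bott--Samelson bimodule $B_{i_1} \otimes_{R_n} \cdots \otimes_{R_n} B_{i_k}$ lies in the image of $\bb$, since $\bb$ is monoidal and sends the generator $\calC_\bullet^{(i)}$ to $B_i$. Because $\sbim_n$ is defined as the Karoubi envelope of the monoidal category generated by the $B_i$, essential surjectivity of $\mathrm{Ho}(\bb)$ up to direct summands reduces to lifting the splitting idempotents from $\sbim_n$ back to $\MF_n^\flat$. This can be arranged by transporting the idempotents through the full-faithfulness isomorphism of Theorem~\ref{thm:main-cat}(3), and then enlarging $\MF_n^\flat$ by its Karoubi envelope inside $\MF_n^{\st}$.

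For step (i), I would prove that $\MF_n^{\st}$ is generated as a triangulated category by the graph matrix factorizations $\calF_D$, $D \in \xGrn$, together with their shifts. The $\mathbf{t}$-grading on $\MF_n^{\st}$, combined with the fact that differentials in the $2$-periodic complexes have degree $\mathbf{t}$, allows one to unfold the $2$-periodic $\ZZ/2$-grading back to an honest $\ZZ$-grading on objects of this generated subcategory. This unfolding is expected to give an inverse to $\Pi$; the purity statement for $\Tr^i(\calF_D)$ obtained as a corollary to Theorem~\ref{thm:main-cat} guarantees the absence of ambiguity in the reconstruction, because the $\mathbf{t}$-homogeneous components of morphisms match the homological-degree shifts on the Soergel side.

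The main obstacle is the generation statement in step (i). While the homomorphisms $\Phi: \Br_n \to \MF_n^{\st}$ and $\Phi^\flat: \xGrn \to \MF_n^\flat$ produce a wealth of objects, it is not clear a priori that their shifts, cones, and summands exhaust the whole category $\MF_n^{\st}$. A promising approach is to translate the problem to the Bezrukavnikov--Riche coherent-sheaf model $\rmD^{b,T_{qt}}_G(\tfrg^2)$ via the push-forward $i_*$ from Section~\ref{sec:coher-sheaf-vers}, where tilting-theoretic generation results for Steinberg-type categories are available, and then check that $i_*$ is essentially surjective onto $\MF_n^{\st}$. The Knörrer reduction of Section~\ref{sec:knorr-reduct-mathbbb} should allow one to reduce the essential surjectivity of $i_*$ to a tractable statement about matrix factorizations supported on the vanishing locus of a regular sequence on $\tfrg^2$.
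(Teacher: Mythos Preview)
The statement you are attempting to prove is explicitly labeled as a \emph{conjecture} in the paper (Section~\ref{sec:conjectures}); the authors do not claim a proof, and none appears anywhere in the text. So there is no ``paper's own proof'' to compare against.

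As for your strategy itself: the reduction you outline is reasonable in structure, but the acknowledged ``main obstacle''---that $\MF_n^{\st}$ is generated as a triangulated category by the graph matrix factorizations $\calF_D$---is not a technical lemma but essentially the entire content of the conjecture. Showing that every object of $\MF_n^{\st}$ can be built from the $\calC_\bullet^{(i)}$ via cones, shifts, and retracts is precisely what it means for the fully faithful functor $\bb$ to extend to an equivalence. Your suggestion to transport the problem to the Bezrukavnikov--Riche category $\rmD^{b,T_q}_G(\tfrg^2)$ via $i_*$ is a natural idea, but the paper only establishes that $i_*$ is monoidal, not that it is essentially surjective onto $\MF_n^{\st}$; proving that would itself be a substantial result, likely comparable in difficulty to the conjecture. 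There is also a subtlety you pass over: $\MF_n^{\st}$ is $2$-periodic while $\mathrm{Ho}(\sbim_n)$ carries an honest $\ZZ$-valued homological grading, so the proposed equivalence must be formulated carefully using the $\mathbf{t}$-grading to unfold the periodicity. You gesture at this, but the compatibility of the unfolding with the triangulated structure on all of $\MF_n^{\st}$ (rather than just on the image of $\Pi$) is not addressed and would require the generation statement already.

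In short, your proposal is an outline of what a proof would have to accomplish, with the hard step correctly identified but left open; this is consistent with the paper's own assessment that the statement remains a conjecture.
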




   Finally, let us mention that the object \(\calE(L)\) allows
   one to define {\it dualizable homology} \cite{OblomkovRozansky19a} of the link as
   \[\mathrm{HXY}(L)=R\Gamma(\calE(L)).\]

   On the other hand Gorsky and Hogancamp \cite{GorskyHogancamp17}constructed a deformation of the
   homology theory \(\mathrm{HY}\) which they call \(y\)-fied homology
   such that \(\mathrm{HY}(L)\) is a module over \(R(L)\).
   They conjecture that their homology are preserved by the involution
   \(\mathfrak{F}\). Thus we expect the following

   \begin{conjecture} For any link \(L\) we have
     \[\mathrm{HY}(L)=\mathrm{HXY}(L).\]
   \end{conjecture}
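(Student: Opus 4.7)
The plan is to lift this paper's main comparison theorem~\eqref{eq:iso} to the $y$-deformed setting. Both $\mathrm{HY}(L)$ and $\mathrm{HXY}(L)$ are modules over $R(L) = \CC[x_1,\dots,x_\ell,y_1,\dots,y_\ell]$, and each specializes at $y_i \to 0$ to the ordinary triply-graded homology of $L$: the right-hand side by property (2) of $\calE(L)$ recalled above, and the left-hand side by the defining property of Gorsky-Hogancamp's $y$-fication. By Theorem~\ref{thm:main} these two specializations agree canonically, so the remaining task is to match the two $R(L)$-deformations, not merely their underlying vector spaces.

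First, I would upgrade the functor $\mnfn$ of this paper to a $y$-enhanced variant relating the $y$-fied Soergel-bimodule category of~\cite{GorskyHogancamp17} with an $R(L)$-equivariant enhancement of $\MF_n^{\st}$. On the geometric side, the object $\calE(L) \in \rmD^{\per}_{T_{qt}}(R(L))$ of~\cite{OblomkovRozansky19a} already carries the required $R(L)$-structure and satisfies $\mathrm{HXY}(L) = R\Gamma(\calE(L))$ by definition. The goal is to produce a canonical isomorphism in $\rmD^{\per}_{T_{qt}}(R(L))$ between $\calE(L)$ and the complex of $R(L)$-modules underlying Gorsky-Hogancamp's construction of $\mathrm{HY}(L)$, and then take global sections.

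Concretely, the key steps are: first, extend the categorical trace $\ctr$ of~\eqref{eq:geomod} to an $R(L)$-enhanced trace whose $y_i \to 0$ reduction is $\ctr$ and whose global sections compute the $y$-fied homology of the closure $L(\beta)$; second, identify this enhanced trace, under the equivalence coming from $\mnfn$, with Gorsky-Hogancamp's $y$-fied Hochschild construction applied to $\PhiRbr(\beta)$; third, invoke Theorem~\ref{thm:main} and a Nakayama-type filtration argument along the $y$-variables to deduce the isomorphism of $R(L)$-modules from the isomorphism of their $y_i \to 0$ specializations together with compatible $y$-actions.

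The main obstacle is the second step: Gorsky-Hogancamp's $\mathrm{HY}$ is defined via a specific Koszul-type deformation of the Hochschild complex in which the $y_i$'s couple the Soergel-bimodule differential to the bimodule structure, whereas the $R(L)$-action on $\calE(L)$ is intrinsically geometric, arising from the action of tautological classes over the Hilbert scheme. Pinning down the isomorphism of the $y$-variables on the two sides requires a careful comparison of $y$-linear differentials, and is complicated by the fact that the Gorsky-Hogancamp $\mathfrak{F}$-symmetry of $\mathrm{HY}(L)$ is not yet established on their side. The cleanest route is therefore to \emph{construct} the isomorphism $\mathrm{HY}(L) \cong \mathrm{HXY}(L)$ directly and derive the Gorsky-Hogancamp symmetry conjecture as a corollary of the known symmetry $\mathfrak{F}(\calE(L)) = \calE(L)$ from~\cite{OblomkovRozansky19a}.
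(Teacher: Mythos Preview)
The statement you are addressing is a \emph{conjecture} in the paper, not a theorem; the paper offers no proof. Its only remark on the matter is the single sentence following the conjecture: ``To prove this conjecture we need to study the analog of the functor \(\bb\) for the deformations of the categories \(\MF_n^{st}\) from \cite{OblomkovRozansky19a}.'' So there is no proof to compare your proposal against---only a one-line hint at a strategy.

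Your proposal is consistent with that hint: both point toward lifting the functor \(\mnfn\) and the comparison machinery of this paper to the \(y\)-deformed setting. In that sense your outline is a reasonable elaboration of what the authors themselves suggest. But you should be clear that what you have written is a research plan, not a proof. Each of your three ``concrete steps'' is a substantial project: constructing the \(R(L)\)-enhanced trace, identifying it with the Gorsky--Hogancamp deformation, and running a Nakayama-type argument all require new constructions and verifications that go well beyond anything in this paper. You correctly flag the second step as the main obstacle, and indeed matching the algebraic \(y\)-coupling in the Gorsky--Hogancamp Koszul deformation with the geometric \(y\)-action coming from tautological classes is the heart of the problem; nothing in the present paper provides leverage on it. The Nakayama step is also more delicate than you indicate: knowing that two \(R(L)\)-modules agree at \(y=0\) and carry compatible \(y\)-actions does not by itself imply they are isomorphic as \(R(L)\)-modules without additional flatness or filtration control, which you have not established.

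In short: your direction matches the paper's suggested approach, but the paper does not prove the conjecture, and neither does your proposal---it is an outline of what a proof would require.
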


   To prove this conjecture we need to study the analog of
   the functor \(\bb\) for the deformations of the categories \(\MF_n^{st}\)
   from \cite{OblomkovRozansky19a}.



 \end{document}